\def\hf{\hat{f}}
\def\hp{\hat{p}}
\def\hh{h^{\prime}}
\def\hInfo{\hat{{\rm I}}}
\numberwithin{equation}{section}
\newcommand{\be}{\begin{equation}}
\newcommand{\ee}{\end{equation}}
\newcommand{\ba}{\begin{aligned}}
\newcommand{\ea}{\end{aligned}}
\renewcommand{\thesubsection}{\arabic{section}.\arabic{subsection}}
\newcommand{\one}{\textbf{1}}
\newcommand{\zero}{\textbf{0}}
\newcommand{\opt}{^*}
\newcommand{\difI}{\delta_{W,\eps}}
\newcommand*{\rom}[1]{\expandafter\@slowromancap\romannumeral #1@}
\newcommand{\dezero}{\varepsilon}
\newcommand{\event}{\Lambda}
\newcommand*\rmd{\mathop{}\!\mathrm{d}}
\newcommand{\asto}{\xrightarrow{a.s.}}
\def\hf{\hat{f}}
\def\hp{\hat{p}}
\def\hh{h^{\prime}}
\def\hInfo{\hat{{\rm I}}}
\def\TV{\mbox{\tiny\rm TV}}
\newcommand{\bea}{\begin{eqnarray}}
\newcommand{\eea}{\end{eqnarray}}
\newcommand{\targetpert}{f_{W, \eps}}
\newcommand{\normmax}[1]{\norm{#1}_{\rm max}}
\newcommand{\wtilde}{\widetilde}
\newcommand{\bT}{\boldsymbol{T}}
\newcommand{\res}{{\sf R}}
\newcommand{\resT}{{\sf T}}
\newcommand{\resQ}{{\sf Q}}
\newcommand{\errorone}{\mathcal{E}_1}
\newcommand{\errortwo}{\mathcal{E}_2} 
\newcommand{\org}{^{(0)}}
\newcommand{\bGamma}{\mathbf{\Gamma}}
\newcommand{\subs}{^{\rm sub}}
\newcommand{\pert}{\iota}
\newcommand{\neb}{\mathcal{N}}
\newcommand{\aux}{^{\rm aux}}
\newcommand{\wbar}{\overline}
\newcommand{\law}{\mathcal{L}}
\def\loss{L}
\def\Info{{\rm I}}
\def\eps{{\varepsilon}}
\def\id{{\boldsymbol{I}}}
\def\bh{\boldsymbol{h}}
\def\hsigma{\hat{\sigma}}
\def\hbv{\hat{\boldsymbol v}}
\def\hx{\hat{x}}
\def\bU{{\boldsymbol{U}}}
\def\bV{{\boldsymbol{V}}}
\def\bZ{{\boldsymbol{Z}}}
\def\bSigma{{\boldsymbol{\Sigma}}}
\def\hbSigma{{\boldsymbol{\widehat{\Sigma}}}}
\def\hSigma{{\widehat{\Sigma}}}
\def\bP{{\boldsymbol{P}}}
\newcommand{\set}{\mathcal{S}}
\def\bQ{{\boldsymbol{Q}}}
\def\hbX{{\boldsymbol{\widehat{X}}}}
\def\hbU{{\boldsymbol{\hat{U}}}}
\def\hbV{{\boldsymbol{\hat{V}}}}
\def\hbu{{\boldsymbol{\widehat{u}}}}
\def\bzero{{\mathbf 0}}
\def\cF{{\mathcal F}}
\def\Z{{\mathbb Z}}
\def\opt{\mbox{\tiny\rm opt}}
\def\sub{\mbox{\tiny\rm ub}}
\def\op{\mbox{\tiny\rm op}}
\def\naturals{{\mathbb N}}
\def\reals{{\mathbb R}}
\def\Z{{\mathbb Z}}
\def\normal{{\sf N}}
\def\sT{{\sf T}}
\def\bv{{\boldsymbol{v}}}
\def\bx{{\boldsymbol{x}}}
\def\ba{{\boldsymbol{a}}}
\def\bA{\boldsymbol{A}}
\def\bB{\boldsymbol{B}}
\def\hbx{\boldsymbol{\hat{x}}}
\def\de{{\rm d}}
\def\bX{\boldsymbol{X}}
\def\bY{\boldsymbol{Y}}
\def\bW{\boldsymbol{W}}
\def\bDelta{\boldsymbol{\Delta}}
\def\prob{{\mathbb P}}
\def\E{{\mathbb E}}
\def\<{\langle}
\def\>{\rangle}
\def\rank{{\rm rank}}
\def\diag{{\rm diag}}
\def\hbU{\widehat{\boldsymbol U}}
\def\hbV{\widehat{\boldsymbol V}}
\def\obX{\overline{\boldsymbol X}}
\def\obU{\overline{\boldsymbol U}}
\def\obV{\overline{\boldsymbol V}}
\def\by{{\boldsymbol{y}}}
\def\P{{\sf P}}
\def\be{{\boldsymbol{e}}}
\def\bu{{\boldsymbol{u}}}
\def\Var{{\rm Var}}
\def\obX{\overline{\boldsymbol X}}
\def\obU{\overline{\boldsymbol U}}
\def\obV{\overline{\boldsymbol V}}
\def\hf{\hat{f}}
\def\hp{\hat{p}}
\def\hh{h^{\prime}}
\def\hInfo{\hat{{\rm I}}}
\def\oY{\overline{Y}}
\def\oX{\overline{X}}
\begin{document}

\title{Adapting to Unknown Noise Distribution in Matrix Denoising}

\author{Andrea Montanari\thanks{Department of Electrical Engineering and
  Department of Statistics,    Stanford University} \and Feng
Ruan\thanks{Department of Statistics,    Stanford University} 
\and Jun Yan\thanks{Department of Statistics,    Stanford University}}

\date{October 5, 2018}
\maketitle

\begin{abstract}
We consider the problem of estimating an unknown matrix $\bX\in \reals^{m\times n}$, from observations $\bY = \bX+\bW$
where $\bW$ is a noise matrix with independent and identically distributed entries, as to minimize estimation error 
measured in operator norm.  Assuming that the underlying signal $\bX$ is low-rank and incoherent with respect to the canonical basis, 
we prove that minimax risk is equivalent to $(\sqrt{m}\vee\sqrt{n})/\sqrt{\Info_W}$ in the high-dimensional limit $m,n\to\infty$,
where $\Info_W$ is the Fisher information of the noise.
Crucially, we develop an efficient procedure that achieves this risk, adaptively over 
the noise distribution (under certain regularity assumptions).

Letting  $\bX = \bU\bSigma\bV^{\sT}$ --where $\bU\in \reals^{m\times r}$, $\bV\in\reals^{n\times r}$ are orthogonal, and $r$ is
kept fixed as $m,n\to\infty$-- we use our method to estimate $\bU$, $\bV$.  Standard spectral methods provide  non-trivial estimates of
the factors $\bU,\bV$ (weak recovery)  only if the singular values of $\bX$ are larger than $(mn)^{1/4}\Var(W_{11})^{1/2}$. 
We prove that the new approach achieves 
weak recovery down to the the information-theoretically optimal threshold $(mn)^{1/4}\Info_W^{1/2}$.
\end{abstract}

\section{Introduction and main result}

Let $\bX\in\reals^{m\times n}$ be an unknown signal, and assume we are given observations of its entries corrupted by 
additive noise:
\begin{align}
\bY = \bX +\bW \, ,\label{def:model}
\end{align}
where $\bW = (W_{ij})_{i\le m, j\le n}$ has i.i.d. entries $W_{ij} \sim p_W$. We would like to estimate $\bX$ in operator norm,
namely construct an estimator $\hbX:\reals^{m\times n}\to \reals^{m\times n}$ as to minimize $\E\{\|\hbX(\bY)-\bX\|_{\op}\}$.
Random matrix theory \cite{Guionnet,bai2010spectral} characterizes the
unbiased estimator $\hbX_{\sub}(\bY) = \bY$ as $m,n\to\infty$ (under suitable tail conditions on the law of the noise):
\begin{align}
\E\Big\{\big\|\hbX_{\sub}(\bY)-\bX\big\|_{\op}\Big\} = (\sqrt{m}+\sqrt{n})\sqrt{\Var(W_{11})} +O(1)\, . \label{eq:Unbiased}
\end{align}

Anderson's lemma \cite{anderson1955integral} implies that the unbiased estimator is minimax optimal when the noise is Gaussian.
However, for more general noise distributions, the result (\ref{eq:Unbiased}) leads to a natural statistical question: 
\begin{quote}
\begin{center}
\emph{Can we improve over the error of the unbiased estimator?}
\end{center}
\end{quote}

This question was answered positively in several cases, under the assumption that $\bX$ is low rank and the noise distribution $p_W$ is known
\cite{deshpande2015finding,lesieur2015mmse,perry2016optimality}. It is quite easy to understand a mathematical mechanism leading to this answer \cite{lesieur2015mmse,perry2016optimality} (a somewhat different mechanism is studied in \cite{deshpande2015finding}).
 In many cases of interest (e.g. when
$\bX = \bu\bv^{\sT}$ with $\bu\in\reals^m$, $\bv\in\reals^n$ incoherent with respect to the canonical basis), the entries of $\bX$
are much smaller than the entries of the noise matrix $\bW$. Imagine to apply a non-linear denoiser $f:\reals\to\reals$ component-wise
to $\bX$ to obtain $\hbX(\bY) = f(\bY)$. By Taylor expansion we have
\begin{align}
\hbX(\bY) \approx  f(\bW) +f'(\bW) \odot \bX  \approx \E\{f'(W)\} \bX +f(\bW) \, ,\label{eq:TayolorExpansion}
\end{align}
where $\odot$ denotes Hadamard (entry-wise) product, and $W\sim p_W$ is a scalar with the same distribution as the entries of $\bW$.
Notice that $f(\bW)$ is a matrix with i.i.d. entries. Therefore, for any nonlinearity such that $\E\{f'(W)\} = 1$,  $\E\{f(W)\} = 0$, 
we expect $\E\big\{\big\|\hbX(\bY)-\bX\big\|_{\op}\big\} = (\sqrt{m}+\sqrt{n}) \sqrt{\E\{f(W)^2\}} +O(1)$. This suggests to choose
the nonlinearity $f$ by solving the optimization problem
\begin{align}
\mbox{minimize}&\;\;\; \E\{f(W)^2\}\, ,\\
\mbox{subject to}&\;\;\; \E\{f(W)\} = 0\, ,\;\;\; \E\{f'(W)\} = 1\, .
\end{align}
Assuming $p_W$ to have a differentiable density (also denoted by $p_W$), a simple application of Cauchy-Schwartz inequality implies that the
optimal $f(x)$ is given by
\begin{equation}
\label{eqn:def-score-information}
 f_W(x)=-\frac{1}{\Info_W}\, \frac{p_W'(x)}{p_W(x)} \, ,\;\;\;\;\; 
\Info_W=\int_{\mathbb{R}}\frac{(p^\prime_W(x))^2}{p_W(x)}\de x\, .
\end{equation}
Notice that $\Info_W$ is the Fisher information for the location family $\{p_w(W+\theta)\}_{\theta\in\reals}$.
This denoiser would achieve $\E\big\{\big\|\hbX(\bY)-\bX\big\|_{\op}\big\} = (\sqrt{m}+\sqrt{n}) \Info_W^{-1/2} +O(1)$.

Unfortunately, the above argument does not provide a concrete statistical procedure, since the noise distribution $p_W$ is
typically unknown to the data analyst. Our main result is that the ideal estimation error can be 
achieved without prior knowledge of $p_W$. In order to state this result, we define a class of matrices $\cF_{m,n}(r,M,\eta)$ that formalizes  
the assumption that $\bX$ has small entries and is low-rank:
\begin{align}
\cF_{m,n}(r, M, \eta) \equiv \big\{&\bX\in\reals^{m\times n}: \;\; \rank(\bX) \leq r,~
											\opnorm{\bX} \le M(m \vee n)^{1/2},~
											\opnormbig{\bX \odot \bX} \le M(m \vee n)^{1/2-\eta}, 
											\nonumber \\
	&~~~~~~~~\norm{\bX}_{\ell_2 \to \ell_\infty} \le Mn^{1/2 - \eta}~~\text{and}~~
		\normbig{\bX^{\sT}}_{\ell_2 \to \ell_\infty} \le Mm^{1/2 - \eta}
		 \big\}\, .
\end{align}
(Recall that $\norm{\bX}_{\ell_2 \to \ell_\infty}$ is the maximum $\ell_2$ norm of any row of $\bX$.)

We also denote by $\cF_{m,n}(M, \eta)$ the same class, where the rank constraint is removed, i.e. $\cF_{m,n}(M,\eta) = \cF_{m,n}(m\wedge n,M,\eta)$.
Our first result concerns estimation in the class $\cF_{m,n}(M,\eta)$.
\begin{theorem}\label{thm:Main0}
For each $m,n\in\naturals$, there exists an estimator $\hbX^{(*)}:\reals^{m\times n}\to\reals^{m\times n}$ such that the following holds.
Assume that the noise distribution is absolutely continuous with
respect to Lebesgue measure, with density $p_W$ satisfying the
following assumptions:
\begin{itemize}
\item[{\sf A0.}] The Fisher information $\Info_{W}$ is finite.
\item[{\sf A1.}] $\int |w|^2 p_W(w) \, \de w\le M_1$ for some constant $M_1$.
\item[{\sf A2.}] $p_W\in C^3(\reals)$, with derivatives $p^{(\ell)}_W$ satisfying
\begin{align}
\norm{p_W^{(\ell)} (\,\cdot\,)}_\infty \leq M_2 ~~\text{for all $\ell\in \{0,1,2\}$.}
\end{align}
\end{itemize}
Then, letting $\bY = \bX+\bW$, with $(W_{ij})_{i\le m,j\le n}\sim_{iid}p_W$, assuming $m \asymp n$, we have :
\begin{align}
\sup_{\bX\in\cF_{m,n}(M,\eta)}\E\big\{\|\hbX^{(*)}(\bY)-\bX\|_{\op}\big\}  \le    \frac{\sqrt{m}+ \sqrt{n}}{\sqrt{\Info_W}}  +o((m\vee n)^{1/2})\, . \label{eq:AdaptiveUB_0}
\end{align}
\end{theorem}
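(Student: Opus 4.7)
The plan is to build an adaptive plug-in estimator that mimics the oracle denoiser $f_W(x)=-p_W'(x)/(\Info_W\,p_W(x))$ appearing in (\ref{eqn:def-score-information}). The key observation is that although $p_W$ is unknown, the $mn$ entries $Y_{ij}=X_{ij}+W_{ij}$ are approximate samples from $p_W$: by the incoherence bound $\|\bX\|_{\ell_2\to\ell_\infty}\le Mn^{1/2-\eta}$, each entry satisfies $|X_{ij}|\lesssim n^{-\eta}\to 0$, so the empirical distribution of the $Y_{ij}$'s differs from $p_W$ by a vanishing translation. Thus, using standard kernel density estimation with a bandwidth $h_n\to 0$ chosen to balance bias and variance, one can build estimators $\hp, \hp'$ and then $\hf(x)\propto -\hp'(x)/\hp(x)$ (with a careful truncation/regularization to avoid small denominators, exploiting assumption {\sf A2} on $\|p_W^{(\ell)}\|_\infty$ for smoothness and enough mass in the bulk).

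A critical technical device is sample splitting. I would partition the index set $[m]\times[n]$ into two (or a constant number of) disjoint regions, use one region to form $\hf$ using only those entries, and use the independent copy of $\bW$ restricted to the other region to form the actual denoised output. This decouples the randomness of $\hf$ from the noise matrix to which it is applied. Writing $\hbX^{(*)}(\bY)=\hf(\bY)$ (with an appropriate reassembly of the two halves), apply the Taylor expansion in the spirit of (\ref{eq:TayolorExpansion}):
\begin{align}
\hf(\bY) \;=\; \hf(\bW) \;+\; \hf'(\bW)\odot \bX \;+\; \tfrac{1}{2}\hf''(\bW^{*})\odot(\bX\odot\bX)\, ,
\end{align}
and show term by term that $\hbX^{(*)}(\bY)-\bX$ has operator norm at most $(\sqrt{m}+\sqrt{n})/\sqrt{\Info_W}+o((m\vee n)^{1/2})$.

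The four ingredients to control are: (i) $\hf(\bW)$ is a matrix with i.i.d.~entries conditional on the independent data used to build $\hf$, so classical random matrix bounds (Bai--Yin/Seginer type) give $\|\hf(\bW)\|_\op \le (\sqrt{m}+\sqrt{n})\sqrt{\E[\hf(W)^2 \mid \hf]}+o((m\vee n)^{1/2})$; (ii) as $n\to\infty$, $\E[\hf(W)^2]\to 1/\Info_W$ by the consistency of score estimation in $L^2(p_W)$, which follows from {\sf A0}--{\sf A2}; (iii) the Hadamard product $\hf'(\bW)\odot\bX$ concentrates around $\E[\hf'(W)]\,\bX\approx \bX$ in operator norm---this uses that $\bX$ is ``nearly low rank'' (or rather has small $\ell_2\to\ell_\infty$ norm) so that $(\hf'(\bW)-\E[\hf'(W)])\odot\bX$ can be shown small by a combination of Bernstein-type concentration on each row/column and a covering argument; (iv) the quadratic remainder is controlled directly by $\|\bX\odot\bX\|_\op\le M(m\vee n)^{1/2-\eta}$ combined with $\|\hf''(\bW)\|_\infty$ bounds from {\sf A2}.

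The main obstacle is (iii), together with showing that the score estimator converges in a mode strong enough to translate to operator-norm guarantees. Pointwise/$L^2$ consistency of $\hp$ and $\hp'$ follows from standard kernel theory, but what is really needed is that $\E[\hf(W)^2]$ and $\E[\hf'(W)]$ approach their ideal values $1/\Info_W$ and $1$ at a rate fast enough that, multiplied by $\sqrt{m}+\sqrt{n}$, the error is $o((m\vee n)^{1/2})$. This requires uniform control of $\hf$ over a suitably truncated region of $\reals$, a careful choice of the bandwidth $h_n$ (likely polynomial in $(mn)^{-1}$ under the smoothness {\sf A2}), and handling the fact that the samples used to form $\hp$ are not exactly drawn from $p_W$ but from a small perturbation of it. The sample-splitting step, while conceptually straightforward, must be arranged so that the class $\cF_{m,n}(M,\eta)$ is preserved under restriction to each half, and that the final reassembly does not inflate the operator norm---this can be done by writing the global error as a sum over $O(1)$ blocks and applying the triangle inequality.
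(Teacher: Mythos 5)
Your overall architecture (kernel estimates of $p_W,p_W'$ with regularization, Taylor expansion of the denoised matrix, i.i.d.\ random matrix bounds for the pure-noise part, $\ell_2\to\ell_\infty$/Hadamard bounds for the signal-interaction and quadratic terms) matches the paper's analysis of its oracle denoiser. The genuine gap is in the device you rely on to decouple $\hf$ from $\bW$: sample splitting followed by block reassembly cannot deliver the statement as written, because Eq.~(\ref{eq:AdaptiveUB_0}) is a \emph{sharp-constant} bound and the $o((m\vee n)^{1/2})$ slack does not absorb constant-factor losses. Concretely, if you split the columns into two halves and denoise each half with the score estimated from the other, the two noise blocks $\bZ_1,\bZ_2$ are each (conditionally) i.i.d.\ of size $m\times n/2$, so $\|\bZ_j\|_{\op}\approx(\sqrt{m}+\sqrt{n/2})/\sqrt{\Info_W}$, and the best block bound $\|[\bZ_1\ \bZ_2]\|_{\op}\le(\|\bZ_1\|_{\op}^2+\|\bZ_2\|_{\op}^2)^{1/2}$ gives $\approx(\sqrt{2m}+\sqrt{n})/\sqrt{\Info_W}$, e.g.\ $\approx 2.41\sqrt{n}/\sqrt{\Info_W}$ for $m=n$ versus the target $2\sqrt{n}/\sqrt{\Info_W}$; finer blockings or the plain triangle inequality are worse. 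You cannot instead apply i.i.d.\ random matrix theory to the reassembled matrix, since the entries of $\bZ_1$ depend on the noise in block $2$ through $\hf_2$ and vice versa. The only way to recover the sharp constant is to show that all the fitted denoisers are uniformly close to a single deterministic function and apply the random matrix bound once to that function of $\bW$ --- but that is exactly the paper's route (compare $\hf_{Y,\eps}$ to the regularized oracle $f_{W,\eps}(x)=-p_W'(x)/(p_W(x)+\eps)$ in entrywise sup norm, use $\|\cdot\|_{\op}\le\sqrt{mn}\,\|\cdot\|_{\max}$ with bandwidths $h_n=n^{-\eta_1}$, $h_n'=n^{-\eta_2}$ making the sup discrepancy $o((mn)^{-1/4})$), and once you have that uniform comparison, sample splitting is superfluous.

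A secondary issue in your step (i): conditional on the training half, the entries $\hf(W_{ij})$ are i.i.d.\ but not exactly centered, and the conditional mean $\mu=\E[\hf(W)\mid\hf]$ contributes a rank-one term of operator norm $|\mu|\sqrt{mn/2}$. Your bound $\|\hf(\bW)\|_{\op}\lesssim(\sqrt m+\sqrt n)\,\E[\hf(W)^2\mid\hf]^{1/2}$ therefore requires $|\mu|=o((mn)^{-1/4})$, a quantitative rate that does not follow from the mere consistency statements in your step (ii); the paper avoids this entirely because its oracle satisfies $\E[f_{W,\eps}(W)]=0$ exactly (the $\eps$-regularized score still integrates to zero against $p_W$), and the plug-in is controlled only through the uniform oracle comparison. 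So the needed repair is to replace splitting-plus-reassembly by a uniform (sup-norm) comparison to a deterministic, exactly centered oracle, after which your remaining ingredients (ii)--(iv) line up with the paper's treatment of the error terms.
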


We next consider the rank-constrained class $\cF_{m,n}(r,M,\eta)$. 
\begin{theorem}\label{thm:Main}
For each $m,n\in\naturals$, there exists an estimator $\hbX:\reals^{m\times n}\to\reals^{m\times n}$ such that the following holds.
Under the same assumptions of Theorem \ref{thm:Main0}, letting $\bY = \bX+\bW$, with $(W_{ij})_{i\le m,j\le n}\sim_{iid}p_W$, assuming $m \asymp n$, we have :
\begin{align}
\sup_{\bX\in\cF_{m,n}(r,M,\eta)}\E\big\{\|\hbX(\bY)-\bX\|_{\op}\big\}  \le    \frac{\sqrt{m}\vee \sqrt{n}}{\sqrt{\Info_W}}  +o((m\vee n)^{1/2})\, .  \label{eq:AdaptiveUB}
\end{align}
\end{theorem}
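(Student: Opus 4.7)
I would construct $\hbX$ as a rank-$r$ truncation (possibly with singular-value shrinkage) of the entrywise-denoised estimator $\wtilde{\bX} := \hbX^{(*)}(\bY)$ supplied by Theorem~\ref{thm:Main0}. Write $\wtilde{\bX} = \bX + \boldsymbol{N}$; Theorem~\ref{thm:Main0} gives $\|\boldsymbol{N}\|_{\op} \le (\sqrt{m}+\sqrt{n})/\sqrt{\Info_W}(1+o(1))$, and the heuristic preceding~\eqref{eqn:def-score-information} makes clear that $\boldsymbol{N}$ is well-approximated by $f_W(\bW)$---an i.i.d.\ matrix of variance $1/\Info_W$ entries, independent of $\bX$. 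Intuitively, the rank constraint then lets us project away the bulk of this noise, improving the prefactor from $\sqrt{m}+\sqrt{n}$ to $\sqrt{m}\vee\sqrt{n}$ (a factor-$2$ saving when $m\asymp n$).

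\paragraph{Key spectral identity.}
Let $\hbU,\hbV$ be the top-$r$ left and right singular vectors of $\wtilde{\bX}$ and $\hbX = \P_{\hbU}\wtilde{\bX}\P_{\hbV}$. Expanding
\[
\hbX - \bX = \P_{\hbU}\boldsymbol{N}\P_{\hbV} \;-\; \P_{\hbU}\bX\P_{\hbV^{\perp}} \;-\; \P_{\hbU^{\perp}}\bX\P_{\hbV} \;-\; \P_{\hbU^{\perp}}\bX\P_{\hbV^{\perp}},
\]
the first (variance) term is an $r\times r$ projection of $\boldsymbol{N}$, of operator norm $O(1/\sqrt{\Info_W})$, and the last is second order. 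The two middle (bias) terms have \emph{both} orthogonal column spaces ($\hbU$ vs.\ $\hbU^{\perp}$) \emph{and} orthogonal row spaces ($\hbV^{\perp}$ vs.\ $\hbV$); a direct computation $(A+B)^{\sT}(A+B) = A^{\sT}A + B^{\sT}B$ (cross terms vanish), combined with the fact that $A^{\sT}A$ is supported on $\hbV^{\perp}$ while $B^{\sT}B$ is supported on $\hbV$, yields
\[
\bigl\|\P_{\hbU}\bX\P_{\hbV^{\perp}} + \P_{\hbU^{\perp}}\bX\P_{\hbV}\bigr\|_{\op} = \max\!\Bigl(\sigma_1(\bX)\|\sin\Theta(\bV,\hbV)\|_{\op},\,\sigma_1(\bX)\|\sin\Theta(\bU,\hbU)\|_{\op}\Bigr),
\]
a \emph{max}, not a sum. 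This max-vs-sum phenomenon is precisely what converts $\sqrt{m}+\sqrt{n}$ into $\sqrt{m}\vee\sqrt{n}$.

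\paragraph{Refined Davis--Kahan.}
Apply the one-sided perturbation bounds $\|\sin\Theta(\bU,\hbU)\|_{\op}\lesssim\|\boldsymbol{N}\bV\|_{\op}/\sigma_r(\bX)$ and $\|\sin\Theta(\bV,\hbV)\|_{\op}\lesssim\|\boldsymbol{N}^{\sT}\bU\|_{\op}/\sigma_r(\bX)$. Since $\bU,\bV$ depend only on $\bX$ while $\boldsymbol{N}\approx f_W(\bW)$ has entries independent of $\bX$, the matrices $\boldsymbol{N}\bV$ (size $m\times r$) and $\boldsymbol{N}^{\sT}\bU$ (size $n\times r$) behave as i.i.d.\ matrices of variance $1/\Info_W$, with operator norms $\sqrt{m/\Info_W}(1+o(1))$ and $\sqrt{n/\Info_W}(1+o(1))$ respectively. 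Taking the max gives a bias bound of order $[\sigma_1(\bX)/\sigma_r(\bX)]\cdot\sqrt{m\vee n}/\sqrt{\Info_W}$---already the target whenever $\bX$ is well-conditioned on its rank-$r$ support.

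\paragraph{Arbitrary conditioning and the main obstacle.}
For ill-conditioned $\bX\in\cF_{m,n}(r,M,\eta)$ the estimator should adaptively shrink the effective rank, retaining only those directions with $\wtilde{\sigma}_i$ above a threshold $\tau\asymp\sqrt{m\vee n}/\sqrt{\Info_W}$: discarded directions contribute at most $\tau$ each (bounding $\|\bX_{\rm tail}\|_{\op}$), while retained directions, analyzed direction-by-direction via spiked-model asymptotics, each contribute $O(\sqrt{m\vee n}/\sqrt{\Info_W})$, which combines (for fixed $r$) to the claimed bound. The two main obstacles are: (i) justifying $\boldsymbol{N}\approx f_W(\bW)$ precisely enough that $\boldsymbol{N}$ may be treated as independent of $(\bU,\bV)$ in the projected-noise estimates---nontrivial because the data-adaptive denoiser of Theorem~\ref{thm:Main0} itself depends on $\bY$ and hence on $\bX$---and (ii) handling singular values at or near the BBP-type detection threshold, where signal and noise are intertwined and a sharp random matrix analysis (possibly including optimal singular-value shrinkage) is needed to attain the exact leading constant $1$ in $(\sqrt{m}\vee\sqrt{n})/\sqrt{\Info_W}$.
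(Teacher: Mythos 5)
Your high-level architecture matches the paper's (entrywise denoising, then a spectral step exploiting low rank, analyzed through BBP-type singular value/vector asymptotics and principal angles), but two of your concrete steps fail. First, the "variance" term $\P_{\hbU}\boldsymbol{N}\P_{\hbV}$ is not of operator norm $O(1/\sqrt{\Info_W})$: the subspaces $\hbU,\hbV$ are the top singular subspaces of $\bX+\boldsymbol{N}$, hence strongly correlated with $\boldsymbol{N}$, and the projection captures an order-$\sqrt{n}$ amount of noise (in the extreme case $\bX=\bzero$ its norm is $\approx (\sqrt{m}+\sqrt{n})/\sqrt{\Info_W}$; more generally $\hbu_1^{\sT}\boldsymbol{N}\hbv_1 \approx (mn)^{1/4}[H(\sigma_1)-\sigma_1\cos\alpha_1\cos\beta_1]$, which is of the same order as the target). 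Treating it as negligible breaks the accounting that is supposed to produce $\sqrt{m}\vee\sqrt{n}$. Second, the estimator you actually define, hard rank-$r$ truncation $\P_{\hbU}\wtilde{\bX}\P_{\hbV}$, does not attain the constant $1$ uniformly over $\cF_{m,n}(r,M,\eta)$: for a spike with $\sigma$ near the detection threshold the retained empirical singular value inflates to $\approx H(1)\,\Info_W^{-1/2}(mn)^{1/4}$ while the angles degenerate, so the worst-case normalized error is $\approx(\gamma^{1/4}+\gamma^{-1/4})\Info_W^{-1/2}$ --- no better than Theorem \ref{thm:Main0}. You flag this regime as an obstacle, but it is not a corner case to be patched: resolving it is the content of the theorem, and Davis--Kahan/Wedin bounds with $\sigma_1/\sigma_r$ prefactors cannot give the exact leading constant even for well-conditioned $\bX$.

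The paper's proof supplies exactly the missing ingredients. Theorem \ref{proposition:upper-bound-proposition} gives the decomposition $\hf_{Y,\eps}(\bY)=\Info_W\bX+\sqrt{\Info_W}\bZ+\bDelta$ with almost-sure and moment control of $\bDelta$ (this is where your obstacle (i), the data-dependence of the adaptive denoiser, is actually handled, via uniform kernel-estimation rates), together with a consistent estimate $\hInfo_{W,\eps}$ of $\Info_W$. The spectral step then thresholds at $(1+\delta)H(1)\hInfo_{W,\eps}^{1/2}$ and \emph{shrinks} the retained singular values via $H^{-1}$ (Eq.~\eqref{eqn:def-hbSigma}), so that the per-spike error block becomes $\sigma_i\bT(\alpha_i,\beta_i)$ with no $\hat\sigma_i-\sigma_i$ term on the diagonal; its operator norm $2\sigma_i J(\alpha_i,\beta_i)$ is bounded by $\max\{\gamma^{1/4},\gamma^{-1/4}\}\Info_W^{-1/2}$ through the explicit angle formulas $G^{(1)},G^{(2)}$ and the elementary inequality $T(t,y)\le ty$ (Lemma \ref{lemma:proof-upper-bound-third-step}). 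Those angle formulas come from Theorem \ref{theorem:general-deform}, which must hold for \emph{deterministic} $\bU,\bV$ (proved by a moment-method argument; the standard results you would invoke assume random or rotationally invariant singular vectors), and robustness to $\bDelta$, to repeated singular values, and the passage from almost-sure bounds to expectations require, respectively, the H\"older-type perturbation bound of Theorem \ref{theorem:nonlinear-PCA-perturbation}, a perturbation of the $\sigma_i$'s, and the uniform-integrability Lemma \ref{lemma:upper-bound-ui}. None of these can be bypassed by the max-versus-sum identity for the bias terms alone, so as written your argument has a genuine gap.
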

\begin{remark}
The upper bound of Eq.~(\ref{eq:AdaptiveUB_0}) improves over the risk of the unbiased estimator in Eq.~(\ref{eq:Unbiased}) 
in that the factor $\sqrt{\Var(W)}$ is replaced by $1/\sqrt{\Info_W}$, which is strictly smaller in any case except for Gaussian noise (in which case the two bounds are equal).
This is achieved by entry-by-entry denoising of the matrix $\bY$, following the ideas described above.

The bound  of Eq.~(\ref{eq:AdaptiveUB}) further improves the factor $(\sqrt{m}+\sqrt{m})$ to $\sqrt{m}\vee \sqrt{n}$ by exploiting the low-rank 
structure in $\bX$. Entrywise denoising is followed by suitable
shrinkage of  the singular values of $\bY$. Singular values shrinkage is a well studied topic, see for instance \cite{shabalin2013reconstruction,gavish2014optimal,chatterjee2015matrix,donoho2014minimax,donoho2018optimal}. However, by itself, it does not achieve the minimax error for non-Gaussian noise.
\end{remark}

Our second main result establishes that no estimator can do substantially better than what guaranteed by , in the rank constrained case, even with knowledge of $p_W$.
\begin{theorem}\label{thm:LowerBound}
Assume the density $p_W$ to be weakly differentiable and $\Info_W$ to be finite, and assume $m=m(n)$ be such that
$\lim_{n\to\infty}m(n)/n =\gamma$. Then
\begin{equation}
\lim_{n\to\infty}\inf_{\hbX(\,\cdot\,)}\sup_{\bX\in\cF_{m,n}(r,M,\eta)}
	\frac{1}{(mn)^{1/4}}\E\big\{\|\hbX(\bY)-\bX\|_{\op}\big\}  \ge  
		\frac{\gamma^{1/4}\vee \gamma^{-1/4}}{\sqrt{\Info_W}} -o_M(1). 
\end{equation}
(Here the infimum is taken over all measurable functions $\hbX:\reals^{m\times n}\to\reals^{m\times n}$, and $o_M(1)$ is a quantity vanishing 
as $M\to\infty$.)
\end{theorem}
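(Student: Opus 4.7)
Without loss of generality assume $m\ge n$, so that $(\gamma^{1/4}\vee\gamma^{-1/4})(mn)^{1/4}=\sqrt{m}(1+o(1))$; the case $m<n$ is handled symmetrically by transposing. The plan is to construct a Bayesian prior supported in $\cF_{m,n}(r,M,\eta)$ that forces any measurable estimator to incur operator-norm error at least $\sqrt{m/\Info_W}(1-o_M(1))$. Fix once and for all an incoherent unit vector $\bv_0\in\reals^n$ (for instance $\bv_0=n^{-1/2}\bone$) and consider rank-1 signals $\bX_\bu=\lambda\bu\bv_0^T$ indexed by $\bu\in S^{m-1}$, with $\bu$ drawn from the uniform measure on the incoherent subset $\cS\subset S^{m-1}$ on which $\|\bu\|_\infty\le C\sqrt{(\log m)/m}$. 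By classical sphere concentration, $\cS$ has measure $1-o(1)$. Choosing $\lambda$ of order $\sqrt{m}$, the four norm conditions in the definition of $\cF_{m,n}(r,M,\eta)$ reduce to elementary inequalities involving $\lambda$, $\|\bu\|_\infty$, and $\|\bv_0\|_\infty$, and are satisfied provided $M$ is taken sufficiently large.

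The pivot of the argument is a projection inequality: for any estimator $\hbX$, the vector $\hat\bz\equiv\hbX(\bY)\bv_0\in\reals^m$ is an estimator of $\lambda\bu$, and since $\|\bv_0\|=1$,
\[
\|\hbX(\bY)-\bX_\bu\|_{\op}\;\ge\;\|\hbX(\bY)\bv_0-\lambda\bu\|_2\;=\;\|\hat\bz-\lambda\bu\|_2.
\]
It therefore suffices to lower bound the Bayes $\ell_2$-risk for estimating $\lambda\bu$. In local tangent-space coordinates $\eta\in T_{\bu_0}S^{m-1}\cong\reals^{m-1}$ around a reference point $\bu_0\in\cS$, a direct expansion of the log-likelihood $\sum_{ij}\log p_W(Y_{ij}-\lambda u_i(v_0)_j)$ combined with the definition $\Info_W=\int(p_W')^2/p_W$ and $\|\bv_0\|=1$ yields the Fisher information matrix $I(\eta)=\Info_W\lambda^2\,\boldsymbol{I}_{m-1}$. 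The multi-parameter Cram\'er-Rao / van Trees inequality for the linear functional $\lambda\bu$ then gives
\[
\E\big\|\hat\bz-\lambda\bu\big\|_2^2\;\ge\;\Tr\big[\lambda^2\,(\Info_W\lambda^2\,\boldsymbol{I}_{m-1})^{-1}\big]\,(1-o(1))\;=\;\frac{m-1}{\Info_W}\,(1-o(1)),
\]
where the negligible $I(\pi)$ contribution from the prior is absorbed in the $o(1)$ provided $\lambda\gg 1$.

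To upgrade this $L^2$ lower bound into the $L^1$ lower bound that operator-norm loss really requires, I would appeal to the H\'ajek-Le Cam local asymptotic minimax theorem. Assumptions A0-A2 (finite $\Info_W$, $p_W\in C^3$ with bounded derivatives) are exactly what guarantee differentiability in quadratic mean of the location family $\{p_W(\cdot-\tau)\}$, and localizing near any fixed $\bu_0\in\cS$ produces a limiting Gaussian shift experiment on $\reals^{m-1}$ with precision $\Info_W\boldsymbol{I}_{m-1}$. The asymptotic minimax risk for the convex, translation-invariant loss $\|\cdot\|_2$ in the Gaussian limit equals $\E\|G\|_2$ with $G\sim\mathsf{N}(0,\Info_W^{-1}\boldsymbol{I}_{m-1})$, which is $\sqrt{(m-1)/\Info_W}(1-o(1))$ by the well-known concentration of the $\chi_{m-1}$ distribution around its mean. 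Combining the projection inequality with this $L^1$ bound and the identity $\sqrt{m}=\gamma^{1/4}(mn)^{1/4}(1+o(1))$ yields the claim for $m\ge n$; the symmetric case contributes $\gamma^{-1/4}$, and taking the maximum produces $\gamma^{1/4}\vee\gamma^{-1/4}$.

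The main obstacle is precisely the $L^2$-to-$L^1$ passage: Jensen's inequality goes in the wrong direction, so the Cram\'er-Rao / van Trees bound by itself only controls second moments. The H\'ajek-Le Cam framework circumvents this by comparing with a Gaussian shift limit in which concentration of $\|G\|_2$ at its mean is automatic; the price is verifying DQM for $p_W$, which is routine under A0-A2. A secondary but routine issue is that the incoherence truncation of the uniform prior must not distort the Fisher-information computation: this is handled by a smooth truncation whose support has measure $1-o(1)$ and which preserves the tangent-space isotropy up to an $o_M(1)$ error, absorbed in the theorem's stated looseness.
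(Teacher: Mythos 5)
Your overall skeleton matches the paper's: a Bayesian rank-one prior $\bx\bv_0^{\sT}$ along a fixed incoherent direction, the projection inequality $\|\hbX(\bY)-\bX\|_{\op}\ge\|\hbX(\bY)\bv_0-\bx\|_2$, and a reduction to Fisher-information-limited location estimation. But the step that carries the real difficulty --- turning an information bound into a lower bound on the \emph{first} moment of the $\ell_2$ error when the effective parameter dimension is $m-1\to\infty$ --- is not established by your argument. The H\'ajek--Le Cam local asymptotic minimax theorem you invoke is a fixed-dimension statement: it compares the experiment, localized around a point in a parameter space of \emph{fixed} dimension $d$, with a Gaussian shift on $\reals^d$ as $n\to\infty$. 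Here $d=m-1$ grows proportionally to $n$, the limiting risk $\E\|G\|_2\asymp\sqrt{m/\Info_W}$ diverges, and you need the convergence to the Gaussian shift experiment to be quantitative and uniform in the dimension before you may import $\chi_{m-1}$ concentration from the limit. Nothing in the proposal supplies that uniformity, and your van Trees computation only gives the second moment, which (as you note) Jensen cannot convert. A secondary obstruction is your prior: the uniform distribution on an incoherent subset of $S^{m-1}$ does not factorize over coordinates, so the posterior does not decouple across rows, and the one route that makes a growing-dimension argument tractable --- treating the problem as $m$ independent one-dimensional location problems --- is unavailable with this prior.

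For comparison, the paper closes exactly this gap by (i) taking $\bx$ with i.i.d.\ truncated-Gaussian entries, so that given $\bY$ the posterior factorizes and each row is a classical one-dimensional LAN location problem at scale $n^{-1/2}$; (ii) applying a one-dimensional Bernstein--von Mises-type result (Le Cam--Yang) per row to approximate each conditional law by a Gaussian with variance $(\Info_W+\Gamma^{-1})^{-1}$ in total variation; (iii) lower bounding each conditional risk via Anderson's lemma with the loss truncated at level $K$; and (iv) handling the $L^2\to L^1$ passage by its Lemma~\ref{lemma:norm-lower-bound}: the truncated per-coordinate errors are independent and bounded, so Chebyshev/Hoeffding concentration lets one pull the square root outside the expectation, after which $\Gamma,K\to\infty$, $K\eps^{1/2}\to 0$, $M\to\infty$ recover $\gamma^{1/4}\vee\gamma^{-1/4}/\sqrt{\Info_W}$. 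Your proposal would become correct if you replaced the sphere prior by a product prior and replaced the global LAM appeal by this coordinate-wise BvM-plus-truncation argument (or supplied a genuinely dimension-uniform version of the LAM comparison, which is a substantial piece of work, not a routine verification). One minor further point: Theorem~\ref{thm:LowerBound} assumes only weak differentiability and finite $\Info_W$, not {\sf A1}--{\sf A2}; finiteness of the Fisher information already gives quadratic-mean differentiability of the location family, so your reliance on {\sf A2} is unnecessary and would weaken the statement.
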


\begin{remark}
Roughly speaking, Theorem \ref{thm:LowerBound} states that, in the rank constrained class, any estimator
has worst case error lower bounded as $\sup_{\bX\in\cF_{m,n}(r,M,\eta)}\E\big\{\|\hbX(\bY)-\bX\|_{\op}\big\}  \gtrsim (\sqrt{m}\vee \sqrt{n})/\sqrt{\Info_W}$.
This lower bound obviously applies to the larger class $\cF_{m,n}(M,\eta)$, but it falls short of the upper bound of Theorem \ref{thm:Main0}
by a factor $(\sqrt{m}+\sqrt{n})/ (\sqrt{m}\vee \sqrt{n})\le 2$ (for large $m,n$).

It is an open problem to close this gap between upper and lower bounds in the rank-unconstrained class.  
\end{remark}

The rest of this paper is organized as follows. Section \ref{sec:Related} overviews related work. 
Section \ref{sec:Construction} describes the construction of the
adaptive estimator, and states a more detailed version of Theorem \ref{thm:Main}
Section \ref{sec:LowRank} applies the adaptive denoiser to the problem of estimating the top singular subspaces of a low-rank 
matrix $\bX$. Section \ref{sec:Numerical} illustrates the performance of our method through a simulation study. 
Finally, Sections \ref{sec:Techniques_LB} and \ref{sec:Techniques_UB} outline the proofs of our main theorems, with details deferred to the appendices.

\section{Related work}
\label{sec:Related}

The problem of estimating a noisy matrix in operator norm is central in 
principal component analysis and covariance estimation
\cite{vershynin2012close}, matrix completion \cite{candes2010matrix,keshavan2010matrix}, 
graph localization \cite{singer2008remark,javanmard2013localization}, group synchronization 
\cite{singer2011angular,wang2013exact,javanmard2016phase},
community detection \cite{moore2017computer,abbe2018community}, gene expression analysis \cite{ringner2008principal}, and a number of other applications. 

A large part of the theoretical literature has focused on the Gaussian spiked model, or on sparse covariance structures.
In the last setting, several estimators are known that achieve minimax error rates to varying degree of generality 
\cite{el2008operator,bickel2008covariance,cai2010optimal,cai2013sparse}.
These approaches are often based on entry-wise thresholding of the empirical covariance.
While sparsity is a useful assumptions for some datasets, it is not warranted for other applications (including matrix completion, group synchronization, localization,
and community detection).
Accordingly, we do not assume $\bX$ to be sparse, but rather incoherent with respect to the canonical basis. Our focus is 
on achieving adaptivity with respect to the unknown noise distribution. In order to investigate the effect of the noise distribution, we cannot limit
ourselves to determine error rates, but instead we need to pinpoint the asymptotic estimation error up to sub-leading error terms.

Our work is closely related to results on the spiked covariance model. Within the spiked model, we observe $m$-dimensional vectors $(\by_i)_{i\le n}\sim \normal(0,\bSigma)$
with covariance $\bSigma = \bU\bU^{\sT}+\id_m$, with $\bU\in\reals^{m\times r}$ a matrix describing the $r\ll m$ spikes.
If $\by_1, \dots,\by_m$ are stacked as columns of a matrix $\bY$, we can equivalently write $\bY = \bX+\bW$, where $\bX = \bU\bV^{\sT}$,
$\bV\in\reals^{n\times r}$ has entries $(V_{ij})_{i\le n,j\le r}\sim_{iid}\normal(0,1/m)$, and $(W_{ij})_{i\le m, j\le n}\sim \normal(0,1)$. 
The low-rank component $\bU\bU^{\sT}$ of the covariance is extracted using principal component analysis, which is known be optimal, 
but is not asymptotically consistent in the high-dimensional regime $n\asymp m$ unless $\sigma_{\rm min}(\bU)\to \infty$ \cite{johnstone2009consistency}. 
Random matrix theory has been used to determine asymptotic detection (or weak recovery) thresholds, asymptotic estimation errors, as
well optimal hypothesis testing procedures \cite{BBAP05,BS06,paul2007asymptotics,onatski2013asymptotic}.

A substantial literature studies matrix denoising under a Gaussian
noise model \cite{shabalin2013reconstruction,gavish2014optimal,chatterjee2015matrix,donoho2014minimax,donoho2018optimal}.
By rotational invariance, in this case optimal denoising is achieved by a form of singular value shrinkage, 
i.e. taking the singular value decomposition of
the observed matrix $\bY$ and and applying a nonlinear function to its singular values.
Singular value shrinkage comes with error guarantees also under non-Gaussian noise models
(among others, \cite{chatterjee2015matrix} studies a broad class of models). However, 
it is suboptimal in this more general case. 

Singular value shrinkage is also suboptimal when the signal $\bX=\bU\bV^{\sT}$ is of rank 
$r\ll m,n$, and the factors $\bU,\bV$ have additional structure. Sparse principal 
component analysis is a prominent example of this phenomenon \cite{johnstone2009consistency,zou2006sparse,d2005direct}.
 More recently, the case in which the rows of $\bU$, $\bV$ are i.i.d. draws from probability distributions $p_U$, $p_V$ on $\reals^r$
has attracted attention, see e.g.  
\cite{lesieur2017constrained,deshpande2014information,deshpande2016asymptotic,krzakala2016mutual,barbier2016mutual,lelarge2016fundamental,miolane2017fundamental,montanari2017estimation}.

As suggested by the simple argument in the introduction, singular value shrinkage (and other simple spectral methods) 
also becomes suboptimal when 
the noise is non-Gaussian. Namely, by applying a suitable entry-wise nonlinearity to the data matrix $\bY$, we can reduce the operator norm
of the noise, without affecting the signal. This phenomenon was investigated in \cite{deshpande2015finding} in the context of the hidden 
sub-matrix problem, and in \cite{krzakala2016mutual} in the context of rank-one matrix estimation for symmetric matrices.
In particular, \cite{krzakala2016mutual} assumed the rank-one signal $\bX = \sqrt{n}\bu\bu^{\sT}$ to be incoherent with the standard basis and identified 
a weak recovery phase transition when the Fisher information crosses $\Info_W=1$ (assuming the normalization $\|\bu\|_2=1$).
Subsequently, Perry and co-authors \cite{perry2016optimality} proved that the same threshold correspond to
a phase transition in hypothesis testing. If $\Info_W<1$ it is impossible to distinguish with  vanishing error probability  
between data from the spiked model $\bY  = \sqrt{n}\bu\bu^{\sT}+ \bW$, and pure noise $\bY=\bW$. If $\Info_W>1$,
instead the two hypotheses can be distinguished with vanishing error probability as $n\to\infty$.

\section{Construction of the adaptive estimators}
\label{sec:Construction}

In this section we describe the estimators $\hbX^{(*)}(\bY)$ and $\hbX(\bY)$ of Theorems 
\ref{thm:Main0} and \ref{thm:Main}.

Equation (\ref{eqn:def-score-information}) suggest to estimate the density of the noise $p_W(\,\cdot\,)$, 
in order to construct an approximation of the denoiser $f_W(\,\cdot\,)$. The challenge is of course that we do not have samples from 
$p_W$ but only matrix entries $(Y_{ij})_{i\le m, j\le n}$. However, it turns out that we can use these as surrogates for the noise values
$(W_{ij})_{i\le m, j\le n}$, provided we introduce a suitable noise correction.

Let $K:\reals\to\reals$ be a first order non-negative kernel, i.e, 
\begin{equation}
\label{eqn:def-1st-order-kernel}
K(z)\geq 0,~~\int_{\R}K(z)\, \de z=1~~\text{and}~~\int_{\R}zK(z)\,\de z=0\, .
\end{equation}
Throughout, we assume  $K(\, \cdot \, )$ to satisfy the following smoothness
condition. For some constant $M>0$ (independent of the model parameters $\bX$, $p_W$, $m,n$) and  for $\ell \in \{0,\dots,3\}$, we have
\begin{align}
\norm{K^{(\ell)}}_{\infty }\le M\, ,\;\;\; \int_{\R}\left(
K^{(l)}(z)\right) ^{2}\de z \leq M\, ,\;\;\;\;\int_{\R}z^{2}K(z)\, \de z\leq M\, .
\end{align}
Many standard kernels satisfy these conditions, a simple example being the Gaussian kernel 
$K(z) = (2\pi )^{-1/2}\exp (-z^{2}/2)$.

We further define the function 
\begin{align}
H(\sigma) =
\begin{cases}
\sqrt{ (\sigma + \gamma^{-1/2} \sigma^{-1})(\sigma+ \gamma^{1/2} \sigma^{-1})}
 & \text{if } \sigma \ge 1\text{,} \\ 
\gamma^{1/4} + \gamma^{-1/4} & \text{otherwise.}%
\end{cases}\label{eq:Hsigma}
\end{align}
Note that $\sigma\mapsto H(\sigma)$ is strictly increasing on $(1,\infty)$, and hence
we can define its inverse  $H^{-1}(y)$ on $(H(1),\infty)$ with $H(1) = \gamma^{1/4} + \gamma^{-1/4}$.

Our adaptive estimators depends on parameters $h_n,\hh_n,\eps, \delta>0$ and  proceeds as follows:
\begin{enumerate}
\item Compute the average entry $\oY \equiv \sum_{i\le m, j\le n} Y_{ij}/(mn)$.
\item Compute the corrected kernel estimates
\begin{align}
&\hat{p}_{Y}(x) = \frac{1}{mnh_{n}}\sum_{i\le m,j\le n}K\left( \frac{Y_{ij}-\oY-x}{h_{n}}\right),~ 
&\hat{p}_{Y}^\prime(x) = \frac{1}{mn (h_{n}^\prime)^{2}}
\sum_{i\le m, j\le n}K^{\prime }\left( \frac{Y_{k,l}-\oY-x}{h_{n}^{\prime }}\right).
\end{align}
\item Define the denoiser $\hf_{Y,\eps}(\,\cdot\, )$, and the estimated Fisher information
\begin{align}
\hf_{Y,\eps}(x) = - 
	\frac{\hp_Y'(x)}{\hp_Y(x)+\eps}\, ,\;\;\;\;\;\; \hInfo_{W,\eps} \equiv 
		\frac{1}{mn}\sum_{i\le m,j\le n} \left(\frac{\hp_Y'(Y_{ij} - \bar{Y})}{\hp_Y(Y_{ij} - \bar{Y})+\eps}\right)^2 + \eps\, .
\label{eq:hf_def}
\end{align}
(We will omit the dependence on $\eps$ whenever clear from the context.)
\item Construct the matrix obtained by applying this denoiser entry-wise to $\bY$: 
$\hbX^{(0)}(\bY;h_n,\hh_n,\eps) = \hf_{Y}(\bY)$, and
compute its singular value decomposition
\begin{align}
\label{eqn:svd-of-hbX0}
\hbX^{(0)} = (mn)^{1/4}\hbU\hbSigma^{(0)}\hbV^{\sT}\, .
\end{align}
\item Construct the diagonal matrix $\hbSigma$ with entries
\begin{equation}
\label{eqn:def-hbSigma}
\hSigma_{i,i} = \begin{cases}
\hat{\Info}_{W, \eps}^{-1/2}
	H^{-1}(\hat{\Info}_{W, \eps}^{-1/2}\hSigma^{(0)}_{i,i}) & 
		\mbox{ if $\hSigma^{(0)}_{i,i}\ge (1+\delta)H(1) \hat{\Info}_{W, \eps}^{1/2}$,}\\
0& \mbox{ otherwise.}
\end{cases}
\end{equation}
\item Return
\begin{align}
\hbX^{(*)} (\bY;h_n,\hh_n,\eps)  &= \hat{\Info}_{W, \eps}^{-1} \,  \hbX^{(0)}(\bY;h_n,\hh_n,\eps)  \, ,\\
\hbX(\bY;h_n,\hh_n,\eps,\delta)  &= (mn)^{1/4}\hbU\hbSigma\hbV^{\sT}\, .
\end{align}
\end{enumerate}
(In what following we will often omit the  we will often omit the dependence the on the parameters
$h_n,\hh_n,\eps,\delta$.)

\begin{remark}
Note that $\hat{p}_{Y}(x)$, $\hat{p}_{Y}^{\prime}(x)$ are estimators for the noise density $p_W$ and its first derivative. They differ from standard
kernel estimators because of the shift $\oY$. This is designed to correct the error made because $Y_{ij}\neq W_{ij}$. If
all the entries of $\bX$ were equal to $\oX$, we would have $W_{ij} = Y_{ij}-\oX$. The correction we use simply 
replaces $\oX$ by $\oY$. It turns
out that this simple correction is sufficient in the large $m,n$ limit.
\end{remark}

The next statement provides a more detailed version  of Theorems \ref{thm:Main0}.
%
\begin{theorem}
\label{theorem:upper-bound}
Assume that conditions {\sf A0}, {\sf A1}, {\sf A2} of Theorem \ref{thm:Main0} hold. Further, 
let $h_n = n^{-\eta_1}$ and $h_n^\prime = n^{-\eta_2}$ for some 
$\eta_1 \in \left(1/4, 1\right)$ and $\eta_2 \in \left(1/4, 1/3\right)$.
Then the estimators  $\hbX^{(*)}(\,\cdot\,)$ and  $\hbX(\,\cdot\,)$  defined above satisfies
\begin{align}
	&\limsup_{n \to \infty, m/n \to \gamma}\sup_{\bX\in\cF_{m,n}(M,\eta)}
		\frac{1}{(mn)^{1/4}}\E \norm{\hbX^{(*)}(\bY;h_n,h'_n,\eps) - \bX}_{\op}
		\le \big(\gamma^{1/4}+ \gamma^{-1/4}\big)\, \Info_W^{-1/2}+o_{\eps,\delta}(1)\, ,\label{eqn:upper-bound-0}\\
	 	&\limsup_{n \to \infty, m/n \to \gamma}\sup_{\bX\in\cF_{m,n}(r,M,\eta)}
		\frac{1}{(mn)^{1/4}}\E \norm{\hbX(\bY;h_n,h'_n,\eps,\delta) - \bX}_{\op}
		\le \max\{\gamma^{1/4}, \gamma^{-1/4}\} \Info_W^{-1/2}+o_{\eps,\delta}(1)\, ,\label{eqn:upper-bound}
\end{align}
where $\lim_{\delta\to 0}\lim_{\eps\to 0}o_{\eps,\delta}(1) =0$.
\end{theorem}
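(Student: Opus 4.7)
The plan is to analyze the estimators in three stages: (i) control the pilot density/score estimator $\hf_{Y,\eps}$; (ii) Taylor-expand the entrywise denoiser around the noise to show that $\hbX^{(0)}=\hf_Y(\bY)$ decomposes as a signal part $\approx \Info_W \bX$ plus an i.i.d.-like noise matrix whose operator norm obeys Marchenko--Pastur-type asymptotics; and (iii) for the low-rank version, use a BBP-type analysis to show that the function $H$ in \eqref{eq:Hsigma} is exactly the map from underlying singular values to observed singular values after denoising, so that the shrinker in \eqref{eqn:def-hbSigma} correctly inverts it.

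First I would show that the score estimate concentrates on the true score. The key observation is that $Y_{ij}-\oY = W_{ij}-\oW + (X_{ij}-\oX)$. Under $\cF_{m,n}(M,\eta)$, both $\oX$ and each $X_{ij}$ are $O(n^{-\eta})$, and the empirical average $\oW$ concentrates at $\E W$ at rate $(mn)^{-1/2}$, so the shifted samples $Y_{ij}-\oY$ behave as near-perfect samples from $p_W$. Standard kernel-estimator bias/variance analysis (using the moment and smoothness conditions on $K$, bandwidths $h_n=n^{-\eta_1}$, $h'_n=n^{-\eta_2}$, and assumption \textsf{A2}) then gives uniform control on $\hp_Y-p_W$ and $\hp_Y'-p_W'$ over compacts, together with tail control coming from \textsf{A1}. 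The $\eps$-regularization in \eqref{eq:hf_def} guarantees that $\hf_{Y,\eps}$ is uniformly bounded and Lipschitz, and that $\hInfo_{W,\eps}\to \Info_W$ up to an $o_\eps(1)$ bias.

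Next I would Taylor-expand: for each entry,
\begin{align*}
\hf_Y(Y_{ij}) = \hf_Y(W_{ij}+X_{ij}) = \hf_Y(W_{ij}) + \hf_Y'(W_{ij})\, X_{ij} + R_{ij},
\end{align*}
where $R_{ij} = O(\|\hf_Y''\|_\infty X_{ij}^2)$ collects second-order terms. The matrix $R$ has $\|R\|_{\op}\le \sqrt{mn}\,\|X\odot X\|_{\max}$-type bound, but more sharply $\|R\|_{\op}\lesssim \|\hf_Y''\|_\infty\|\bX\odot\bX\|_{\op} = o((mn)^{1/4})$ using the $\eta$-bound on $\|\bX\odot \bX\|_{\op}$. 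Writing $\hf_Y'(W_{ij}) = \E\hf_Y'(W) + (\hf_Y'(W_{ij})-\E\hf_Y'(W))$, the first piece multiplies $\bX$ by a scalar converging to $\Info_W$, while the centered piece, multiplied entrywise by $\bX$, has operator norm controlled by $\|\bX\|_{\ell_2\to\ell_\infty}$ times standard sub-Gaussian concentration -- this is also $o((mn)^{1/4})$ under the incoherence condition. Finally, $\bM_{ij}:=\hf_Y(W_{ij})$ is (close to) an i.i.d.\ zero-mean matrix with second moment $\approx \Info_W$, so by Bai--Yin-type results
\begin{align*}
\frac{\|\bM\|_{\op}}{(mn)^{1/4}} \to (\gamma^{1/4}+\gamma^{-1/4})\sqrt{\Info_W}.
\end{align*}
Combining and dividing by $\hInfo_{W,\eps}\to \Info_W$ yields the first bound \eqref{eqn:upper-bound-0}.

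For \eqref{eqn:upper-bound} I would use the decomposition $\hbX^{(0)}\approx \Info_W\bX + \bM$. After rescaling by $(mn)^{1/4}\hInfo_{W,\eps}^{1/2}$, the noise $\bM/((mn)^{1/4}\Info_W^{1/2})$ is a random matrix whose spectrum converges to the Marchenko--Pastur law with top edge $H(1)=\gamma^{1/4}+\gamma^{-1/4}$, while $\Info_W\bX/((mn)^{1/4}\Info_W^{1/2}) = \Info_W^{1/2}\bX/(mn)^{1/4}$ is a finite-rank perturbation whose nonzero singular values are $\Info_W^{1/2}\sigma_k(\bX)/(mn)^{1/4}$. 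The incoherence conditions in $\cF_{m,n}(r,M,\eta)$ ensure the hypotheses of a BBP/Benaych-Georges--Nadakuditi style theorem (the signal singular vectors are delocalized and the perturbation is low-rank), giving the asymptotic rule: each underlying spike $\sigma$ above $1$ produces an observed top singular value converging to $H(\sigma)$, with singular vectors having nontrivial overlap. The shrinker \eqref{eqn:def-hbSigma} then applies $H^{-1}$ to recover $\sigma$, discarding spikes below the phase-transition threshold. A standard computation (as in the Gaussian singular-value-shrinkage literature cited in the paper) shows that after this shrinkage, the worst-case operator-norm error equals $\max\{\gamma^{1/4},\gamma^{-1/4}\}\Info_W^{-1/2}+o_{\eps,\delta}(1)$; the cancellation between the $\sqrt{m}$ and $\sqrt{n}$ terms that occurs for rank-$r$ shrinkage is exactly what replaces $\gamma^{1/4}+\gamma^{-1/4}$ by $\gamma^{1/4}\vee\gamma^{-1/4}$.

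The hard part will be Step (ii), making the Taylor expansion rigorous uniformly over the class. In particular, controlling the operator norm of the entrywise product $(\hf_Y'(\bW)-\Info_W)\odot \bX$ requires delicate bounds that exploit the incoherence (not just the max-norm) of $\bX$, together with the fact that $\hf_Y'$ -- a plug-in estimator involving ratios -- remains sufficiently smooth and bounded uniformly in the realization; this is where the $\eps$-regularization and the bandwidth choices $\eta_1\in(1/4,1)$, $\eta_2\in(1/4,1/3)$ become essential. A secondary difficulty is justifying the BBP asymptotics in Step (iii) when the noise matrix $\bM$ is not exactly i.i.d.\ with the idealized score distribution but only asymptotically so; this should be handled by a perturbation argument comparing $\bM$ to an oracle matrix with entries $f_W(W_{ij})/\sqrt{\Info_W}$ and showing the perturbation has negligible operator norm on the $(mn)^{1/4}$ scale.
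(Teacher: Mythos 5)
Your overall plan matches the paper's: Taylor expansion around the noise to decompose $\hf_Y(\bY)\approx \Info_W\bX+\sqrt{\Info_W}\bZ$, then a spiked-random-matrix analysis of the decomposition, with the shrinker $H^{-1}$ inverting the BBP map. Two of the paper's key lemmas are exactly what you sketch: a decomposition lemma (the paper's Theorem~\ref{proposition:upper-bound-proposition}) and an operator-norm bound on $\hf_Y(\bY)-f_{W,\eps}(\bY)$ (the paper's Lemma~\ref{lemma:upper-bound-lemma-two}), and you correctly anticipate that the expansion should be organized around the oracle score $f_{W,\eps}(W_{ij})$ to get genuinely i.i.d.\ entries, rather than around the plug-in $\hf_Y$ whose evaluations at different $W_{ij}$ are not independent. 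However, there are two genuine gaps.

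The first is in step (iii). You write that ``the incoherence conditions in $\cF_{m,n}(r,M,\eta)$ ensure the hypotheses of a BBP/Benaych-Georges--Nadakuditi style theorem.'' This is not the case for the results you cite: the Benaych-Georges--Nadakuditi framework (and most of the spiked-model literature) assumes the signal singular vectors $\bU,\bV$ are \emph{random} (uniformly distributed, or at least independent of $\bW$), or that $\bW$ has bi-unitarily invariant law (i.e., Gaussian entries). In the present problem $\bU,\bV$ are deterministic, fixed, and arbitrary within the incoherence class, and after the nonlinear denoising step $\bZ$ is decidedly non-Gaussian. So one cannot invoke off-the-shelf BBP here; the paper has to prove its own anisotropic version for deterministic $\bU,\bV$ (Theorem~\ref{theorem:general-deform}), whose technical core is an estimate on bilinear forms $\langle \bu_n,(z^2\id-\bZ_n\bZ_n^\sT)^{-1}\bZ_n\bv_n\rangle$ with fixed $\bu_n,\bv_n$, proved by a moment/combinatorial method. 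Your proposal, as written, would silently appeal to a theorem whose hypotheses are not satisfied; if your argument only needed the Gaussian case you would not need the whole denoising machinery in the first place.

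The second gap is that the target statement is about $\E\|\cdot\|_{\op}$, \emph{uniformly over $\bX\in\cF_{m,n}(r,M,\eta)$}, and your argument only delivers a pointwise-in-$\bX$, almost-sure (or in-probability) bound on the spectral edge. To finish you need (a) a uniform-integrability argument to pass from almost-sure operator-norm bounds to bounds in expectation --- this is nontrivial because $\hf_Y$ blows up as $\eps\to 0$, so one must track the $\eps$-dependence of moment bounds (the paper's Lemma~\ref{lemma:upper-bound-ui} does exactly this, using a fractional moment $1+\nu$); and (b) an argument to pull the supremum over $\bX$ outside, e.g.\ by continuity of $\bX\mapsto\E\|\hbX(\bX+\bW)-\bX\|_{\op}$ on the compact class and selecting a worst-case sequence $\bX_n^*$. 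Neither step is mentioned in your proposal. A smaller but also real issue is that $H^{-1}$ is only H\"older, not Lipschitz, near the edge $H(1)$, so replacing the shrunk singular values of $\hbX^{(0)}$ by those of the oracle $\wtilde\bX^{(0)}$ requires a matrix perturbation bound for H\"older functions of singular values together with a singular-gap lower bound; and since Theorem~\ref{theorem:general-deform} assumes distinct top singular values, a separate perturbation-of-the-spikes argument is needed to cover ties.
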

\section{Singular space recovery and additional results}
\label{sec:LowRank}

An important application of the denoising method introduced above is to estimate latent factors in 
high-dimensional data. We denote the singular value decomposition of the signal $\bX$ by
\begin{align}
\bX =(mn)^{1/4} \bU\, \bSigma\, \bV^{\sT}\, ,\label{eq:SVD_X}
\end{align}
where $\bU\in\reals^{m\times r}$, $\bV\in\reals^{n\times r}$ are orthogonal matrices (i.e. satisfying 
$\bU^{\sT}\bU = \bV^{\sT}\bV = \id_r$), and $\bSigma\in\reals^{r\times r}$ is a diagonal matrix containing the
scaled singular values of $\bX$ (denoted by $\sigma_1\ge \sigma_2\ge \dots\ge \sigma_r$). The scaling factor $(mn)^{1/4}$ is introduced so that
the operator norm of $\bX$ is of the same order as the one of $\bW$ when the $\sigma_i$'s are of order one.

In the following, we denote by $\bU_k\in \reals^{m\times k}$ the matrix containing the first $k$ columns of $\bU$,
by $\bV_k\in \reals^{n\times k}$ the matrix containing the first $k$ columns of $\bV$, and so on. We also denote by $\bu_i$
the $i$-th column of $\bU$, and by $\bv_i$ the $i$-th column of $\bV$.

Our next theorem bounds the principal angle between the eigenspaces $\bU_k, \bV_k$, and their estimates $\hbU_k, \hbV_k$.
Define the function 
\begin{equation*}
G(\sigma; t )\equiv
\left(\frac{1- t^{-2} \sigma^{-4}}{1 + (\gamma^{1/2}\wedge \gamma^{-1/2}) t^{-1} \sigma^{-2}}\right)^{1/2}\, .
\end{equation*}
\begin{theorem}
\label{thm:subspace-estimation} 
Assume that conditions {\sf A0}, {\sf A1}, {\sf A2} of Theorem \ref{thm:Main0} hold. Further, let
$h_n = n^{-\eta_1}$ and $h_n^\prime = n^{-\eta_2}$ for some 
$\eta_1 \in \left(1/4, 1\right)$ and $\eta_2 \in \left(1/4, 1/3\right)$.
Consider a sequence of matrices $\bX= \bX_n \in \cF_{m, n}(r, M, \eta)$ as in 
Eq.~(\ref{eq:SVD_X}) with scaled singular values $\bSigma$
independent of $n$. 
For any $l \in [r]$ such that $\sigma_{l} \neq \sigma_{l+1}$ and $\sigma_l > \Info_W^{-1/2}$, the following holds
\begin{align}
\lim_{n\to \infty, m/n \to \gamma} 
	\sigma _{\min }\left( \widehat{\bU}_{l}^{\sT}\bU_{l}\right)
 	= G(\sigma_{l}; \Info_W) +o_{\eps,\delta}(1)\, 
\end{align}
where $\lim_{\delta\to 0}\lim_{\eps\to 0}o_{\eps,\delta}(1) =0$.
\end{theorem}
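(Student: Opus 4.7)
The plan is to reduce this to a statement about singular subspaces of a spiked rectangular matrix, for which classical BBP-type theorems apply. The starting point is the key identity established (implicitly) in the proof of Theorem~\ref{theorem:upper-bound}: the entry-wise denoised matrix admits the representation
\begin{equation*}
\hbX^{(0)}(\bY) \;=\; \Info_W\, \bX \;+\; \bN \;+\; \bR,
\end{equation*}
where $\bN$ has entries $N_{ij} \approx -p_W'(W_{ij})/p_W(W_{ij})$ (i.i.d.\ with mean zero and variance $\Info_W$), and $\bR$ is a remainder which I expect to control in operator norm as $\|\bR\|_{\op} = o((mn)^{1/4})$ for $h_n = n^{-\eta_1}$, $h_n' = n^{-\eta_2}$ in the stated ranges, after sending $\eps\to 0$. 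This comes from the Taylor expansion $\hf_Y(Y_{ij}) = \hf_Y(W_{ij}) + \hf_Y'(W_{ij}) X_{ij} + O(X_{ij}^2)$, combined with uniform (in $x$) convergence of $\hp_Y,\hp_Y'$ to $p_W,p_W'$ on compacts, the normalization $\E\{ (-p_W'/p_W)'(W)\} = \Info_W$ coming from integration by parts, and the incoherence bounds in $\cF_{m,n}(r,M,\eta)$ which make the quadratic Taylor remainder, as well as the discrepancy between $\E[\hf_Y']$ and its empirical analogue, negligible when integrated against the low-rank signal $\bX$.

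Second, I would rewrite this representation in the canonical ``spiked rectangular matrix'' form
\begin{equation*}
\frac{\hbX^{(0)}}{\sqrt{\Info_W}\,(mn)^{1/4}} \;=\; \sqrt{\Info_W}\,\bU\bSigma\bV^{\sT} \;+\; \frac{\bZ}{(mn)^{1/4}} \;+\; \frac{\bR'}{(mn)^{1/4}},
\end{equation*}
where $\bZ = \bN/\sqrt{\Info_W}$ has i.i.d.\ entries with zero mean and unit variance, so that $\|\bZ\|_{\op}/(mn)^{1/4} \to \gamma^{1/4}+\gamma^{-1/4}$ (Bai--Yin/Geman), and $\|\bR'\|_{\op}/(mn)^{1/4}\to 0$. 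This is exactly a spiked $m\times n$ model with rank-$r$ signal of singular values $\sqrt{\Info_W}\,\sigma_1\geq \cdots\geq \sqrt{\Info_W}\,\sigma_r$ and standard i.i.d.\ rectangular noise. Invoking the Benaych-Georges--Nadakuditi theorem for such models then gives, for every index $l$ with $\sqrt{\Info_W}\,\sigma_l>1$ and $\sigma_l\neq\sigma_{l+1}$: (i) the $l$-th singular value of the rescaled matrix converges to a deterministic function of $\sqrt{\Info_W}\,\sigma_l$ and $\gamma$, producing the threshold $\sigma_l > \Info_W^{-1/2}$; and (ii) the squared cosine between $\hat{\bu}_l$ and $\bu_l$ converges to
\begin{equation*}
\frac{1 - (\sqrt{\Info_W}\,\sigma_l)^{-4}}{1 + (\gamma^{1/2}\wedge\gamma^{-1/2})(\sqrt{\Info_W}\,\sigma_l)^{-2}} \;=\; G(\sigma_l;\Info_W)^2.
\end{equation*}
The gap condition $\sigma_l\neq \sigma_{l+1}$ gives a spectral gap in the limit so that Wedin-type perturbation arguments separate the top-$l$ singular subspace from the rest, allowing the subspace overlap $\sigma_{\min}(\hbU_l^{\sT}\bU_l)$ to be identified with the worst cosine among the first $l$ individual singular vectors, which equals $G(\sigma_l;\Info_W)$ since $G(\cdot\,;\Info_W)$ is monotone increasing.

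Finally, the perturbation from $\bR'$ is handled by a Weyl/Wedin bound: any spurious contribution to singular vectors is at most of order $\|\bR'\|_{\op}/\mathrm{gap}$, and since $\|\bR'\|_{\op}/(mn)^{1/4}\to 0$ while the relevant asymptotic spectral gap between the $l$-th surfaced spike and the bulk of $\bZ/(mn)^{1/4}$ is bounded away from zero (under $\sigma_l>\Info_W^{-1/2}$ and $\sigma_l\neq\sigma_{l+1}$), the conclusion transfers from the idealized spiked model to $\hbX^{(0)}$. The main obstacle is the first step: rigorously showing $\|\bR\|_{\op}=o((mn)^{1/4})$. The score function $-p_W'/p_W$ need not be bounded, and the Taylor expansion involves second derivatives of $\hf_Y$, so one must simultaneously control the bias of the shifted kernel estimator (using the centering $\oY$ to absorb the offset between $W_{ij}$ and $Y_{ij}$ up to $O(\max_{ij}|X_{ij}|^2)$ error, which is small by the $\ell_2\to\ell_\infty$ bounds in $\cF_{m,n}(r,M,\eta)$), its variance (controlled by the kernel bandwidths with $h_n,h_n'\gg n^{-1/2}$), and entry-wise outliers of $\hf_Y(Y_{ij})$ (controlled by the $\eps$-regularization). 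These estimates parallel those needed for Theorem~\ref{theorem:upper-bound} and can be imported from that proof, with the additional observation that only projections onto the fixed low-dimensional column/row spaces of $\bU,\bV$ matter for the subspace-alignment conclusion, giving extra slack through a rank-$r$ reduction.
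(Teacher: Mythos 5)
Your overall architecture mirrors the paper's: you first invoke the decomposition $\hbX^{(0)}(\bY)=\Info_W\bX+\sqrt{\Info_W}\,\bZ+\bDelta$ with $\|\bDelta\|_{\op}=o((mn)^{1/4})$ (this is exactly the paper's Theorem \ref{proposition:upper-bound-proposition}), then analyze the resulting spiked rectangular model, and finally transfer the conclusion to $\hbU_l$ by a Davis--Kahan/Wedin perturbation step, which is also how the paper concludes. The genuine gap is in the middle step: you invoke the Benaych-Georges--Nadakuditi theorem directly, but its hypotheses are not satisfied in this setting. Here both $\bU=\bU_n$ and $\bV=\bV_n$ are \emph{deterministic} matrices (arbitrary incoherent elements of $\cF_{m,n}(r,M,\eta)$), and the noise $\bZ$ (entries given by the score function of $W$) is non-Gaussian, hence not bi-unitarily invariant. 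BGN-type results require either that one of the singular-vector matrices be random and independent of the noise, or that the noise be invariant so that one can randomize $\bU,\bV$ by rotation; neither holds here. This is precisely the obstruction the paper flags in the remarks after Theorem \ref{theorem:general-deform}, and it is why the paper proves its own deterministic-vector version of the BBP phenomenon (following Ding/Knowles--Yin, with an independent moment-method proof of the resolvent estimates such as $\langle \bu_n,(z^2\id_m-\bZ_n\bZ_n^{\sT})^{-1}\bZ_n\bv_n\rangle\to 0$ for fixed $\bu_n,\bv_n$). Without such a result, the convergence of $\langle\hat{\bu}_i,\bu_j\rangle$ to $G(\sigma_i;\Info_W)\,\mathbf{1}\{i=j\}$ that your argument rests on is unproven, so your plan as written would fail at its central step unless you supply (or cite) an anisotropic/deterministic-direction analogue of BGN.

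A second, smaller gap: the theorem only assumes $\sigma_l\neq\sigma_{l+1}$ and $\sigma_l>\Info_W^{-1/2}$, not that $\sigma_1>\dots>\sigma_l$ are pairwise distinct. Your identification of $\sigma_{\min}(\hbU_l^{\sT}\bU_l)$ with the worst cosine among the first $l$ \emph{individual} singular-vector pairs tacitly requires distinctness of all top-$l$ values, since otherwise there is no one-to-one correspondence of singular vectors. The paper handles the degenerate case by an explicit perturbation trick (replace $\sigma_i$ by $\sigma_i+\pert_i$ with distinct values, apply the nondegenerate case, and control $\|\hbU_l(\pert)\hbU_l(\pert)^{\sT}-\hbU_l\hbU_l^{\sT}\|_{\op}$ via Davis--Kahan as $\pert\to 0$); your proposal needs either this reduction or a direct subspace-level argument to cover coinciding singular values among $\sigma_1,\dots,\sigma_l$.
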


Note that this theorem implies the following phase transition phenomenon.
\begin{corollary}
Under the assumptions of Theorem \ref{thm:subspace-estimation} 
\begin{align}
\sigma_{\min}^2\left(\hbU_k^{\sT}\bU_k\right)& = o_P(1)\, , ;\;\;\;\;\;\;\;\;\;\;\;\;\mbox{ if $\sigma_k<1/\sqrt{\Info_W}$,}\\
\sigma_{\min}^2\left(\hbU_k^{\sT} \bU_k\right)& \ge \eps_0+o_P(1)\, , \;\;\;\;\;\;\mbox{ if $\sigma_k>1/\sqrt{\Info_W}$, $\sigma_k>\sigma_{k+1}$,}
\end{align} 
where $\eps_0>0$ is a constant depending on $\sigma_k$, $\sigma_{k+1}$, $p_W$.
\end{corollary}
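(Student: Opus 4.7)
The plan is to split into two cases according to whether $\sigma_k$ lies above or below the critical value $\Info_W^{-1/2}$. In the supercritical regime I would apply Theorem~\ref{thm:subspace-estimation} directly, while in the subcritical regime I would invoke the non-Gaussian BBP phase transition for the entrywise-denoised matrix $\hbX^{(0)}$, whose structure is already established in the proof of Theorem~\ref{theorem:upper-bound}.

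Suppose first that $\sigma_k > \Info_W^{-1/2}$ and $\sigma_k > \sigma_{k+1}$. Then $\Info_W^{-2}\sigma_k^{-4} < 1$, so the numerator of $G(\sigma_k; \Info_W)^2$ is strictly positive while the denominator is bounded; hence $c := G(\sigma_k; \Info_W)$ is a strictly positive constant depending only on $\sigma_k$ and $\Info_W$. Applying Theorem~\ref{thm:subspace-estimation} with $l = k$ gives $\sigma_{\min}(\hbU_k^{\sT}\bU_k) \to c + o_{\eps,\delta}(1)$; choosing $\eps, \delta$ small enough that $|o_{\eps,\delta}(1)| < c/2$ and squaring yields $\sigma_{\min}^2(\hbU_k^{\sT}\bU_k) \ge c^2/4 + o_P(1)$, which is the desired lower bound with $\eps_0 := c^2/4$.

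Suppose instead $\sigma_k < \Info_W^{-1/2}$. By the Taylor expansion argument (\ref{eq:TayolorExpansion}) applied to the optimal denoiser $f_W$ of (\ref{eqn:def-score-information}), the entrywise denoised matrix admits the decomposition $\hbX^{(0)} = \bX + f_W(\bW) + \bR$, where $f_W(\bW)$ has i.i.d.\ entries of mean $0$ and variance $1/\Info_W$ and $\|\bR\|_{\op} = o_P((m\vee n)^{1/2})$; this is precisely the structure driving (\ref{eqn:upper-bound-0}) in Theorem~\ref{theorem:upper-bound}. Viewing $\hbX^{(0)}$ as a rank-$r$ spiked additive-noise model with noise variance $1/\Info_W$ per entry, the non-Gaussian BBP transition \cite{BBAP05,BS06} places the critical threshold for the $k$-th spike at $\sigma_k = \Info_W^{-1/2}$; below it, the $k$-th left singular vector $\hbu_k$ of $\hbX^{(0)}$ satisfies $\|\bU^{\sT}\hbu_k\|_2 = o_P(1)$. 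Since the last row of the $k \times k$ matrix $\hbU_k^{\sT}\bU_k$ equals $\hbu_k^{\sT}\bU_k$, and $\|\hbu_k^{\sT}\bU_k\|_2 \le \|\bU^{\sT}\hbu_k\|_2$, we conclude $\sigma_{\min}(\hbU_k^{\sT}\bU_k) = o_P(1)$, and hence its square is $o_P(1)$.

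The main obstacle is the subcritical case: Theorem~\ref{thm:subspace-estimation} as stated excludes the regime $\sigma_k \le \Info_W^{-1/2}$, so one must reach back into the non-Gaussian BBP analysis for $\hbX^{(0)}$. Two ingredients are needed: uniform operator-norm control of the remainder $\bR$ over $\cF_{m,n}(r,M,\eta)$, despite the data-dependent kernel denoiser (a core step of the paper, carried out in Section~\ref{sec:Techniques_UB}); and the subcritical decorrelation statement $\bU^{\sT}\hbu_k = o_P(1)$ for additive noise with i.i.d.\ entries of finite variance $1/\Info_W$, which reduces to standard Benaych-Georges--Nadakuditi type results once the variance and moment conditions on $f_W(W)$ are verified from assumptions {\sf A0}--{\sf A2}. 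With these in hand, the argument above assembles the corollary.
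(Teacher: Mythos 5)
Your treatment of the supercritical case is correct and is exactly how the corollary is meant to follow from the paper: Theorem \ref{thm:subspace-estimation} with $l=k$ gives $\sigma_{\min}(\hbU_k^{\sT}\bU_k)\to G(\sigma_k;\Info_W)+o_{\eps,\delta}(1)$ with $G(\sigma_k;\Info_W)>0$ whenever $\sigma_k>\Info_W^{-1/2}$, and squaring yields the stated lower bound. The row bound $\sigma_{\min}(\hbU_k^{\sT}\bU_k)\le \|\hbu_k^{\sT}\bU_k\|_2\le\|\bU^{\sT}\hbu_k\|_2$ that you use in the subcritical case is also fine.

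The gap is in how you supply the subcritical decorrelation $\|\bU^{\sT}\hbu_k\|_2=o_P(1)$. You outsource it to \cite{BBAP05,BS06} and ``standard Benaych-Georges--Nadakuditi type results,'' but \cite{BS06} concerns singular/eigenvalues only, and the BGN-type singular-vector results \cite{BGN12} require $\bU_n$ or $\bV_n$ to be random and independent of the noise (or bi-unitarily invariant, i.e.\ Gaussian, noise). The paper's remark after Theorem \ref{theorem:general-deform} stresses that precisely this assumption fails here --- $\bU_n,\bV_n$ are deterministic and the denoised noise is non-Gaussian --- which is why the paper proves its own Theorem \ref{theorem:general-deform} by a moment method; but that theorem states overlap limits only for supercritical indices $i$ with $\sigma_i>1$ and contains no statement about subcritical spikes or bulk singular vectors. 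So the key input of your subcritical argument is neither citable off the shelf in this setting nor available from the paper's RMT result; it would require an anisotropic-local-law type argument (as in \cite{Ding17}) or an extension of the paper's moment computation. A second, related issue: even granting decorrelation for the idealized matrix $\Info_W\bX+\sqrt{\Info_W}\bZ$, you must transfer it to $\hbX^{(0)}=\Info_W\bX+\sqrt{\Info_W}\bZ+\bDelta$ with only $\|\bDelta\|_{\op}=o_P((mn)^{1/4})$; since $\hbu_k$ is not separated from the bulk there is no singular-value gap, Davis--Kahan (the tool used throughout the paper) does not apply to this individual vector, and bulk singular vectors are not stable under such perturbations, so this step needs a separate argument you do not give. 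Finally, a minor point: assumptions {\sf A0}--{\sf A2} do not control higher moments of $f_W(W)$ (the score may be unbounded); you should phrase the decomposition through Theorem \ref{proposition:upper-bound-proposition}, whose $\bZ$ has entries bounded by $C\eps^{-1}$, rather than through the unregularized $f_W$.
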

In words, we will say the estimator $\hbU_k$ achieves weak recovery of $\bU_k$
for $\sigma_k>1/\sqrt{\Info_W}$. Obviously, the same threshold implies weak recovery of $\bV_k$. 
The arguments of \cite{krzakala2016mutual,perry2016optimality} imply that no estimator (even using knowledge of $p_W$)
can achieve weak recovery below the same threshold. 
We established that the information-theoretically  optimal weak recovery threshold can be achieved \emph{adaptively} with respect to the
noise distribution $p_W$.

\section{Numerical illustration}
\label{sec:Numerical}

 In this section we conduct experiments to illustrate the performances of our algorithm.
\subsection{Model setting}
We generate  observations $\bY \in \reals^{m\times n}$ according to the model (\ref{def:model}), i.e. $\bY = \bX+\bW$. 
For the signal matrix $\bX$, we let $\bX =(mn)^{1/4} \bU\, \bSigma\, \bV^{\sT}$,
where $\bSigma = \diag(\sigma_1, \ldots, \sigma_r)$ is diagonal and $\bU \in \reals^{m\times r}$ and $\bV \in \reals^{n \times r}$ are independent uniformly random 
orthogonal matrices.

For the noise matrix $\bW$, we let $\bW = (W_{ij})_{i\le m, j\le n}$
with i.i.d. entries $W_{ij} \sim p_W$. We choose $p_W$ to be the a mixture of the  Gaussian distributions $\normal(-\mu, 1)$ and $\normal(\mu, 1)$, whose density is
\begin{align}
\label{eqn:experiment-distribution}
p_W(x) = \frac{1}{2} \left( \frac{1}{\sqrt{2 \pi}} e^{-\frac{(x - \mu)^2}{2}} + \frac{1}{\sqrt{2 \pi}} e^{-\frac{(x + \mu)^2}{2}} \right). 
\end{align}

\paragraph{Parameter choice}
For the data generating process, we choose
\begin{equation*}
r \in \{1,3\}, \,\, m=n \in \{200, 400, 800\},  \,\, \mu=2.
\end{equation*}
Under this choice of $\mu$,
\begin{equation*}
\Var(W_{11}) = 5, \,\,\,\, \Info_W \approx 0.7256.
\end{equation*}
For our algorithm, we let 
\begin{equation*}
h_n = 1.2\,(mn)^{-1/5}, \,\, h_n^{\prime} = (mn)^{-1/7}, \,\,\varepsilon = 0.001,\,\, \delta = 0.01 .
\end{equation*}
In the case $r = 1$, we let $$\sigma_1 \in \{0.2, 0.4, 0.6, \ldots, 4 \}$$
In the case $r = 3$, we let $$\sigma_1 \in \{0.2, 0.4, 0.6, \ldots, 4 \}, \,\, \sigma_2=0.8 \sigma_1, \,\,
\sigma_3=0.6 \sigma_1.$$
For each choice of $r,n$ and $\bSigma$, we generate $50$ instances of the matrices $\bX$ and $\bY$, and apply our estimation algorithm to
compute the empirical average of corresponding accuracy metrics.


\subsection{Singular space recovery}

We first consider the accuracy in recovering the left singular subspace (by symmetry, there is no loss of generality in considering the left, rather than the
right singular subspace). We will compare our approach to classical principal component analysis, which does not denoise the matrix $\bY$, and
instead computes directly its singular value decomposition. 
Let the singular value decomposition of $\bY$ be
\begin{equation*}
\bY = \overline{\bU} \, \overline{\bSigma}^{(0)} \, (\overline{\bV})^{\sT}.
\end{equation*}
As usual, we denote by $\overline{\bU}_k\in\reals^{m\times k}$ the sub-matrix containing the top $k$ left singular vectors of $\bY$,
and by $\obV_k\in\reals^{n\times k}$ the sub-matrix containing the top $k$ right singular vectors.

We compare the two approaches by computing $\sigma_{\min} ( \hbU_{i}^{\sT}\bU_{i} )$ and 
$\sigma_{\min} ( \obU_{i}^{\sT}\bU_{i}) $ for $1 \leq i \leq r$. Note that Theorem \ref{thm:subspace-estimation}
predicts the limit of $\sigma_{\min} ( \hbU_{i}^{\sT}\bU_{i} )$ to be given by  $G(\sigma _{i}; \Info_W)$ (by taking $n \to \infty$, then $\eps \to 0$, then $\delta \to 0$).
Analogously,  Theorem \ref{theorem:general-deform} predicts the limit of  $\sigma_{\min} ( \obU_{i}^{\sT}\bU_{i})$ to be given by
$G(\sigma _{i}; \Var(W_{11})^{-1})$.
\begin{figure}
	\centering
	\begin{subfigure}{} 
		\includegraphics[width=0.53\textwidth]{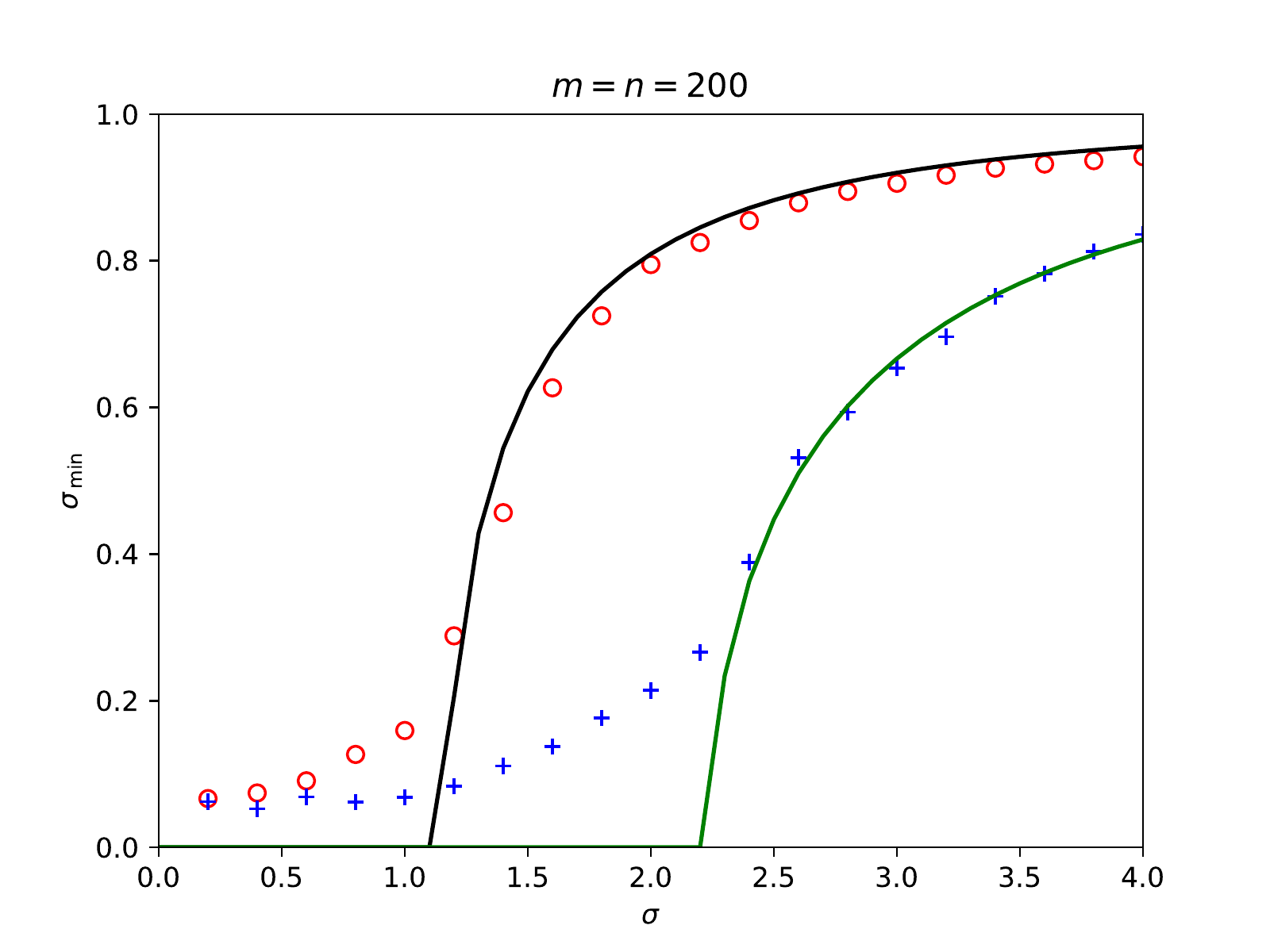}
	\end{subfigure}
	\begin{subfigure}{} 
		\includegraphics[width=0.53\textwidth]{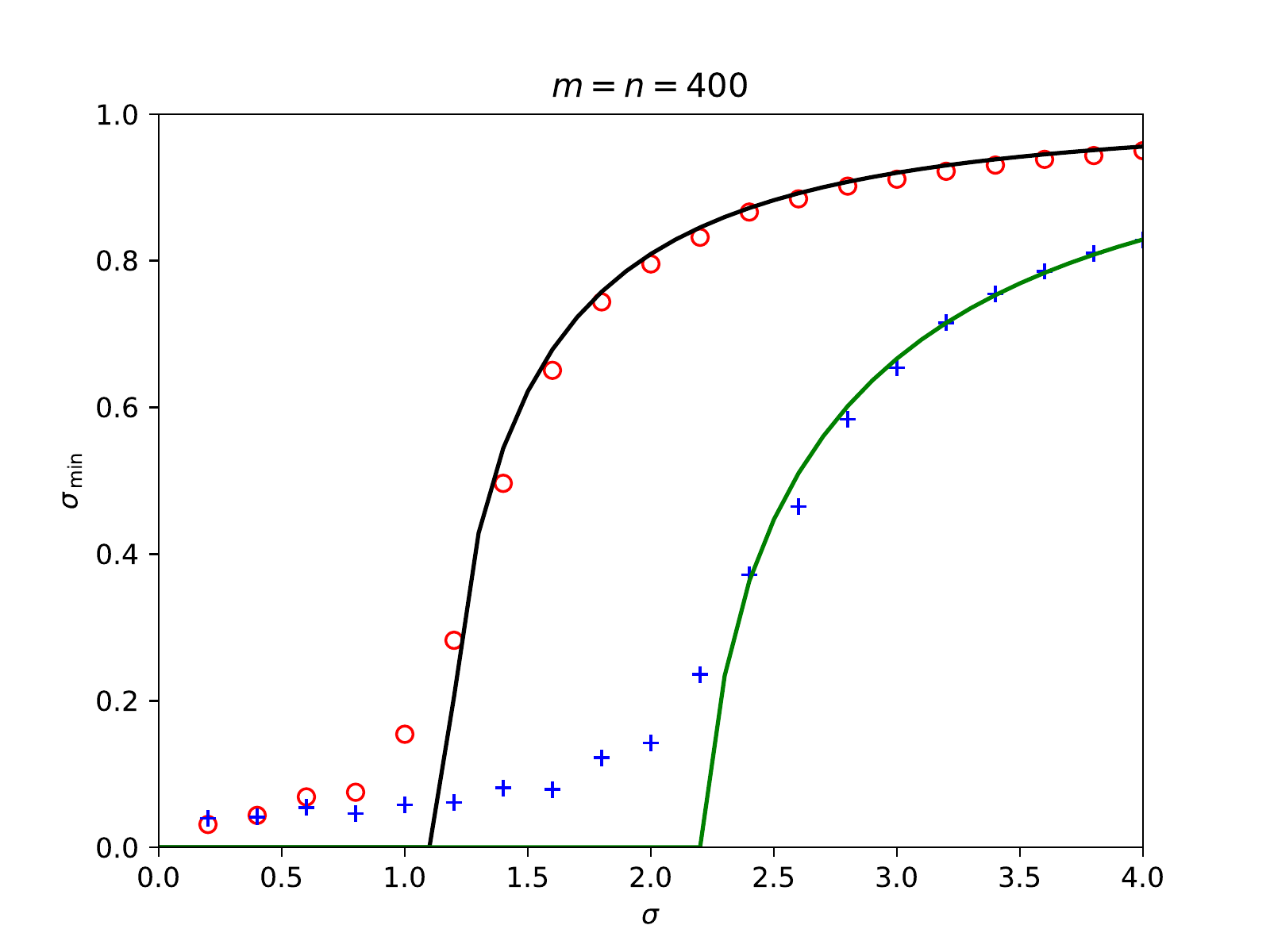}
	\end{subfigure}
	\begin{subfigure}{} 
		\includegraphics[width=0.53\textwidth]{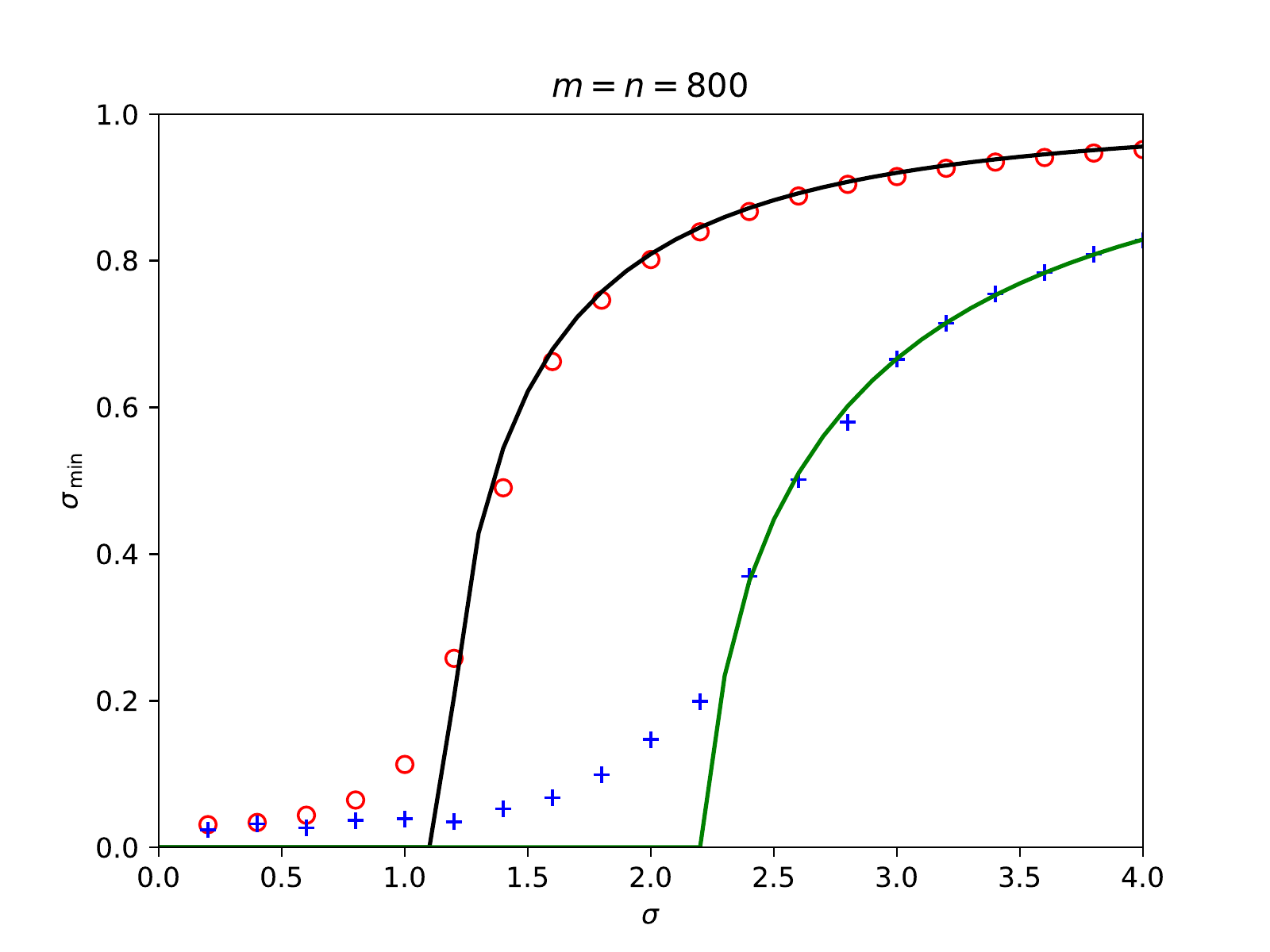}
	\end{subfigure}
	\caption{Singular space recovery in the case $r=1$. The three plots refer to $m=n\in\{200,400,800\} $ respectively. Red circles are the average of $\sigma_{\min} ( \hbU_{1}^{\sT}\bU_{1} )$ over $50$ realizations, and black curve is its asymptotic prediction, given by $G(\sigma_1; \Info_W)$. Blue `$+$' are the average of $\sigma_{\min} ( \obU_{1}^{\sT}\bU_{1})$ over $50$ realizations, and green curve is its asymptotic prediction, given by $G(\sigma_1; \Var(W_{11})^{-1})$.}\label{fig:Ucorr1}
\end{figure}

\begin{figure}
	\centering
	\begin{subfigure}{} 
		\includegraphics[width=0.53\textwidth]{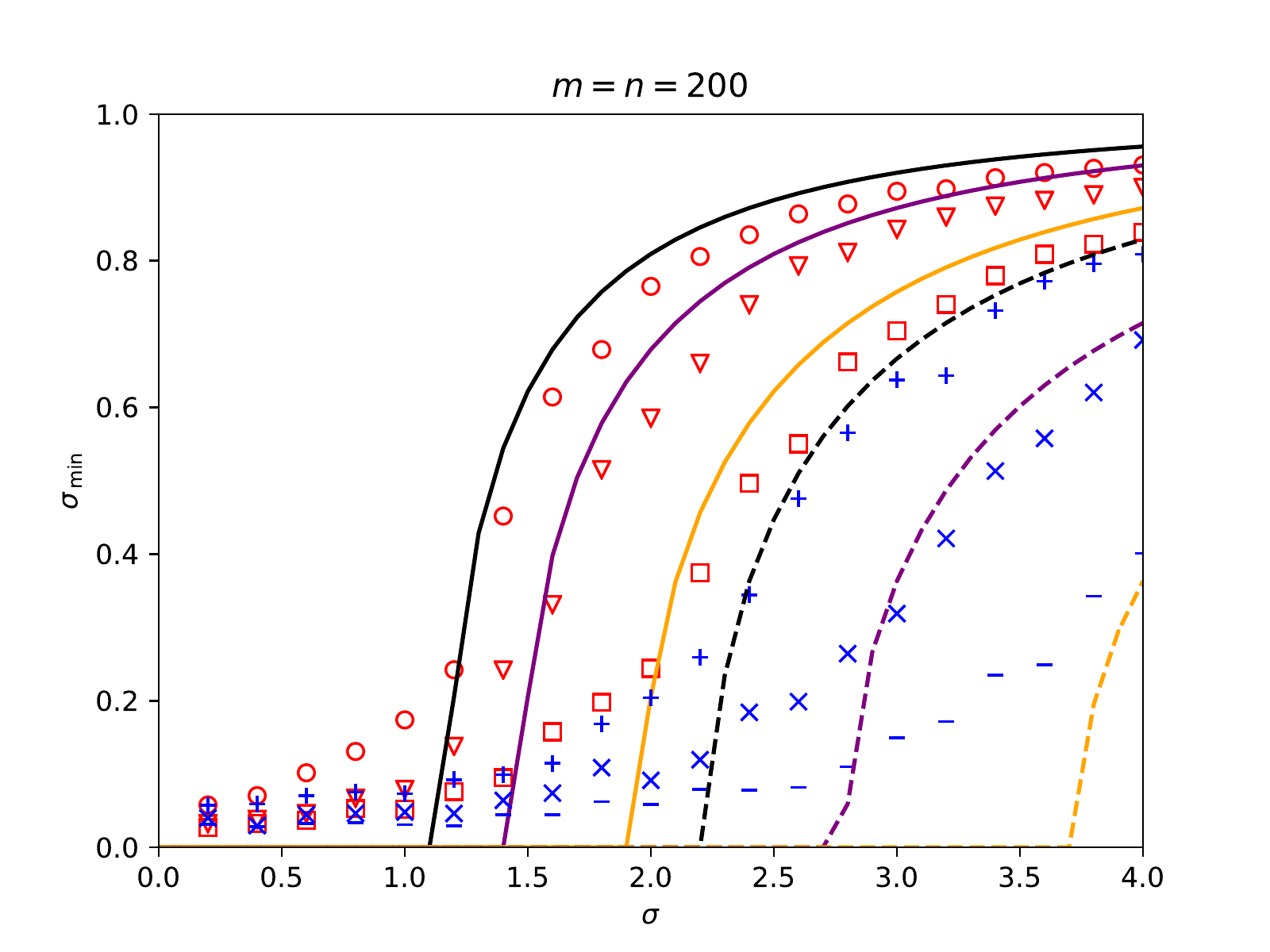}
	\end{subfigure}
	\begin{subfigure}{} 
		\includegraphics[width=0.53\textwidth]{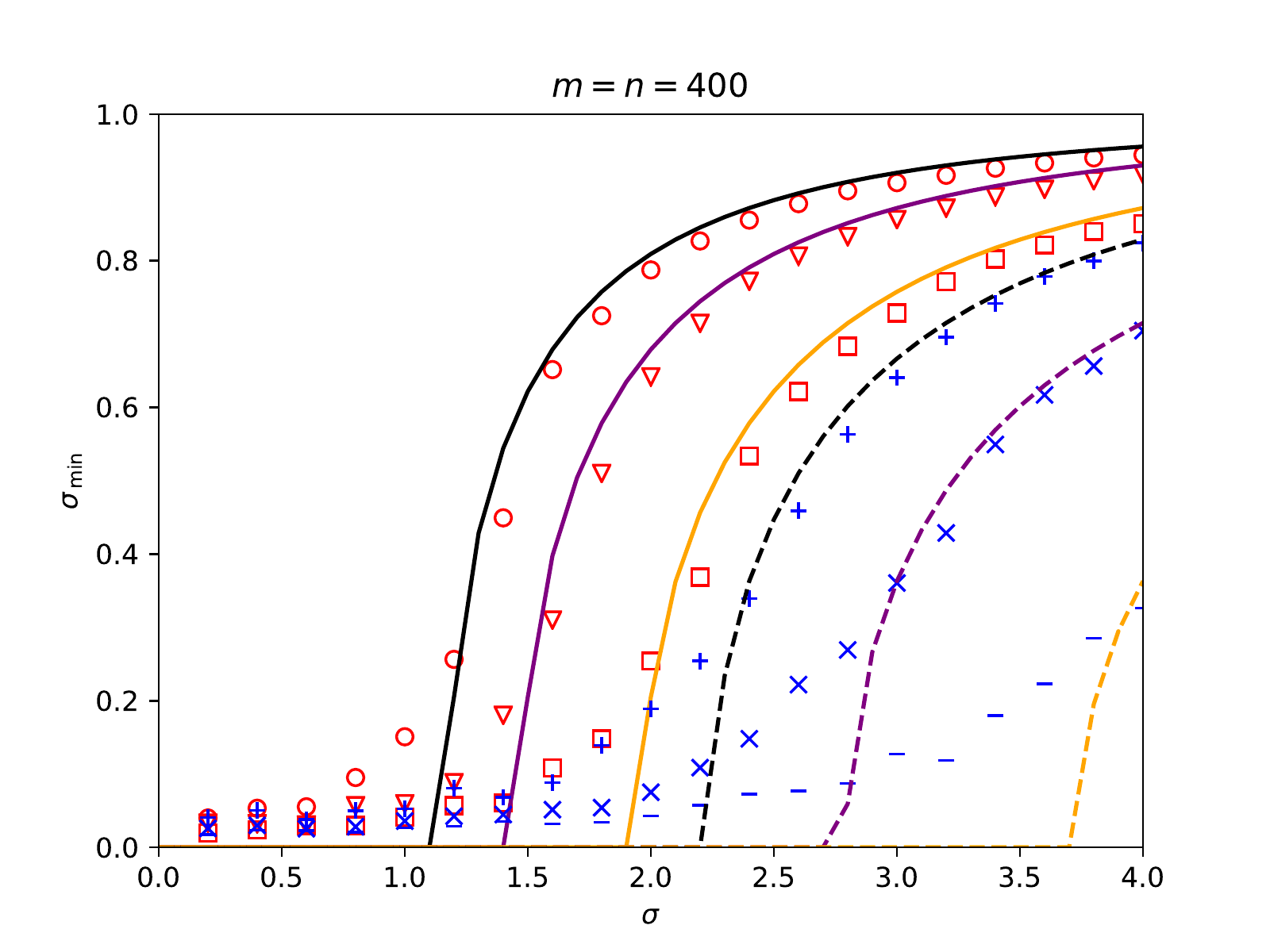}
	\end{subfigure}
	\begin{subfigure}{} 
		\includegraphics[width=0.53\textwidth]{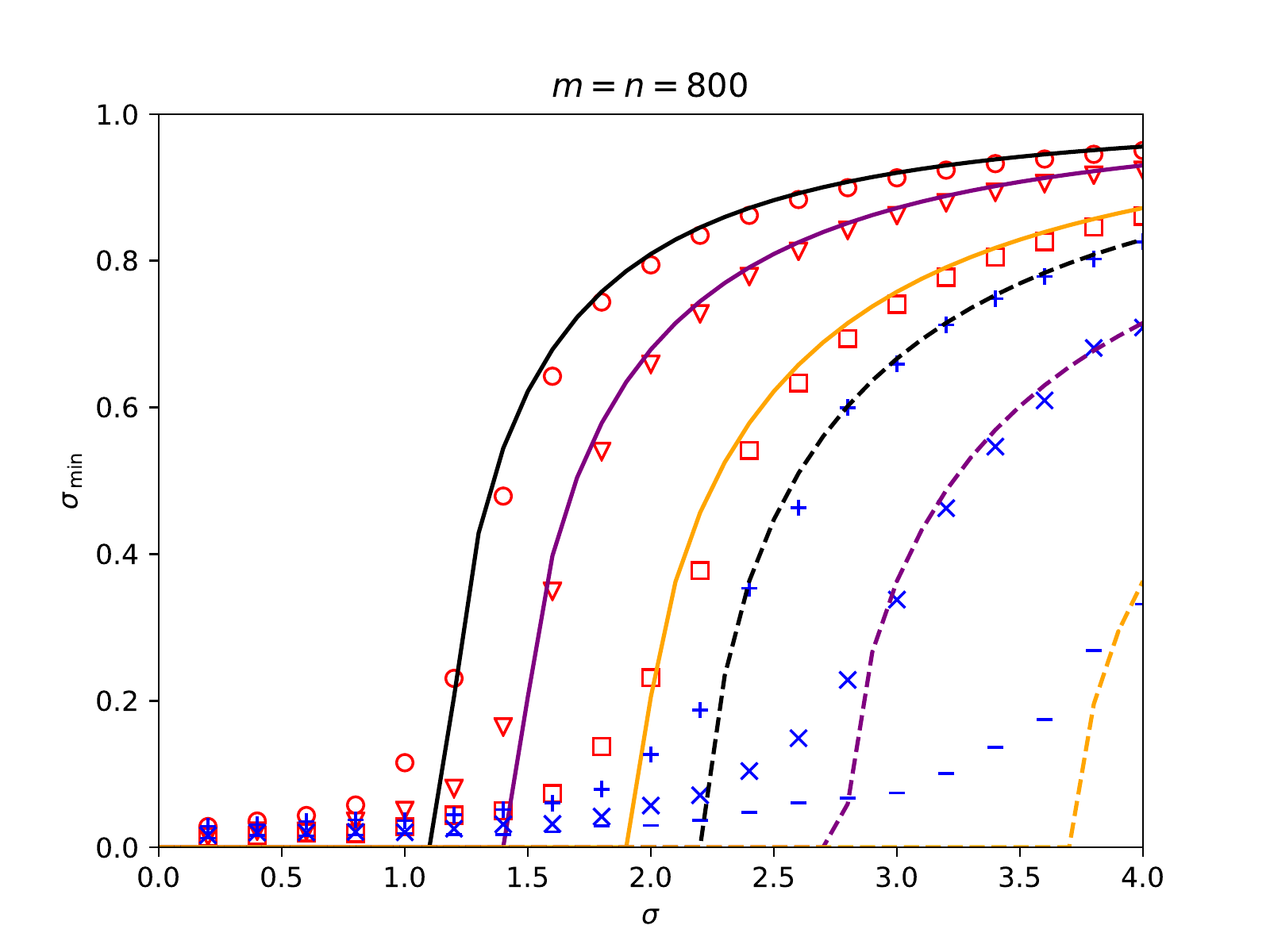}
	\end{subfigure}
	\caption{Singular space recovery in the case $r=3$. The three plots refer to $m=n\in\{200,400,800\}$ respectively. Red circles, triangles and squares are the averages of $\sigma_{\min} ( \hbU_{i}^{\sT}\bU_{i} )$ over $50$ realizations for $i=1,2,3$ respectively; black, purple and orange solid curves are their asymptotic predictions, given by $G(\sigma_i; \Info_W)$. 
Blue `$+$', `$x$' and `$-$' are the average of $\sigma_{\min} ( \obU_{i}^{\sT}\bU_{i})$ over $50$ realizations for $i=1,2,3$ respectively;  black, purple and orange dotted curves are their asymptotic predictions, given by $G(\sigma_i; \Var(W_{11})^{-1})$.}\label{fig:Ucorr2}
\end{figure}

The results of this computation are reported in Figure \ref{fig:Ucorr1} (for $r=1$) and Figure \ref{fig:Ucorr2} (for $r=3$). The present approach outperforms substantially standard PCA, and
its behavior is well captured by the asymptotic theory.

\subsection{Signal estimation}

We next consider the problem of estimating the matrix $\bX$. As before, we compare
our estimator $\hbX(\bY)$ to  a simpler one, denoted by $\obX(\bY; \delta)$, which does not make use of the denoising step,
and is defined below (this approach was originally proposed in \cite{shabalin2013reconstruction}):
\begin{enumerate}
\item Construct the diagonal matrix $\overline{\bSigma}$ with entries
\begin{equation}
\overline{\Sigma}_{i,i} = \begin{cases}
\sigma
	H^{-1}({\sigma}^{-1}\overline{\Sigma}_{i,i}) & 
		\mbox{ if $\overline{\Sigma}_{i,i}\ge (1+\delta)H(1)\sigma$,}\\
0& \mbox{ otherwise.}
\end{cases}
\end{equation}
\item Return
\begin{align}
\overline{\bX}(\bY;\delta)  = (mn)^{1/4}\overline{\bU}\, \overline{\bSigma}\, \overline{\bV}^{\sT}\, .
\end{align}
\end{enumerate}

\begin{figure}
	\centering
	\begin{subfigure}{} 
		\includegraphics[width=0.53\textwidth]{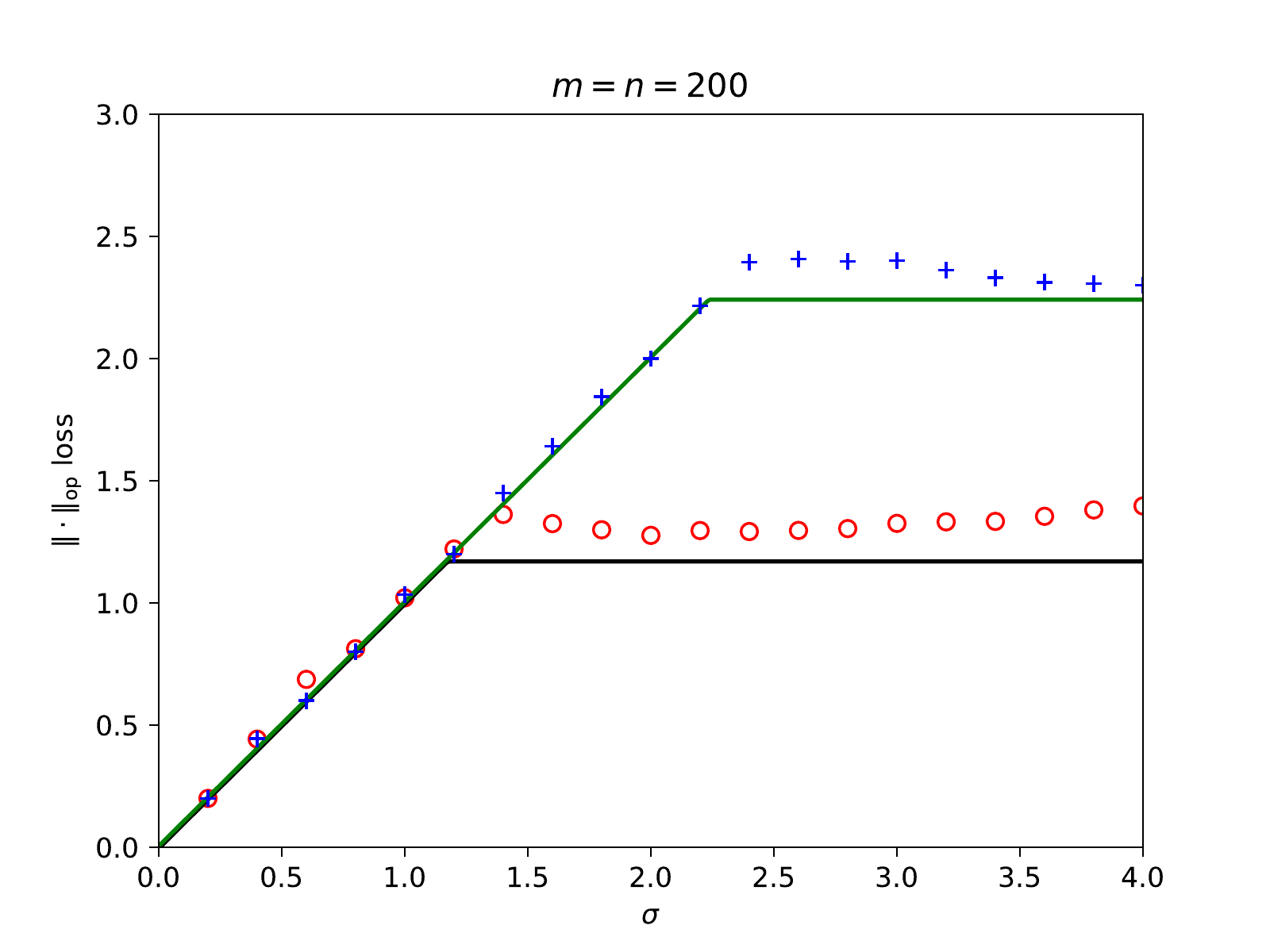}
	\end{subfigure}
	\begin{subfigure}{} 
		\includegraphics[width=0.53\textwidth]{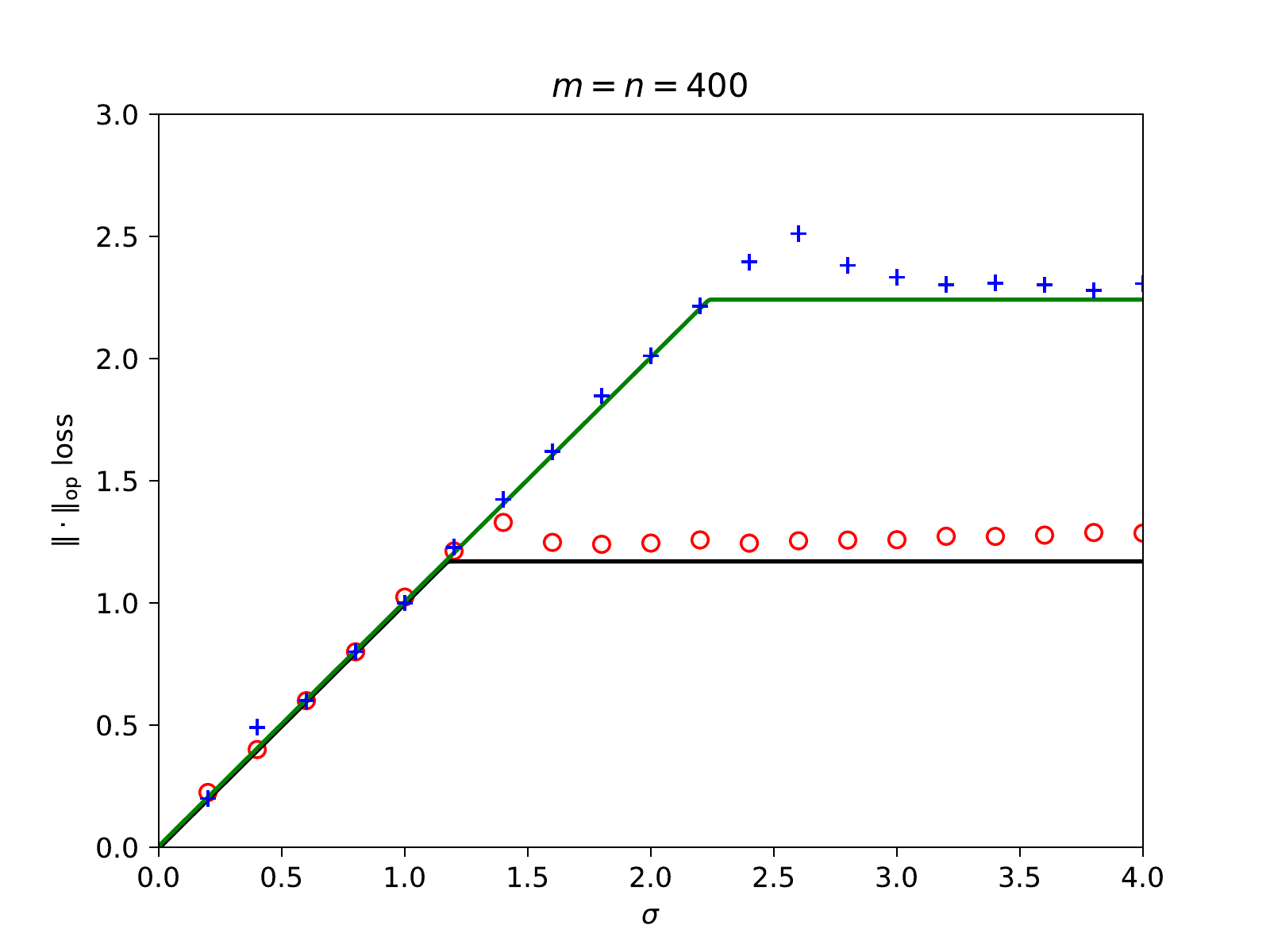}
	\end{subfigure}
	\begin{subfigure}{} 
		\includegraphics[width=0.53\textwidth]{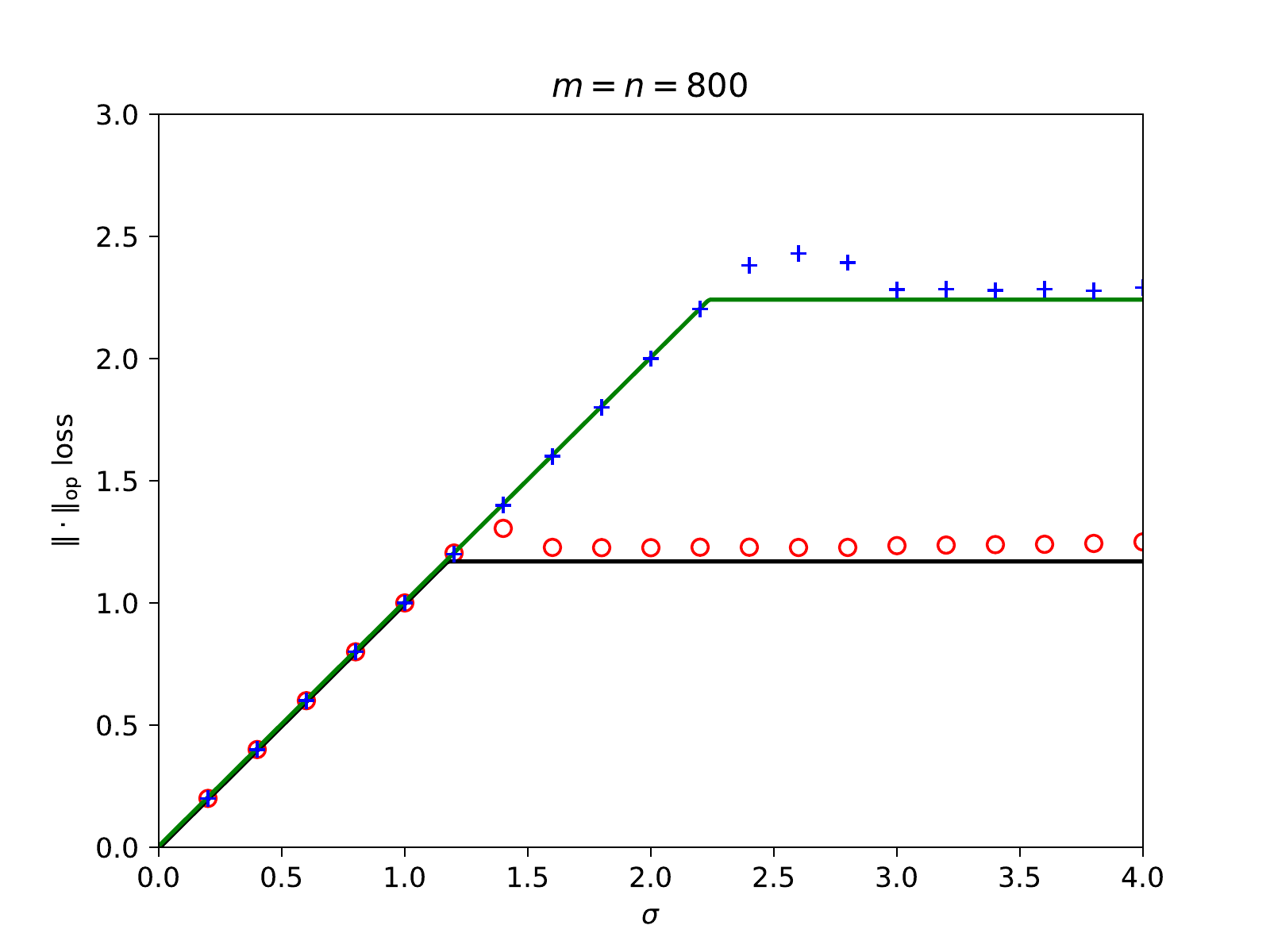}
	\end{subfigure}
	\caption{Matrix denoising under the operator norm loss in the case $r=1$. The three plots refer to $m=n\in\{200,400,800\}$ respectively. Red circles are  averages of $(nm)^{-1/4} \|\widehat{\bX}(\bY)-\bX\|_{\op}$ over 50 realizations, and blue curve is its asymptotic prediction, given by $\sigma \wedge \Info_W^{-1/2}$. Blue `$+$' are  averages of $(nm)^{-1/4} \|\overline{\bX}(\bY;\delta)-\bX\|_{\op}$ over 50 realizations, and green curve is its asymptotic prediction, given by $\sigma \wedge \Var (W_{11})^{-1/2}$.}
\end{figure}
\begin{figure}
	\centering
	\begin{subfigure}{} 
		\includegraphics[width=0.53\textwidth]{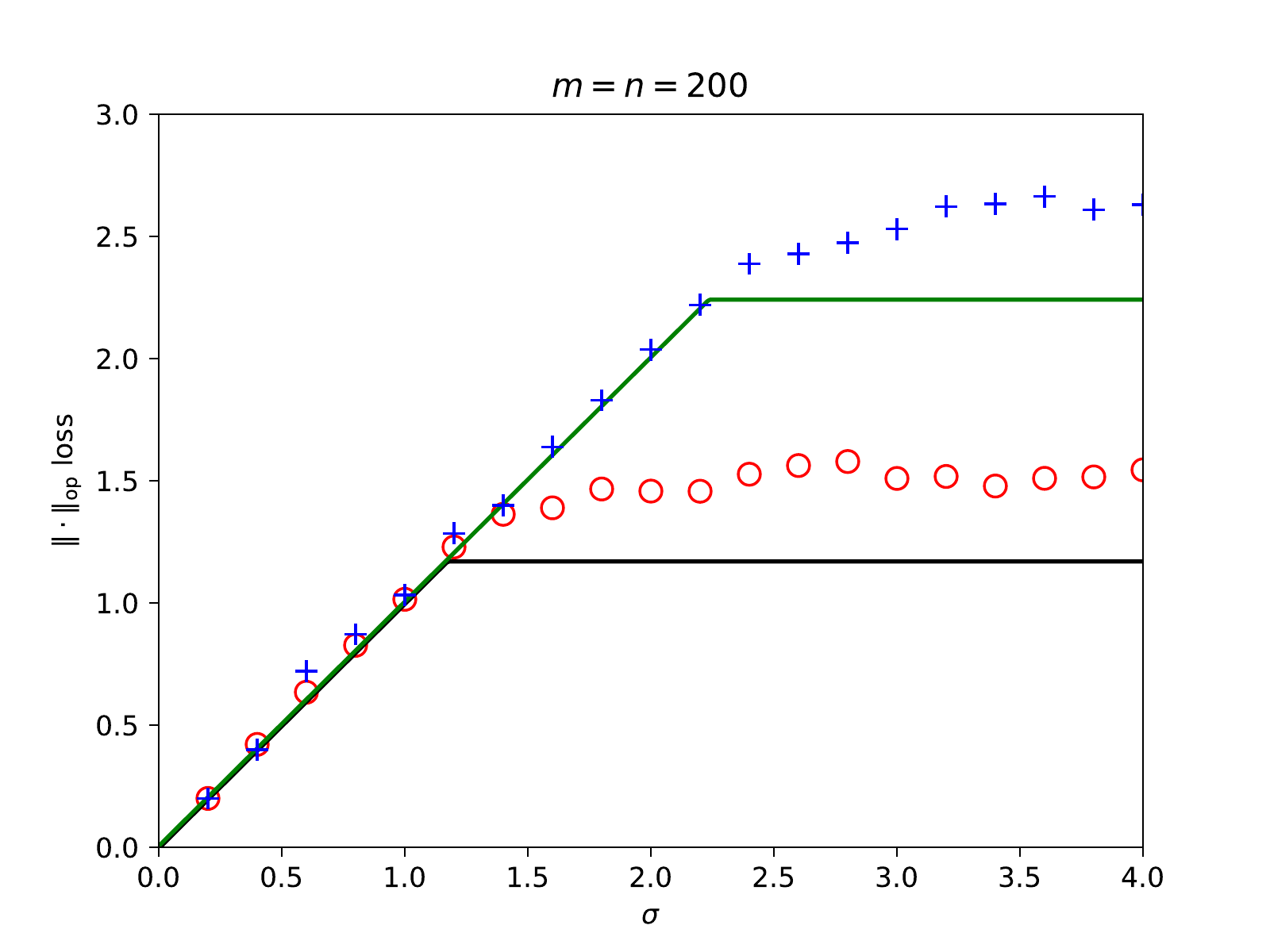}
	\end{subfigure}
	\begin{subfigure}{} 
		\includegraphics[width=0.53\textwidth]{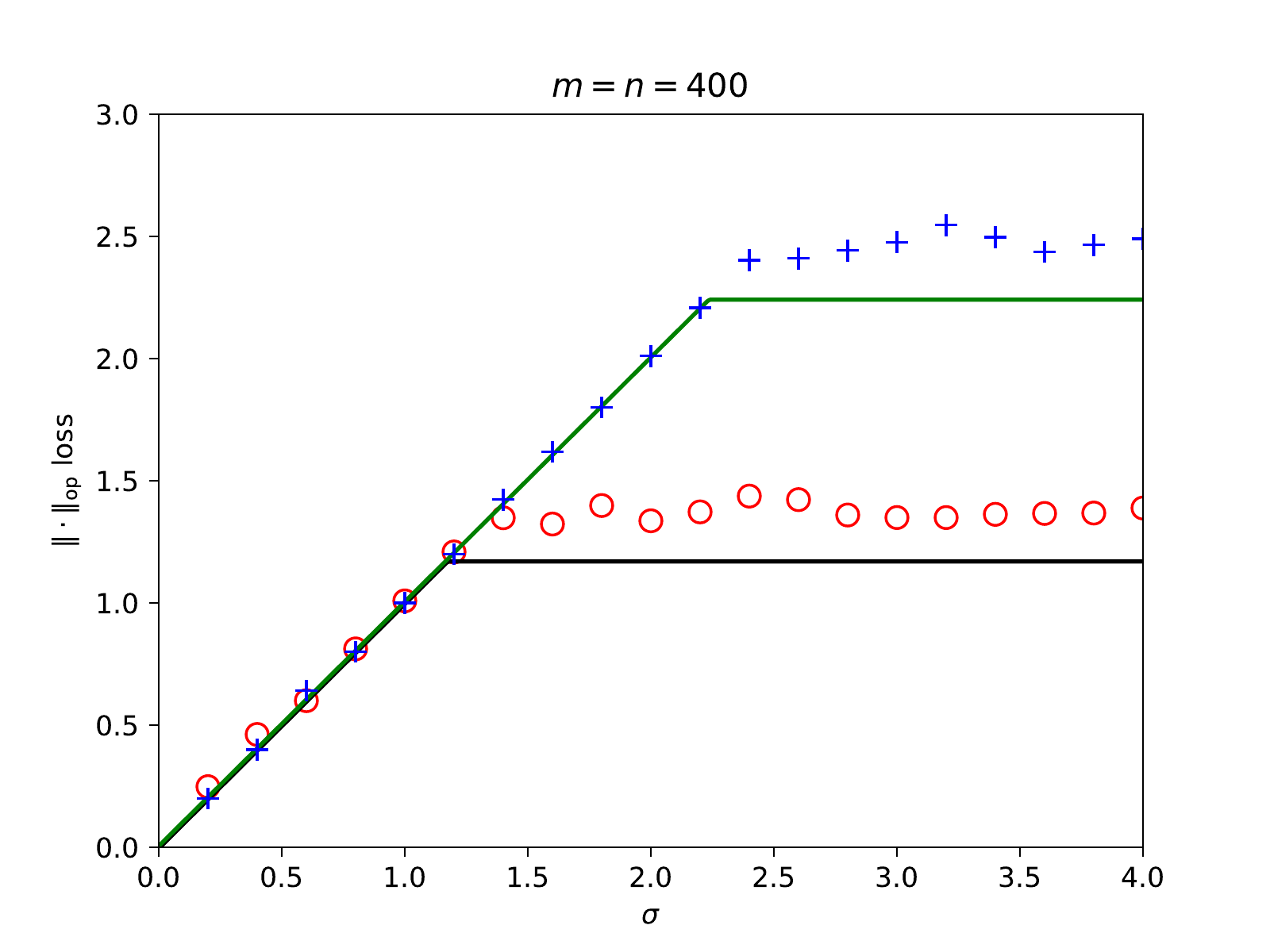}
	\end{subfigure}
	\begin{subfigure}{} 
		\includegraphics[width=0.53\textwidth]{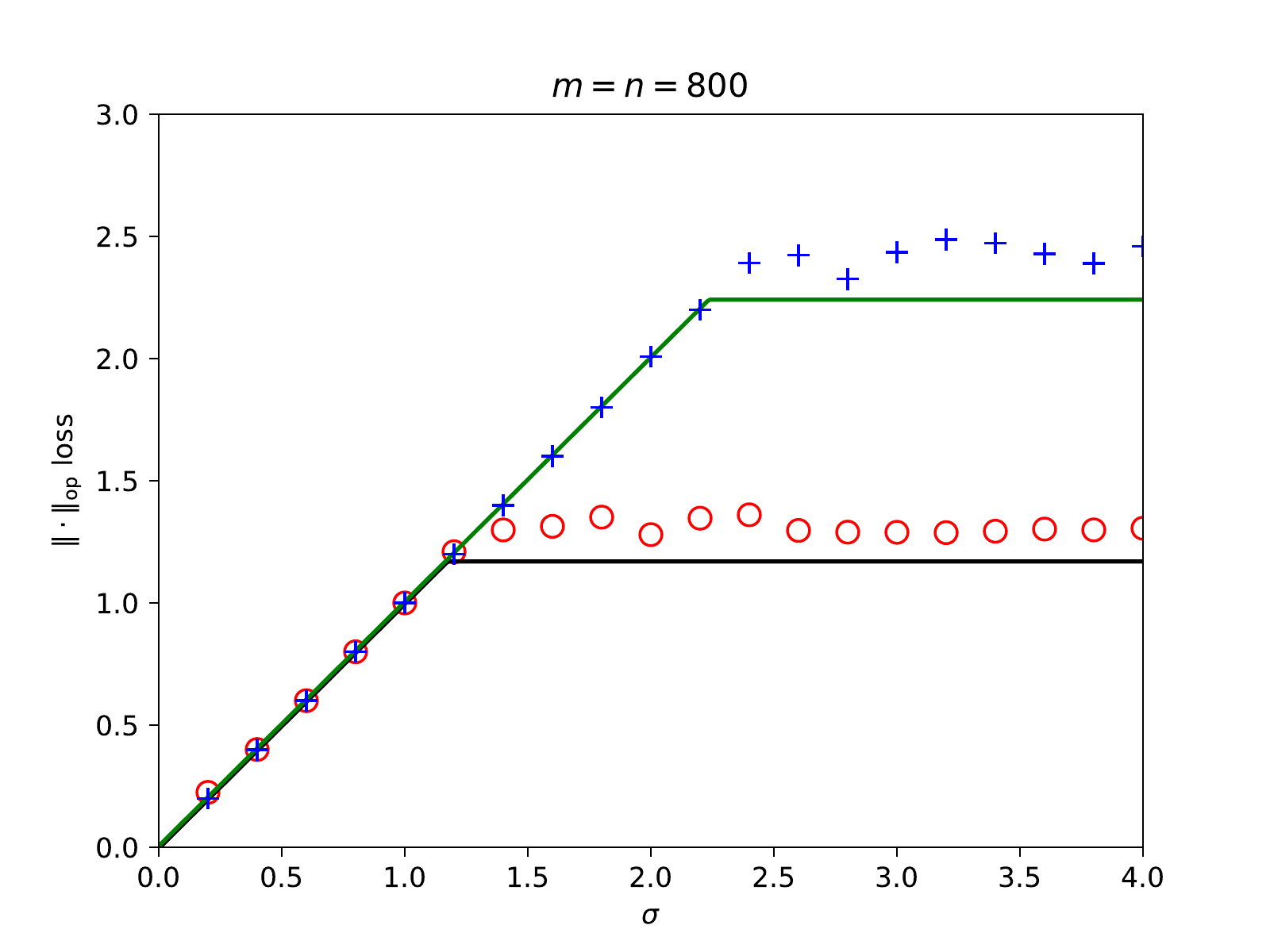}
	\end{subfigure}
	\caption{Matrix denoising under the operator norm loss in the case $r=3$. The three plots refer to $m=n\in\{200,400,800\}$ respectively. Red circles are the average of $(nm)^{-1/4} \|\widehat{\bX}(\bY)-\bX\|_{\op}$ over 50 realizations, and blue curve is its asymptotic prediction, given by $\sigma \wedge \Info_W^{-1/2}$. Blue `$+$' are the average of $(nm)^{-1/4} \|\overline{\bX}(\bY;\delta)-\bX\|_{\op}$ over 50 realizations, and green curve is its asymptotic prediction, given by $\sigma \wedge \Var (W_{11})^{-1/2}$.}
\end{figure}

The asymptotic theory predicts $(nm)^{-1/4} \|\widehat{\bX}(\bY)-\bX\|_{\op}$ to converge to $\sigma_1\wedge \Info_W^{-1/2}$,
and  $(nm)^{-1/4} \|\obX(\bY)-\bX\|_{\op}$ to converge to $\sigma_1 \wedge \Var(W_{11})^{-1/2}$. Again, 
theory captures well the behavior of our experiments already at moderate sizes.
Above the  information-theoretic threshold $\sigma_1 = \Info_W^{-1/2}$,
the new approach outperforms standard PCA.

\section{Proof of Theorem \ref{thm:LowerBound} (lower bound)}
\label{sec:Techniques_LB}

Throughout the proof, we let
\begin{equation}
\label{eqn:def-Q-gamma}
Q(\gamma; r, \eta) \defeq 
	\liminf_{M \to \infty} \liminf_{n \to \infty, m/n \to \gamma} \inf_{\hat{\bX}}
		\sup_{\bX \in \cF_{m, n}(r, M, \eta)} \frac{1}{(mn)^{1/4}}\E \norm{\hat{\bX}(\bY) - \bX}_{\op}. 
\end{equation}
It suffices to show the following lower bound on the quantity $Q(\gamma; r, \eta)$: 
\begin{equation}
\label{eqn:lower-bound-result}
Q(\gamma; r, \eta) \ge \frac{\gamma^{1/4} \vee \gamma^{-1/4}}{\sqrt{\Info_W}}. 
\end{equation}
By symmetry, we can assume $\gamma \ge 1$. Denote $\bv \in \R^n$ to be the 
vector such that $v_i = 1/\sqrt{n}$ for all $i \in [n]$.  For any $c > 0$, define the set 
$\set(c)$ as follows:
\begin{equation}
\set(c) \defeq \{\bx \bv^{\sT} \mid \bx \in \R^m, \norm{\bx}_\infty \le c\}. 
\end{equation}
Note that $\set(c) \subseteq \cF_{m, n}(r, c, \eta)$ for any constant $c > 0$.
Let $\pi^{\Gamma, c}$ denote the one dimensional Gaussian distribution with mean 
$0$ and variance $\Gamma > 0$ truncated on the interval $[-c, c]$. Now, draw the vector 
$\bx \in \R^m$ such that each coordinate of $\bx$ is independent following the distribution 
$\pi^{\Gamma, c}$. This induces a distribution on $\set(c)$, for which we denote by 
$\prob_{\pi}^{\Gamma, c}$. 

To show Eq~\eqref{eqn:lower-bound-result}, we start by introducing the 
following Bayesian setting. Let $\bX$ be sampled from $\pi^{\Gamma, c}$ 
and $\bW$ be an independent matrix with i.i.d entries with  distribution $p_W$. 
Set $\bY = \bX+ \bW$. With a slight abuse of notations, in the rest of the proof, we denote 
the joint distribution of $(\bX, \bY)$ under this sampling scheme by $\prob_{\pi}^{\Gamma, c}$. 
We use $\E_{\pi}^{\Gamma, c}$ to denote the expectation under $\prob_{\pi}^{\Gamma, c}$. 
Now, for any $\gamma, r, M, \eta, n$, define the quantity 
\begin{equation}
R(\gamma; r, M, \eta, n) \defeq 
	\inf_{\hat{\bX}} \sup_{\bX \in \cF_{m, n}(r, M, \eta)} \left\{ \frac{1}{(mn)^{1/4}}\E \norm{\hat{\bX}(\bY) - \bX}_{\op} \right\}.
\end{equation} 
By the standard reduction argument of lower bounding minimax risk by Bayesian risk, 
we get that, 
\begin{align}
R(\gamma; r, M, \eta, n) 
	&\ge \inf_{\hat{\bX}} \left\{\E^{\Gamma, c}_{\pi} \left[\frac{1}{(mn)^{1/4}} \norm{\hat{\bX}(\bY) - \bX}_{\op}\right] \right\}  
		\nonumber \\
&= \inf_{\hat{\bX}} \left\{\E^{\Gamma, c}_{\pi}\left[ \E^{\Gamma, c}_{\pi} 
	\left[\left.\frac{1}{(mn)^{1/4}} \norm{\hat{\bX}(\bY) - \bX}_{\op} \right| \bY \right]\right]\right\} \nonumber \\
&\ge  \E^{\Gamma, c}_{\pi}\left[ 
	\inf_{\hat{\bX}}\left\{ \E^{\Gamma, c}_{\pi} 
		\left[\left.\frac{1}{(mn)^{1/4}} \norm{\hat{\bX}(\bY) - \bX}_{\op} \right| \bY\right]\right\} \right].
		\label{eqn:minimax-bayesian-method}
\end{align}
Now that, $\ltwo{\bv} = 1$. Thus, when $\bX$ takes the form $\bX= \bx \bv^{\sT}$, any estimator $\hat{\bX}$ satisfies, 
\begin{equation}
\norm{\hat{\bX}(\bY) - \bX}_{\op} \ge \ltwo{(\hat{\bX}(\bY) - \bX)\bv} = \ltwo{\hat{\bX} \bv - \bx}.
\end{equation}
Substituting the above estimate into Eq~\eqref{eqn:minimax-bayesian-method}, we get the lower bound, 
\begin{align}
R(\gamma; r, M, \eta, n) &\ge 
	\E^{\Gamma, c}_{\pi}\left[ 
	\inf_{\hat{\bX} \in \R^{m \times n}}\left\{ \E^{\Gamma, c}_{\pi} 
		\left[\left.\frac{1}{(mn)^{1/4}} \ltwo{\hat{\bX}(\bY)\bv - \bx} \right| \bY\right]\right\} \right]  \nonumber \\
	&\ge \E^{\Gamma, c}_{\pi}
		\left[ \inf_{\hat{\bx} \in \R^m}
			\left\{ \E^{\Gamma, c}_{\pi} \left[\left.\frac{1}{(mn)^{1/4}} \ltwo{\hat{\bx}(\bY) - \bx} \right| \bY\right]
				\right\} \right].
\label{eqn:reduction-to-vector}
\end{align}
To further lower bound the right-hand of Eq~\eqref{eqn:reduction-to-vector}, we use the next lemma.
\begin{lemma}
\label{lemma:norm-lower-bound}
Let $\bZ = (Z_1, Z_2, \ldots, Z_n) \in \R^n$ be a random vector with independent coordinates. Then for any 
constant $K > 0$, any (non-random) vector $\ba = (a_1, a_2, \ldots, a_n) \in \R^n$ and any $t > 0$, we have
\begin{equation}
\E \ltwo{\bZ - \ba} \ge  \left(1- K^2n^{-1}t^{-2} \right)_+ \left(\E \loss^2(\bZ, \ba; K)- nt\right)_+^{1/2}. 
\end{equation}
where the loss $\loss(\cdot, \cdot; K): \R^n \times \R^n \to R$ is defined by 
\begin{equation}
\label{eqn:def-loss}
\loss(\bZ, \ba; K) = \Big(\sum_{i=1}^n [(Z_i - a_i)^2 \wedge K]\Big)^{1/2}. 
\end{equation}
\end{lemma}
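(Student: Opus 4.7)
The plan is to exploit the obvious coordinate-wise bound $(Z_i - a_i)^2 \ge (Z_i - a_i)^2 \wedge K$, which gives $\|\bZ - \ba\|_2 \ge \loss(\bZ, \ba; K)$. The right-hand side of the claim is phrased in terms of $\E\loss^2$, so the task reduces to lower bounding $\E\loss(\bZ,\ba;K)$ by a function of $\E\loss^2(\bZ,\ba;K)$, and the natural route is a concentration/truncation argument: show that $\loss^2$ does not drop much below its mean with high probability, then use Markov-style reasoning on the good event.

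First I would control the variance of $\loss^2(\bZ,\ba;K)$. Because the $Z_i$'s are independent and each summand $U_i := (Z_i - a_i)^2 \wedge K$ is bounded in $[0, K]$, independence of the $U_i$'s yields $\Var(\loss^2) = \sum_{i=1}^n \Var(U_i) \le \sum_{i=1}^n \E[U_i^2] \le n K^2$. Next, apply Chebyshev's inequality with deviation $nt$:
\begin{equation*}
\prob\!\left(\loss^2(\bZ,\ba;K) \le \E\loss^2(\bZ,\ba;K) - nt\right) \le \frac{nK^2}{(nt)^2} = \frac{K^2}{nt^2}.
\end{equation*}
Denote this Chebyshev event by $\cA$; on $\cA^c$ one has $\loss^2(\bZ,\ba;K) \ge (\E\loss^2(\bZ,\ba;K) - nt)_+$, and hence $\loss(\bZ,\ba;K) \ge (\E\loss^2(\bZ,\ba;K) - nt)_+^{1/2}$.

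Combining this with $\|\bZ-\ba\|_2 \ge \loss(\bZ,\ba;K)$ gives, on $\cA^c$, the pointwise bound $\|\bZ-\ba\|_2 \ge (\E\loss^2(\bZ,\ba;K) - nt)_+^{1/2}$. Taking expectations and dropping the contribution from $\cA$ (which is nonnegative) yields
\begin{equation*}
\E\|\bZ - \ba\|_2 \;\ge\; \prob(\cA^c)\,\bigl(\E\loss^2(\bZ,\ba;K) - nt\bigr)_+^{1/2} \;\ge\; \bigl(1 - K^2 n^{-1} t^{-2}\bigr)_+ \bigl(\E\loss^2(\bZ,\ba;K) - nt\bigr)_+^{1/2},
\end{equation*}
which is exactly the desired inequality. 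There is no real obstacle here; the only subtlety is making sure to truncate at the level $K$ so that both the variance bound $\Var(U_i) \le K^2$ and the pointwise inequality $\|\bZ-\ba\|_2 \ge \loss$ hold simultaneously. The lemma's usefulness downstream will come from choosing $K$ large enough that the truncation introduces negligible bias (so that $\E\loss^2 \approx \sum_i \E(Z_i-a_i)^2$), while $t$ is chosen so that $K^2/(n t^2) \to 0$ and $nt$ is negligible compared to $\E\loss^2$.
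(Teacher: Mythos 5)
Your proposal is correct and follows essentially the same route as the paper's proof: bound $\|\bZ-\ba\|_2$ below by the truncated loss, use independence and boundedness of the summands $(Z_i-a_i)^2\wedge K$ to get the variance bound $nK^2$, apply Chebyshev at deviation $nt$, and restrict to the good event before taking expectations. No gaps; the argument matches the paper step for step.
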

\begin{proof}
Denote by $T_i = (Z_i - a_i)^2 \wedge K$ for $i \in [m]$. Note first that, by definition, we have
\begin{equation}
\label{eqn:determistic-lower-bound-norm}
\ltwo{\bZ - \ba} \ge \loss(\bZ, \ba; K) = \Big(\sum_{i=1}^n T_i \Big)^{1/2} 
	\ge \Big(\sum_{i=1}^n \E [T_i]  - nt\Big)_+^{1/2} \times \indic{\sum_{i=1}^n T_i \ge 
		\sum_{i=1}^n\E[T_i]  -nt}. 
\end{equation}
Using the definition of Eq~\eqref{eqn:def-loss} and taking expectation over both sides of 
Eq~\eqref{eqn:determistic-lower-bound-norm}, we get
\begin{equation}
\label{eqn:expectation-lower-bound-norm}
\E \ltwo{\bZ - \ba} \ge \Big(\E \loss^2(\bZ, \ba; K)  - nt\Big)_+^{1/2} \times  
	\prob\left(\sum_{i=1}^n T_i \ge \sum_{i=1}^n\E[T_i] -nt\right). 
\end{equation}
By assumption, $\{T_i\}_{i=1}^n$'s are independent random variables with 
$\big|T_i\big| \le K$ almost surely. Thus, we have, 
$\Var\left(\sum_{i=1}^n T_i\right) \le \sum_{i=1}^n \Var(T_i) \le nK^2$. Now
Markov's inequality implies, 
\begin{equation}
\label{eqn:Markov-inequality-norm}
\prob\left(\sum_{i=1}^n T_i \ge \sum_{i=1}^n\E[T_i] -nt\right) \ge 
1- \prob \left(\left|\frac{1}{n} \sum_{i=1}^n (T_i - \E [T_i])\right| \ge t\right) \ge 1- K^2n^{-1}t^{-2}.
\end{equation}
Now, the desired result follows by Eq~\eqref{eqn:expectation-lower-bound-norm} and 
Eq~\eqref{eqn:Markov-inequality-norm}.
\end{proof}
Now we turn to lower bound the RHS of Eq~\eqref{eqn:reduction-to-vector}. 
For each $i \in [n]$, denote $\bY_i = (Y_{i, 1}, Y_{i, 2}, \ldots, Y_{i, n})^{\sT}$
and $\bW_i = (W_{i, 1}, W_{i, 2}, \ldots, W_{i, n})^{\sT}$ to be the $i$th row 
of the matrix $\bY$ and $\bW$ respectively. Then, since $\bY = \bX + \bW$, 
this implies 
\begin{equation}
\label{eqn:observation-new-model-vector}
\bY_i = x_i \bv + \bW_i~~\text{for all $i \in [m]$}.
\end{equation}
Thus, by the joint independence of $\{(x_i, \bW_i, \bY_i)\}_{i=1}^m$, we 
know that the posterior distribution of $\bx$ given $\bY$ has independent 
coordinates. Moreover, we have, 
\begin{equation}
\law(\bx_i \mid \bY) = \law (\bx_i \mid \bY_i)~~\text{for all $i \in [m]$}.
\end{equation}
Now, we can use Lemma~\ref{lemma:norm-lower-bound} to show that, for 
any estimator $\hat{\bx} \in \R^m$ and $t, K > 0$:
\begin{equation}
\E^{\Gamma, c}_{\pi} \left[ \ltwo{\hat{\bx}(\bY) - \bx} \mid \bY\right]
	\ge \left(1- K^2m^{-1} t^{-2}\right)_+
\sum_{i=1}^m \E^{\Gamma, c}_{\pi} 
	\Big([(\hx_i(\bY) - x_i)^2 \wedge K \,|\, \bY_i] - m t\Big)_+^{1/2}
\end{equation}
Hence, substituting the above estimate into Eq~\eqref{eqn:reduction-to-vector},
we get that for $M \ge c$, 
\begin{equation}
\label{eqn:reduction-to-conditional-expectation-sum}
R(\gamma; r, M, \eta, n) \ge 
	  \left(1- K^2m^{-1} t^{-2} \right)_+\E^{\Gamma, c}_{\pi} \left[ \inf_{\hbx \in \R^n}
	  	\Big(\frac{1}{(mn)^{1/2}}
			\sum_{i=1}^m \E^{\Gamma, c}_{\pi} [(\hx_i - x_i)^2 \wedge K \,|\, \bY_i] 
				- \gamma^{1/2} t\Big)_+^{1/2}\right].
\end{equation}
Now, to further lower bound the RHS of Eq~\eqref{eqn:reduction-to-conditional-expectation-sum}, 
we use~\cite[Proposition 4, Section 6.4]{LeYa12}. Note that a simple transformation of that 
result gives the result below (see~e.g.~\cite[Theorem 2]{Duchi18}).

\begin{theorem}
\label{theorem:posterior-is-almost-gaussian}
Let $\bX = (X_1, X_2, \ldots, X_n)^{\sT} \in \R^n$ whose coordinates $X_i$ are independent 
random variables drawn from the same distribution $P$. Denote by $P_{\theta}$ the 
distribution of a shift of $P$ by $\theta$ for all $\theta \in \R$. 
Suppose that the families $\{P_{\theta}^{\otimes n}\}_{\theta\in \R}$ is a quadratic mean differentiable family, so that 
for some quantity $\Info$ and random variable $\Delta_n(\bX)$, 
\begin{equation}
\log \frac{\de P^{\otimes n}_{h/\sqrt{n}}(\bX)}{\de P^{\otimes n}(\bX)} = h \Delta_n(\bX) - \half h^2 \Info+ o_P(1)\label{eq:QMDLemma}
\end{equation}
holds for all $h \in \R$. Now, denote $Y_i = \frac{1}{\sqrt{n}} h + X_i$ for $i \in [n]$ 
and $\bY = (Y_1, Y_2, \ldots, Y_n)^{\sT} \in \R^n$. Fix $\Gamma > 0$. Denote 
$\pi^{\Gamma, c}$ to be the one dimensional Gaussian distribution with mean $0$
and variance $\Gamma$, truncated to $[-c,c]$. Put $\pi^{\Gamma, c}$ as the prior distribution on $h$ 
and denote $\pi^{\Gamma, c}(\cdot \mid \bY)$ to be the posterior of $h$ given $\bY$.
Then for any $\eps > 0$, there exist $C = C(\eps)$ and $N = N(\eps)$ such that for 
$c \ge C$ and $n \ge N$,  
\begin{equation}
\int \tvnorm{G^{\Gamma}(\, \cdot\, \mid \Info^{-1} \bY) - \pi^{\Gamma, c}(\, \cdot\, \mid \bY)} \de \wbar{P}_n(\bY)
	\le \eps, 
\end{equation}
where in above we denote $G^{\Gamma}(\cdot \mid \Info^{-1} \bY)$ to be the Gaussian distribution
\begin{equation}
\label{eqn:def-posterior-normal}
G^{\Gamma}(\, \cdot\, \mid \Info^{-1} \bY) = \normal \left((\Info + \Gamma^{-1})^{-1}\Delta_n(\bY), (\Info + \Gamma^{-1})^{-1}\right).
\end{equation} 
and $\wbar{P}_n(\bY)$ to be the marginal distribution of $\bY$, 
\begin{equation*}
\wbar{P}_n = \int P_{h/\sqrt{n}, n} \de\pi^{\Gamma, c}(h).
\end{equation*}
\end{theorem}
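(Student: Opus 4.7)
My plan is to derive the statement from the classical Bernstein--von Mises (BvM) theorem for quadratic mean differentiable families \cite{LeYa12}, tracking explicitly how the truncated Gaussian prior $\pi^{\Gamma,c}$ enters, and then to upgrade $\wbar P_n$-convergence in probability to the integrated TV bound via bounded convergence (since $\tvnorm{\cdot}\le 1$). The starting point is to combine Bayes' rule with the QMD expansion \eqref{eq:QMDLemma} to write the unnormalized posterior density of $h$ given $\bY$ as
\begin{equation*}
q_n(h \mid \bY) \;\propto\; \exp\!\Bigl(h\,\Delta_n(\bY) - \tfrac{1}{2} h^2\,\Info + r_n(h,\bY)\Bigr)\,\exp\!\Bigl(-\tfrac{h^2}{2\Gamma}\Bigr)\,\one_{\{|h|\le c\}}\,,
\end{equation*}
where $r_n(h,\bY) = o_P(1)$ pointwise in $h$. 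Completing the square in the quadratic part immediately identifies the target Gaussian $G^\Gamma(\cdot\mid\Info^{-1}\bY)$ of \eqref{eqn:def-posterior-normal}, reducing the whole problem to showing that the remainder $r_n$ and the truncation are TV-negligible.

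Next, I would carry out the standard BvM localization. Set $I_\Gamma := \Info+\Gamma^{-1}$ and $\hat h(\bY) := I_\Gamma^{-1}\Delta_n(\bY)$; under $\wbar P_n$ the variable $\Delta_n(\bY)$ is $O_P(1)$, and hence so is $\hat h$. On a bounded interval $B_R := [\hat h-R,\hat h+R]$ the QMD remainder satisfies $\sup_{h\in B_R}|r_n(h,\bY)| = o_P(1)$ by a finite-grid argument combined with the locally Lipschitz structure of the QMD log-likelihood and Le Cam's contiguity lemma. Outside $B_R$, the quadratic $-\tfrac12 h^2 I_\Gamma$ yields sub-Gaussian tails that dominate any polynomial growth of $r_n$, so both the posterior and the target $G^\Gamma$ assign mass $o(1)$ there for large $R$. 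Combining the two regions gives $\tvnorm{G^\Gamma(\cdot\mid\Info^{-1}\bY) - \pi^{\Gamma,c}(\cdot\mid\bY)} = o_P(1)$ under $\wbar P_n$ on the event $\{|\hat h(\bY)|\le c/2\}$.

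I would then dispose of the truncation and pass to the integrated bound. Given $\eps>0$, choose $C=C(\eps)$ large enough that, for $c\ge C$ and $n$ large: (i) $\wbar P_n(|\hat h|>C/2)\le \eps/3$ by tightness of $\Delta_n(\bY)$, and (ii) a $\normal(\hat h,I_\Gamma^{-1})$ with $|\hat h|\le C/2$ places mass at most $\eps/3$ outside $[-c,c]$, so that $G^\Gamma$ and its restriction to $[-c,c]$ differ by at most $\eps/3$ in TV on that event. Combined with the previous step, TV converges to $0$ in $\wbar P_n$-probability; bounded convergence then yields $\int\tvnorm{G^\Gamma(\cdot\mid\Info^{-1}\bY)-\pi^{\Gamma,c}(\cdot\mid\bY)}\,\de\wbar P_n(\bY)\le\eps$ for all $n\ge N(\eps)$.

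The main obstacle is the uniformity of $r_n(h,\bY)=o_P(1)$ over $h$ in a compact interval: this is the genuinely technical ingredient in any BvM proof, requiring a covering argument at scale $1/\sqrt n$ together with Le Cam's contiguity lemma to transport tail estimates from $P^{\otimes n}$ to the contiguous family $\{P^{\otimes n}_{h/\sqrt n}\}$. Since this machinery is already assembled in \cite[Section 6.4, Proposition 4]{LeYa12} and the quantitative version needed here is extracted in \cite[Theorem 2]{Duchi18}, in practice the work reduces to verifying that the truncated Gaussian prior $\pi^{\Gamma,c}$ fits into the hypotheses of those results.
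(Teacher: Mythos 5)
Your proposal is correct and follows essentially the same route as the paper: the paper does not prove this theorem from scratch but obtains it as a direct transformation of the Bernstein--von Mises result in \cite[Proposition 4, Section 6.4]{LeYa12} (see also \cite[Theorem 2]{Duchi18}), which is exactly the machinery your sketch of posterior expansion, localization, and truncation control reduces to for the uniform handling of the QMD remainder. The only point to watch is the quantifier order in the statement ($N = N(\eps)$ uniform over $c \ge C(\eps)$, whereas your fix-$c$-then-let-$n\to\infty$ argument with bounded convergence a priori yields $N$ depending on $c$), but this is covered by the cited result and is harmless in the paper's application, where $c = c(\eps)$ is chosen before $n$.
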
 \noindent\noindent

Now, fix $ \Gamma > 0$ and $\eps > 0$. 
By assumption, the coordinates of $\bY=\bY_i$ (cf. Eq.~\eqref{eqn:observation-new-model-vector})  are i.i.d. with common distribution 
$P_{\theta}$ which is a shift of the density $p_W$ with finite Fisher information. 
Hence, Eq.~(\ref{eq:QMDLemma}) holds with $I = \Info_W$, and $\Delta_n(\bY) = n^{-1/2}\sum_{i\le n}p_W'(Y_i)/p_W(Y_i)$.
Denote by $\pi^{\Gamma, c}(\, \cdot\, \mid \bY_i)$ the conditional 
distribution of $x_i$ given $\bY_i$ and $G^{\Gamma}(\, \cdot \, | \bY_i)$ to be 
the normal distribution as in Eq~\eqref{eqn:def-posterior-normal}
\begin{equation}
G^{\Gamma}(\,\cdot\, \mid  \bY) = \normal \left((\Info_W + \Gamma^{-1})^{-1}\Delta_n(\bY), (\Info_W + \Gamma^{-1})^{-1}\right).
\end{equation}
Recall our observation model is equivalent to the model in 
Eq~\eqref{eqn:observation-new-model-vector}. By Theorem 
\ref{theorem:posterior-is-almost-gaussian}, there exist some
$c = c(\eps) > 0$ and $n_0 = n_0(\eps) \in \N$, such that the following holds 
for all $n \geq n_0(\varepsilon)$ and $i \in [m]$: 
\begin{equation}
\E^{\Gamma, c}_{\pi} \tvnorm{G^{\Gamma}(\,\cdot \, | \bY_i) - \pi^{\Gamma,c}(\,\cdot\,|\,\bY_i)} \le \eps,
\end{equation}
Now, for each $i \in [m]$, define the event 
\begin{equation}
\event_i := \{\norm{G^{\Gamma}(\,\cdot \, | \bY_i) - \pi^{\Gamma,c}(\,\cdot\,|\,\bY_i)}_{\TV} \le \eps^{1/2} \}. 
\end{equation} 
By Markov's inequality, we know that 
\begin{equation}
\label{eqn:lower-bound-tv-event}
\prob_{\pi}^{\Gamma, c}(\event_i) \ge 
	1-\eps^{-1/2}\E^{\Gamma, c}_{\pi} \tvnorm{G^{\Gamma}(\cdot \, | \bY_i) - \pi^{\Gamma,c}(\cdot\,|\,\bY_i)}
	\ge 1- \eps^{1/2}.
\end{equation}
Now by definition of $\tvnorm{\cdot}$, we know on event $\event_i$, the following holds for all $x_i \in \R$
\begin{equation}
\label{eqn:conditional-lower-bound-on-good-event}
\E^{\Gamma, c}_{\pi} [(\hx_i - x_i)^2 \wedge K \,|\, \bY_i] \ge 
	\left(\E_G^{\Gamma,c}[(\hx_i - x_i)^2 \wedge K \,|\, \bY_i] - K \varepsilon^{1/2}\right)_+. 
\end{equation}
Meanwhile, if we define the quantity 
\begin{equation*}
J_W^{\Gamma, K} = \E_T [T^2 \wedge K]
	~\,\,\text{for}\,\,~T \sim \normal(0, (\Info_W + \Gamma^{-1})^{-1}), 
\end{equation*}
then Anderson's lemma~\cite{Anderson55} shows that, for all $i \in [m]$, 
\begin{equation}
\label{eqn:conditional-lower-bound-centering}
\inf_{x \in \R} \E_G^{\Gamma}[(\hx_i - x)^2 \wedge K \,|\, \bY_i] 
	\ge \E_G^{\Gamma}[(\hx_i - \E_G^{\Gamma}[\hx_i \mid \bY_i])^2 \wedge K \,|\, \bY_i] 
		= J_W^{\Gamma, K},
\end{equation}
Thus Eq~\eqref{eqn:conditional-lower-bound-on-good-event} and 
Eq~\eqref{eqn:conditional-lower-bound-centering} together imply 
that for all $i \in [m]$ and $x_i \in \R$, 
\begin{equation}
\label{eqn:conditional-lower-bound-almost-final}
\E^{\Gamma, c}_{\pi} [(\hx_i - x_i)^2 \wedge K \,|\, \bY_i] \indic{\event_i} 
	\ge \left(J_W^{\Gamma, K} - K \varepsilon^{1/2}\right)_+ \indic{\event_i}.
\end{equation}
Thus, summing over $i \in [m]$ of Eq~\eqref{eqn:conditional-lower-bound-almost-final},
we get that, 
\begin{equation}
\inf_{x \in \R^m}
 \sum_{i=1}^m \E^{\Gamma, c}_{\pi} [(\hx_i - x_i)^2 \wedge K \,|\, \bY_i]\
	\ge \left(J_W^{\Gamma, K} - K \varepsilon^{1/2}\right)_+ 
		\sum_{i=1}^m  \indic{\event_i}.
\end{equation}
Substituting the above bound into Eq~\eqref{eqn:reduction-to-conditional-expectation-sum},
we get 
\begin{equation}
\label{eqn:conditional-lower-bound-final-except-binary-event}
R(\gamma; r, M, \eta, n) \ge \gamma^{1/4} \cdot (1- K^2m^{-1}t^{-2})_+ \cdot
	\E_{\pi}^{\Gamma, c}\left[\left(\Big(J_W^{\Gamma, K} - K \varepsilon^{1/2}\Big) \cdot 
		\frac{1}{m}\sum_{i=1}^m  \indic{\event_i} - t\right)_+^{1/2}\right].
\end{equation}
Now that $\event_i \in \sigma(\bY_i)$ by definition of $\{\event_i\}_{i\in [m]}$, we see that
the events $\{\event_i\}_{i\in [m]}$ are mutually independent. Using Eq~\eqref{eqn:lower-bound-tv-event} 
and Hoeffding's inequality, we immediately get for any $\eps > 0$, 
\begin{equation}
\label{eqn:Hoeffding-high-prob-lower-bound}
\prob_{\pi}^{\Gamma, c} \left(\frac{1}{m}\sum_{i=1}^m \indic{\event_i} \ge 1- 2\eps^{1/2} \right) \ge 1- \exp(-4m\eps).
\end{equation}
Thus, Markov's inequality implies that, 
\begin{align*}
&\E_{\pi}^{\Gamma, c}\left[\Bigg(\Big(J_W^{\Gamma, K} - K \varepsilon^{1/2}\Big) \cdot 
		\frac{1}{m}\sum_{i=1}^m  \indic{\event_i} - t\Bigg)_+^{1/2}\right] 
\ge (1- \exp(-4m\eps)) \cdot \left((J_W^{\Gamma, K} - K \varepsilon^{1/2}) (1- 2\eps^{1/2})_+ - t\right)_+^{1/2}.
\end{align*}
Substituting the above estimate into Eq 
\eqref{eqn:conditional-lower-bound-final-except-binary-event}, we get that,
\begin{equation*}
R(\gamma; r, M, \eta, n) \ge \gamma^{1/4}\cdot (1- K^2m^{-1}t^{-2})_+(1 - \exp(-4m\eps))
	\left((J_W^{\Gamma, K} - K \varepsilon^{1/2}) (1- 2\eps^{1/2})_+ - t\right)_+^{1/2}.
\end{equation*}
Now, we set $t = m^{-1/3}$. Thus, we get for any $\Gamma, K, \eps > 0$, there exist 
$c = c(\eps), n_0 = n_0(\eps) > 0$ such that for $n \ge n_0$ and $M \ge c$, 
\begin{equation}
\label{eqn:conditional-lower-bound-final}
R(\gamma; r, M, \eta, n) \ge \gamma^{1/4}(1- K^2m^{-1/3})_+ (1 - \exp(-4m\eps))^{1/2} 
	\left((J_W^{\Gamma, K} - K \varepsilon^{1/2}) (1- 2\eps^{1/2})_+ - m^{-1/3}\right)_+^{1/2}.
\end{equation}
Now, take $n \to \infty$. We get for any $\Gamma, K, \eps > 0$, there exists $c = c(\eps) > 0$ 
such that for all $M \ge c$, 
\begin{equation}
\liminf_{n \to \infty, m/n\to \gamma}R(\gamma; r, M, \eta, n)\ge \gamma^{1/4}
	\left(J_W^{\Gamma, K} - K \varepsilon^{1/2}\right)_+^{1/2}(1- 2\eps^{1/2})_+^{1/2}.
\end{equation}
Then, take $M \to \infty$. This shows for any $\Gamma, K, \eps > 0$, 
\begin{equation}
\label{eqn:final-step-lower-bound}
\lim_{M \to \infty}\liminf_{n \to \infty, m/n\to \gamma}R(\gamma; r, M, \eta, n)\ge \gamma^{1/4}
	\left(J_W^{\Gamma, K} - K \varepsilon^{1/2}\right)_+^{1/2}
\end{equation}
Lastly, we take $\Gamma \to \infty, K \to \infty, K\eps^{1/2} \to 0$ on both sides of 
Eq~\eqref{eqn:final-step-lower-bound}. Since the LHS of the above is independent of 
$\eps, K, \Gamma > 0$, and moreover we have, 
\begin{equation}
\lim_{\Gamma \to \infty, K \to \infty, K\eps^{1/2} \to 0} 
	\left(J_W^{\Gamma, K} - K \varepsilon^{1/2}\right)_+^{1/2} = 
		\Info_W^{-1/2},
\end{equation}
and the theorem thus follows. 

\section{Proof outline of Theorem \ref{theorem:upper-bound} (upper bound)}
\label{sec:Techniques_UB}

\subsection{Notation}

For a family of random variables $\{X_{n, \alpha}\}_{n \in \N, \alpha \in A}$ and 
scalars $\alpha_0, c$, we write
\begin{equation}
\lim_{\alpha \to \alpha_0} \lim_{n \to \infty} X_{n, \alpha} = c
\end{equation}
if there exist a (non-random) function $f: A \to \R$ 
and a neighborhood of $\alpha_0$, denoted by $\neb(\alpha_0)$ such that, 
for any fix $\alpha \in \neb(\alpha_0)$, we have, almost surely, 
$\lim_{n \to \infty} X_{n, \alpha} = f(\alpha)$  and further $\lim_{\alpha \to \alpha_0} f(\alpha) = c$. 

We write
\begin{equation}
\limsup_{\alpha \to \alpha_0} \limsup_{n \to \infty} X_{n, \alpha} \le c
\end{equation}
if there exist a (non-random) function $f: A \to \R$ 
and a neighborhood $\neb(\alpha_0)$ of $\alpha_0$, such that, 
for any fix $\alpha \in \neb(\alpha_0)$, we have, almost surely, 
$\limsup_{n \to \infty} X_{n, \alpha} \le f(\alpha)$, and further $\limsup_{\alpha \to \alpha_0} f(\alpha) = c$. 

We write
\begin{equation}
\lim_{\alpha \to \alpha_0} \limsup_{n \to \infty} X_{n, \alpha} = c
\end{equation}
if there exist two (non-random) function $f_1: A \to \R$
$f_2: A \to \R$ and a neighborhood $\neb(\alpha_0)$ of $\alpha_0$,
such that, for any fix $\alpha \in \neb(\alpha_0)$, we have, almost surely, 
$f_1(\alpha) \le \limsup_{n \to \infty} X_{n, \alpha} \le f_2(\alpha)$ 
and $\lim_{\alpha \to \alpha_0} f_1(\alpha) = \lim_{\alpha \to \alpha_0} f_2(\alpha) = c$.

\subsection{Proof outline and key lemmas}

The proof consists of three steps. In the first step, we characterize the behavior of the denoised 
matrix $\widehat{f}_{Y}(\bY)$. Note that this theorem immediately implies the error estimate for $\hbX^{(*)}$ in the rank-unconstrained problem,
namely Eq.~(\ref{eqn:upper-bound-0}).
\begin{theorem}\label{proposition:upper-bound-proposition}
Let $\hf_{Y,\eps}(\,\cdot\,)$, $\hInfo_{W,\eps}$ be defined in Eq.~\eqref{eq:hf_def}.
Then under the assumptions of Theorem \ref{thm:Main0}, the following decomposition holds
\begin{equation}
\label{eqn:decomposition-of-key-estimator}
\widehat{f}_{Y,\eps}(\bY) = \Info_W \bX + \sqrt{\Info_W}\,\bZ+ \bDelta, 
\end{equation}
where the matrices $\bZ\in \R^{m\times n}$ and $\bDelta \in \R^{m \times n}$ 
satisfy the following properties: 
\begin{enumerate}
\item The matrix $\bZ$ is a random matrix whose entries are 
i.i.d mean $0$ and variance $1$. Moreover, there exist constants
$\eps_0, C > 0$ independent of $m$ and $n$, such that almost surely 
for all $\eps \le \eps_0$, 
\begin{equation}
\label{eqn:Z-matrix-bound}
\norm{\bZ}_{\max} \le C\eps^{-1}.
\end{equation}
\item The matrix $\bDelta$ satisfies 
\begin{equation}
\label{eqn:funny-bound-error-decomposition-main-as}
\lim_{\eps \to 0^+} \limsup_{n \to \infty, m/n\to \gamma} 
	\frac{1}{(mn)^{1/4}}\opnorm{\bDelta} = 0. 
\end{equation}
Moreover, for some $\nu \in (0, 1]$, we have 
\begin{equation}
\label{eqn:funny-bound-error-decomposition-main}
\lim_{\eps \to 0^+} \lim_{n \to \infty, m/n\to \gamma} 
	\frac{1}{(mn)^{(1+\nu)/4}}\E \opnorm{\bDelta}^{1+\nu} = 0. 
\end{equation}
\end{enumerate}
Finally, $\hat{\Info}_{W, \eps}$ satisfies 
\begin{equation}
\lim_{\eps\to 0} \limsup_{n \to \infty, m/n \to \gamma}
	\big|\hat{\Info}_{W, \eps} - \Info_W\big| = 0. 
\end{equation}
\end{theorem}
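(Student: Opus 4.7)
The natural strategy is to compare the data-driven denoiser $\hat f_{Y,\eps}$ to its population regularized analogue $g_\eps(x) := -p_W'(x)/(p_W(x)+\eps)$; under the regularity assumptions one has $\E g_\eps(W)\to 0$, $\E g_\eps(W)^2\to\Info_W$, and $\E g_\eps'(W)\to\Info_W$ as $\eps\to 0$ (the last by integration by parts using A2). For each $(i,j)$ one Taylor-expands
\[
\hat f_{Y,\eps}(Y_{ij}) \;=\; \hat f_{Y,\eps}(W_{ij}) \;+\; \hat f_{Y,\eps}'(W_{ij})\, X_{ij} \;+\; \tfrac12\hat f_{Y,\eps}''(\xi_{ij})\,X_{ij}^2,
\]
and then replaces $\hat f_{Y,\eps}$ by $g_\eps$ and the random coefficient $\hat f_{Y,\eps}'(W_{ij})$ by the deterministic limit $\Info_W$. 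Setting $\mu_\eps=\E g_\eps(W)$, $\sigma_\eps^2=\Var g_\eps(W)$, I would define $\bZ$ to be the matrix with iid entries $Z_{ij} := (g_\eps(W_{ij})-\mu_\eps)/\sigma_\eps$, which has $\E Z_{ij}=0$, $\Var Z_{ij}=1$, and $|Z_{ij}|\le C/\eps$ since $|g_\eps|\le M_2/\eps$ by A2. Everything else is pushed into $\bDelta$, which decomposes into (i) a rescaling/centering piece absorbing $(\sigma_\eps-\sqrt{\Info_W})\bZ$ plus the rank-one $\mu_\eps\bone\bone^{\sT}$ (both vanish as $\eps\to 0$); (ii) the kernel-error matrix with entries $\hat f_{Y,\eps}(W_{ij})-g_\eps(W_{ij})$; (iii) the first-order coefficient correction $(\hat f_{Y,\eps}'(W_{ij})-\Info_W)X_{ij}$; and (iv) the Taylor remainder of size $\hat f_{Y,\eps}''(\xi_{ij})X_{ij}^2$.

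\textbf{Kernel concentration and the easy pieces.} Since $Y_{ij}-\oY=(W_{ij}-\overline{W})+(X_{ij}-\oX)$ with deterministic per-entry shifts $\delta_{ij}=(X_{ij}-\oX)-\overline{W}$ of size $O(n^{-\eta})+O_P(n^{-1/2})$ (using the $\ell_2\to\ell_\infty$ and entrywise-max constraints in $\cF_{m,n}(M,\eta)$, the latter coming from $\|\bX\odot\bX\|_\op\le M(m\vee n)^{1/2-\eta}$), the shifted-data target density $(mn)^{-1}\sum_{i,j}p_W(\cdot-\delta_{ij})$ is uniformly within $o(1)$ of $p_W$. Combined with the usual kernel bias-variance analysis at the chosen bandwidths $h_n=n^{-\eta_1}$, $h_n'=n^{-\eta_2}$ under A2, this yields $\|\hat p_Y-p_W\|_\infty$, $\|\hat p_Y'-p_W'\|_\infty\to 0$ in probability, hence $\|\hat f_{Y,\eps}-g_\eps\|_\infty$, $\|\hat f_{Y,\eps}'-g_\eps'\|_\infty\to 0$ and $\|\hat f_{Y,\eps}''\|_\infty=O_P(\eps^{-k})$ at fixed $\eps>0$. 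This controls piece (i) (rank one, scalar $o_\eps(1)$), piece (ii) (uniform entrywise bound combined with a non-commutative Bernstein/Bandeira--van Handel bound on its mean-zero part plus a rank-one mean shift), and piece (iv) via $\|\hat f_{Y,\eps}''\|_\infty\cdot\|\bX\odot\bX\|_\op\le O(\eps^{-k})\cdot M(m\vee n)^{1/2-\eta}=o((mn)^{1/4})$.

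\textbf{Main obstacle: the first-order correction.} The hard piece is (iii); the naive bound $\|\hat f_{Y,\eps}'-\Info_W\|_\infty\cdot\|\bX\|_\op$ only matches the target order $(mn)^{1/4}$ (with a bad $\eps^{-k}$ prefactor) and therefore does not yield the required $o((mn)^{1/4})$ decay. The fix is a further split
\[
\hat f_{Y,\eps}'(W_{ij})-\Info_W \;=\; [\hat f_{Y,\eps}'(W_{ij})-g_\eps'(W_{ij})]\;+\;[g_\eps'(W_{ij})-\E g_\eps'(W)]\;+\;[\E g_\eps'(W)-\Info_W].
\]
The first summand is a uniform kernel error (absorbed as in step 2); the third is a deterministic $o_\eps(1)$ scalar that can be pushed into the leading $\Info_W\bX$ term; the second produces a matrix $\bA$ of iid mean-zero uniformly bounded entries, independent of $\bX$. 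The crux is therefore to bound $\|\bA\odot\bX\|_\op$. Since $\bA\odot\bX$ has independent mean-zero entries with row-variance proxies $\sigma_A\max_i(\sum_j X_{ij}^2)^{1/2}\le \sigma_A\,M(m\vee n)^{1/2-\eta}$ (and symmetrically for columns) and entrywise bound $\|\bX\|_{\max}\cdot\|A\|_\infty\le \sqrt{M}(m\vee n)^{1/4-\eta/2}\cdot O(\eps^{-1})$, a sharp inequality of Bandeira--van Handel (or Latala) type yields $\E\|\bA\odot\bX\|_\op=O((m\vee n)^{1/2-\eta}\sqrt{\log n})=o((mn)^{1/4})$. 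The analogous moment version of the same inequality delivers the $(1+\nu)$-th-moment bound~\eqref{eqn:funny-bound-error-decomposition-main}, with the requisite moments being finite at fixed $\eps>0$ because $|g_\eps|\le M_2/\eps$.

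\textbf{Fisher information.} Finally, $\hat\Info_{W,\eps}=\eps+(mn)^{-1}\sum_{i,j}\hat f_{Y,\eps}(Y_{ij}-\oY)^2$ is handled by replacing $\hat f_{Y,\eps}$ by $g_\eps$ and $Y_{ij}-\oY$ by $W_{ij}$ via the uniform kernel control, applying the law of large numbers to the iid family $g_\eps(W_{ij})^2$ (which has bounded support at fixed $\eps>0$), and concluding by dominated convergence $\E g_\eps(W)^2+\eps\to\Info_W$ as $\eps\to 0$.
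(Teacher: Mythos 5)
Your proposal follows the same skeleton as the paper's proof: the oracle denoiser $g_\eps=f_{W,\eps}(x)=-p_W'(x)/(p_W(x)+\eps)$, a Taylor expansion in $X_{ij}$, the split into a rescaling piece, a kernel-error piece, a derivative-fluctuation piece $\bA\odot\bX$ (bounded through a matrix-moment inequality using $\|\bX\|_{\ell_2\to\ell_\infty}$), a Taylor remainder controlled by $\|\bX\odot\bX\|_{\op}$, and dominated convergence in $\eps$ for the deterministic shifts and for $\hat{\Info}_{W,\eps}$. The one structural difference is where you put the Taylor expansion, and that is where a genuine gap appears. You expand the \emph{estimated} function $\hat f_{Y,\eps}$ around $W_{ij}$, so your argument needs sup-norm control of $\hat f_{Y,\eps}'$ and $\hat f_{Y,\eps}''$, i.e.\ of derivatives of the kernel estimates: $\hat f_{Y,\eps}'$ contains $(\hat p_Y)'$ and $(\hat p_Y')'$, and $\hat f_{Y,\eps}''$ contains $(\hat p_Y)''$ and $(\hat p_Y')''$, which are $K'$-, $K''$-, $K'''$-based estimates of $p_W',p_W'',p_W'''$ at bandwidths $h_n,h_n'$. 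The claims $\|\hat f_{Y,\eps}'-g_\eps'\|_\infty\to 0$ and $\|\hat f_{Y,\eps}''\|_\infty=O_P(\eps^{-k})$ are not justified and cannot be established over the full bandwidth range allowed by the theorem: the pointwise fluctuation of $(\hat p_Y)'$ is of order $(mnh_n^3)^{-1/2}\asymp n^{-1+3\eta_1/2}$, which diverges for $\eta_1>2/3$, that of $(\hat p_Y)''$ is $(mnh_n^5)^{-1/2}$, divergent for $\eta_1>2/5$, and that of $(\hat p_Y')''$ is $(mn(h_n')^7)^{-1/2}$, divergent for $\eta_2>2/7<1/3$; the bias at this order would moreover need bounds on $p_W'''$ (and beyond) that {\sf A2} does not give. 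Without these, your piece (iii) first summand and piece (iv) are unsupported (the trivial bound $\|\hat f_{Y,\eps}''\|_\infty\lesssim \eps^{-1}(h_n')^{-4}$ makes piece (iv) of order $(h_n')^{-4}(m\vee n)^{1/2-\eta}$, far larger than $(mn)^{1/4}$). The paper's ordering avoids this entirely: it first proves $\|\hat f_{Y,\eps}(\bY)-f_{W,\eps}(\bY)\|_{\op}\le\sqrt{mn}\,\|\hat f_{Y,\eps}(\bY)-f_{W,\eps}(\bY)\|_{\rm max}=o((mn)^{1/4})$ using only uniform consistency of $\hat p_Y$ and $\hat p_Y'$ themselves (no derivatives of the estimates ever appear), and only then Taylor-expands the deterministic $f_{W,\eps}$, whose first and second derivatives are bounded by $C\eps^{-2}$ and $C\eps^{-3}$ via {\sf A2}. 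Reordering your argument in this way repairs it and essentially reproduces the paper's proof.

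Two further points. First, your rank-one piece $\mu_\eps\bone\bone^{\sT}$ does not ``vanish as $\eps\to0$'' in the iterated limit: its operator norm is $|\mu_\eps|\sqrt{mn}$, so after dividing by $(mn)^{1/4}$ it diverges as $n\to\infty$ at fixed $\eps$ whenever $\mu_\eps\neq0$. The term is harmless only because $\mu_\eps=\E g_\eps(W)=-\int p_W'p_W/(p_W+\eps)\,\de w=\eps\int p_W'/(p_W+\eps)\,\de w=0$ exactly for every $\eps$ (the paper's ``key observation''); you should state this identity rather than only $\E g_\eps(W)\to0$ as $\eps\to0$. Second, you cannot apply an independent-entry matrix Bernstein/Bandeira--van Handel bound to the kernel-error matrix with entries $\hat f_{Y,\eps}(W_{ij})-g_\eps(W_{ij})$: these entries all depend on the whole sample through $\hat p_Y,\hat p_Y'$ and are therefore not independent. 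What works (and what the paper does) is the crude bound $\sqrt{mn}$ times the maximum entry, which suffices precisely because the kernel rates at bandwidths $\eta_1,\eta_2>1/4$ make the uniform entrywise error $o((mn)^{-1/4})$ at fixed $\eps$; your write-up should make that rate requirement explicit.
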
 
An outline of the proof of this statement is  presented in Section \ref{sec:ProofDecomposition}, with most  technical work 
deferred to the appendices.

In the second step, we prove an ``almost sure convergence version'' of 
of Theorem~\ref{theorem:upper-bound}. This is formulated 
in a more precise way in Lemma~\ref{lemma:upper-bound-as} below. 
Its proof is deferred to Appendix \ref{sec:proof-lemma-upper-bound-as}.
\begin{lemma}
\label{lemma:upper-bound-as}
Let $(\bX_{n})_{n\ge 1}$ be a sequence of  matrices with $\bX_{n}\in\cF_{m,n}(r,M,\eta)$
where $m=m(n)$ is such that $\lim_{n\to\infty}m(n)/n = \gamma\in(0,\infty)$. Denote
by $\bY_n$ a noisy observation of $\bX_n$.
Under the assumptions of Theorem \ref{thm:Main}, the estimator $\hbX(\,\cdot\,)$ satisfies
\begin{align} 
\label{eqn:upperbound-lowrank}
\lim_{\delta\to 0}\lim_{\eps \to 0}\limsup_{n\to \infty}
	\frac{1}{(mn)^{1/4}}\|\hbX(\bY_n)-\bX_n\|_{\op}
		\le  \big(\gamma^{1/4}\vee \gamma^{-1/4}\big)\, \Info_W^{-1/2}.
\end{align}
\end{lemma}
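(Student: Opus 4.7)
The plan is to reduce the lemma to a classical spiked-model singular-value shrinkage analysis, via the decomposition provided by Theorem~\ref{proposition:upper-bound-proposition}. First, I rewrite the output of Step~4 of the algorithm as
\[
\hbX^{(0)} \;=\; \sqrt{\Info_W}\,\bM_n \;+\; \bDelta, \qquad \bM_n \;:=\; \sqrt{\Info_W}\,\bX_n + \bZ,
\]
so that $\bM_n$ is a rectangular spiked model with rank-$r$ signal of scaled singular values $t_i := \sqrt{\Info_W}\,\sigma_i$ and i.i.d.\ unit-variance noise $\bZ$; Theorem~\ref{proposition:upper-bound-proposition} guarantees $(mn)^{-1/4}\|\bDelta\|_{\op}\to 0$ almost surely under the iterated limit $n\to\infty$ then $\eps\to 0$, and $\hat\Info_{W,\eps}\to \Info_W$ under the same limit.

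Second, I would apply classical rectangular spiked-model asymptotics to $\bM_n$: almost surely, the top singular values of $\bM_n/(mn)^{1/4}$ converge to $H(t_i)$ when $t_i>1$ and to $H(1)=\gamma^{1/4}+\gamma^{-1/4}$ otherwise, and the corresponding top singular vectors have the explicit limiting overlaps with the true spikes $(\bu_i,\bv_i)$ that underlie Theorem~\ref{thm:subspace-estimation}. Weyl's inequality transfers singular-value convergence from $\sqrt{\Info_W}\bM_n$ to $\hbX^{(0)}$ with $o(1)$ loss; Wedin's theorem transfers singular-vector convergence, using that the $\sigma_i$ are $n$-independent and $H$ is strictly monotone, so the spectral gap of $\bM_n$ at each detected spike is $\Theta(1)$. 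Regarding the shrinkage itself: since $\hat\Info_{W,\eps}\to\Info_W$, the threshold $(1+\delta)H(1)\sqrt{\hat\Info_{W,\eps}}$ selects, in the $\delta\to 0$ limit, exactly the indices with $\sigma_i>1/\sqrt{\Info_W}$; for detected $i$, continuity of $H^{-1}$ and $H^{-1}\!\circ\! H=\mathrm{id}$ on $(1,\infty)$ give $\hSigma_{ii}\to\sigma_i$, while $\hSigma_{ii}=0$ for undetected $i$.

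Finally, I would bound the operator norm by decomposing $\hbX-\bX = (mn)^{1/4}\bigl[\sum_{i\in D}(\hSigma_{ii}\widehat{\bu}_i\widehat{\bv}_i^{\sT} - \sigma_i\bu_i\bv_i^{\sT}) - \sum_{i\notin D}\sigma_i\bu_i\bv_i^{\sT}\bigr]$, with $D$ the detected index set. The ``tail'' piece has operator norm $\le(mn)^{1/4}/\sqrt{\Info_W}$ because each undetected $\sigma_i\le 1/\sqrt{\Info_W}$. For the ``detected'' piece, representing each summand in the at-most-$2r$-dimensional joint span of $\{\bu_i,\widehat{\bu}_i\}_{i\in D}$ and $\{\bv_i,\widehat{\bv}_i\}_{i\in D}$ reduces the operator-norm computation to a deterministic finite-dimensional calculation whose limiting value, by the known optimality of $H^{-1}$-shrinkage for operator-norm loss under a spiked rectangular noise model (compare~\cite{shabalin2013reconstruction,gavish2014optimal,donoho2014minimax}), equals $(\gamma^{1/4}\vee\gamma^{-1/4})/\sqrt{\Info_W}$.

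The main obstacle is precisely this last step: the detected contributions cannot be bounded term-by-term via the triangle inequality---that would introduce a spurious factor of $r$---so one must exploit that, in the limit, the components of $\widehat{\bu}_i,\widehat{\bv}_i$ outside the true signal subspace are, across distinct simple spikes, pairwise orthogonal (a consequence of the BBP analysis for simple spikes, with degenerate singular values treated as a single block). This block-orthogonal structure turns what could have been an $O(r)$ accumulation into a maximum over spikes, which in turn is controlled by the optimality property of the $H^{-1}$ shrinker.
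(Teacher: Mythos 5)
Your route is structurally the paper's own: decompose $\hbX^{(0)}$ via Theorem~\ref{proposition:upper-bound-proposition} into the clean spiked model $\Info_W\bX+\sqrt{\Info_W}\bZ$ plus a vanishing $\bDelta$; invoke the spiked-model limits for singular values and overlaps (be careful calling these ``classical''---with deterministic $\bu_i,\bv_i$ and non-Gaussian i.i.d.\ noise they are exactly Theorem~\ref{theorem:general-deform}, which the paper proves by the moment method precisely because they are not off-the-shelf); show the threshold selects exactly the spikes with $\sigma_i>\Info_W^{-1/2}$ and that $H^{-1}$ debiases $\hSigma_{ii}\to\sigma_i$; and reduce the operator-norm error to a finite-dimensional limit using asymptotic orthogonality across spikes. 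Your Weyl/Wedin transfer plays the role of the paper's H\"older perturbation bound (Theorem~\ref{theorem:nonlinear-PCA-perturbation}) and of its perturbation trick for degenerate detected $\sigma_i$ (spike-by-spike gaps are not $\Theta(1)$ there, so the block treatment you mention is genuinely needed), and your block-orthogonality observation is what the paper's Gram--Schmidt computation in Lemma~\ref{lemma:proof-upper-bound-third-step} formalizes. Up to that point the proposal is sound and essentially identical in substance.

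The genuine gap is the final inequality, which you assert rather than prove. After the reduction, the detected contribution converges to $\max_{i\in D}\sigma_i\,\opnorm{\bT(\alpha_i,\beta_i)}$ with $\bT$ as in Eq.~\eqref{eqn:block-singular-value}, $\cos\alpha_i=G^{(1)}(\sigma_i;\Info_W)$, $\cos\beta_i=G^{(2)}(\sigma_i;\Info_W)$, and what the lemma requires is the uniform bound $\sigma_i\opnorm{\bT(\alpha_i,\beta_i)}\le(\gamma^{1/4}\vee\gamma^{-1/4})\Info_W^{-1/2}$ for every supercritical $\sigma_i$. You claim the limiting value \emph{equals} $(\gamma^{1/4}\vee\gamma^{-1/4})\Info_W^{-1/2}$ ``by the known optimality of $H^{-1}$-shrinkage.'' That equality is false for $\gamma\neq 1$ (the limit depends on $\sigma_i$ and is strictly below the bound, with equality only as $\sigma_i\downarrow\Info_W^{-1/2}$, or identically when $\gamma=1$), and, more importantly, optimality of a shrinker within a class says nothing about the numerical value of its worst-case loss; the cited Gaussian-noise shrinkage papers do not hand you the required uniform-in-$\sigma$ inequality in this form. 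This bound is the substantive content of the paper's Lemma~\ref{lemma:proof-upper-bound-third-step}: the singular values of $\bT(\alpha,\beta)$ are $2\cos(\alpha/2)\sin(\beta/2)$ and $2\cos(\beta/2)\sin(\alpha/2)$, and the needed estimate Eq.~\eqref{eqn:crucial-estimate} follows only after proving the elementary but non-obvious monotonicity fact $T(t,y)\le ty$ for $t\ge1$, $y\in[0,1]$ (Eq.~\eqref{eqn:elementary-funny-claim}). Without this calculation (or a precise citation containing it), your argument shows that the normalized error converges to some finite-dimensional expression, but not that this expression is below the stated constant, so the conclusion of the lemma is not established.
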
  
The proof of this lemma requires an analysis of the eigen-structure of
the random matrix  $\widetilde{\bY}= \Info_W \bX + \sqrt{\Info_W}\,\bZ$ which is (by the previous theorem) a good
approximation of $\widehat{f}_{Y,\eps}(\bY)$. Note that  $\widetilde{\bY}$  is a low-rank deformation of a matrix with 
independent random entries, 
and a vast amount of work has been devoted to the study of such matrices over the last ten years, see e.g. 
\cite{BBAP05,BS06,paul2007asymptotics,onatski2013asymptotic, BGN12, BloemendalKnYa16, Ding17}. 
Despite the wealth of information available in the literature, only the recent work of Xiucai Ding~\cite{Ding17} considers
our setting, whereby the signal matrix $\bX$ is non-random.  
For the reader's convenience provide an independent proof of the relevant random matrix theory estimates, 
which are stated in Section  \ref{sec:RMT}.

In the third step, we prove that the errors
$\|\hbX(\bY_n)-\bX_n\|_{\op}/(mn)^{1/4}
$ are uniformly integrable, thus allowing to 
translate the almost sure result to a result on the expected error. 
\begin{lemma}
\label{lemma:upper-bound-ui}
Within the same setting of Lemma \ref{lemma:upper-bound-as}, 
there exist some $c > 0, \nu \in (0, 1]$ such that for any $\delta, \eps \le c$, 
\begin{align} 
\limsup_{n\to \infty, m/n\to \gamma}
	\frac{1}{(mn)^{(1+\nu)/4}}\E \left[ \|\hbX(\bY_n)-\bX_n\|_{\op}^{(1+\nu)}\right]
		< \infty.
\end{align}
\end{lemma}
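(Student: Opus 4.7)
The plan is to exploit the decomposition in Theorem~\ref{proposition:upper-bound-proposition} and bound the $(1+\nu)$-th moment of each piece separately. The key observation, which avoids any detection analysis of which singular values of $\hbX^{(0)}$ cross the threshold, is that the shrinkage is a \emph{contraction}: for $\sigma\ge 1$,
\begin{equation*}
H(\sigma)^2 = \sigma^2 + (\gamma^{1/2}+\gamma^{-1/2}) + \sigma^{-2}\ge \sigma^2,
\end{equation*}
so $H^{-1}(y)\le y$ for every $y\ge H(1)$. Since $\hInfo_{W,\eps}\ge \eps$ by construction and the shrinkage is only triggered when $\hInfo_{W,\eps}^{-1/2}\hSigma^{(0)}_{ii}\ge (1+\delta)H(1)>H(1)$, each retained singular value satisfies $\hSigma_{ii}=\hInfo_{W,\eps}^{-1/2}H^{-1}(\hInfo_{W,\eps}^{-1/2}\hSigma^{(0)}_{ii})\le \hInfo_{W,\eps}^{-1}\hSigma^{(0)}_{ii}\le \eps^{-1}\hSigma^{(0)}_{ii}$, and consequently $\|\hbX\|_{\op}\le \eps^{-1}\|\hbX^{(0)}\|_{\op}$.

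Next, substituting the decomposition $\hbX^{(0)} = \Info_W\bX + \sqrt{\Info_W}\,\bZ + \bDelta$ from Theorem~\ref{proposition:upper-bound-proposition} into the triangle inequality $\|\hbX-\bX\|_{\op}\le \|\hbX\|_{\op}+\|\bX\|_{\op}$ yields
\begin{equation*}
\|\hbX-\bX\|_{\op} \,\le\, (1+\eps^{-1}\Info_W)\,\|\bX\|_{\op} \,+\, \eps^{-1}\sqrt{\Info_W}\,\|\bZ\|_{\op} \,+\, \eps^{-1}\|\bDelta\|_{\op}.
\end{equation*}
By Minkowski, it suffices to bound each of $\|\bX\|_{\op}^{1+\nu}$, $\E\|\bZ\|_{\op}^{1+\nu}$ and $\E\|\bDelta\|_{\op}^{1+\nu}$ by $O((mn)^{(1+\nu)/4})$. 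The first is deterministic of order $(m\vee n)^{(1+\nu)/2}=O((mn)^{(1+\nu)/4})$, since $\bX\in\cF_{m,n}(r,M,\eta)$ and $m\asymp n$; the third is $o((mn)^{(1+\nu)/4})$ directly by Eq.~\eqref{eqn:funny-bound-error-decomposition-main}.

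I expect the main (though standard) technical point to be the moment bound on $\|\bZ\|_{\op}$. Because $\bZ$ has i.i.d.\ entries with mean zero, unit variance and uniform bound $C\eps^{-1}$, Latala's inequality gives $\E\|\bZ\|_{\op}\le C(\eps)(\sqrt{m}+\sqrt{n})$. The operator norm is convex and $C\eps^{-1}$-Lipschitz in the Frobenius metric, so Talagrand's concentration inequality for convex Lipschitz functions of independent bounded random variables yields subgaussian concentration of $\|\bZ\|_{\op}$ around its mean at scale $\eps^{-1}$, and in particular $\E\|\bZ\|_{\op}^{1+\nu}\le C(\eps)(\sqrt{m}+\sqrt{n})^{1+\nu}=O_\eps((mn)^{(1+\nu)/4})$ since $m\asymp n$. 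Combining the three bounds finishes the proof. The $\eps$-dependent constants $C(\eps)$ are harmless, as Lemma~\ref{lemma:upper-bound-ui} only requires finite limsup for each fixed $\eps,\delta\le c$, with no uniformity in $\eps$.
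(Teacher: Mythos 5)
Your proposal is correct and takes essentially the same route as the paper: the same deterministic contraction bound $H^{-1}(y)\le y$ for $y\ge H(1)$ together with $\hInfo_{W,\eps}\ge\eps$ gives $\|\hbX\|_{\op}\le\eps^{-1}\|\hbX^{(0)}\|_{\op}$, after which the decomposition of Theorem~\ref{proposition:upper-bound-proposition} and term-by-term $(1+\nu)$-moment bounds on $\bX$, $\bZ$, $\bDelta$ finish the argument exactly as in the paper's proof. The only (immaterial) differences are that the paper justifies the $\E\|\bZ\|_{\op}^{1+\nu}=O_\eps((mn)^{(1+\nu)/4})$ step by citing \cite{Guionnet} while you use Latala plus Talagrand concentration (note the operator norm is $1$-Lipschitz in Frobenius norm, with the $\eps^{-1}$ entering only through the entrywise bound in the concentration inequality), and that for fixed $\eps$ Eq.~\eqref{eqn:funny-bound-error-decomposition-main} gives boundedness rather than $o((mn)^{(1+\nu)/4})$, which is all you need.
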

The proof of the lemma is deferred to 
Appendix~\ref{sec:proof-lemma-upper-bound-ui}.

Now, we are ready to prove Theorem~\ref{theorem:upper-bound}. 
Note that, by Lemma~\ref{lemma:upper-bound-as} and Lemma 
\ref{lemma:upper-bound-ui}, we know that, for any $\zeta > 0$, there exists 
some $\delta_0 > 0$, such that, for all $\delta < \delta_0$, there exists some 
$\eps_0 = \eps_0(\delta)$ such that for all $\eps \le \eps_0$, we have almost 
surely,  
\begin{equation}
\label{eqn:main-upper-step-one}
\limsup_{n\to \infty}
	\frac{1}{(mn)^{1/4}}\|\hbX(\bY_n;\eps,\delta)-\bX_n\|_{\op}
		\le \max\{\gamma^{1/4}, \gamma^{-1/4}\}\Info_W^{-1/2}
                + \zeta\, .
\end{equation}
(For the sake of clarity, we made explicit the dependence upon $\eps,\delta$.)
Moreover, for some $\nu > 0$, 
\begin{equation}
\label{eqn:main-upper-step-two}
\limsup_{n\to \infty}
	\frac{1}{(mn)^{(1+\nu))/4}}\E\big\{\|\hbX(\bY_n;\eps,\delta)-\bX_n\|_{\op}^{(1+\nu)}\big\}
		< \infty. 
\end{equation}
Thus, by Eq~\eqref{eqn:main-upper-step-one} and Eq~\eqref{eqn:main-upper-step-two},
we know that, for any $\zeta > 0$, there exists some $\delta_0 > 0$, such that, for all 
$\delta < \delta_0$, there exists some $\eps_0 = \eps_0(\delta)$ such that for all 
$\eps \le \eps_0$, 
\begin{align}
\limsup_{n\to \infty}
	 \frac{1}{(mn)^{1/4}}\E \|\hbX(\bY_n;\eps,\delta)-\bX_n\|_{\op}
		\le \max\{\gamma^{1/4}, \gamma^{-1/4}\}\Info_W^{-1/2} + \zeta\, .
\end{align}
Now notice that $\bX\mapsto\E \|\hbX(\bX+\bW;\eps,\delta)-\bX\|_{\op}$ is a continuous function of $\bX$
in the compact domain $\cF_{m,n}(r,M,\eta)$. Hence, there exists $\bX^*_n$ such that
\begin{align}
\E \|\hbX(\bX^*_n+\bW;\eps,\delta)-\bX^*_n\|_{\op} = \sup_{\bX\in\cF_{m,n}(r,M,\eta)} \E \|\hbX(\bX+\bW;\eps,\delta)-\bX\|_{\op} \, .
\end{align}
Letting $\bY^*_n = \bX^*_n$, we conclude that for any $\zeta > 0$, there exists some $\delta_0 > 0$, 
such that, for all $\delta < \delta_0$, there exists some $\eps_0 = \eps_0(\delta)$ such that for all 
$\eps \le \eps_0$, 
\begin{align*}
\limsup_{n\to \infty}
	 \frac{1}{(mn)^{1/4}}\sup_{\bX\in \cF_{m,n}(r,M,\eta)}\E \|\hbX(\bY;\eps,\delta)-\bX\|_{\op}&=
\limsup_{n\to \infty}
	 \frac{1}{(mn)^{1/4}}\E \|\hbX(\bY_n^*;\eps,\delta)-\bX_n^*\|_{\op}\\
		&\le \max\{\gamma^{1/4}, \gamma^{-1/4}\}\Info_W^{-1/2} + \zeta\, .
\end{align*}
Hence, by taking $\eps_n,\delta_n\downarrow 0$ sufficiently slowly, we obtain
\begin{align*}
\limsup_{n\to \infty}
	 \frac{1}{(mn)^{1/4}}\sup_{\bX\in \cF_{m,n}(r,M,\eta)}\E \|\hbX(\bY;\eps_n,\delta_n)-\bX\|_{\op}&
		\le \max\{\gamma^{1/4}, \gamma^{-1/4}\}\Info_W^{-1/2} \,,
\end{align*}
which coincides with the claim of the theorem.

\subsection{Proof of Theorem  \ref{proposition:upper-bound-proposition}}
\label{sec:ProofDecomposition}

For any $\eps > 0$, define the auxiliary function 
$\targetpert(\, \cdot \,)$ by
\begin{equation}
\targetpert(x)\defeq -\frac{p_W'(x)}{p_W(x) + \eps}. 
\end{equation}
Notice that $\targetpert(x)$ is the `oracle' denoiser, which uses knowledge of the noise distribution $p_W$. 
The proof of Theorem~\ref{proposition:upper-bound-proposition}  proceed in two steps:
\begin{enumerate}
\item We analyze the matrix $f_{W, \eps}(\bY)$ obtained by applying the oracle denoiser, see 
Lemma~\ref{lemma:upper-bound-lemma-one} below.
\item We show that the denoiser $\hf_Y$ behaves similarly to the oracle denoiser  $f_{W, \eps}$, 
for our purposes, cf. Lemma \ref{lemma:upper-bound-lemma-two}.
\end{enumerate}
\begin{lemma}
\label{lemma:upper-bound-lemma-one}
Assume that the conditions of Theorem~\ref{theorem:upper-bound} hold.
Then, we have the following decomposition of the matrix $\targetpert(\bY)$, 
\begin{equation}
\label{eqn:decomposition-of-f-dezero}
\targetpert(\bY) = \Info_W\bX + \sqrt{\Info_W}\bZ + \bar{\bDelta}, 
\end{equation}
where the matrices $\bZ\in \R^{m\times n}$ and $\bar{\bDelta} \in \R^{m \times n}$ 
satisfy the following properties: 
\begin{enumerate}
\item The matrix $\bZ$ is a random matrix whose entries are 
i.i.d mean $0$ and variance $1$. Moreover, there exist constants
$\eps_0, C > 0$ independent of $m$ and $n$, such that almost surely 
for all $\eps \le \eps_0$, 
\begin{equation}
\label{eqn:Z-bound}
\norm{\bZ}_{\max} \le C\eps^{-1}.
\end{equation}
\item The matrix $\bar{\bDelta}$ satisfies
\begin{equation}
\label{eqn:funny-bound-error-decomposition-as}
\lim_{\eps \to 0} \lim_{n \to \infty, m/n\to \gamma} 
	\frac{1}{(mn)^{1/4}}\opnorm{\bar{\bDelta}} = 0. 
\end{equation}
Moreover, we have
\begin{equation}
\label{eqn:funny-bound-error-decomposition}
\lim_{\eps \to 0} \lim_{n \to \infty, m/n\to \gamma} 
	\frac{1}{(mn)^{1/2}}\E \opnorm{\bar{\bDelta}}^2 = 0. 
\end{equation}
\end{enumerate}
\end{lemma}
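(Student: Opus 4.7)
\textbf{Proof proposal for Lemma~\ref{lemma:upper-bound-lemma-one}.}

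The plan is to Taylor expand the oracle denoiser $\targetpert(\cdot)$ entry-wise around $\bW$. Writing $Y_{ij}=W_{ij}+X_{ij}$ and using $\targetpert \in C^2$ (which follows from assumption {\sf A2} and the fact that $p_W+\eps\ge \eps>0$), I will expand
\begin{equation*}
\targetpert(Y_{ij}) \;=\; \targetpert(W_{ij}) \;+\; \targetpert'(W_{ij})\, X_{ij}\;+\;\tfrac12\,\targetpert''(\xi_{ij})\,X_{ij}^2,
\end{equation*}
with $\xi_{ij}$ between $W_{ij}$ and $Y_{ij}$. Define the scalar constants $a_\eps=\E[\targetpert(W)]$, $b_\eps=\Var(\targetpert(W))^{1/2}$, $c_\eps=\E[\targetpert'(W)]$, and set $Z_{ij}=(\targetpert(W_{ij})-a_\eps)/b_\eps$, so that $\bZ$ has i.i.d.\ entries with mean $0$ and variance $1$. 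Since $|\targetpert(x)|\le M_2/\eps$, one obtains $\|\bZ\|_{\max}\le C/\eps$ for small $\eps$, establishing item~1.

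The next step is to compute/expand the three scalar constants. A short integration-by-parts argument (writing $p_W' p_W =(p_W+\eps)p_W' - \eps p_W'$ and using $\int p_W'=0$, $[\log(p_W+\eps)]_{-\infty}^{\infty}=0$) shows $a_\eps=0$ exactly, which is important because otherwise $a_\eps \bone\bone^{\sT}$ would contribute $|a_\eps|\sqrt{mn}$ to the operator norm. Similar computations show $b_\eps^2=\int (p_W')^2 p_W/(p_W+\eps)^2\,\de x$ and $c_\eps=\int (p_W')^2/(p_W+\eps)\,\de x$, both of which converge monotonically to $\Info_W$ as $\eps\to 0$ by dominated convergence and the definition of Fisher information. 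Substituting back, I decompose
\begin{equation*}
\bar\bDelta \;=\; \underbrace{(b_\eps-\sqrt{\Info_W})\,\bZ}_{\rm (I)}\;+\;\underbrace{(c_\eps-\Info_W)\,\bX}_{\rm (II)}\;+\;\underbrace{(\targetpert'(\bW)-c_\eps\bone\bone^{\sT})\odot\bX}_{\rm (III)}\;+\;\underbrace{\tfrac12\,\targetpert''(\bxi)\odot\bX\odot\bX}_{\rm (IV)}.
\end{equation*}

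Each piece will be controlled separately. Term (I) uses $\|\bZ\|_{\op}=O(\sqrt{m}+\sqrt{n})$ (Bai--Yin / bounded-entry random matrix bounds), giving $\|{\rm (I)}\|_{\op}/(mn)^{1/4}\le |b_\eps-\sqrt{\Info_W}|\cdot O(1)\to 0$. Term (II) is handled by $\|\bX\|_{\op}\le M(m\vee n)^{1/2}$ and $|c_\eps-\Info_W|\to 0$. Term (IV) uses the pointwise bound $|\targetpert''|\le C/\eps^3$ from {\sf A2} together with the structural assumption $\|\bX\odot\bX\|_{\op}\le M(m\vee n)^{1/2-\eta}$, producing an $\eps$-dependent constant times $(m\vee n)^{-\eta}\cdot(mn)^{1/4}\to 0$ for fixed $\eps$. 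Term (III) is the main obstacle: it is a random matrix with independent (not identically distributed) mean-zero entries $E_{ij}X_{ij}$, bounded by $C X_{ij}/\eps$, with variance $X_{ij}^2\Var(\targetpert'(W))$. Here I plan to invoke a non-commutative Bernstein / Bandeira--van~Handel bound, so that
\begin{equation*}
\|\text{(III)}\|_{\op}\;\lesssim\;\eps^{-1}\Bigl(\|\bX\|_{\ell_2\to\ell_\infty}+\|\bX^{\sT}\|_{\ell_2\to\ell_\infty}\Bigr)\cdot\sqrt{\log(m+n)}\;+\;\eps^{-1}\|\bX\|_{\max}\cdot\text{(log factor)},
\end{equation*}
which by the incoherence conditions is $O(\eps^{-1}n^{1/2-\eta}\sqrt{\log n})=o((mn)^{1/4})$ for any fixed $\eps$.

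For the $L^2$-type bound \eqref{eqn:funny-bound-error-decomposition}, I plan to combine these arguments with the $L^q$ moment bounds for operator norms underlying the above deviation inequalities (e.g.\ the moment version of Bandeira--van~Handel), together with crude deterministic envelopes such as $\|\targetpert(\bY)\|_{\op}\le \|\targetpert(\bY)\|_F\le (M_2/\eps)\sqrt{mn}$ to obtain uniform integrability. The truly delicate step will be (III): the entries involve a bounded but possibly heavy-tailed scaling ($\eps^{-1}$), so I must ensure that the Bernstein-type bound I invoke only loses constants depending on $\eps$ (not powers of $mn$), and that the incoherence exponent $\eta>0$ is enough to absorb the resulting $\sqrt{\log n}$ factor.
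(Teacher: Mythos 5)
Your proposal follows essentially the same route as the paper's own proof: the same entry-wise Taylor expansion around $\bW$, the same key observation that $\E[\targetpert(W)]=0$, the same four-term error decomposition, and the same treatment of each term (a matrix-concentration bound in terms of row/column $\ell_2$ norms and the maximal entry for the linear term $(\targetpert'(\bW)-c_\eps)\odot\bX$, the bound $\|\targetpert''\|_\infty\,\opnorm{\bX\odot\bX}$ for the quadratic remainder, and scalar convergences $b_\eps,c_\eps\to\Info_W^{1/2},\Info_W$ for the remaining two, with moment bounds plus Borel--Cantelli giving the almost-sure and $L^2$ statements). The only slips are cosmetic and do not affect the argument since $\eps$ is fixed before $n\to\infty$: one has $\|\targetpert'\|_\infty=O(\eps^{-2})$ rather than $O(\eps^{-1})$, and the operator-norm bound on the quadratic remainder requires the (true, because $\bX\odot\bX$ is entrywise nonnegative) domination step that $|A_{ij}|\le B_{ij}$ entrywise implies $\opnorm{\bA}\le\opnorm{\bB}$, which the paper justifies via trace powers.
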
  
The basic idea in this lemma is to control various error terms in the
Taylor expansion  (\ref{eq:TayolorExpansion}).
A complete  proof  is deferred to Appendix~\ref{sec:proof-upper-bound-lemma-one}.

\begin{lemma}
\label{lemma:upper-bound-lemma-two}
Assume that the conditions of Theorem~\ref{theorem:upper-bound} hold.
We have:
\begin{equation}
\label{eqn:upper-bound-lemma-two-one}
\lim_{\eps \to 0^+} \lim_{n \to \infty, m/n\to \gamma}
	\frac{1}{(mn)^{1/4}} \opnorm{\widehat{f}_{Y}(\bY) - \targetpert(\bY)} = 0,
\end{equation}
and for some positive $\nu \in (0, 1]$, 
\begin{equation}
\label{eqn:upper-bound-lemma-two-two}
\lim_{\eps \to 0} \lim_{n \to \infty, m/n\to \gamma}
	\frac{1}{(mn)^{(1+\nu)/4}} \E \opnorm{\widehat{f}_{Y}(\bY) - \targetpert(\bY)}^{(1+\nu)} = 0.
\end{equation} 
Moreover, we have: 
\begin{equation}
\label{eqn:upper-bound-lemma-two-three}
\lim_{\eps \to 0} \lim_{n \to \infty, m/n\to \gamma}
	\big|\hat{\Info}_{W, \eps} - \Info_W\big| = 0.
\end{equation}
\end{lemma}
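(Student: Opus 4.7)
The plan is to reduce the lemma to uniform convergence of the kernel estimators $\hat p_Y, \hat p_Y'$ to $p_W, p_W'$. The key algebraic identity is
\begin{equation*}
\hat f_Y(x) - f_{W,\eps}(x) = \frac{p_W'(x)\bigl(\hat p_Y(x) - p_W(x)\bigr) - \bigl(\hat p_Y'(x) - p_W'(x)\bigr)\bigl(p_W(x) + \eps\bigr)}{\bigl(\hat p_Y(x) + \eps\bigr)\bigl(p_W(x) + \eps\bigr)}\,,
\end{equation*}
which, combined with the bounds $\hat p_Y, p_W \ge 0$ and $\|p_W'\|_\infty \le M_2$ from \textsf{A2}, gives the deterministic pointwise estimate $|\hat f_Y(x) - f_{W,\eps}(x)| \le C\eps^{-2}\bigl(|\hat p_Y(x) - p_W(x)| + |\hat p_Y'(x) - p_W'(x)|\bigr)$. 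Moreover both $\hat f_Y$ and $f_{W,\eps}$ are bounded in sup norm by $C\eps^{-1}$, which will serve as a crude uniform bound for atypical entries.

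For the uniform convergence at rate $o(n^{-1/2})$, I would leverage that there are $N = mn \asymp n^2$ samples. On any compact interval $[-R, R]$, a bracketing-entropy or Talagrand concentration argument combined with the bandwidth choices $h_n = n^{-\eta_1}$, $\eta_1 \in (1/4, 1)$, and $h_n' = n^{-\eta_2}$, $\eta_2 \in (1/4, 1/3)$, gives stochastic fluctuation of order $(\log n/(Nh_n))^{1/2}$ for $\hat p_Y$ and $(\log n/(Nh_n'^3))^{1/2}$ for $\hat p_Y'$, both $o(n^{-1/2})$; the biases are $O(h_n^2)$ and $O(h_n'^2)$, also $o(n^{-1/2})$ by the smoothness in \textsf{A2}. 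To handle that the samples are $Y_{ij} - \bar Y = (W_{ij} - \bar W) + (X_{ij} - \bar X)$ rather than $W_{ij}$, Taylor-expand $K$ in the perturbation $X_{ij} - \bar X$: the linear term's coefficients sum to zero across $(i,j)$, with stochastic residual $O\bigl(\|\bX\|_F \, h_n^{1/2}/(mn\, h_n^2)\bigr)$, and the quadratic remainder is $O\bigl(\|\bX\|_F^2/(mn\, h_n^3)\bigr)$. Both contributions are $o(n^{-1/2})$ thanks to the entrywise constraints in $\cF_{m,n}(M,\eta)$, since $\opnorm{\bX \odot \bX} \le M(m \vee n)^{1/2-\eta}$ controls $\|\bX\|_F^2 \le \sqrt{mn}\,\opnorm{\bX\odot\bX}$.

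For \eqref{eqn:upper-bound-lemma-two-one}, use $\opnorm{A} \le \|A\|_F$ and split the Frobenius sum: on typical indices $\{|Y_{ij}| \le R_n\}$, each squared entry is $o_\eps(1/n)$ by the previous step, contributing a total $o_\eps((mn)^{1/2})$; on atypical indices, the count is at most $(mn)M_1/R_n^2$ in expectation (by Chebyshev, using \textsf{A1}), with each summand bounded by $O(\eps^{-2})$. A slowly growing choice of $R_n$ balances the two, yielding $\opnorm{A} = o_\eps((mn)^{1/4})$ almost surely; the $L^{1+\nu}$ moment bound \eqref{eqn:upper-bound-lemma-two-two} follows from the same decomposition combined with $L^p$-versions of the KDE concentration inequalities. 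For \eqref{eqn:upper-bound-lemma-two-three}, write $\hat\Info_{W,\eps} = (mn)^{-1}\sum_{ij} \hat f_Y(Y_{ij} - \bar Y)^2 + \eps$, replace $\hat f_Y$ by $f_{W,\eps}$ on typical indices via the uniform convergence (the atypical piece being $o_\eps(1)$ after division by $mn$), then invoke the law of large numbers for the bounded i.i.d.\ variables $f_{W,\eps}(W_{ij})^2$ to converge to $\E f_{W,\eps}(W)^2$, and finally let $\eps \to 0$ by monotone convergence to recover $\Info_W$. The main obstacle is the tension between the tight $o(n^{-1/2})$ rate required in step two and the weak tail assumption \textsf{A1}: global uniform KDE convergence on $\R$ is hopeless with only a second moment, so the truncation at $R_n$ and the atypical-index accounting must be balanced carefully.
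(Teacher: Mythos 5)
There is a genuine gap in how you transfer the kernel estimates from the noise sample to the observed sample. Your plan is to Taylor-expand the kernel $K$ in the perturbation $X_{ij}-\oX$, use the exact cancellation of the linear term, and bound the quadratic remainder by $O\big(\|\bX\|_F^2/(mn\,h_n^3)\big)$ (and, implicitly, by $O\big(\|\bX\|_F^2/(mn\,(h_n')^4)\big)$ for $\hat p_Y'$). With $\|\bX\|_F^2/(mn)\lesssim M (m\vee n)^{-1/2-\eta}$ this remainder is of order $n^{-1/2-\eta+3\eta_1}$, which is $o(n^{-1/2})$ only if $3\eta_1<\eta$; since the lemma allows $\eta_1\in(1/4,1)$ and $\eta>0$ is an arbitrary (typically small) class parameter, the remainder is not even $o(1)$ over most of the admissible bandwidth range (and the analogous remainder for $\hat p_Y'$ requires $4\eta_2<\eta$, impossible for $\eta_2>1/4$). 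The same obstruction hits the $\oW$-shift if treated by expanding $K$, since $|\oW|/h_n^2\approx n^{-1+2\eta_1}$ is not $o(n^{-1/2})$ for $\eta_1>1/4$. The paper never expands $K$: it writes $\hat p_Y(x)=\hat p_W(x+\oW;\bX)$ exactly, controls the stochastic part $\hat p_W(\cdot;\bX)-\E\hat p_W(\cdot;\bX)$ by Bernstein plus a covering argument (the shifts $X_{ij}$ are harmless there, since boundedness and variance of $K((W_{ij}+\wtilde X_{ij}-x)/h_n)$ do not see the shift), and puts the entire effect of the signal into the \emph{bias}, Taylor-expanding the smooth density $p_W$ (not $K$) to second order and using the first-order kernel together with $\sum_{ij}\wtilde X_{ij}=0$; this yields a bias $O\big(M_2(h_n^2+\tfrac{1}{mn}\|\bX\|_F^2+|\oW|)\big)$ with no negative powers of the bandwidth multiplying $\|\bX\|_F^2$, which is what makes the $o(n^{-1/2})$ rate attainable for all allowed $\eta_1,\eta_2$.

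A secondary, reparable issue: your crude bound $\|\hat f_Y\|_\infty\le C\eps^{-1}$ is false; since $\hat p_Y'$ is only bounded by $\|K'\|_\infty (h_n')^{-2}$, the correct deterministic bound is $\|\hat f_Y\|_\infty\le C\eps^{-1}(h_n')^{-2}$, which grows polynomially in $n$. With this correction your ``slowly growing'' truncation level $R_n$ no longer balances the atypical-entry contribution (one needs $R_n$ polynomially large, which still works since the covering argument only pays a logarithm), and the same $(h_n')^{-2}$ factor is exactly why, for the moment bound \eqref{eqn:upper-bound-lemma-two-two}, the paper restricts to a small exponent $\nu$ and a bad event of probability $(mn)^{-(1-\kappa)}$ with $\kappa$ small, rather than obtaining a second-moment bound as your sketch suggests. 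Apart from these two points, your skeleton (entrywise reduction via the pseudo-Lipschitz bound on $s\mapsto -s/(t+\eps)$, uniform KDE rates $o(n^{-1/2})$ from $mn$ samples, Chebyshev accounting for large entries under {\sf A1}, Hoeffding/LLN plus $\eps\to 0$ by monotone convergence for $\hInfo_{W,\eps}$) matches the paper's argument.
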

The key step in proving this lemma is to control  $\max_{i\le m,j\le n}|\hf_Y(Y_{ij})-\targetpert(Y_{ij})|$,
which then allows to bound the operator norm difference $\|\widehat{f}_{Y}(\bY) - \targetpert(\bY)\|_{\op}$.
In order to bound the maximum entry-wise difference, we use uniform bounds on the discrepancy between 
$\hp_W$ and $p_W$, and $\hp'_W$ and $p'_W$ on suitable intervals. 
We refer to 
Appendix~\ref{sec:proof-upper-bound-lemma-two} for the full proof.

It is clear that Theorem~\ref{proposition:upper-bound-proposition} 
follows form the last two lemmas.

\subsection{A random matrix theory result}
\label{sec:RMT}
As mentioned above, a key step in proving Lemma \ref{lemma:upper-bound-as} is to analyze the low-rank plus noise model that is derived
 in Theorem 
\ref{proposition:upper-bound-proposition}. In this section we state a random matrix theory result in order to prove Lemma \ref{lemma:upper-bound-as},
but which is potentially useful in other applications as well. Notice that, in Theorem \ref{proposition:upper-bound-proposition},
the signal and noise components are scaled --respectively-- by $\Info_W$, and $\sqrt{\Info_W}$. However, these scalings can be absorbed into a scaling
of the singular values of $\bX_n$, and an overall scaling of the matrix $\bY_n$, which affects only the singular values on $\bY_n$ and not its singular vectors. For the 
notational simplicity, we will remove these scalings in the present section.

We consider therefore a sequence of matrices $\bX_n \in \R^{m \times n}$ with rank $r$, and singular value decomposition:
\begin{equation}
\bX_n = (mn)^{1/4}\sum_{i=1}^r\sigma_i\bu_i\bv_i^{\sT},\label{eq:XnDef}
\end{equation}
where $\bu_i = \bu_i(n)\in \R^{m}$ and 
$\bv_i=\bv_i(n)\in \R^{n}$ are singular vectors, and $\sigma_1\ge \sigma_2\ge \dots\ge \sigma_r$ are singular values of $\bX_n/(mn)^{1/4}$ in decreasing order .
 This sequence of matrices is indexed by the number of columns, and we will assume the number of rows to be 
$m=m(n)$, such that $\lim_{n\to\infty} m(n)/n=\gamma$. 

Let $\bW_n\in \R^{m \times n}$ be a random matrix 
with i.i.d entries $W_{i, j}$ satisfying $\E \bW_n= \bzero$ and define $\bY_n\in \R^{m \times n}$ to be
\begin{equation}
\bY_n = \bX_n+ \bW_n = (mn)^{1/4}\sum_{i=1}^r\sigma_i\bu_i\bv_i^{\sT}, + \bW_n. \label{eq:YnDef}
\end{equation}
Introduce the singular value decomposition
\begin{equation}
\bY_n = (mn)^{1/4}\sum_{i=1}^{m\wedge n}\hsigma_i\hbu_i\hbv_i^{\sT},\label{eq:YnSVD}
\end{equation}
where  $\hbu_i =\hbu_i(n)\in \R^{m}$,  $\hbv_i =\hbv_j(n)\in \R^{n}$ are left and right singular vectors
and $\hsigma_1\ge \hsigma_2\ge \dots$ the singular values of $\bY_n/(mn)^{1/4}$ in decreasing order.

Recall the definition of $H(\sigma)$ in Eq.~(\ref{eq:Hsigma}), and further define the functions
\begin{equation}
G^{(1)}(\sigma) = 
		\left(\frac{1-\sigma^{-4}}{1+\gamma^{1/2}\sigma^{-2}}\right)^{1/2},
~~~\text{and}~~~
G^{(2)}(\sigma) = 
		\left(\frac{1-\sigma^{-4}}{1+\gamma^{-1/2}\sigma^{-2}}\right)^{1/2}.
\end{equation}
The relevant random matrix theory estimates are stated below, and are essentially a restatement of a theorem in \cite{Ding17}.
\begin{theorem}
\label{theorem:general-deform}
Fix $r \in \N$ and singular values $\sigma_1,\dots,\sigma_r$. Let  $\{(\bu_1(n),\dots,\bu_r(n))\}_{n\in \N}$ and 
$\{(\bv_1(n),\dots,\bv_r(n))\}_{n\in \N}$ be two deterministic sequences of orthonormal 
sets of vectors, and define the matrices $\bX_n$ and $\bY_n$ as in Eqs.~(\ref{eq:XnDef}), (\ref{eq:YnDef}), 
with singular value decomposition (\ref{eq:YnSVD}).  Suppose that the entries of $\bW_n$ are i.i.d with
$\E W_{i, j} = 0$ and $\E W_{i, j}^2 = 1$ and moreover, $\sup_{n\ge 1} \E\{|W_{ij}|^p\}<\infty$ for each $p\ge 1$.
Assume $m=m(n)$ to be such that $\lim_{n\to\infty}m(n)/n=\gamma\in(0,\infty)$.

Then for each $i \in [r]$, 
\begin{equation}
\label{eqn:singular-value-convergence}
\hat{\sigma}_i \asto H(\sigma_i).
\end{equation}
Moreover, assume that for some $k \in [r]$, 
\begin{equation*}
\sigma_1 > \sigma_2 > \ldots > \sigma_k > 1 \ge \sigma_{k+1} 
	\ge \ldots \ge \sigma_r.
\end{equation*}
Then, we have for any $i \in [k]$, 
\begin{equation}
\label{eqn:singular-vector-sign-issue}
 \langle \hbu_i, \bu_i \rangle \langle \hbv_i, \bv_i \rangle \asto 
 	G^{(1)}(\sigma_i) G^{(2)}(\sigma_i).
\end{equation}
Moreover,  for any $i \in [k], j \in [r]$ we have
\begin{align}
\big| \langle \hbu_i, \bu_j \rangle\big| &\asto \big\{G^{(1)}(\sigma_i)\vee G^{(2)}(\sigma_i)\big\}  \indic{i = j}\label{eqn:singular-vector-convergence-A}\\
\big| \langle \hbv_i, \bv_j \rangle\big|&\asto \big\{G^{(1)}(\sigma_i)\wedge G^{(2)}(\sigma_i)\big\} \indic{i = j}. \label{eqn:singular-vector-convergence-B}
\end{align}
\end{theorem}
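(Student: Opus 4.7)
The plan is to exploit the Hermitization trick and reduce Theorem~\ref{theorem:general-deform} to outlier eigenvalue/eigenvector asymptotics for a finite-rank symmetric perturbation of a symmetric random matrix. Define the $(m+n)\times(m+n)$ symmetric matrices
\begin{equation*}
\tilde{\bY}_n = \frac{1}{(mn)^{1/4}}\begin{pmatrix} \bzero & \bY_n\\ \bY_n^{\sT} & \bzero\end{pmatrix},\qquad \tilde{\bW}_n = \frac{1}{(mn)^{1/4}}\begin{pmatrix} \bzero & \bW_n\\ \bW_n^{\sT} & \bzero\end{pmatrix},
\end{equation*}
and $\tilde{\bX}_n = \tilde{\bY}_n-\tilde{\bW}_n$, which has rank $2r$. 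The nonzero eigenvalues of $\tilde{\bY}_n$ are exactly $\{\pm \hat\sigma_i\}_{i\le m\wedge n}$, with associated eigenvectors built linearly from $\hbu_i$ and $\hbv_i$. Thus the claims of the theorem translate into outlier eigenvalue/eigenvector asymptotics for $\tilde{\bY}_n$, a setting governed by the classical theory of deformed Wigner-type matrices.

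For the singular-value convergence, the plan is to write down the secular equation: every eigenvalue $z$ of $\tilde{\bY}_n$ outside the spectrum of $\tilde{\bW}_n$ satisfies
\begin{equation*}
\det\bigl(I_{2r} - \bB^{\sT}\,G_{\tilde{\bW}_n}(z)\,\bA\bigr) = 0,
\end{equation*}
where $\tilde{\bX}_n = \bA\bB^{\sT}$ is a rank-$2r$ factorization assembled from the vectors $(\bu_i^{\sT},\bzero)^{\sT}$ and $(\bzero,\bv_i^{\sT})^{\sT}$, and $G_{\tilde{\bW}_n}(z) = (zI - \tilde{\bW}_n)^{-1}$ is the resolvent. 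Marchenko--Pastur theory, combined with an \emph{isotropic} local law, implies that the deterministic bilinear forms $\bu_j^{\sT}(z^2 I - \bW_n\bW_n^{\sT}/\sqrt{mn})^{-1}\bu_k$ and $\bv_j^{\sT}(z^2 I - \bW_n^{\sT}\bW_n/\sqrt{mn})^{-1}\bv_k$ converge almost surely to $m_1(z)\delta_{jk}$ and $m_2(z)\delta_{jk}$ respectively, uniformly for $z$ in a neighborhood of any prospective outlier location. Substituting these limits, the $2r\times 2r$ secular determinant factorizes into $r$ scalar equations of the form $\sigma_i^2 z^2 m_1(z)m_2(z) = 1$, whose unique solution in $(\gamma^{1/4}+\gamma^{-1/4},\infty)$ when $\sigma_i > 1$ is $z = H(\sigma_i)$, verified by direct computation using the MP self-consistent equations. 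This establishes \eqref{eqn:singular-value-convergence}.

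For the singular-vector overlaps, the plan is to represent the spectral projector onto the outlier eigenspace of $\tilde{\bY}_n$ near $H(\sigma_i)$ as a Cauchy contour integral of $G_{\tilde{\bY}_n}(z)$. The Sherman--Morrison--Woodbury identity rewrites $G_{\tilde{\bY}_n}(z)$ in terms of $G_{\tilde{\bW}_n}(z)$ and the rank-$2r$ perturbation $\tilde{\bX}_n$. Evaluating the residues at the simple poles $z = H(\sigma_i)$ using the isotropic limits for $G_{\tilde{\bW}_n}$ yields, after straightforward but somewhat tedious algebra, closed-form expressions for $|\langle \hbu_i,\bu_j\rangle|$, $|\langle \hbv_i,\bv_j\rangle|$ and the signed product $\langle\hbu_i,\bu_i\rangle\langle\hbv_i,\bv_i\rangle$. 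One then verifies that these expressions coincide with $G^{(1)}(\sigma_i)$, $G^{(2)}(\sigma_i)$, and their product via the explicit forms of $m_1$ and $m_2$. The vanishing of the off-diagonal overlaps ($j\ne i$) follows from the distinctness of the $\sigma_i$'s combined with the proportionality $m_\bullet(z)\delta_{jk}$ in the isotropic limit.

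The main technical obstacle is precisely this \emph{isotropic} convergence of $G_{\tilde{\bW}_n}(z)$ when probed by the fixed but otherwise arbitrary singular vectors $\bu_j(n), \bv_j(n)$, with sufficient uniformity in $z$ to validate the contour-integral manipulation. This is considerably stronger than convergence of the empirical spectral distribution or even the usual averaged local law. For entries with all finite moments, the required isotropic local Marchenko--Pastur law is available from \cite{BloemendalKnYa16}, and a proof tailored to the deterministic low-rank-signal setting is given in \cite{Ding17}. We invoke these as black boxes; the remaining determinantal identities and residue calculations are routine and are carried out in the appendix.
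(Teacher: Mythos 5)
Your proposal is correct in outline, but it takes a genuinely different route from the paper. You Hermitize, write the secular equation $\det(I_{2r}-\bB^{\sT}G_{\tilde\bW_n}(z)\bA)=0$, and extract eigenvector overlaps from a contour integral of the perturbed resolvent, importing the isotropic/anisotropic local Marchenko--Pastur law as a black box to evaluate all the deterministic bilinear forms of $G_{\tilde\bW_n}(z)$. The paper instead stays with the unsymmetrized singular-value problem and follows the framework of \cite{BGN12}: it reduces the theorem to six almost-sure limits of resolvent bilinear forms in deterministic directions, gets the two diagonal-block limits from \cite{BloemendalKnYaYi14}, obtains the derivative quantities by the Montel--Vitali ``derivative trick'' of \cite{BaiSi10}, and then proves the one genuinely new ingredient --- the mixed form $\langle \bu_n,(z^2\id_m-\bZ_n\bZ_n^{\sT})^{-1}\bZ_n\bv_n\rangle\to 0$ for \emph{fixed} $\bu_n,\bv_n$ --- from scratch by a fourth-moment/graph-counting argument. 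That mixed form is exactly the quantity your plan sweeps into the ``isotropic law'' black box: it is the off-diagonal block of the Hermitized resolvent probed by two different deterministic vectors, it appears unavoidably in your secular determinant and in your residue computation, and it is the reason the paper does not simply quote \cite{BGN12} (whose Proposition 8.12 needs randomness of the singular vectors). Your route is legitimate --- it is essentially the route of \cite{Ding17}, which the paper explicitly acknowledges, resting on the anisotropic local law of \cite{knowles2017anisotropic} --- but you should make sure the reference you invoke actually controls that off-diagonal-block form for deterministic directions (the anisotropic law of Knowles--Yin does; the statements the paper takes from \cite{BloemendalKnYaYi14} are only used for the diagonal blocks). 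The trade-off is clear: your approach is shorter and inherits uniformity in $z$ and strong error bounds from the local-law technology, while the paper's approach is self-contained and elementary (moment method plus Borel--Cantelli and a complex-analysis lemma), at the cost of a lengthy combinatorial estimate.
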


\begin{remark}
Notice that Theorem~\ref{theorem:general-deform} assumes the top $k$ singular values of the signal matrix $\bX_n/(mn)^{1/4}$ to be distinct.
This is necessary in order to obtain convergence results of the form (\ref{eqn:singular-vector-sign-issue}) that provide a one-to-one correspondence between 
singular vectors of the signal matrix, and singular vectors of the noisy matrix $\bY_n$.
However, no such assumption is made in our main result, Theorem \ref{thm:Main}, or in Lemma \ref{lemma:upper-bound-as}. Indeed, in
the proof of Lemma  \ref{lemma:upper-bound-as} we remove the non-degeneracy assumption via a perturbation argument.
\end{remark}

\begin{remark}
Results analogous to  Theorem~\ref{theorem:general-deform}  have been established under different 
settings~\cite{BGN12, BloemendalKnYa16}.
However, earlier work assumes that either $\bU_n=(\bu_1(n),\dots,\bu_r(n))$ or $\bV_n=(\bv_1(n),\dots,\bv_r(n))$ is random and independent from 
$\bW_n$. Such results cannot used for the present application in which 
both $\bU_n$ and $\bV_n$ are fixed. The only case in which they can be applied is when $\bW_n$ has 
bi-unitarily invariant distribution which, for i.i.d. entries,  implies that $\bW_n$ has Gaussian entries. 
 Using invariance, we can effectively replace $\bU_n$, $\bV_n$ by uniform random rotations of the same matrices.

 Ding \cite{Ding17} addressed the case of deterministic $\bU_n$, $\bV_n$,  establishing the estimates in Theorem \ref{theorem:general-deform},
by using earlier results of Knowles and Yin \cite{knowles2017anisotropic}.
We propose here an independent proof both for readability and to make our assumptions more transparent.
\end{remark}

Our proof  of  Theorem \ref{theorem:general-deform} (cf. Appendix \ref{app:RMT}) follows a well-established strategy, see e.g. \cite{BGN12}.
We express the singular vectors and singular values of $\bY_n$ in terms of the resolvent of the noise $\bW_n$, and the
low-rank perturbation. The technical core of this type of argument is to control the resolvent, which we do using the moments' method.

\subsection*{Acknowledgements}

This work was partially supported by grants NSF DMS-1613091, NSF CCF-1714305 and NSF IIS-1741162
and ONR N00014-18-1-2729.

\bibliographystyle{amsalpha}
\bibliography{bib}

\appendix
\renewcommand{\thesubsection}{\Alph{section}.\arabic{subsection}}

\numberwithin{equation}{section}
\numberwithin{theorem}{section}
\numberwithin{proposition}{section}


\newpage

\newcommand{\bS}{\boldsymbol{S}}
\newcommand{\bE}{\boldsymbol{E}}
\section{Proof of Lemma~\ref{lemma:upper-bound-as}}
\label{sec:proof-lemma-upper-bound-as}
\subsection{Notation}

We introduce the common notation used throughout 
the proof of Lemma~\ref{lemma:upper-bound-as}. By Theorem 
\ref{proposition:upper-bound-proposition}, we know that, 
$\what{\bX}\org(\bY) = \what{f}_{Y, \eps}(\bY)$ has the decomposition below: 
\begin{equation}
\label{eqn:decomposition-f-Y-eps-bY}
\what{\bX}\org(\bY) =  \Info_W \bX + {\sqrt{\Info_W}}\bZ + \bDelta, 
\end{equation}
where $\bDelta$ is a random matrix satisfying 
\begin{equation}
\label{eqn:as-zero-Delta-A}
\lim_{\eps \to 0} \limsup_{n \to \infty, m/n\to \gamma} \frac{1}{(mn)^{1/4}}
	\opnorm{\bDelta} = 0
\end{equation}
and $\bZ$ is some random matrix, whose entries are i.i.d bounded with mean $0$
variance $1$ and moreover, for some constants $\eps_0, C > 0$ independent of 
$m, n$, we have $\normmax{\bZ} \le C\eps^{-1}$ for $\eps \le \eps_0$ and $m, n\in \N$.

Denote the SVD decomposition of $\bX$ to be
\begin{equation}
\bX = (mn)^{1/4} \bU \bSigma \bV^{\sT}
\end{equation}
where $\bU \in \R^{m \times r}$, $\bV \in \R^{n \times r}$ and 
$\bSigma \in \R^{r \times r}$.
Let $k$ be the number of singular values of $\bX$ strictly larger than 
the threshold $\Info_W^{-1/2}$, i.e., 
\begin{equation}
k = \big|\{l: \Sigma_{l, l} > \Info_W^{-1/2}\}\big|, 
\end{equation}
and $\hat{k}$ be the rank of the estimator $\what{\bX}^{(0)}$, i.e., 
\begin{equation}
\label{eqn:def-k}
\hat{k} = \big|\{l: \hat{\Sigma}_{i, i} \neq 0\}\big|. 
\end{equation}
Recall the definition of $\what{\bU}, \what{\bV}, \what{\bSigma}, 
\hbSigma^{(0)}$ from Eq~\eqref{eqn:svd-of-hbX0} and Eq~\eqref{eqn:def-hbSigma}.
Define $\what{\bX}_k \in \R^{m \times n}$ by
\begin{equation}
\what{\bX}_k = (mn)^{1/4}\what{\bU}_k \what{\bSigma}_k \what{\bV}_k^{\sT}. 
\end{equation}
Define $\wtilde{\bX}^{(0)} \in \R^{m \times n}$ and its SVD decomposition as follows:
\begin{equation}
\wtilde{\bX}^{(0)} = \Info_W \bX + {\sqrt{\Info_W}}\bZ
	~~\text{and}~~
	\wtilde{\bX}^{(0)} = (mn)^{1/4}\wtilde{\bU} \wtilde{\bSigma}\org \wtilde{\bV}^{\sT},
\end{equation}
Define $\wtilde{\bSigma}_k \in \R^{k \times k}$ to be a diagonal matrix
with diagonal entries given by
\begin{equation}
\wtilde{\Sigma}_{i, i} = \begin{cases}
	\hat{\Info}_{W, \eps}^{-1/2} H^{-1}(\hat{\Info}_{W, \eps}^{-1/2}\wtilde{\Sigma}_{i, i}\org)
		&\text{if $\wtilde{\Sigma}_{i, i}\org \ge H(1) \hat{\Info}_{W, \eps}^{1/2}$} \\
	0 &\text{otherwise}
	\end{cases}
\end{equation}
Define the rank $k$ matrix $\wtilde{\bX}_k \in \R^{m \times n}$ by
\begin{equation}
\wtilde{\bX}_k = (mn)^{1/4}\wtilde{\bU}_k \wtilde{\bSigma}_k \wtilde{\bV}_k^{\sT}.
\end{equation}

\subsection{Proof Outline}
Our proof of Lemma~\ref{lemma:upper-bound-as} consists of three steps. 
In the first step, we argue that 
$\hat{k} = k$ with high probability, and $\what{\bX}(\bY) \approx
		\what{\bX}_k(\bY)$, as stated precisely in the next lemma. Its proof is 
given in Section~\ref{sec:proof-lemma-proof-upper-bound-first-step}.
\begin{lemma}
\label{lemma:proof-upper-bound-first-step}
We have 
\begin{equation}
\lim_{\delta\to 0}\lim_{\eps \to 0}\lim_{n\to \infty, m/n\to \gamma}
	\frac{1}{(mn)^{1/4}}\opnorm{\what{\bX}(\bY)-
		\what{\bX}_k(\bY)} = 0
\end{equation}
\end{lemma}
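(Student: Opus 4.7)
The plan is to show that, under the iterated limit $n\to\infty$ (followed by $\eps\to 0$ and then $\delta\to 0$), the effective rank $\hat k$ of $\widehat{\bX}^{(0)}$ equals $k$ almost surely. Once $\hat k = k$, the definitions give $\widehat{\bX}(\bY) = (mn)^{1/4}\widehat{\bU}\widehat{\bSigma}\widehat{\bV}^{\sT} = (mn)^{1/4}\widehat{\bU}_k\widehat{\bSigma}_k\widehat{\bV}_k^{\sT} = \widehat{\bX}_k(\bY)$ exactly, because the $k$ surviving singular values are in positions $1,\dots,k$ by monotonicity. Hence the operator norm difference in the lemma is identically zero.

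To identify the limiting spectrum of $\widehat{\bX}^{(0)}$, I would apply Theorem~\ref{proposition:upper-bound-proposition} to write
\begin{equation*}
\widehat{\bX}^{(0)} \;=\; \sqrt{\Info_W}\,\bigl(\sqrt{\Info_W}\,\bX + \bZ\bigr) \;+\; \bDelta,
\end{equation*}
and view $\sqrt{\Info_W}\,\bX + \bZ$ as a rank-$r$ deformation of an i.i.d.\ noise matrix $\bZ$ (mean $0$, variance $1$, entries bounded by $C\eps^{-1}$ so all moments finite for each fixed $\eps$). Applying Theorem~\ref{theorem:general-deform} with effective signal singular values $\sqrt{\Info_W}\,\sigma_i$, then transferring to the singular values of $\widehat{\bX}^{(0)}$ via Weyl's inequality and the bound $\|\bDelta\|_{\op} = o((mn)^{1/4})$ a.s.\ (Eq.~\eqref{eqn:as-zero-Delta-A}), yields almost surely, for each $i$,
\begin{equation*}
\frac{\widehat{\Sigma}^{(0)}_{i,i}}{(mn)^{1/4}} \;\longrightarrow\;
\begin{cases}
\sqrt{\Info_W}\,H\!\bigl(\sqrt{\Info_W}\,\sigma_i\bigr), & \sigma_i > 1/\sqrt{\Info_W}, \\[2pt]
\sqrt{\Info_W}\,H(1), & \sigma_i \le 1/\sqrt{\Info_W},
\end{cases}
\end{equation*}
where for indices $i>r$ the bound by $\sqrt{\Info_W}\,H(1)$ comes from Weyl plus the standard Bai--Yin limit $\|\bZ\|_{\op}/(mn)^{1/4}\to H(1)$, applicable since $\bZ$ has bounded i.i.d.\ entries.

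Using $\widehat{\Info}_{W,\eps}\to \Info_W$ from Theorem~\ref{proposition:upper-bound-proposition}, the thresholding cutoff $(1+\delta)H(1)\widehat{\Info}_{W,\eps}^{1/2}$ converges to $(1+\delta)H(1)\sqrt{\Info_W}$. Since $H$ is strictly increasing on $[1,\infty)$ with $H(1)=\gamma^{1/4}+\gamma^{-1/4}$, for each $i\le k$ the limit $\sqrt{\Info_W}\,H(\sqrt{\Info_W}\sigma_i)$ strictly exceeds $\sqrt{\Info_W}\,H(1)$; choosing $\delta$ smaller than the gap guarantees that $\widehat{\Sigma}^{(0)}_{i,i}$ exceeds the threshold eventually. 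For each $i>k$, the limit equals (or is at most) $\sqrt{\Info_W}\,H(1)$, which is strictly below $(1+\delta)\sqrt{\Info_W}\,H(1)$ for any $\delta>0$, so the entry is zeroed. Hence $\hat k = k$ almost surely for $n$ large, $\eps$ small, $\delta$ small, in the stated order.

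The main subtlety I anticipate is that Theorem~\ref{theorem:general-deform} is stated for a \emph{fixed} sequence of signal singular values $\sigma_i$, whereas the hypothesis $\bX_n\in\cF_{m,n}(r,M,\eta)$ allows the $\sigma_i(n)$ to vary with $n$. I would handle this by a standard subsequence argument: since the $\sigma_i(n)$ are uniformly bounded, along any subsequence pass to a further subsequence so that $\sigma_i(n)\to\sigma_i^\star$ for each $i\le r$; the comparison to the threshold $1/\sqrt{\Info_W}$ is continuous away from the boundary value, and boundary cases $\sigma_i^\star = 1/\sqrt{\Info_W}$ are absorbed by the strict $(1+\delta)$ factor in the cutoff (the limiting singular value equals $\sqrt{\Info_W}H(1)$, still strictly below the threshold). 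A secondary minor point is that Theorem~\ref{theorem:general-deform} requires degenerate equality cases among the $\sigma_i$'s to be treated separately for singular \emph{vector} convergence; this is irrelevant here since only singular values enter the thresholding.
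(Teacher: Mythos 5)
Your proposal is correct and follows essentially the same route as the paper: show $\hat k = k$ eventually (via Theorem~\ref{theorem:general-deform} applied to the low-rank-plus-noise part, Weyl's inequality to absorb $\bDelta$, convergence of $\hat{\Info}_{W,\eps}$, and strict monotonicity of $H$ together with the $(1+\delta)$ cushion), after which $\what{\bX}(\bY)=\what{\bX}_k(\bY)$ exactly and the norm difference vanishes identically. Your extra remark about $n$-dependent singular values goes slightly beyond the paper, which implicitly treats the $\sigma_i$ as fixed, but it does not change the argument.
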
 \noindent\noindent
In the second step, we show that 
$\what{\bX}_k(\bY) \approx \wtilde{\bX}_{k}(\bY)$.
This approximation is formulated rigorously in the next lemma, whose proof is 
given in Section~\ref{sec:proof-lemma-proof-upper-bound-second-step}.
\begin{lemma}
\label{lemma:proof-upper-bound-second-step}
We have
\begin{equation}
\lim_{\eps \to 0}\limsup_{n\to \infty, m/n\to \gamma}
	\frac{1}{(mn)^{1/4}}\opnorm{\what{\bX}_k(\bY)-\wtilde{\bX}_{k}(\bY)} = 0
\end{equation}
\end{lemma}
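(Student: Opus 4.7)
The plan is a Weyl-plus-Wedin spectral perturbation argument applied to the pair $\widehat{\bX}^{(0)} = \wtilde{\bX}^{(0)} + \bDelta$, exploiting $\lim_{\eps \to 0}\limsup_{n} \opnorm{\bDelta}/(mn)^{1/4} = 0$ and $\hat{\Info}_{W,\eps} \to \Info_W$ from Theorem~\ref{proposition:upper-bound-proposition}. The key additional ingredient is a macroscopic spectral gap for $\wtilde{\bX}^{(0)}/(mn)^{1/4}$. Rescaling $\wtilde{\bX}^{(0)} = \Info_W \bX + \sqrt{\Info_W}\, \bZ$ by $\sqrt{\Info_W}$ puts it exactly in the model of Theorem~\ref{theorem:general-deform} with signal singular values $\{\sqrt{\Info_W}\sigma_i\}_{i=1}^{r}$. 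Since $k$ is the number of $\sigma_i$ strictly above $\Info_W^{-1/2}$, Theorem~\ref{theorem:general-deform} gives, almost surely, $\wtilde{\Sigma}^{(0)}_{i,i} \to \sqrt{\Info_W}\, H(\sqrt{\Info_W}\sigma_i) > \sqrt{\Info_W}\, H(1)$ for $i \le k$, while the $(k+1)$-th singular value converges to the bulk edge $\sqrt{\Info_W}\, H(1)$. This yields the limiting gap
\[
\kappa \;:=\; \sqrt{\Info_W}\bigl(H(\sqrt{\Info_W}\sigma_k) - H(1)\bigr) \;>\; 0.
\]

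Next, Weyl's inequality gives $|\widehat{\Sigma}^{(0)}_{i,i} - \wtilde{\Sigma}^{(0)}_{i,i}| \le \opnorm{\bDelta}/(mn)^{1/4}$ for every $i$. Combined with $\hat{\Info}_{W,\eps} \to \Info_W$, this implies that for every small enough fixed $\delta > 0$, every sufficiently small $\eps>0$, and all $n$ large, the top $k$ singular values of both $\widehat{\bX}^{(0)}/(mn)^{1/4}$ and $\wtilde{\bX}^{(0)}/(mn)^{1/4}$ exceed the higher threshold $(1+\delta) H(1)\hat{\Info}_{W,\eps}^{1/2}$ by margin $\ge \kappa/4$, while the remaining singular values fall below the lower threshold $H(1)\hat{\Info}_{W,\eps}^{1/2}$ by margin $\ge \kappa/4$. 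Two consequences follow: (i) $\widehat{k} = k$, so $\widehat{\bX}_k$ really is the shrunk top-$k$ block of $\widehat{\bX}^{(0)}$; and (ii) the two distinct thresholds in the definitions of $\widehat{\bSigma}_k$ and $\wtilde{\bSigma}_k$ generate identical top-$k$ partitionings on each matrix, and the common shrinkage $\pi_\eps(\sigma) := \hat{\Info}_{W,\eps}^{-1/2}\, H^{-1}(\hat{\Info}_{W,\eps}^{-1/2}\sigma)$ is applied to the $k$ retained values in both cases.

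Given $\kappa$ and $\opnorm{\bDelta}/(mn)^{1/4} \to 0$, Wedin's $\sin\Theta$ theorem yields
\[
\opnorm{\widehat{P}_L - \wtilde{P}_L}, \;\opnorm{\widehat{P}_R - \wtilde{P}_R}\;\le\; \frac{C\, \opnorm{\bDelta}/(mn)^{1/4}}{\kappa},
\]
where $\widehat{P}_L := \widehat{\bU}_k \widehat{\bU}_k^{\sT}$ and analogously for the other three projections. The map $\pi_\eps$ is continuously differentiable, hence Lipschitz on any compact subset of $(\hat{\Info}_{W,\eps}^{1/2} H(1),\infty)$; by the margin argument above, the $k$ relevant singular values of both matrices belong to such a compact set for large $n$, so $|\pi_\eps(\widehat{\Sigma}^{(0)}_{i,i}) - \pi_\eps(\wtilde{\Sigma}^{(0)}_{i,i})| = O(\opnorm{\bDelta}/(mn)^{1/4})$. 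Writing $\widehat{\bX}_k/(mn)^{1/4} = \Psi_\eps(\widehat{\bX}^{(0)}/(mn)^{1/4})$ and $\wtilde{\bX}_k/(mn)^{1/4} = \Psi_\eps(\wtilde{\bX}^{(0)}/(mn)^{1/4})$ for the common functional-calculus operator $\Psi_\eps(M) := \sum_i \pi_\eps(\sigma_i(M))\, \mathbf{1}[\sigma_i(M) \ge c]\, \bu_i \bv_i^{\sT}$ with $c$ any value strictly between the two thresholds (which both agree with $\widehat{\bX}_k$ and $\wtilde{\bX}_k$ by consequence (ii) above), a standard perturbation-of-functional-calculus estimate assembled from the projection stability and the Lipschitz bound for $\pi_\eps$ produces
\[
\frac{1}{(mn)^{1/4}}\opnorm{\widehat{\bX}_k - \wtilde{\bX}_k} \;\le\; C'(\Info_W,\sigma_1,\dots,\sigma_k,\eps)\cdot \frac{\opnorm{\bDelta}}{(mn)^{1/4}},
\]
with the multiplicative constant bounded as $\eps \to 0$. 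Taking $\limsup_n$ then $\eps\to 0$ closes the argument.

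The principal technical obstacle is possible multiplicity among the signal singular values $\sigma_i$, which prevents matching individual singular vector pairs of $\widehat{\bX}^{(0)}$ and $\wtilde{\bX}^{(0)}$ (and is precisely the reason why Theorem~\ref{theorem:general-deform} assumed distinct $\sigma_i$'s). This is why the argument is phrased at the level of the projections $\widehat{P}_L,\wtilde{P}_L,\widehat{P}_R,\wtilde{P}_R$ and of the basis-independent operator $\Psi_\eps(M)$, rather than via a componentwise comparison of singular triples; a secondary subtlety, the mismatch between the $(1{+}\delta)$ and $1$ threshold factors in the two definitions, is dissolved by the uniform margin established in paragraph two.
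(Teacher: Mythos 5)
Your skeleton matches the paper's up to the decisive step: like the paper, you establish the macroscopic gap at level $k$ via Theorem~\ref{theorem:general-deform} (this is the paper's Lemma~\ref{lemma:singular-gap-bA}), use Weyl plus $\hat{\Info}_{W,\eps}\to\Info_W$ to get $\hat k=k$ and to reconcile the $(1+\delta)$ versus $1$ thresholds, and note that the shrinker is regular away from the threshold (the paper proves a global H\"older-$1/4$ bound in Lemma~\ref{lemma:CBF-g-eps}). But the final assembly is asserted, not proved, and the ingredients you cite are not sufficient as stated. First, Wedin gives you the left and right top-$k$ projections separately; closeness of $\what{\bU}_k\what{\bU}_k^{\sT}$ to $\wtilde{\bU}_k\wtilde{\bU}_k^{\sT}$ and of the right projections does \emph{not} control the paired object $\what{\bU}_k\what{\bV}_k^{\sT}$ versus $\wtilde{\bU}_k\wtilde{\bV}_k^{\sT}$: two matrices can share both top-$k$ subspaces and all singular values while their left--right pairings differ by a rotation, so the truncated shrunk matrices differ by order one. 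The paper controls exactly this pairing by applying Davis--Kahan to the symmetric dilation, which bounds $\opnorm{\bP_l\bQ_l^{\sT}-\wtilde{\bP}_l\wtilde{\bQ}_l^{\sT}}$ directly (Eq.~\eqref{eqn:perturb-U-V-prod}). Second, scalar Lipschitz continuity of $\pi_\eps$ on a compact set plus subspace stability does not ``assemble'' into an operator-norm bound on $\Psi_\eps(\what{\bX}^{(0)})-\Psi_\eps(\wtilde{\bX}^{(0)})$: inside the top-$k$ block the singular values can be degenerate or nearly so (you flag this yourself), so individual singular triples are unstable, and Lipschitz functions are not operator Lipschitz in general; making this rigorous requires either heavy machinery (contour/smooth functional calculus exploiting the spectral gap, or operator-H\"older estimates) or a bespoke argument.

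That bespoke argument is precisely the paper's Theorem~\ref{theorem:nonlinear-PCA-perturbation}, which is the real content your proposal treats as ``standard'': write $f(\bA_k)-f(\wtilde{\bA}_k)$ by Abel summation over the nested blocks, $\sum_{l<k}\big(f(\sigma_l)-f(\sigma_{l+1})\big)\big(\bP_l\bQ_l^{\sT}-\wtilde{\bP}_l\wtilde{\bQ}_l^{\sT}\big)+f(\sigma_k)\big(\bP_k\bQ_k^{\sT}-\wtilde{\bP}_k\wtilde{\bQ}_k^{\sT}\big)$, and then argue term by term: when the internal gap $\delta_l$ exceeds $2\opnorm{\bE}$, the dilated Davis--Kahan bound controls the paired projection difference; when $\delta_l\le 2\opnorm{\bE}$, the projection difference is only bounded by $2$, but the H\"older continuity of the shrinker makes the coefficient $f(\sigma_l)-f(\sigma_{l+1})$ small. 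This mechanism is what absorbs near-degeneracies inside the top-$k$ block, and nothing in your proposal plays its role. To repair your write-up you would either have to prove such a perturbation lemma yourself (at which point you have reproduced the paper's Theorem~\ref{theorem:nonlinear-PCA-perturbation}) or invoke genuinely nontrivial operator-function perturbation theory, which cannot be waved in as a routine consequence of projection stability and a scalar Lipschitz bound.
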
\noindent\noindent
In the last step, we bound the operator norm distance between $\wtilde{\bX}_k(\bY)$ and $\bX$, as stated in the next 
lemma, whose proof is given in Section~\ref{sec:proof-lemma-proof-upper-bound-third-step}.
\begin{lemma}
\label{lemma:proof-upper-bound-third-step}
We have 
\begin{equation}
\lim_{\eps \to 0}\limsup_{n\to \infty, m/n\to \gamma}
	\frac{1}{(mn)^{1/4}}\opnorm{\wtilde{\bX}_k(\bY)- \bX} 
		\le \Info_W^{-1/2}(\gamma^{1/4} + \gamma^{-1/4}).
\end{equation}
\end{lemma}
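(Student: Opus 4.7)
The plan is to invoke Theorem~\ref{theorem:general-deform} on the rescaled matrix $\wtilde{\bX}^{(0)}/\sqrt{\Info_W} = \sqrt{\Info_W}\bX + \bZ$, combine this with the estimate $\hat{\Info}_{W,\eps}\to \Info_W$ from Theorem~\ref{proposition:upper-bound-proposition}, and finally bound the operator norm of the reconstruction error via approximate orthogonality of the singular subspaces. The rescaled matrix fits the signal-plus-noise form of Theorem~\ref{theorem:general-deform} with effective signal singular values $\tilde{\sigma}_i := \sqrt{\Info_W}\sigma_i$; the noise $\bZ$ has i.i.d.\ entries with mean zero, unit variance, and magnitude bounded by $C/\eps$, hence satisfies all moment conditions. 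The theorem then yields, almost surely, $\wtilde{\Sigma}^{(0)}_{i,i}/\sqrt{\Info_W}\asto H(\tilde{\sigma}_i)$ together with alignment estimates for the columns $\tilde{\bu}_i,\tilde{\bv}_i$ of $\wtilde{\bU},\wtilde{\bV}$. The non-degeneracy hypothesis of Theorem~\ref{theorem:general-deform} is enforced by first perturbing $\bSigma$ to break any ties; standard Weyl and Wedin inequalities then give error estimates on the singular values and vectors that vanish as the perturbation is sent to zero at the end of the argument.

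Since $\tilde{\sigma}_i > 1$ iff $\sigma_i > \Info_W^{-1/2}$, i.e.\ iff $i\le k$, the shrinkage threshold $(1+\delta)H(1)\hat{\Info}_{W,\eps}^{1/2}$ converges to $(1+\delta)H(1)\sqrt{\Info_W}$ and isolates exactly the first $k$ singular values for small $\delta$ and large $n$. The formula for $\wtilde{\Sigma}_{i,i}$ then gives $\wtilde{\Sigma}_{i,i}\asto \Info_W^{-1/2}H^{-1}(H(\tilde{\sigma}_i))=\sigma_i$ for $i\le k$, i.e., the shrunken singular values asymptotically reproduce the true ones.

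For the operator norm, I decompose
\begin{align*}
\wtilde{\bX}_k - \bX \;=\; \underbrace{\sum_{i\le k}(mn)^{1/4}\bigl[\wtilde{\Sigma}_{i,i}\tilde{\bu}_i\tilde{\bv}_i^{\sT} - \sigma_i\bu_i\bv_i^{\sT}\bigr]}_{=:\,\bR_1} \;-\; \underbrace{\sum_{i>k}(mn)^{1/4}\sigma_i\bu_i\bv_i^{\sT}}_{=:\,\bR_2},
\end{align*}
and bound each piece. Trivially, $\|\bR_2\|_{\op}/(mn)^{1/4} = \sigma_{k+1}\le \Info_W^{-1/2}$. For each rank-one summand of $\bR_1$, since $\wtilde{\Sigma}_{i,i}\to\sigma_i$, its asymptotic operator norm equals $\sigma_i\|\tilde{\bu}_i\tilde{\bv}_i^{\sT}-\bu_i\bv_i^{\sT}\|_{\op}$; a direct rank-two computation gives $\|\tilde{\bu}_i\tilde{\bv}_i^{\sT}-\bu_i\bv_i^{\sT}\|_{\op}^2\asto 1-G^{(1)}(\tilde{\sigma}_i)G^{(2)}(\tilde{\sigma}_i)+|G^{(1)}(\tilde{\sigma}_i)-G^{(2)}(\tilde{\sigma}_i)|$. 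Using the factorization identity $1-G^{(1)}G^{(2)}+|G^{(1)}-G^{(2)}|=(1-G^{\min})(1+G^{\max})$ together with the explicit formulas for $G^{(1)},G^{(2)}$, one checks that this is bounded by $(\gamma^{1/4}\vee\gamma^{-1/4})^2/(\Info_W\sigma_i^{2})$, uniformly in $\tilde{\sigma}_i>1$, so the asymptotic norm of each rank-one piece is at most $(\gamma^{1/4}\vee\gamma^{-1/4})/\sqrt{\Info_W}$.

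The final step is to pass from individual to joint bounds. The singular vector alignment estimates from Theorem~\ref{theorem:general-deform} give $\langle\tilde{\bu}_i,\bu_j\rangle,\langle\tilde{\bu}_i,\tilde{\bu}_j\rangle,\langle\bu_i,\bu_j\rangle\asto 0$ for $i\ne j$, and analogously for $\tilde{\bv},\bv$; consequently the $r+k$ unit vectors appearing as left factors and the $r+k$ appearing as right factors in $\bR_1$ and $\bR_2$ form asymptotically orthonormal systems. For rank-one matrices with mutually orthonormal ranges and coranges, the operator norm of the sum equals the maximum of the individual operator norms, and an approximate version proved by bounding the off-diagonal entries of the $(r+k)\times(r+k)$ Gram matrix yields an $o(1)$-error version. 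This produces $\|\wtilde{\bX}_k-\bX\|_{\op}/(mn)^{1/4}\le \max\{(\gamma^{1/4}\vee\gamma^{-1/4})/\sqrt{\Info_W},\Info_W^{-1/2}\}+o(1)\le (\gamma^{1/4}+\gamma^{-1/4})/\sqrt{\Info_W}$, as desired. The main obstacle will be this last step: carefully controlling how the small off-diagonal inner products perturb the ``operator norm equals max'' identity, which requires the full set of cross-alignment estimates from Theorem~\ref{theorem:general-deform}, as well as uniformity in the limits $\eps\to 0$ and $n\to \infty$.
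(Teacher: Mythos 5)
Your route is essentially the paper's own: you invoke Theorem~\ref{theorem:general-deform} after the same rescaling, use $\hat{\Info}_{W,\eps}\to\Info_W$ to get $\wtilde{\Sigma}_{i,i}\to\sigma_i$ for $i\le k$, and reduce the operator norm to per-index $2\times 2$ blocks plus the tail $i>k$ of norm $\sigma_{k+1}\le \Info_W^{-1/2}$. Your quantity $1-G^{(1)}G^{(2)}+|G^{(1)}-G^{(2)}|=(1-G^{\min})(1+G^{\max})$ is exactly the squared operator norm of the paper's block $\bT(\alpha_i,\beta_i)$, and your asserted bound $\sigma_i^2(1-G^{\min})(1+G^{\max})\le(\gamma^{1/4}\vee\gamma^{-1/4})^2\,\Info_W^{-1}$ is precisely the paper's elementary inequality $T(t,y)\le ty$, which the paper proves by a monotonicity computation; you should supply that argument rather than leave it as ``one checks''. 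Your approximate-block-orthogonality step is what the paper implements concretely via the Gram--Schmidt construction of $\bQ_u,\bQ_v$ and the explicit limit of the $(k+r)\times(k+r)$ submatrix; note that for your $2\times2$ computation you need the sign-consistency statement in Eq.~(\ref{eqn:singular-vector-sign-issue}), not only the moduli $|\langle\tilde{\bu}_i,\bu_i\rangle|$, since with opposite signs the block norm would involve $1+G^{(1)}G^{(2)}$ instead.

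The genuine gap is your treatment of tied singular values. You break ties by perturbing $\bSigma$ and claim that ``standard Weyl and Wedin inequalities'' give singular-vector error estimates that vanish as the perturbation is sent to zero. That fails in exactly the case you are addressing: Wedin-type bounds for individual singular vectors carry the \emph{within-block} gap in the denominator, and after tie-breaking that gap is of the same order as the perturbation $\pert$ itself, so the bound does not vanish --- individual singular vectors of $\wtilde{\bX}^{(0)}(\pert)$ and $\wtilde{\bX}^{(0)}$ can differ by an arbitrary rotation inside a nearly degenerate block. What is true, and what the paper uses, is stability of the whole shrunken reconstruction: comparing $\wtilde{\bX}_k(\pert)$ with $\wtilde{\bX}_k$ requires only the gap between $\sigma_k$ and $\max\{\sigma_{k+1},\tau_{\eps}\}$ (bounded away from zero uniformly in the perturbation) together with H\"older continuity of the spectral shrinker $f_{\eps}$; this is the content of Theorem~\ref{theorem:nonlinear-PCA-perturbation}, proved via Davis--Kahan on the symmetrized matrix plus a telescoping/H\"older argument that absorbs small internal gaps. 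Replace your Wedin step by such a functional perturbation bound (or argue at the level of the top-$k$ subspace, using that the shrunken singular values within a degenerate block all converge to the common value), and the rest of your argument goes through.
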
\noindent\noindent
Now the desired claim of Lemma~\ref{lemma:upper-bound-as} follows 
easily by 
Lemma~\ref{lemma:proof-upper-bound-first-step}, 
Lemma~\ref{lemma:proof-upper-bound-second-step} and
Lemma~\ref{lemma:proof-upper-bound-third-step}. 

\subsubsection{Proof of Lemma~\ref{lemma:proof-upper-bound-first-step}} 
\label{sec:proof-lemma-proof-upper-bound-first-step}
We start by proving the following 
(recall the definition of $\hat{k}$ at Eq~\eqref{eqn:def-k})
\begin{equation}
\label{eqn:limit-of-hat-k}
\lim_{\delta\to 0}\lim_{\eps \to 0} \lim_{n\to \infty, m/n\to \gamma} \hat{k} = k.
\end{equation}
To do so, we first analyze the limiting singular values of $\wtilde{\bX}^{(0)}(\bY)$. The 
lemma below is useful. 
\begin{lemma}
\label{lemma:singular-value-of-bA}
There exists some $\eps_0 > 0$ such that, for all $\eps \le \eps_0$, we have, 
\begin{enumerate}
\item For $i \in [r]$, we have almost surely, 
\begin{equation}
\label{eqn:spectral-converge-r}
\lim_{n \to \infty, m/n\to \gamma} \frac{1}{(mn)^{1/4}}\sigma_i (\wtilde{\bX}^{(0)}(\bY)) 
	= \Info_W^{1/2} H(\Info_W^{1/2}\sigma_i),
\end{equation}
\item For $i = r+1$, we have almost surely, 
\begin{equation} 
\label{eqn:spectral-converge-r+1}
\limsup_{n \to \infty, m/n\to \gamma} 
	\frac{1}{(mn)^{1/4}}\sigma_{r+1}(\wtilde{\bX}^{(0)}(\bY)) 
	\le \Info_W^{1/2} H(1)\, . 
\end{equation}
\end{enumerate}
\end{lemma}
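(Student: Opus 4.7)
The plan is to derive Lemma~\ref{lemma:singular-value-of-bA} as an essentially immediate corollary of the random matrix theory result Theorem~\ref{theorem:general-deform}, after a simple rescaling. First I would pull the scalar $\sqrt{\Info_W}$ out of $\wtilde{\bX}^{(0)}(\bY)$ by writing
\begin{equation*}
\wtilde{\bX}^{(0)}(\bY) \;=\; \sqrt{\Info_W}\bigl(\sqrt{\Info_W}\,\bX + \bZ\bigr),
\end{equation*}
so that $\sigma_i(\wtilde{\bX}^{(0)}(\bY)) = \sqrt{\Info_W}\,\sigma_i(\sqrt{\Info_W}\bX + \bZ)$. The inner matrix fits exactly the framework of Theorem~\ref{theorem:general-deform}: the signal part $\sqrt{\Info_W}\,\bX$ is a rank-$r$ deterministic matrix with singular values $\sqrt{\Info_W}\sigma_1,\ldots,\sqrt{\Info_W}\sigma_r$ (the singular vectors $\bu_i,\bv_i$ inherited from $\bX$, hence deterministic and orthonormal), and $\bZ$ is a random matrix with i.i.d.\ mean-zero, unit-variance entries.

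Next, I would check the moment hypotheses of Theorem~\ref{theorem:general-deform}. By Theorem~\ref{proposition:upper-bound-proposition} there exist $\eps_0,C>0$ such that $\normmax{\bZ}\le C\eps^{-1}$ almost surely for $\eps\le \eps_0$. Thus for each such $\eps$, the entries of $\bZ$ are uniformly bounded and $\sup_n \E |Z_{ij}|^p < \infty$ for every $p\ge 1$. With $m/n \to \gamma$, all assumptions of Theorem~\ref{theorem:general-deform} are met, and applying it with signal singular values $\sqrt{\Info_W}\sigma_i$ yields, for each $i\in[r]$,
\begin{equation*}
\frac{1}{(mn)^{1/4}}\sigma_i(\sqrt{\Info_W}\bX + \bZ) \;\asto\; H(\sqrt{\Info_W}\sigma_i).
\end{equation*}
Multiplying by $\sqrt{\Info_W}$ produces Eq.~\eqref{eqn:spectral-converge-r} directly.

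For the $(r{+}1)$-th singular value, Weyl's inequality gives
\begin{equation*}
\sigma_{r+1}(\sqrt{\Info_W}\bX + \bZ) \;\le\; \sigma_{r+1}(\sqrt{\Info_W}\bX) + \sigma_1(\bZ) \;=\; \sigma_1(\bZ),
\end{equation*}
since $\sqrt{\Info_W}\bX$ has rank at most $r$. The top singular value of the noise can then be controlled either by the Bai--Yin theorem, which gives $\sigma_1(\bZ)/(\sqrt{m}+\sqrt{n}) \asto 1$ under the uniform moment bounds on $\bZ$, or equivalently by invoking Theorem~\ref{theorem:general-deform} in the trivial signal case $r=0$; either route delivers $\sigma_1(\bZ)/(mn)^{1/4} \asto \gamma^{1/4}+\gamma^{-1/4} = H(1)$. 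Combining with the factor $\sqrt{\Info_W}$ extracted at the start yields Eq.~\eqref{eqn:spectral-converge-r+1}.

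The proof is essentially a bookkeeping exercise once Theorem~\ref{theorem:general-deform} is in hand; the only small care point is that $\sigma_1,\ldots,\sigma_r$ should be regarded as fixed for each application of the random matrix result. This is legitimate in the context of Lemma~\ref{lemma:upper-bound-as}, where $\bX_n$ is a prescribed sequence in $\cF_{m,n}(r,M,\eta)$ and, by compactness of the singular value spectrum on that class, one may pass to a subsequence along which the singular values converge before invoking the lemma. No genuine obstacle arises; the only subtlety worth double-checking is the $\eps$-dependence of $\bZ$, but because the conclusion is claimed for each fixed $\eps\le\eps_0$ and the almost sure statements hold on the probability space for $\bZ=\bZ(\eps)$, this causes no issue.
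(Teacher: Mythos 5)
Your proposal is correct and follows essentially the same route as the paper, which simply notes that both claims "follow directly from Theorem~\ref{theorem:general-deform}" after absorbing the scalings $\Info_W$ and $\sqrt{\Info_W}$ into the singular values and an overall factor. Your explicit Weyl/Bai--Yin step for $\sigma_{r+1}$ and your check of the moment hypotheses via $\normmax{\bZ}\le C\eps^{-1}$ just fill in details the paper leaves implicit, so there is no substantive difference.
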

\begin{proof}
Both Eq~\eqref{eqn:spectral-converge-r} and Eq~\eqref{eqn:spectral-converge-r+1}
follow directly from Theorem~\ref{theorem:general-deform}.
\end{proof}
Note that $\sigma_k > \Info_W^{-1/2}$ by definition of $k$. Fix any 
$\delta_0^\prime$ such that, 
\begin{equation}
\sigma_k > (1+\delta_0^\prime) \Info_W^{-1/2}. 
\end{equation}
By Weyl's inequality, we have the bound
\begin{equation}
\label{eqn:bound-by-bDelta}
\max_{l \in \{k, k+1\}} \Big|\sigma_l(\what{\bX}^{(0)}(\bY)) - \sigma_l(\wtilde{\bX}^{(0)}(\bY)) \Big| 
		\le \opnorm{\bDelta}, 
\end{equation}
Since by definition of $\bDelta$, we have (see Eq~\eqref{eqn:as-zero-Delta-A})
\begin{equation}
\label{eqn:limit-of-Delta}
\lim_{\eps \to 0} \limsup_{n\to \infty, m/n\to \gamma}
	\frac{1}{(mn)^{1/4}}\opnorm{\bDelta} = 0, 
\end{equation}
Lemma~\ref{lemma:singular-value-of-bA}, Eq~\eqref{eqn:bound-by-bDelta}
and Eq~\eqref{eqn:limit-of-Delta} imply that, 
\begin{equation}
\label{eqn:limit-of-hat-f-Y-Y-inf}
\lim_{\eps \to 0}\liminf_{n\to \infty, m/n\to \gamma} 
	\frac{1}{(mn)^{1/4}}\sigma_k(\what{\bX}^{(0)}(\bY)) 
		\ge \Info_W^{1/2}H\left(1+\delta_0^\prime\right)
\end{equation}
and
\begin{equation}
\label{eqn:limit-of-hat-f-Y-Y-sup}
\lim_{\eps \to 0} \limsup_{n\to \infty, m/n\to \gamma} 
	\frac{1}{(mn)^{1/4}}\sigma_{k+1}(\what{\bX}^{(0)}(\bY))
		 \le\Info_W^{1/2} H(1). 
\end{equation}
Now Theorem~\ref{proposition:upper-bound-proposition} shows that 
\begin{equation}
\label{eqn:converge-hat-W-eps}
\lim_{\eps \to 0}\lim_{n\to \infty, m/n\to \gamma} 
	\left|\hat{\Info}_{W, \eps} - \Info_{W}\right| = 0. 
\end{equation}
As the function $\sigma \to H(\sigma)$ is strictly increasing on $[1, \infty)$, 
Eq~\eqref{eqn:limit-of-hat-f-Y-Y-inf}, Eq~\eqref{eqn:limit-of-hat-f-Y-Y-sup}
and Eq~\eqref{eqn:converge-hat-W-eps} together show that, for $\delta > 0$
sufficiently small satisfying $(1+\delta) H(1) < H(1+\delta_0^\prime)$,
\begin{equation}
\lim_{\eps \to 0} \lim_{n\to \infty, m/n\to \gamma} \hat{k} = k. 
\end{equation}
This proves the claim at Eq~\eqref{eqn:limit-of-hat-k}. In particular,
\begin{equation}
\lim_{\delta\to 0}\lim_{\eps \to 0}\lim_{n\to \infty, m/n\to \gamma}
	\frac{1}{(nm)^{1/4}}\|\what{\bX}(\bY)-
		\what{\bX}_k(\bY)\|_{\op} = 0,
\end{equation}
giving the desired claim of the Lemma. 

\renewcommand{\opt}{^*}
\renewcommand{\H}{\mathbb{H}}
\newcommand{\imax}{^{\rm max}}
\subsubsection{Proof of Lemma~\ref{lemma:proof-upper-bound-second-step}}
\label{sec:proof-lemma-proof-upper-bound-second-step}
The main idea of our proof is the following: we view $\what{\bX}\org(\bY)$ as a 
perturbation of $\wtilde{\bX}\org(\bY)$, and then we establish a general matrix 
perturbation result to bound the difference 
$\opnormbig{\what{\bX}(\bY) - \wtilde{\bX}(\bY)}$. It turns out that such bound 
on $\opnormbig{\what{\bX}(\bY) - \wtilde{\bX}(\bY)}$ gives the
desired claim of the lemma. 

We start by introducing our matrix perturbation result.  Let 
$\bA \in \R^{m \times n}$ be a matrix with singular value decomposition
\begin{equation}
\bA = \bP \bS \bQ^{\sT},
\end{equation}
where $\bP \in \R^{m \times m}$, $\bQ \in \R^{n \times n}$ and 
$\bS \in \R^{m \times n}$. Define $\bA_k \in \R^{m \times n}$
to be 
\begin{equation}
\bA_k = \bP_k \bS_k \bQ_k^{\sT},
\end{equation} 
the best rank $k$ approximation for $\bA$ under any 
unitarily invariant norm~\cite[Theorem 4.18]{StewartSun90}. 
Now, for a function $f: \R_+\to \R_+$, define the matrix
$f(\bA_k) \in \R^{m \times n}$ by
\begin{equation}
f(\bA_k) = \bP_k f(\bS_k) \bQ_k^{\sT},
\end{equation} 
where $f(\bS_k) \in \R^{k \times k}$ is the diagonal matrix we get by 
applying $f$ entry-wisely to the diagonal of $\bS_k$. 

Let $\wtilde{\bA} \in \R^{m \times n}$ denote a perturbation 
of matrix $\bA$: 
\begin{equation}
	\wtilde{\bA} = \bA+ \bE,
\end{equation} 
where $\bE\in \R^{m \times n}$ is the error matrix. 
Define the matrix $\wtilde{\bA}_k$ and $f(\wtilde{\bA}_k)$ in an 
analogous way. Intuitively, we expect that (under suitable
assumptions on  the matrix $\bA$ and function $f$)
$f(\wtilde{\bA}_k) \approx f(\bA_k)$
when the error matrix $\bE$ is ``small". 
Theorem~\ref{theorem:nonlinear-PCA-perturbation} provides a bound of this type.

\begin{theorem}
\label{theorem:nonlinear-PCA-perturbation}
Let $\wtilde{\bA}, \bA, \bE \in \R^{m \times n}$ be matrices such that
\begin{equation}
\wtilde{\bA} = \bA+ \bE.
\end{equation} 
Assume for some continuous function $f$ and some constants $L, \vartheta, \tau, \zeta > 0$
and $\alpha \in (0, 1]$, we have 
\begin{enumerate}
\item The function $f$ is $(L, \alpha)$ H\"{o}lder continuous on $[\tau, \zeta]$, i.e., for all $x_1, x_2 
	\in [\tau, \zeta]$, 
	\begin{equation}
	\label{eqn:f-Holder-continuous}
		\big| f(x_1) - f(x_2)\big| \le L \big| x_1 - x_2\big|^{\alpha}.
	\end{equation}
\item The following inequalities hold
	\begin{equation}
	\label{eqn:singular-gap}
	\zeta > \sigma_1(\bA),~
		\sigma_k(\bA) > \max\{\sigma_{k+1}(\bA), \tau\} + \vartheta
	~~\text{and}~~
		\vartheta > 2\opnorm{\bE}.
	\end{equation}
\end{enumerate} 
Then, the perturbation bound below holds for $f(\bA_k)$ and $ f(\wtilde{\bA}_k)$:
\begin{equation}
\label{eqn:crazy-PCA-pert-bound}
\opnormbig{f(\bA_k) - f(\wtilde{\bA}_k)} \le 
	4k L \opnorm{\bE}^{\alpha} + \frac{2}{\vartheta}f(\sigma_k(\bA))\opnorm{\bE}.
\end{equation}
\end{theorem}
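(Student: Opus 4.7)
The plan is to prove the bound by a two-tier perturbation argument: Weyl's inequality composed with H\"older continuity controls the singular-value perturbation, while Wedin's $\sin\Theta$ theorem controls the singular-subspace perturbation. The hypotheses $\vartheta > 2\opnorm{\bE}$ and $\sigma_k(\bA) > \max\{\sigma_{k+1}(\bA),\tau\}+\vartheta$ are tailored so that both tools apply cleanly. First, Weyl gives $|\sigma_i(\wtilde{\bA})-\sigma_i(\bA)|\le \opnorm{\bE}<\vartheta/2$, so $\wtilde{\bA}_k$ is well-defined (the gap between $\sigma_k(\wtilde{\bA})$ and $\sigma_{k+1}(\wtilde{\bA})$ exceeds $\vartheta/2$), and all $\sigma_i(\bA),\sigma_i(\wtilde{\bA})$ for $i\le k$ lie in $[\tau,\zeta]$ where H\"older continuity applies; second, Wedin's theorem with this effective gap yields
\begin{equation*}
\max\bigl\{\|\sin\Theta(\bP_k,\wtilde\bP_k)\|_{\op},\|\sin\Theta(\bQ_k,\wtilde\bQ_k)\|_{\op}\bigr\}\le \opnorm{\bE}/\vartheta.
\end{equation*}

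The algebraic core of the argument is the decomposition
\begin{equation*}
f(\bA_k)-f(\wtilde\bA_k) = \bP_k\bigl[f(\bS_k)-f(\wtilde\bS_k)\bigr]\bQ_k^{\sT} + \bigl[\bP_k f(\wtilde\bS_k)\bQ_k^{\sT}-\wtilde\bP_k f(\wtilde\bS_k)\wtilde\bQ_k^{\sT}\bigr].
\end{equation*}
The first summand has operator norm $\le\max_{i\le k}|f(\sigma_i(\bA))-f(\sigma_i(\wtilde\bA))|\le L\opnorm{\bE}^\alpha$, since $\bP_k,\bQ_k$ are partial isometries and the diagonal difference has maximal entry at most $L\opnorm{\bE}^\alpha$. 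For the second summand I peel off the ``scalar part'' by writing $f(\wtilde\bS_k) = f(\sigma_k(\bA))\mathbf{I}_k + \bigl(f(\wtilde\bS_k)-f(\sigma_k(\bA))\mathbf{I}_k\bigr)$. The scalar piece contributes $f(\sigma_k(\bA))\bigl[\bP_k\bQ_k^{\sT}-\wtilde\bP_k\wtilde\bQ_k^{\sT}\bigr]$, whose operator norm is controlled through the two-term telescoping $\opnorm{\bP_k\bQ_k^{\sT}-\wtilde\bP_k\wtilde\bQ_k^{\sT}}\le \|\sin\Theta(\bP_k,\wtilde\bP_k)\|_{\op}+\|\sin\Theta(\bQ_k,\wtilde\bQ_k)\|_{\op}\le 2\opnorm{\bE}/\vartheta$, producing the $\frac{2}{\vartheta}f(\sigma_k(\bA))\opnorm{\bE}$ term. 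The remaining residual, of the form $\bP_k\bD\bQ_k^{\sT}-\wtilde\bP_k\bD\wtilde\bQ_k^{\sT}$ with $\bD=\diag(f(\wtilde\sigma_i)-f(\sigma_k(\bA)))$, is handled rank-one block by rank-one block: after a four-way telescoping that inserts intermediate matrices, each resulting cross piece admits a H\"older estimate of size $L\opnorm{\bE}^\alpha$, and summing over the $k$ diagonal blocks produces the $4kL\opnorm{\bE}^\alpha$ contribution.

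The main obstacle is the rotation ambiguity of individual singular vectors: the gap hypothesis is imposed only between $\sigma_k$ and $\sigma_{k+1}$, not within the top-$k$ block, so individual left/right singular vectors of $\bA$ and $\wtilde\bA$ need not be close, and every perturbation bound has to be routed through the subspace projectors $\bP_k\bP_k^{\sT}$, $\bQ_k\bQ_k^{\sT}$ and the signed projector $\bP_k\bQ_k^{\sT}$. This is precisely why the constant $f(\sigma_k(\bA))$ multiplies $\opnorm{\bE}/\vartheta$ in the leading contribution, rather than some sum over individual $f(\sigma_i(\bA))$; in return, the residual pieces must be bounded block-by-block, which is where the multiplicative $k$ in $4kL\opnorm{\bE}^\alpha$ is born. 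Verifying that no piece of the residual expansion leaks a stray factor like $f(\sigma_1(\bA))$ or $|\sigma_1-\sigma_k|^\alpha$ --- either of which would spoil the claimed bound --- is the trickiest piece of bookkeeping and the reason a careful four-way telescoping (rather than a na\"\i ve triangle inequality) is required.
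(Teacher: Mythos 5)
There are two genuine gaps, both at the places where the real work of this theorem lives. First, the inequality you invoke for the scalar piece,
\begin{equation*}
\opnorm{\bP_k\bQ_k^{\sT}-\wtilde{\bP}_k\wtilde{\bQ}_k^{\sT}} \;\le\; \|\sin\Theta(\bP_k,\wtilde{\bP}_k)\|_{\op}+\|\sin\Theta(\bQ_k,\wtilde{\bQ}_k)\|_{\op},
\end{equation*}
is not a valid inequality: the signed cross projector $\bP_k\bQ_k^{\sT}$ encodes the \emph{pairing} between left and right singular vectors, which subspace angles do not see. For instance, with $\bA=\diag(2,1)$ and $\wtilde{\bA}$ the anti-diagonal matrix with entries $2,1$, both left and right top-$2$ subspaces coincide (all $\sin\Theta$'s vanish), yet $\bP_2\bQ_2^{\sT}-\wtilde{\bP}_2\wtilde{\bQ}_2^{\sT}$ has operator norm $2$. (This example has a large $\bE$, so it does not contradict the theorem, but it shows your step needs the coupling, not just Wedin.) The standard repair — and the paper's route — is the Jordan--Wielandt symmetric dilation $\begin{bmatrix}0&\bA\\ \bA^{\sT}&0\end{bmatrix}$: Davis--Kahan applied to its top-$k$ eigenprojectors controls precisely the off-diagonal block, which \emph{is} $\bP_k\bQ_k^{\sT}-\wtilde{\bP}_k\wtilde{\bQ}_k^{\sT}$, giving $\opnorm{\bP_k\bQ_k^{\sT}-\wtilde{\bP}_k\wtilde{\bQ}_k^{\sT}}\le 2\opnorm{\bE}/\vartheta$ under $\vartheta>2\opnorm{\bE}$.

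Second, the residual $\bP_k\bD\bQ_k^{\sT}-\wtilde{\bP}_k\bD\wtilde{\bQ}_k^{\sT}$ with $\bD=\diag\big(f(\wtilde{\sigma}_i)-f(\sigma_k(\bA))\big)$ cannot be handled ``rank-one block by rank-one block'': for $i<k$ there is no gap assumption, so $\bp_i\bq_i^{\sT}-\wtilde{\bp}_i\wtilde{\bq}_i^{\sT}$ can have norm of order one, while its coefficient is of size $L|\wtilde{\sigma}_i-\sigma_k(\bA)|^{\alpha}$ --- which involves $|\sigma_1-\sigma_k|^{\alpha}$, exactly the stray factor you say must not appear. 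You correctly flag this as the crux, but the ``four-way telescoping'' is asserted, not exhibited, and a per-index estimate of size $L\opnorm{\bE}^{\alpha}$ is not attainable this way. The paper's device is summation by parts over the \emph{nested} partial sums: write the residual analogue as $\sum_{l<k}\big(f(\sigma_l(\bA))-f(\sigma_{l+1}(\bA))\big)\big(\bP_l\bQ_l^{\sT}-\wtilde{\bP}_l\wtilde{\bQ}_l^{\sT}\big)$ plus the $l=k$ term you already isolated, and then run a dichotomy in $l$: if $\delta_l=\sigma_l(\bA)-\sigma_{l+1}(\bA)\le 2\opnorm{\bE}$, use H\"older on the coefficient and the trivial bound $2$ on the projector difference; if $\delta_l>2\opnorm{\bE}$, use the dilation/Davis--Kahan bound $2\opnorm{\bE}/\delta_l$ together with $|f(\sigma_l)-f(\sigma_{l+1})|\le L\delta_l^{\alpha}$ and $\opnorm{\bE}/\delta_l\le(\opnorm{\bE}/\delta_l)^{\alpha}$. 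Either way each summand is at most $4L\opnorm{\bE}^{\alpha}$, independent of how spread the top-$k$ singular values are, which is where the $4kL\opnorm{\bE}^{\alpha}$ term actually comes from. Your overall splitting (Weyl + H\"older on the singular-value part, scalar peel-off at $f(\sigma_k(\bA))$) matches the paper's skeleton, but without the dilation argument and the Abel-summation-plus-gap-dichotomy the two key bounds remain unproved.
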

We defer the proof of this theorem
to Appendix~\ref{sec:matrix-perturbation}.

Define for $\eps > 0$ the random variable $\tau_\eps$ and the (random) function
$f_{\eps}$ by 
\begin{equation}
\label{eqn:def-f-eps}
\tau_{\eps} = \hat{\Info}_{W, \eps}^{1/2}H(1)~~\text{and}~~
f_{\eps}(\sigma) = \begin{cases}
	\hat{\Info}_{W, \eps}^{-1/2} H^{-1}(\hat{\Info}_{W, \eps}^{-1/2} \sigma)~
		&\text{if $\sigma \ge \tau_{\eps}$} \\
	0~&\text{otherwise}
	\end{cases}.
\end{equation}
By definition, we have 
\begin{equation}
\what{\bX}(\bY) = f_{\eps}(\what{\bX}\org(\bY))
	~\text{and}~
\wtilde{\bX}(\bY) = f_{\eps}(\wtilde{\bX}\org(\bY))
\end{equation}
Now, viewing $\what{\bX}\org(\bY)$ as a perturbation of $\wtilde{\bX}\org(\bY)$,
we wish to use Theorem~\ref{theorem:nonlinear-PCA-perturbation} to give upper 
bounds on
\begin{equation*}
\opnorm{\what{\bX}(\bY) - \wtilde{\bX}(\bY)} = 
	\opnormbigg{f_{\eps}(\what{\bX}\org(\bY)) - f_{\eps}(\wtilde{\bX}\org(\bY))}.
\end{equation*} 
To apply Theorem~\ref{theorem:nonlinear-PCA-perturbation}, we need to check 
two conditions. As our first step, we show that, for any $\eps, \zeta > 0$, the function 
$f_{\eps}$ is H\"{o}lder continuous on $[\tau_{\eps}, \zeta]$. 
\begin{lemma}
\label{lemma:CBF-g-eps}
For any $\eps > 0, \zeta > 0$, $f_{\eps}$ is $(4\hat{\Info}_{W, \eps}^{-1}\zeta^{3/4}, 
	\frac{1}{4})$ H\"{o}lder continuous on 
$[\tau_{\eps}, \zeta]$, i.e., 
\begin{equation}
|f_{\eps}(x_1) - f_{\eps}(x_2)|\le 4 \hat{\Info}_{W, \eps}^{-1}\zeta^{3/4}|x_1 - x_2|^{1/4}
\end{equation}
\end{lemma}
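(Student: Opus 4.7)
The plan is to exploit a clean closed form for $H^{-1}$. The key algebraic identity, valid for $\sigma\ge 1$, is
\begin{align*}
H(\sigma)^{2}-H(1)^{2}\;=\;\sigma^{2}+\sigma^{-2}-2\;=\;(\sigma-\sigma^{-1})^{2},
\end{align*}
because $H(\sigma)^{2}=\sigma^{2}+(\gamma^{1/2}+\gamma^{-1/2})+\sigma^{-2}$ and $H(1)^{2}=2+\gamma^{1/2}+\gamma^{-1/2}$. Setting $w(y):=\sqrt{y^{2}-H(1)^{2}}$, this means that the inverse $\sigma=H^{-1}(y)$ satisfies $\sigma-\sigma^{-1}=w(y)$; solving the quadratic $\sigma^{2}-w\sigma-1=0$ and picking the root $\ge 1$ gives the explicit formula
\begin{align*}
H^{-1}(y)\;=\;\frac{w(y)+\sqrt{w(y)^{2}+4}}{2}\qquad\text{for all } y\ge H(1).
\end{align*}

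Next I would observe that the outer map $w\mapsto(w+\sqrt{w^{2}+4})/2$ is $1$-Lipschitz on $[0,\infty)$, since its derivative $(1+w/\sqrt{w^{2}+4})/2$ lies in $[1/2,1)$. Combined with the elementary inequality $\sqrt{a}-\sqrt{b}\le\sqrt{a-b}$ valid for $a\ge b\ge 0$, this yields, for all $H(1)\le y_{1}\le y_{2}\le Y$,
\begin{align*}
|H^{-1}(y_{2})-H^{-1}(y_{1})|\;\le\;w(y_{2})-w(y_{1})\;\le\;\sqrt{y_{2}^{2}-y_{1}^{2}}\;\le\;\sqrt{2Y\,|y_{2}-y_{1}|}\,.
\end{align*}
That is, $H^{-1}$ is H\"older-$1/2$ on $[H(1),Y]$ with constant $\sqrt{2Y}$. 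Since $|y_{1}-y_{2}|\le Y$, we can further write $\sqrt{|y_{1}-y_{2}|}\le Y^{1/4}\,|y_{1}-y_{2}|^{1/4}$, which upgrades the above into H\"older-$1/4$ continuity of $H^{-1}$ with constant $\sqrt{2}\,Y^{3/4}$.

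It remains to translate this into a bound on $f_{\eps}$ via the rescaling $y=\hat{\Info}_{W,\eps}^{-1/2}x$. On the interval $[\tau_{\eps},\zeta]$ we have $y\in[H(1),\,\hat{\Info}_{W,\eps}^{-1/2}\zeta]$, so I set $Y=\hat{\Info}_{W,\eps}^{-1/2}\zeta$. Using $|f_{\eps}(x_{1})-f_{\eps}(x_{2})|=\hat{\Info}_{W,\eps}^{-1/2}|H^{-1}(y_{1})-H^{-1}(y_{2})|$ and $|y_{1}-y_{2}|^{1/4}=\hat{\Info}_{W,\eps}^{-1/8}|x_{1}-x_{2}|^{1/4}$ and substituting the H\"older-$1/4$ bound yields
\begin{align*}
|f_{\eps}(x_{1})-f_{\eps}(x_{2})|\;\le\;\sqrt{2}\,\hat{\Info}_{W,\eps}^{-1/2}\,(\hat{\Info}_{W,\eps}^{-1/2}\zeta)^{3/4}\,\hat{\Info}_{W,\eps}^{-1/8}\,|x_{1}-x_{2}|^{1/4}\;=\;\sqrt{2}\,\hat{\Info}_{W,\eps}^{-1}\,\zeta^{3/4}\,|x_{1}-x_{2}|^{1/4},
\end{align*}
which is below the claimed constant $4\hat{\Info}_{W,\eps}^{-1}\zeta^{3/4}$. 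I do not expect a real obstacle beyond finding the right substitution; the only pitfall is to keep the H\"older-$1/2$ constant at the optimal size $\sqrt{Y}$, rather than something like $Y^{3/2}$ that a naive estimate on $H^{-1}(y)^{2}$ (treating $H^{-1}$ via $u=\sigma^{2}$) would produce, so that the final conversion yields the exponent $\zeta^{3/4}$ instead of a worse power of $\zeta$.
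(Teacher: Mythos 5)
Your proof is correct, and it reaches the stated constant with room to spare ($\sqrt{2}\le 4$). The paper's own argument has the same overall skeleton — invert $H$ explicitly, control differences with the subadditivity $|\sqrt{a}-\sqrt{b}|\le\sqrt{|a-b|}$, then rescale by $\hat{\Info}_{W,\eps}^{1/2}$ — but the algebraic route differs in a way worth noting. The paper writes $H^{-1}(x)=\tfrac{1}{\sqrt 2}\bigl(x^2-h+((x^2-h)^2-4)^{1/2}\bigr)^{1/2}$ with $h=\gamma^{1/2}+\gamma^{-1/2}$ and applies the square-root inequality twice, landing directly on $|H^{-1}(x_1)-H^{-1}(x_2)|\le 2|x_1-x_2|^{1/4}|x_1+x_2|^{3/4}$, then bounds $|x_1+x_2|^{3/4}$ by a power of $\zeta$. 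You instead use the identity $H(\sigma)^2-H(1)^2=(\sigma-\sigma^{-1})^2$ to express $H^{-1}$ as a $1$-Lipschitz function of $w(y)=\sqrt{y^2-H(1)^2}$, which buys you a genuine intermediate H\"older-$1/2$ bound $|H^{-1}(y_1)-H^{-1}(y_2)|\le\sqrt{2Y\,|y_1-y_2|}$ on $[H(1),Y]$ — this captures the sharp square-root behavior of $H^{-1}$ at the edge $y=H(1)$, which the paper's estimate obscures — and only then do you degrade to exponent $1/4$ to match the statement. Both proofs then perform the identical rescaling $x=\hat{\Info}_{W,\eps}^{1/2}y$, and the bookkeeping of the powers of $\hat{\Info}_{W,\eps}$ in your last display is right. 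So: same strategy, cleaner parametrization on your side, with a slightly stronger intermediate estimate and a better constant; the paper's version is marginally more direct since it never states the H\"older-$1/2$ step.
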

\begin{proof}
For notational simplicity, we denote $h$ to be the constant
\begin{equation}
h = \gamma^{1/2} + \gamma^{-1/2}
\end{equation}
By elementary computations, we have for all $x \ge H(1)$, 
\begin{equation}
H^{-1}(x) = \frac{1}{\sqrt{2}} \left(x^2- h + 
	\left((x^2- h)^2-4\right)^{1/2}\right)^{1/2}.
\end{equation}
Noting the elementary fact that $|x^{1/2} - y^{1/2}| \le |x-y|^{1/2}$
for any $x, y\in \R_+$, we have for $x_1, x_2 \ge H(1)$
\begin{align}
\left|H^{-1}(x_1) - H^{-1}(x_2)\right|
	&\le \left(\left|x_1^2 - x_2^2\right| + \left|\left((x_1^2- h)^2-4\right)^{1/2}
		- \left((x_2^2- h)^2-4\right)^{1/2}\right|\right)^{1/2} \nonumber \\
	&\le \left|x_1^2 - x_2^2\right|^{1/2} + \left|(x_1^2- h)^2 -  (x_2^2- h)^2\right|^{1/4}
	\le 2|x_1 - x_2|^{1/4} |x_1 + x_2|^{3/4}. \nonumber
\end{align}
Hence, we have, for any $\eps > 0$ and $x_1, x_2 \in [\tau_{\eps}, \infty)$, 
\begin{equation}
\left|f_{\eps}(x_1) - f_{\eps}(x_2)\right|
= \hat{\Info}_{W, \eps}^{-1/2}
\left|H^{-1}(\hat{\Info}_{W, \eps}^{-1/2}x_1) - H^{-1}(\hat{\Info}_{W, \eps}^{-1/2} x_2)\right|
\le 2\hat{\Info}_{W, \eps}^{-1}|x_1 - x_2|^{1/4} |x_1 + x_2|^{3/4}.
\end{equation}
The estimate above gives the desired claim of Lemma~\ref{lemma:CBF-g-eps}. 
\end{proof}

%
%
\indent Next, we bound the largest singular value and singular gap of
	$\wtilde{\bX}\org$.
\begin{lemma}
\label{lemma:singular-gap-bA}
There exist constants $\eps_0, \vartheta_0, C_0 > 0$ independent of $m, n$ such that, 
for all $\eps \le \eps_0$, 
\begin{equation}
\label{eqn:top-singular-bound}
\limsup_{n \to \infty, m/n \to \gamma} 
	\frac{1}{(mn)^{1/4}}
		\sigma_1(\wtilde{\bX}\org)
				\le C_0,
\end{equation}
and
\begin{equation}
\label{eqn:singular-gap-as}
\liminf_{n \to \infty, m/n \to \gamma} 
	\frac{1}{(mn)^{1/4}}
		\min\{\sigma_k(\wtilde{\bX}\org) - \sigma_{k+1}(\wtilde{\bX}\org), 
			~\sigma_k(\wtilde{\bX}\org) - \tau_{\eps}\big\}
				\ge \vartheta_0.
\end{equation}
\end{lemma}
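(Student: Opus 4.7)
The plan is to read off both bounds directly from Lemma~\ref{lemma:singular-value-of-bA}, which already identifies the almost-sure limits of the singular values of $\wtilde{\bX}\org$, combined with the consistency $\hat{\Info}_{W,\eps} \to \Info_W$ established in Theorem~\ref{proposition:upper-bound-proposition}. Throughout, I would first pass to a subsequence along which the signal singular values $\sigma_1(\bX_n), \ldots, \sigma_r(\bX_n)$ all converge (they are uniformly bounded thanks to $\bX_n \in \cF_{m,n}(r,M,\eta)$) and along which the integer $k = k(n)$ stabilizes; by Bolzano--Weierstrass it suffices to verify the claim on every such subsequence.

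For the top-singular-value bound Eq.~\eqref{eqn:top-singular-bound}: Part~1 of Lemma~\ref{lemma:singular-value-of-bA} yields
\begin{equation*}
(mn)^{-1/4}\,\sigma_1(\wtilde{\bX}\org) \asto \sqrt{\Info_W}\,H(\sqrt{\Info_W}\,\sigma_1),
\end{equation*}
and the right-hand side is a finite constant because $\sigma_1$ is bounded in terms of $M,\gamma$ by the defining constraint of $\cF_{m,n}(r,M,\eta)$. One may thus take $C_0$ to be any strict upper bound on this quantity; it is independent of $m$, $n$, and $\eps$.

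For the spectral-gap bound Eq.~\eqref{eqn:singular-gap-as}: Part~1 of Lemma~\ref{lemma:singular-value-of-bA} also gives $(mn)^{-1/4}\sigma_k(\wtilde{\bX}\org) \asto \sqrt{\Info_W}\,H(\sqrt{\Info_W}\,\sigma_k)$. The corresponding limsup for $\sigma_{k+1}$ is at most $\sqrt{\Info_W}\,H(1)$: this is Part~2 of the lemma when $k=r$, and when $k<r$ the defining inequality $\sigma_{k+1} \le \Info_W^{-1/2}$ combined with $H(\sigma) = H(1)$ for $\sigma \le 1$ gives the same bound via Part~1. By the definition of $k$ we have $\sigma_k > \Info_W^{-1/2}$, so $\sqrt{\Info_W}\,\sigma_k > 1$ and strict monotonicity of $H$ on $[1,\infty)$ yields $H(\sqrt{\Info_W}\,\sigma_k) > H(1)$. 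Hence
\begin{equation*}
\liminf_{n\to\infty,\,m/n\to\gamma}\frac{\sigma_k(\wtilde{\bX}\org) - \sigma_{k+1}(\wtilde{\bX}\org)}{(mn)^{1/4}} \;\ge\; \sqrt{\Info_W}\bigl(H(\sqrt{\Info_W}\,\sigma_k) - H(1)\bigr) \;>\; 0.
\end{equation*}
The $\tau_\eps$ piece is then essentially free: Theorem~\ref{proposition:upper-bound-proposition} gives $\hat{\Info}_{W,\eps}\to\Info_W$, so almost surely $\tau_\eps = \hat{\Info}_{W,\eps}^{1/2}H(1)$ is of order one for $\eps$ small and $n$ large, and therefore $\tau_\eps/(mn)^{1/4}\to 0$. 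Consequently $(\sigma_k(\wtilde{\bX}\org) - \tau_\eps)/(mn)^{1/4}\to \sqrt{\Info_W}\,H(\sqrt{\Info_W}\,\sigma_k)$, which is strictly positive and in fact strictly larger than the gap versus $\sigma_{k+1}$. Setting $\vartheta_0 := \tfrac12\sqrt{\Info_W}\bigl(H(\sqrt{\Info_W}\,\sigma_k) - H(1)\bigr)$ then delivers both inequalities simultaneously.

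The only genuine obstacle is bookkeeping: the singular values $\sigma_i(\bX_n)$ and the threshold-crossing count $k(n)$ depend on $n$, whereas Lemma~\ref{lemma:singular-value-of-bA} (and Theorem~\ref{theorem:general-deform} underneath it) is phrased for a fixed signal spectrum. The subsequence reduction above resolves this: once one is on a subsequence where $\sigma_i(\bX_n) \to \sigma_i^\infty$ and $k(n) \equiv k$, both inequalities collapse to the strict monotonicity of $H$ on $[1,\infty)$ and the vanishing of $\tau_\eps/(mn)^{1/4}$, and no new random-matrix input beyond Lemma~\ref{lemma:singular-value-of-bA} is needed.
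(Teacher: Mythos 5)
Your overall route is the same as the paper's: both proofs read Eq.~\eqref{eqn:top-singular-bound} and the $\sigma_k$--$\sigma_{k+1}$ gap directly off Lemma~\ref{lemma:singular-value-of-bA} (with $H(\sigma)=H(1)$ for $\sigma\le 1$ covering the case $k<r$), and both invoke the consistency $\hat{\Info}_{W,\eps}\to\Info_W$ from Theorem~\ref{proposition:upper-bound-proposition}; your $\vartheta_0$ is essentially the paper's $\tfrac12\Info_W^{1/2}\big(H(1+\delta_0')-H(1)\big)$ with $\delta_0'$ made exact. The subsequence reduction you prepend is not needed: Lemma~\ref{lemma:singular-value-of-bA} (via Theorem~\ref{theorem:general-deform}) is already stated for a scaled spectrum that does not vary with $n$, and note that by itself the reduction would not yield a single $\vartheta_0$ valid along the whole sequence if $\sigma_k(n)$ were allowed to drift toward $\Info_W^{-1/2}$.

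The one genuine discrepancy is your handling of the $\sigma_k(\wtilde{\bX}\org)-\tau_\eps$ term. You parse the prefactor $(mn)^{-1/4}$ as also multiplying $\tau_\eps$, declare $\tau_\eps/(mn)^{1/4}\to 0$, and conclude that this term tends to the full value $\Info_W^{1/2}H(\Info_W^{1/2}\sigma_k)$, ``strictly larger than the gap versus $\sigma_{k+1}$''. The typesetting of the display admits that reading, but it is not the content the lemma must deliver: $\tau_\eps=\hat{\Info}_{W,\eps}^{1/2}H(1)$ is an $O(1)$ threshold compared against the \emph{normalized} singular values $\wtilde{\Sigma}\org_{i,i}=(mn)^{-1/4}\sigma_i(\wtilde{\bX}\org)$ --- this is exactly how it enters the definition of $\wtilde{\bSigma}_k$ and the hypothesis $\sigma_k(\bA)>\max\{\sigma_{k+1}(\bA),\tau\}+\vartheta$ of Theorem~\ref{theorem:nonlinear-PCA-perturbation} in the downstream proof of Lemma~\ref{lemma:proof-upper-bound-second-step}. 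Correspondingly, the paper's proof never sends $\tau_\eps/(mn)^{1/4}$ to zero; it proves $\tau_\eps\to\Info_W^{1/2}H(1)$ (using $\hat{\Info}_{W,\eps}\to\Info_W$) and lower-bounds $(mn)^{-1/4}\sigma_k(\wtilde{\bX}\org)-\tau_\eps$ by $\Info_W^{1/2}\big(H(1+\delta_0')-H(1)\big)$, which is of the same size as the $\sigma_{k+1}$ gap, not larger. Your final $\vartheta_0$ happens to survive the correct reading (the true limit of that term is $2\vartheta_0$), and the repair uses only the consistency statement you already cite, but as written your justification of the $\tau_\eps$ part establishes a weaker statement than the one the perturbation argument actually consumes, so you should restate and reprove that piece at the normalized scale.
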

\begin{proof}
First of all, the first claim at Eq~\eqref{eqn:top-singular-bound} follows directly from 
Lemma~\ref{lemma:singular-value-of-bA}. To prove the second claim at 
Eq~\eqref{eqn:singular-gap-as}, fix any $\delta_0^\prime$ such 
that, 
\begin{equation}
\label{eqn:def-delta-0}
\sigma_k > (1+\delta_0^\prime) \Info_W^{-1/2}. 
\end{equation}
Then Lemma~\ref{lemma:singular-value-of-bA} implies that, 
\begin{equation}
\label{eqn:sin-upper-bound}
\lim_{\eps \to 0}\liminf_{n\to \infty, m/n\to \gamma} 
	\frac{1}{(mn)^{1/4}}\sigma_k(\wtilde{\bX}\org) 
		\ge \Info_{W}^{1/2}H(1+ \delta_0^\prime)
\end{equation}
and
\begin{equation}
\label{eqn:sin-lower-bound}
\lim_{\eps \to 0} \limsup_{n\to \infty, m/n\to \gamma} 
	\frac{1}{(mn)^{1/4}}\sigma_{k+1}(\wtilde{\bX}\org)
		 \le \Info_{W}^{1/2}H(1). 
\end{equation}
Now, by Theorem~\ref{proposition:upper-bound-proposition}, we 
know the following convergence
\begin{equation}
\lim_{\eps \to 0} \limsup_{n\to \infty, m/n\to \gamma} 
	\big|\hat{\Info}_{W, \eps} - \Info_W\big| = 0,
\end{equation}
this implies that, 
\begin{equation}
\label{eqn:tau-eps-limit}
\lim_{\eps \to 0} \limsup_{n\to \infty, m/n\to \gamma} 
	\big|\tau_{\eps} -  \Info_{W}^{1/2} H(1)\big| = 0.
\end{equation}
Thus, if we set $\vartheta_0$ to be 
\begin{equation}
\vartheta_0 = \half \Info_{W}^{1/2} \left(H(1+\delta_0^\prime) - H(1)\right) > 0,
\end{equation}
then Eq~\eqref{eqn:sin-upper-bound}, Eq~\eqref{eqn:sin-lower-bound} and 
Eq~\eqref{eqn:tau-eps-limit} imply that 
\begin{equation}
\lim_{\eps \to 0}\liminf_{n \to \infty, m/n \to \gamma} 
	\frac{1}{(mn)^{1/4}}
		\min\{\sigma_k(\wtilde{\bX}\org) - \sigma_{k+1}(\wtilde{\bX}\org), 
			~\sigma_k(\wtilde{\bX}\org) - \tau_{\eps}\big\}
				> \vartheta_0.
\end{equation}
This implies the second claim at Eq~\eqref{eqn:singular-gap-as}.
\end{proof}

Now we are ready to apply Theorem~\ref{theorem:nonlinear-PCA-perturbation} to 
the matrices $\bA = \wtilde{\bX}\org(\bY)$, $\wtilde{\bA} = \what{\bX}\org(\bY)$ and the 
function $f = f_{\eps}$. Indeed pick $\eps_0, \vartheta_0, C_0$ such that the 
statement of Lemma~\ref{lemma:singular-gap-bA} holds. By Eq~\eqref{eqn:as-zero-Delta-A},
we know for any $\bar{\Delta} > 0$, there exists some $\eps_1 > 0$ such that for 
$\eps \le \eps_1$, 
\begin{equation}
\limsup_{n \to \infty, m/n \to \gamma} \frac{1}{(mn)^{1/4}}\opnorm{\bDelta} 
	\le \bar{\Delta}. 
\end{equation}
Note that by definition, 
\begin{equation}
\what{\bX}(\bY) = f_{\eps}(\what{\bX}\org(\bY))
~~\text{and}~~ \wtilde{\bX}(\bY) = f_{\eps}(\wtilde{\bX}\org(\bY)).
\end{equation}
By Lemma~\ref{lemma:CBF-g-eps}, Lemma~\ref{lemma:singular-gap-bA} and 
Theorem~\ref{theorem:nonlinear-PCA-perturbation}, we know that, when 
$\eps \le \eps_0 \wedge \eps_1$ and $\bar{\Delta} < \vartheta_0/2$,
\begin{align}
&\limsup_{n \to \infty, m/n \to \gamma} \frac{1}{(mn)^{1/4}}
	\opnormbig{\wtilde{\bX}_{k}(\bY) - \what{\bX}_k(\bY)}  \\
&\le \left(\limsup_{n \to \infty, m/n \to \gamma}\hat{\Info}_{W, \eps}^{-1}\right)
	\cdot 4k C_0^{3/4}\bar{\Delta}^{1/4} + 
	 \left(\limsup_{n \to \infty, m/n \to \gamma}f_{\eps}(C_0)\right) 
	 	\cdot \frac{2}{\vartheta_0}\bar{\Delta}. 
\label{eqn:crazy-PCA-pert-bound-f-g-eps-Delta}
\end{align}
Now, denote the quantity $\tau$ and the function $f$ to be  
\begin{equation}
\label{eqn:def-g-eps-limit}
\tau = \Info_W^{-1/2} H(1),~~
\bar{f}(\sigma) =\begin{cases}
		\Info_W^{-1/2} H^{-1}(\Info_W^{-1/2}\sigma)~~&\text{if $\sigma \ge \tau$}\\
		\Info_W^{-1/2} &\text{otherwise}
	\end{cases}.
\end{equation}
As Theorem~\ref{proposition:upper-bound-proposition} shows the following convergence
\begin{equation}
\label{eqn:again-Info-W-converge}
\lim_{\eps \to 0}\limsup_{n \to \infty, m/n \to \gamma} 
	\big|\hat{\Info}_{W, \eps} - \Info_W\big| = 0, 
\end{equation}
we get for any $a > 0$,
\begin{equation}
\label{eqn:limit-of-f-g-eps-function}
\lim_{\eps \to 0} \limsup_{n \to \infty, m/n \to \gamma} 
	\big(f_{\eps}(a) - \bar{f}(a)\big) \le 0.
\end{equation}
Now, using Eq~\eqref{eqn:again-Info-W-converge} and 
Eq~\eqref{eqn:limit-of-f-g-eps-function}, and taking
$\eps \to 0$ on both sides of Eq~\eqref{eqn:crazy-PCA-pert-bound-f-g-eps-Delta},
we get that, 
\begin{equation}
\label{eqn:one-step-up-to-pert-limit-Delta}
\lim_{\eps \to 0}\limsup_{n \to \infty, m/n \to \gamma} \frac{1}{(mn)^{1/4}}
	\opnormbig{\what{\bX}_k(\bY)- \wtilde{\bX}_{k}}
\le 4k \Info_W^{-1}  C_0^{3/4}\bar{\Delta}^{1/4} + 
	 \bar{f}(C_0) \cdot \frac{2\bar{\Delta}}{\vartheta_0}
\end{equation}
Note that the LHS of above is independent of $\bar{\Delta} < \vartheta_0/2$. 
Taking $\bar{\Delta}\to 0$ in Eq~\eqref{eqn:one-step-up-to-pert-limit-Delta} 
gives us
\begin{equation}
\lim_{\eps \to 0}\limsup_{n \to \infty, m/n \to \gamma} 
	\frac{1}{(mn)^{1/4}}\opnormbig{\what{\bX}_k(\bY)- \wtilde{\bX}_{k}} = 0.
\end{equation}
This proves the desired claim of Lemma~\ref{lemma:proof-upper-bound-second-step}.

\subsubsection{Proof of Lemma~\ref{lemma:proof-upper-bound-third-step}}
\label{sec:proof-lemma-proof-upper-bound-third-step}
In the proof below, we assume without loss of generality that $\gamma \le 1$. 
Define the auxiliary matrix $\wtilde{\bX}_k\aux \in \R^{m \times n}$ by 
\begin{equation}
\wtilde{\bX}_k\aux = (mn)^{1/4}\wtilde{\bU}_k \bSigma_k \wtilde{\bV}_k^{\sT}
\end{equation}
We divide our proof of Lemma~\ref{lemma:proof-upper-bound-third-step}
into two cases. In the first case, we consider 
the situation where the top singular values $\{\sigma_i\}_{i \in [k]}$ are distinct, i.e., 
\begin{equation}
\sigma_1 > \sigma_2 > \ldots > \sigma_k > \Info_W^{-1/2}. 
\end{equation}
By triangle inequality, we know that, 
\begin{align}
\label{eqn:dif-op-tilde-A}
 \frac{1}{(mn)^{1/4}}\norm{\wtilde{\bX}_k -  \bX}_{\op}
	&\le \frac{1}{(mn)^{1/4}}\norm{\wtilde{\bX}_k- \wtilde{\bX}_k\aux }_{\op}
		+  \frac{1}{(mn)^{1/4}}\norm{\wtilde{\bX}_k\aux - \bX}_{\op} \notag \\
	&= \opnorm{\wtilde{\bSigma}_k - \bSigma_k}
		+ \opnorm{\wtilde{\bU}_k \bSigma_{k} \wtilde{\bV}_k^{\sT} - \bU \bSigma \bV^{\sT}}.
\end{align}
Now, we upper bound the RHS of Eq~\eqref{eqn:dif-op-tilde-A} separately. The goal 
is to show that, 
\begin{equation}
\label{eqn:first-error-bound}
\lim_{\eps \to 0} \lim_{n\to \infty, m/n\to \gamma}
	\opnorm{\wtilde{\bSigma}_k - \bSigma_k} = 0.
\end{equation}
and 
\begin{equation}
\label{eqn:second-error-bound}
\lim_{\eps \to 0} \limsup_{n\to \infty, m/n\to \gamma}
	\opnorm{\wtilde{\bU}_k \bSigma_{k} \wtilde{\bV}_k^{\sT} - \bU \bSigma \bV^{\sT}} \le
		\max\{\gamma^{1/4}, \gamma^{-1/4}\} \Info_W^{-1/2}.
\end{equation}
We start by proving Eq~\eqref{eqn:first-error-bound}.
By Lemma~\ref{lemma:singular-value-of-bA} and 
Theorem~\ref{proposition:upper-bound-proposition}, 
we know that for $i \in [k]$
\begin{equation}
\label{eqn:one-funny-convergence}
\lim_{n\to \infty, m/n\to \gamma}
	\wtilde{\bSigma}\org_{i, i} 
= \lim_{n\to \infty, m/n\to \gamma}
	\frac{1}{(mn)^{1/4}}\sigma_i(\wtilde{\bX}\org(\bY))
	= \Info_W^{1/2} H\big(\Info_W^{1/2}\sigma_i\big)
\end{equation}
and 
\begin{equation}
\label{eqn:two-funny-convergence}
\lim_{\eps \to 0} \lim_{n\to \infty, m/n\to \gamma} \hat{\Info}_{W, \eps} = \Info_W.
\end{equation}
Since $\sigma \to H^{-1}(\sigma)$ is continuous on $[H(1), \infty)$, 
Eq~\eqref{eqn:one-funny-convergence} and Eq~\eqref{eqn:two-funny-convergence} 
give for $i \in [k]$,  
\begin{equation}
\label{eqn:first-error-bound-key}
\lim_{\eps \to 0} \lim_{n\to \infty, m/n\to \gamma} 
	\tilde{\Sigma}_{i, i} = \Info_W^{-1/2} H^{-1}\left(H\left(\Info_W^{1/2}\sigma_i\right) \right)
		= \sigma_i,
\end{equation}
where the last identity uses the fact that $\sigma_i > \Info_W^{-1/2}$ for $i \in [k]$.
Hence, using Eq~\eqref{eqn:first-error-bound-key}, we have, 
\begin{equation}
\lim_{\eps \to 0} \lim_{n\to \infty, m/n\to \gamma}
	\opnorm{\wtilde{\bSigma}_k - \bSigma_k} 
= \lim_{\eps \to 0} \lim_{n\to \infty, m/n\to \gamma}\max_{i \in [k]}
	\left|\tilde{\Sigma}_{i, i} - \sigma_i\right| = 0, 
\end{equation}
which gives the desired bound in Eq~\eqref{eqn:first-error-bound}.

Next, we show Eq~\eqref{eqn:second-error-bound}. Fix $\eps > 0$ first. Now we construct two 
auxiliary orthonormal matrices $\bQ_u \in \R^{m \times m}$ and $\bQ_v \in \R^{n \times n}$.
First, define the two sets, 
\begin{equation}
S_{\bu} = \{\wtilde{\bu}_1,\ldots,\wtilde{\bu}_{k}, \bu_1,\ldots,\bu_r\}
	~~\text{and}~~
S_{\bv} = \{\wtilde{\bv}_1,\ldots,\wtilde{\bv}_{k}, \bv_1,\ldots,\bv_r\}.
\end{equation}
Now, run the Gram-Schmidt orthogonalization process on the sets of vectors $S_{\bu}$
and $S_{\bv}$ separately. As the first $k$ vectors of $S_{\bu}$ (and same for $S_{\bv}$) 
are already orthogonal to each other, we may assume the output vectors of the process 
takes the form of 
\begin{equation}
S_{\bu}^{\rm GS} = \{\wtilde{\bu}_1,\ldots,\wtilde{\bu}_{k},\bu_{1}^{\prime},
\ldots,\bu_{r}^{\prime}\}~~\text{and}~~
S_{\bv}^{\rm GS} = \{\wtilde{\bv}_1,\ldots,\wtilde{\bv}_{k},\bv_{1}^{\prime},
\ldots,\bv_{r}^{\prime}\}.
\end{equation}
The orthonormal matrices $\bQ_u$ and $\bQ_u$ are defined by any two orthonormal 
matrices whose first $k+r$ columns are $S_{\bu}^{\rm GS}$ and $S_{\bv}^{\rm GS}$
respectively. Since both $\bQ_u \in \R^{m \times m}$ and $\bQ_v \in \R^{n \times n}$
are orthonormal matrices, we have, 
\begin{equation}
\label{eqn:transformation-GS}
\opnorm{\wtilde{\bU}_k \bSigma_{k} \wtilde{\bV}_k^{\sT} - \bU \bSigma \bV^{\sT}}
= \opnorm{\bQ_u^{\sT}\wtilde{\bU}_k \bSigma_{k} \wtilde{\bV}_k^{\sT}\bQ_v - \bQ_u^{\sT}\bU \bSigma \bV^{\sT}\bQ_v}
\end{equation}
Denote the matrix $\bGamma \in \R^{m \times n}$ to be
\begin{equation}
\label{eqn:def-b-Gamma-sub}
\bGamma = \bQ_u^{\sT}\wtilde{\bU}_k \bSigma_{k} \wtilde{\bV}_k^{\sT}\bQ_v - \bQ_u^{\sT}\bU \bSigma \bV^{\sT}\bQ_v.
\end{equation}
Then, by construction of $\bQ_u$ and $\bQ_v$, we know that the matrix $\bGamma$ has 
$0$ entries except its top left $(k+r) \times (k+r)$ sub-matrix. Denote this sub-matrix to be 
$\bGamma\subs \in \R^{(k+r) \times (k+r)}$. For each $i \in [k]$, define the angles 
$\alpha_i, \beta_i \in (0, \pi/2)$ through the following: 
\begin{equation}
\cos{\alpha_i} =G^{(1)} (\sigma_i;\Info_W)~~\text{and}~~\cos{\beta_i} = G^{(2)} (\sigma_i;\Info_W),
\end{equation}
where we define the functions $G^{(1)}(\sigma; t)$ and $G^{(2)}(\sigma; t)$ for 
$\sigma, t > 0, \sigma^2 t \ge 1$ by
\begin{equation}
G^{(1)}(\sigma; t) = \left(\frac{1-t^{-2}\sigma^{-4}}{1+\gamma^{1/2}t^{-1}\sigma^{-2}}\right)^{1/2}
~\text{and}~
G^{(2)}(\sigma; t) = \left(\frac{1-t^{-2}\sigma^{-4}}{1+\gamma^{-1/2}t^{-1}\sigma^{-2}}\right)^{1/2}
\end{equation}
Moreover, introduce the following two by two matrices for $i \in [k]$,
\begin{equation}
\overline{\bDelta}^{(i)} = \sigma_i
\begin{bmatrix}
(1 - \cos \alpha_i \cos \beta_i) &  \sin{\alpha_i} \cos \beta_i \\
\sin{\beta_i} \cos \alpha_i &- \sin \alpha_i \sin{\beta_i}
\end{bmatrix}.
\end{equation}
Now we prove that, almost surely we have 
\begin{equation}
\label{eqn:converge-Gamma-sub}
\lim_{n \to \infty} \bGamma\subs = 
	\begin{bmatrix}
		\overline{\Delta}^{(1)}_{1, 1} &0 & \cdots & 0 & \overline{\Delta}^{(1)}_{1, 2}
			& 0 & \cdots &  0 & \cdots & 0\\
		0 & \overline{\Delta}^{(2)}_{1, 1} & \cdots & 0 & 0 & \overline{\Delta}^{(2)}_{1, 2}
		 & \cdots & 0 &\cdots & 0\\
		0 & 0 & \cdots & 0 & 0 & 0 & \cdots & 0 & \cdots & 0\\
		0 &0 & \cdots &  \overline{\Delta}^{(k)}_{1, 1} & 0 & 0 & \cdots &  
			\overline{\Delta}^{(k)}_{1, 2} &\cdots & 0\\
		\overline{\Delta}^{(1)}_{2, 1} &0 & \cdots & 0 & \overline{\Delta}^{(1)}_{2, 2}
			& \cdots & 0 & 0 & \cdots & 0\\
		0 & \overline{\Delta}^{(2)}_{2, 1} & \cdots & 0 & 0 & \overline{\Delta}^{(2)}_{2, 2}
		 & \cdots & 0 & \cdots & 0\\
		0 & 0 & \cdots & 0 & 0 & 0 & \cdots & 0 & \cdots & 0\\
		0 &0 & \cdots &  \overline{\Delta}^{(k)}_{2, 1} & 0 & 0 & \cdots &  
			\overline{\Delta}^{(k)}_{2, 2} &  \cdots & 0\\
		0 & 0 & 0 & 0 & 0 & 0 & 0 & 0 &  \cdots & 0\\
		\cdots & \cdots & \cdots & \cdots & \cdots & \cdots & \cdots & \cdots & \cdots & 0\\
		0 & 0 & 0 & 0 & 0 & 0 & 0 & 0 & \cdots & 0
	\end{bmatrix}
\end{equation}
Since $\gamma \le 1$, by Theorem~\ref{theorem:general-deform}, we 
can without loss of generality (by flipping the sign of $\{\wtilde{\bu}_i\}_{i\in [k]}$
and $\{\wtilde{\bv}_i\}_{i\in [k]}$ if necessary) assume that, for $i \in [k]$ 
and $j \in [r]$, 
\begin{equation}
\label{eqn:corr-tilde-u-u}
\langle \wtilde{\bu}_i, \bu_j\rangle \asto 
	\begin{cases}
		\cos(\alpha_i)~~&\text{if $i \ne j$} \\
		0~~&\text{otherwise}
	\end{cases}
~~\text{and}~~
\langle \wtilde{\bv}_i, \bv_j\rangle \asto 
	\begin{cases}
		\cos(\beta_i)~~&\text{if $i \ne j$} \\
		0~~&\text{otherwise}
	\end{cases}
\end{equation}
By definition of the Gram-Schmidt orthogonalization, we can without loss 
of generality (by flipping the sign of $\{\bu_i^\prime\}_{i\in [k]}$
and $\{\bv_i^\prime\}_{i\in [k]}$ if necessary) assume that, for
$i \in [k]$ and $j \in [r]$, 
\begin{equation}
\label{eqn:corr-u-prime-u}
\langle \bu_i^\prime, \bu_j\rangle \asto 
	\begin{cases}
		\sin(\alpha_i)~~&\text{if $i \ne j$} \\
		0~~&\text{otherwise}
	\end{cases}
~~\text{and}~~
\langle \bv_i^\prime, \bv_j\rangle \asto 
	\begin{cases}
		\sin(\beta_i)~~&\text{if $i \ne j$} \\
		0~~&\text{otherwise}
	\end{cases}
\end{equation}
Now, using Eq~\eqref{eqn:corr-tilde-u-u} and Eq~\eqref{eqn:corr-u-prime-u}, we see that, 
\begin{equation}
\label{eqn:converge-U-V-easy}
\bQ_u^{\sT}\wtilde{\bU}_k \asto 
	\begin{bmatrix}
	1 & 0 & \ldots & 0 \\
	0 & 1 & \ldots & 0 \\
	\cdots & \cdots & \cdots & \cdots \\
	0 & 0 & \ldots & 1 \\
	0 & 0 & \ldots & 0 \\
	\cdots & \cdots & \cdots & \cdots \\
	0 & 0 & \ldots & 0
	\end{bmatrix}
~~\text{and}~~
\bQ_v^{\sT}\wtilde{\bV}_k \asto 
	\begin{bmatrix}
	1 & 0 & \ldots & 0 \\
	0 & 1 & \ldots & 0 \\
	\cdots & \cdots & \cdots & \cdots \\
	0 & 0 & \ldots & 1 \\
	0 & 0 & \ldots & 0 \\
	\cdots & \cdots & \cdots & \cdots \\
	0 & 0 & \ldots & 0
	\end{bmatrix}
\end{equation}
and
\begin{equation}
\label{eqn:converge-U-V-hard}
\bQ_u^{\sT}\bU \asto 
\begin{bmatrix}
	\cos(\alpha_1) & 0 & \ldots & 0 \\
	0 & \cos(\alpha_2) & \ldots & 0 \\
	\cdots & \cdots & \cdots & \cdots \\
	0 & 0 & \ldots & \cos(\alpha_k) \\
	\sin(\alpha_1) & 0 & \ldots & 0 \\
	0 & \sin(\alpha_2) & \ldots & 0 \\
	\cdots & \cdots & \cdots & \cdots \\
	0 & 0 & \ldots & \sin(\alpha_k) \\
	0 & 0 & \ldots & 0 \\
	\cdots & \cdots & \cdots & \cdots \\
	0 & 0 & \ldots & 0
	\end{bmatrix}
~~\text{and}~~
\bQ_u^{\sT}\bU \asto 
\begin{bmatrix}
	\cos(\beta_1) & 0 & \ldots & 0 \\
	0 & \cos(\beta_2) & \ldots & 0 \\
	\cdots & \cdots & \cdots & \cdots \\
	0 & 0 & \ldots & \cos(\beta_k) \\
	\sin(\beta_1) & 0 & \ldots & 0 \\
	0 & \sin(\beta_2) & \ldots & 0 \\
	\cdots & \cdots & \cdots & \cdots \\
	0 & 0 & \ldots & \sin(\beta_k) \\
	0 & 0 & \ldots & 0 \\
	\cdots & \cdots & \cdots & \cdots \\
	0 & 0 & \ldots & 0
	\end{bmatrix}
\end{equation}
Now, Eq~\eqref{eqn:converge-Gamma-sub} follows by Eq~\eqref{eqn:converge-U-V-easy}, 
Eq~\eqref{eqn:converge-U-V-hard} and the definition of $\bGamma\subs$ in 
Eq~\eqref{eqn:def-b-Gamma-sub}.

Following Eq~\eqref{eqn:transformation-GS} and Eq~\eqref{eqn:converge-Gamma-sub}, this 
shows that, 
\begin{equation}
\label{eqn:converge-to-two-by-two}
\lim_{n\to \infty, m/n\to \gamma}
	\opnormbig{\wtilde{\bU}_k \bSigma_{k} \wtilde{\bV}_k^{\sT} - \bU \bSigma \bV^{\sT}}
=  \lim_{n\to \infty, m/n\to \gamma}\opnormbig{\bGamma\subs}
	= \max_{i \in [k]} \opnormbig{\bDelta^{(i)}}. 
\end{equation}
Now, we evaluate the RHS of Eq~\eqref{eqn:converge-to-two-by-two}. To do so, 
for any $\alpha, \beta \in (0, \pi)$, define the matrix 
\begin{equation}
\label{eqn:block-singular-value}
\bT(\alpha, \beta) = 
\begin{bmatrix} 
    1- \cos \alpha \cos \beta & \sin{\alpha}\cos{\beta} \\
    \cos{\alpha}\sin{\beta} & -\sin{\alpha} \sin{\beta}
\end{bmatrix}.
\end{equation}
By elementary computations, the two singular values of $\bT(\alpha, \beta)$ are exactly
\begin{equation}
2\cos\left(\alpha /2\right)\sin\left(\beta/2\right)~~\text{and}~~
2\cos\left(\beta/2 \right)\sin\left(\alpha/2 \right)\, ,
\end{equation}
and hence if we define the function $J(\alpha, \beta)$ by 
\begin{equation}
J(\alpha, \beta) = 
2\max\left\{\cos\left(\frac{\alpha}{2}\right)\sin\left(\frac{\beta}{2}\right), 
	\cos\left(\frac{\beta}{2}\right)\sin\left(\frac{\alpha}{2}\right)\right\}.
\end{equation}
then we have for $\alpha, \beta \in (0, \pi)$, $\opnormbig{\bT(\alpha, \beta)} 
= J(\alpha, \beta)$. Hence, using Eq~\eqref{eqn:converge-to-two-by-two}, we 
get that 
\begin{equation}
\lim_{n\to \infty, m/n\to \gamma}
	\opnormbig{\wtilde{\bU}_k \bSigma_{k} \wtilde{\bV}_k^{\sT} - \bU \bSigma \bV^{\sT}}= 
	\max_{i \in [k]} \opnormbig{\bDelta^{(i)}} = 
	\max_{i \in [k]}  2\sigma_i J(\alpha_i, \beta_i)
\end{equation}
Hence, to prove Eq~\eqref{eqn:second-error-bound}, it suffices to show the 
estimate below, 
\begin{equation}
\label{eqn:crucial-estimate}
\max_{i \in [k]} 2\sigma_i J(\alpha_i, \beta_i) 
	\le \max\left\{\gamma^{1/4}, \gamma^{-1/4}\right\}\Info_W^{-1/2}. 
\end{equation}
Denote the function $T: \R_+ \times [0, 1] \to \R_+$ to be
\begin{equation*}
T(t, y) \defeq 
	\left(1 + \left(\frac{1- y^2}{1 + t^{-1} y}\right)^{1/2}\right) 
		\left( 1- \left(\frac{1- y^2}{1 + t y}\right)^{1/2} \right).
\end{equation*}
By elementary computations, we know that, for $i \in [k]$, 
\begin{equation}
\label{eqn:representation-of-J-into-T}
2\sigma_i J(\alpha_i, \beta_i) = \sigma_i
	T^{1/2}\left(\max\{\gamma^{1/2}, \gamma^{-1/2}\}, 
		\sigma_i^{-2} \Info_W^{-1}\right). 
\end{equation}
Now we show that, for any $t \ge 1$ and $y \in [0, 1]$, 
\begin{equation}
\label{eqn:elementary-funny-claim}
T(t, y) \le ty.
\end{equation}
Indeed, introduce the function $\bar{T}(t, y)\defeq T(t, y) - ty$. Then, by computation, 
we have
\begin{equation}
\label{eqn:derivative-T}
\frac{\partial }{\partial t} \bar{T}(t, y)
	= y \tilde{T}(t,y), 
\end{equation}
where the function $\tilde{T}(t,y)$ is defined by
\begin{align}
\tilde{T}(t,y) &\defeq \half (1- y^2)^{1/2} \left( (1+t^{-1}y)^{-3/2}t^{-2} + 
	(1+ty)^{-3/2} \right)  \notag \\
	&+ \half  (1- y^2) (1+t^{-1}y)^{-3/2}(1+ty)^{-3/2} (1-t^{-2}) - 1.
\end{align}
Note that when $t\ge 1$, $\tilde{T}(t, y)\le 0$ as $\tilde{T}(t, 1) = 0$ and by inspection, 
$\tilde{T}(t, y)$ is nonincreasing in $y$ for any fix $t \in [1, \infty)$. Thus, by 
Eq~\eqref{eqn:derivative-T}, we know that, when $t \ge 1$, 
\begin{equation}
\frac{\partial }{\partial t} \bar{T}(t, y) \le 0, 
\end{equation} 
which implies that the function $t \to \bar{T}(t, y)$ is decreasing in $t \in [1, \infty)$ for 
any fix $y$. Since $\bar{T}(1, y) = T(1, y) - y = 0$ for any $y \in [0, 1]$, this implies
that $\bar{T}(t, y) \le 0$ for any $t\ge 1$ and $y \in [0, 1]$, giving the desired claim 
at Eq~\eqref{eqn:elementary-funny-claim}. Now, with Eq~\eqref{eqn:representation-of-J-into-T}
and Eq~\eqref{eqn:elementary-funny-claim}, we conclude that, for $i \in [k]$, 
\begin{equation}
2\sigma_i J(\alpha_i, \beta_i) 
	\le \sigma_i \left(\max\big\{\gamma^{1/2}, \gamma^{-1/2}\big\}\sigma_i^{-2}\Info_W^{-1}\right)^{1/2}
	= \max\left\{\gamma^{1/4}, \gamma^{-1/4}\right\}\Info_W^{-1/2}.
\end{equation} 
This proves Eq~\eqref{eqn:crucial-estimate}, and as mentioned, it leads to the 
desired claim at Eq~\eqref{eqn:second-error-bound}.

In the second case, we consider the situation where some elements of $\{\sigma_i\}_{i \in [k]}$
may coincide. We reduce ourselves to the first case using a perturbation argument outlined below. Indeed, for any $\{\pert_i\}_{i \in [k]}$ 
such that $\{\sigma_i + \pert_i\}_{i \in [k]}$ are distinct, we define
\begin{equation}
\bX(\pert) = \bX + (mn)^{1/4}\sum_{i=1}^k \pert_i \, \bu_i \bv_i^{\sT}.
\end{equation}
Now, for such $\{\pert_i\}_{i \in [k]}$, define $\wtilde{\bX}\org(\pert) \in \R^{m\times n}$ 
and its SVD decomposition of $\wtilde{\bX}^{(0)}$ to be 
\begin{equation}
\wtilde{\bX}\org(\pert) = \Info_W \bX(\pert) + \sqrt{\Info_W} \bZ
~~\text{and}~~
\wtilde{\bX}^{(0)}(\pert) = (mn)^{1/4}\wtilde{\bU}(\pert) \wtilde{\bSigma}\org(\pert) \wtilde{\bV}(\pert)^{\sT},
\end{equation}
Denote $\wtilde{\bSigma}_k(\pert) \in \R^{k \times k}$ to be the diagonal matrix
with diagonal entries defined by, 
\begin{equation}
\wtilde{\Sigma}_{i, i}(\pert) = \begin{cases}
	\hat{\Info}_{W, \eps}^{-1/2} H^{-1}(\hat{\Info}_{W, \eps}^{-1/2}\wtilde{\Sigma}_{i, i}\org(\pert))
		&\text{if $\wtilde{\Sigma}_{i, i}\org(\pert) \ge H(1) \hat{\Info}_{W, \eps}^{1/2}$} \\
	0 &\text{otherwise}
	\end{cases},
\end{equation}
and the rank $k$ matrix $\wtilde{\bX}_k(\pert) \in \R^{m \times n}$ by
\begin{equation}
\wtilde{\bX}_k(\pert) = (mn)^{1/4}\wtilde{\bU}_k(\pert) \wtilde{\bSigma}_k(\pert) \wtilde{\bV}_k(\pert)^{\sT}.
\end{equation}

For notational shorthand, denote $\pert\imax = \max_{i \in [k]} 
\big|\pert_i\big|$. Since $\sigma_k > \sigma_{k+1}$, by construction, when the 
value $\pert\imax$ is sufficiently small, the set of the top $k$ singular values of 
$\bX(\pert)$ is precisely the set $\{\sigma_i + \pert_i\}_{i \in [k]}$. Thus by our 
choice of $\{\pert_i\}_{i \in [k]}$, the top $k$ singular values of $\bX(\pert)$
are pairwise different. Hence, we may use the established result in the first case to 
conclude that, 
\begin{equation}
\lim_{\eps \to 0} \limsup_{n\to \infty, m/n\to \gamma}
	\frac{1}{(mn)^{1/4}}\norm{\wtilde{\bX}_k(\pert) -  \bX}_{\op}
		\le \max\{\gamma^{1/4}, \gamma^{-1/4}\} \Info_W^{-1/2}.	
\end{equation}
To prove our desired Eq~\eqref{eqn:second-error-bound}, it suffices to show that,
\begin{equation}
\label{eqn:perturbation-not-matter-PCA}
\lim_{\pert \to 0}\lim_{\eps \to 0} \limsup_{n\to \infty, m/n\to \gamma}
	\frac{1}{(mn)^{1/4}}\norm{\wtilde{\bX}_k(\pert) -  \wtilde{\bX}_k}_{\op}
		= 0.
\end{equation}
To prove Eq~\eqref{eqn:perturbation-not-matter-PCA}, we use the same idea 
as that of proving Lemma~\ref{lemma:proof-upper-bound-second-step}. Viewing 
$\wtilde{\bX}\org(\pert)$ as a perturbed version of $\wtilde{\bX}\org$, 
we may use the perturbation result Theorem~\ref{theorem:nonlinear-PCA-perturbation}
to upper bound 
\begin{equation}
\opnorm{\wtilde{\bX}(\pert) -  \wtilde{\bX}} 
= \opnormBig{f_{\eps}\left(\wtilde{\bX}\org(\pert)\right) -  f_{\eps}\left(\wtilde{\bX}\org\right)},
\end{equation}
where we recall the definition of $f_{\eps}$ at Eq~\eqref{eqn:def-f-eps}. It turns 
out that such bound suffices for our proof of Eq~\eqref{eqn:perturbation-not-matter-PCA}.

To be more concrete, note first that, by definition
\begin{equation}
\opnormbig{\wtilde{\bX}(\pert) -  \wtilde{\bX}} = \pert\imax.
\end{equation} 
Pick $\eps_0, \vartheta_0, C_0 > 0$ such that the statement of Lemma 
\ref{lemma:singular-gap-bA} holds. Recall $f_{\eps}$ at Eq~\eqref{eqn:def-f-eps}. Since 
\begin{equation}
\wtilde{\bX}(\pert) =  f_{\eps}\left(\wtilde{\bX}\org(\pert)\right)
	~\text{and}~
\wtilde{\bX} = f_{\eps}\left(\wtilde{\bX}\org\right).
\end{equation}
Lemma~\ref{lemma:CBF-g-eps},
Lemma~\ref{lemma:singular-gap-bA} and 
Theorem~\ref{theorem:nonlinear-PCA-perturbation} imply that when
$\eps \le \eps_0$ and $\pert\imax < \vartheta_0/2$,
\begin{align}
&\limsup_{n \to \infty, m/n \to \gamma} \frac{1}{(mn)^{1/4}}
	\opnormbig{\wtilde{\bX}_k(\pert) -  \wtilde{\bX}_k}  \nonumber\\
&\le \left(\limsup_{n \to \infty, m/n \to \gamma} \hat{\Info}_{W, \eps}^{-1}\right)
		\cdot  4kC_0^{3/4} (\pert\imax)^{1/4}
	+ \left( \limsup_{n \to \infty, m/n \to \gamma}
		f_{\eps}(C_0) \right)\cdot \frac{2\pert\imax}{\vartheta_0}
\label{eqn:crazy-PCA-pert-bound-f-g-eps}
\end{align}
Recall $\bar{f}$ at Eq~\eqref{eqn:def-g-eps-limit}. By Eq~\eqref{eqn:limit-of-f-g-eps-function}, 
if we take limit $\eps \to 0$ in Eq~\eqref{eqn:crazy-PCA-pert-bound-f-g-eps}, we get  
\begin{equation}
\label{eqn:one-step-up-to-pert-limit}
\lim_{\eps \to 0}\limsup_{n \to \infty, m/n \to \gamma} \frac{1}{(mn)^{1/4}}
	\opnormbig{\wtilde{\bX}_k(\pert) -  \wtilde{\bX}_k}
\le 4k\Info_W^{-1} C_0^{3/4} (\pert\imax)^{1/4}
+ \bar{f}(C_0) \cdot \frac{2\pert\imax}{\vartheta_0}
\end{equation}
Take $\pert\to 0$ on both sides of Eq~\eqref{eqn:one-step-up-to-pert-limit}. This shows
\begin{equation}
\lim_{\pert \to 0}\lim_{\eps \to 0}\limsup_{n \to \infty, m/n \to \gamma} 
	\frac{1}{(mn)^{1/4}}\opnormbig{\wtilde{\bX}_k(\pert) -  \wtilde{\bX}_k} = 0,
\end{equation}
the desired claim at Eq~\eqref{eqn:perturbation-not-matter-PCA}. 
As mentioned, this proves Eq~\eqref{eqn:second-error-bound}.


\section{Proof of Lemma~\ref{lemma:upper-bound-ui}}
\label{sec:proof-lemma-upper-bound-ui}
First, note that, by H\"{o}lder's inequality, we have for any $\nu \in (0, 1]$: 
\begin{equation}
\label{eqn:ui-upper-bound-one}
\frac{1}{(mn)^{(1+\nu)/4}}  \E \left[
	\|\what{\bX}(\bY)-\bX\|_{\op}^{(1+\nu)}\right]
\le \frac{2}{(mn)^{(1+\nu)/4}} \left(\E \left[\|\what{\bX}(\bY)\|_{\op}^{(1+\nu)}\right]
	+ \E \left[\|\bX\|_{\op}^{(1+\nu)}\right]\right).
\end{equation}
Since $\bX \in \cF_{m,n}(r, M, \eta)$, we know that, for any $\nu \in (0, 1]$, 
\begin{equation}
 \frac{1}{(nm)^{(1+\nu)/4}} \E \left[\|\bX\|_{\op}^{(1+\nu)} \right] \le M^{(1+\nu)} < \infty.
\end{equation}
Hence, using Eq~\eqref{eqn:ui-upper-bound-one}, it suffices to show for some 
$\nu \in (0, 1]$, we have for any $\eps, \delta$ small enough
\begin{equation}
\label{eqn:ui-upper-bound-goal-one}
\limsup_{n\to \infty, m/n\to \gamma}
	\frac{1}{(mn)^{(1+\nu)/4}}\E \left[ \|\what{\bX}(\bY)\|_{\op}^{(1+\nu)}\right]
		< \infty.
\end{equation}
To show Eq~\eqref{eqn:ui-upper-bound-goal-one}, we start by providing a 
deterministic upper bound on $\|\what{\bX}(\bY)\|_{\op}$, which is, 
\begin{equation}
\label{eqn:ui-upper-bound-two}
\opnormbig{\what{\bX}(\bY)}
	\le \eps^{-1}\opnormbig{\what{\bX}\org(\bY)}.
\end{equation}
Indeed, we first note that, 
\begin{equation}
\opnormbig{\what{\bX}(\bY)} 
	= \begin{cases}
		\hat{\Info}_{W, \eps}^{-1/2} H^{-1}\Big(\hat{\Info}_{W, \eps}^{-1/2}
			\opnormbig{\what{\bX}\org(\bY)}\Big) ~~&\text{if 
				$\opnormbig{\what{\bX}\org(\bY)} \ge 
					(1+\delta)H(1)\hat{\Info}_{W, \eps}^{1/2}$} \\
		0~~&\text{otherwise}.
	\end{cases}
\label{eqn:relate-what-to-what-org}
\end{equation}
Next, by inspection $H(\sigma) \ge \sigma$ for $\sigma \ge 1$. Thus 
$H^{-1}(\sigma) \le \sigma$ for $\sigma > H(1)$. Hence, 
Eq~\eqref{eqn:relate-what-to-what-org} implies 
\begin{equation}
\opnormbig{\what{\bX}(\bY)}
\le \hat{\Info}_{W, \eps}^{-1} \opnormbig{\what{\bX}\org(\bY)}
\end{equation}
Since by definition $\hat{\Info}_{W, \eps} \ge \eps$, this implies the desired 
Eq~\eqref{eqn:ui-upper-bound-two}. Now, Eq~\eqref{eqn:ui-upper-bound-two} 
implies for $\nu \in (0, 1]$, 
\begin{equation}
\opnormbig{\what{\bX}(\bY)}^{(1+\nu)}
	\le \eps^{-(1+\nu)}\opnormbig{\what{\bX}\org(\bY)}^{1+\nu}.
\end{equation}
Now to show Eq~\eqref{eqn:ui-upper-bound-goal-one}, it suffices to show
for some $\nu \in (0, 1], c > 0$, we have for any $\eps, \delta \le c$
\begin{equation}
\label{eqn:ui-upper-bound-goal-two}
\limsup_{n\to \infty, m/n\to \gamma}
	\frac{1}{(mn)^{(1+\nu)/4}}\E \left[\opnormbig{\what{\bX}\org(\bY)}^{1+\nu}\right]
		< \infty.
\end{equation}

Now, we prove Eq~\eqref{eqn:ui-upper-bound-goal-two}. By Theorem
\ref{proposition:upper-bound-proposition}, there exist some constants $\eps_0 > 0, 
\nu_0 \in (0, 1]$ such that for any $\eps \le \eps_0$, the matrix $\what{\bX}\org(\bY) 
= \what{f}_Y(\bY)$ has the decomposition: 
\begin{equation}
\what{\bX}\org(\bY) = \Info_W \bX + \sqrt{\Info_W}\bZ + \bDelta, 
\end{equation}
where $\bZ$ satisfies Eq~\eqref{eqn:Z-matrix-bound} and $\bDelta$ satisfies
Eq~\eqref{eqn:funny-bound-error-decomposition-main} for $\nu = \nu_0$. 
Fix this $\eps_0, \nu_0$. By H\"{o}lder's inequality, 
\begin{align}
&\frac{1}{(mn)^{(1+\nu_0)/4}}\E \opnormbig{\what{\bX}\org(\bY) }^{(1+\nu_0)} \nonumber \\
&	\le \frac{3}{(mn)^{(1+\nu_0)/4}}\left(\Info_W^{(1+\nu_0)}
		\opnorm{\bX}^{(1+\nu_0)}+ \Info_W^{(1+\nu_0)/2}
		\E\opnorm{\bZ}^{(1+\nu_0)} + \E\opnormbig{\bDelta}^{(1+\nu_0)} \right).
\label{eqn:holder-step-goal-two}
\end{align}
First, since $\bX \in \cF_{m,n}(r, M, \eta)$, we know that 
\begin{equation}
\label{eqn:first-exp-upper-bound}
\limsup_{n\to \infty, m/n\to \gamma} 
	\frac{1}{(mn)^{(1+\nu_0)/4}} \opnormbig{\bX}^{(1+\nu_0)} \le M^{(1+\nu_0)}. 
\end{equation}
Next, when $\eps \le \eps_0$, we know from Eq~\eqref{eqn:Z-matrix-bound} 
that $\normmax{\bZ}< C\eps^{-1}$ for some constant $C > 0$ independent 
of $m, n, \eps$. Thus, when $\eps \le \eps_0$, \cite{Guionnet}
\begin{equation}
\label{eqn:second-exp-upper-bound}
\limsup_{n\to \infty, m/n\to \gamma}\frac{1}{(mn)^{(1+\nu_0)/4}} 
	 \E\opnorm{\bZ}^{(1+\nu_0)} = (\gamma^{1/4} + \gamma^{-1/4})^{(1+\nu_0)}.
\end{equation}
Finally according to Eq~\eqref{eqn:funny-bound-error-decomposition-main}, we know
for sufficiently small $\eps$, 
\begin{equation}
\label{eqn:third-exp-upper-bound}
\limsup_{n\to \infty, m/n\to \gamma} \frac{1}{(mn)^{(1+\nu_0)/4}} 
	\E \opnorm{\bDelta}^{(1+\nu_0)} < \infty. 
\end{equation}
Substituting Eq~\eqref{eqn:first-exp-upper-bound}, 
Eq~\eqref{eqn:second-exp-upper-bound} and 
Eq~\eqref{eqn:third-exp-upper-bound} into Eq~\eqref{eqn:holder-step-goal-two}, 
one proves Eq~\eqref{eqn:ui-upper-bound-goal-two}. As mentioned, 
Eq~\eqref{eqn:ui-upper-bound-goal-two} implies Eq~\eqref{eqn:ui-upper-bound-goal-one}, 
which implies the desired Lemma~\ref{lemma:upper-bound-ui}.


\section{Proof of Lemma~\ref{lemma:upper-bound-lemma-one}}
\label{sec:proof-upper-bound-lemma-one}
To start with, since $Y_{i, j} = X_{i, j} + W_{i, j}$ for $i \in [m], j\in [n]$, Taylor's expansion 
gives 
\begin{equation}
\label{eqn:taylor-expansion-f-dezero}
\targetpert(Y_{i, j}) = \targetpert(W_{i, j}) + X_{i, j} \targetpert^\prime(W_{i, j})
	+ \half X_{i, j}^2 \targetpert^{\prime\prime}(W_{i, j} + \theta_{i, j}), 
\end{equation}
for some $\theta_{i, j}$ satisfying $|\theta_{i, j}| \leq |X_{i, j}|$. Now, denote 
$\wbar{\mu}_{\eps}$, $\wbar{\mu}_{\eps}^\prime$ and $\wbar{\nu}_{\eps}^2$ 
to be
\begin{equation}
\wbar{\mu}_{\eps}= \E[\targetpert(W_{i, j})],~ 
\wbar{\mu}_{\eps}^\prime= \E[\targetpert(W_{i, j})]
~\text{and}~
\wbar{\nu}_{\eps}^2 = \Var[\targetpert(W_{i, j})].
\end{equation}
A key observation is that $\bar{\mu}_{\eps} = 0$. Thus, if we define
\begin{equation}
\bZ \defeq \wbar{\nu}_{\eps}^{-1}\left(\targetpert(\bW) - \bar{\mu}_{\eps}\right)
	= \wbar{\nu}_{\eps}^{-1}\targetpert(\bW),
\end{equation}
Taylor's expansion at Eq~\eqref{eqn:taylor-expansion-f-dezero} implies the 
following decomposition, 
\begin{equation}
\label{eqn:final-expansion}
\targetpert(\bY) = \Info_W\bX + \sqrt{\Info_W} \bZ + \bar{\bDelta}^{(1)} + \bar{\bDelta}^{(2)}
	+ \bar{\bDelta}^{(3)} + \bar{\bDelta}^{(4)},
\end{equation}
where we introduce the matrices $\bar{\bDelta}^{(1)}, \bar{\bDelta}^{(2)}, 
\bar{\bDelta}^{(3)}, \bar{\bDelta}^{(4)} \in \R^{m\times n}$ to be:
\begin{align*}
\bar{\bDelta}^{(1)}_{i, j} \defeq \left(\targetpert^\prime(W_{i, j}) - \bar{\mu}_{\eps}^\prime\right)X_{i, j},~
\bar{\bDelta}^{(2)}_{i, j} \defeq \half X_{i, j}^2 \targetpert^{\prime\prime}(W_{i, j} + \theta_{i, j})
\end{align*}
and 
\begin{align*}
\bar{\bDelta}^{(3)}_{i, j} \defeq (\wbar{\nu}_{\eps}- \sqrt{\Info_W}) Z_{i, j},~~
\bar{\bDelta}^{(4)}_{i, j} \defeq (\wbar{\mu}_{\eps}^\prime- \Info_W) X_{i, j}.
\end{align*}
Define the matrix $\bar{\bDelta}$ by
\begin{equation}
\bar{\bDelta} \defeq \bar{\bDelta}^{(1)} +  \bar{\bDelta}^{(2)} +  \bar{\bDelta}^{(3)} + \bar{\bDelta}^{(4)}.
\end{equation} 
In the rest of the proof, we show that $\bZ$ and $\bar{\bDelta}$ satisfy the statements 
of the lemma. 

Consider the matrix $\bZ$ first. By definition, $\bZ$ is a random matrix whose entries 
are i.i.d mean $0$ and variance $1$. Moreover, by dominated convergence theorem
\begin{equation}
\label{eqn:limit-dominated-eps}
\lim_{\eps \to 0} \Info_{W, \eps} = \Info_W~\text{and}~
	\lim_{\eps \to 0} \wbar{\nu}_{\eps}^2 = \Info_W.
\end{equation}
Thus for some $\eps_0 > 0$, we have 
$(\wbar{\nu}_{\eps})^{-1} \le 2  \Info_W^{-1/2}$ for all $\eps \le \eps_0$. 
Hence, for $\eps \le \eps_0$,
\begin{equation}
\label{eqn:normmax-Z}
\normmax{\bZ}  \le \wbar{\nu}_{\eps}^{-1} \norm{f_{W, \eps}(\cdot)}_\infty
	\le 2\eps^{-1}\Info_W^{-1/2}\norm{p_W^\prime(\cdot)}_\infty.
\end{equation}
Since $\norm{p_W^\prime(\cdot)}_\infty \le M_2$ by Assumption~{\sf A2}, this shows 
that for some constant $C > 0$
\begin{equation}
\normmax{\bZ} \le C\eps^{-1}
\end{equation}
for $\eps \le \eps_0$. Together, we prove that $\bZ$ satisfies the statements of the lemma. 

Next, we consider the matrix $\bar{\bDelta}$. We need to prove that $\bar{\bDelta}$
satisfies Eq~\eqref{eqn:funny-bound-error-decomposition-as} and
Eq~\eqref{eqn:funny-bound-error-decomposition}.
To start with, by triangle inequality, we have, 
\begin{equation}
\opnorm{\bar{\bDelta}} \le \opnormbig{\bar{\bDelta}^{(1)}} + \opnormbig{\bar{\bDelta}^{(2)}}
		+ \opnormbig{\bar{\bDelta}^{(3)}}+ \opnormbig{\bar{\bDelta}^{(4)}}.
\end{equation}
and 
\begin{equation}
\opnorm{\bar{\bDelta}}^2 \le 4 \left[\opnormbig{\bar{\bDelta}^{(1)}}^2 + \opnormbig{\bar{\bDelta}^{(2)}}^2
	+ \opnormbig{\bar{\bDelta}^{(3)}}^2 + \opnormbig{\bar{\bDelta}^{(4)}}^2\right].
\end{equation}
Thus, it suffices to show that for any $j \in [4]$, 
\begin{equation}
\label{eqn:funny-bound-error-sep-decomposition-as}
\lim_{\eps \to 0} \lim_{n \to \infty, m/n \to \gamma} \frac{1}{(mn)^{1/4}} 
	\opnormbig{\bar{\bDelta}^{(j)}} = 0
\end{equation}
and for any $j \in [4]$, 
\begin{equation}
\label{eqn:funny-bound-error-sep-decomposition}
\lim_{\eps \to 0} \lim_{n \to \infty, m/n \to \gamma} \frac{1}{(mn)^{1/2}} 
	\E \opnormbig{\bar{\bDelta}^{(j)}}^2 = 0.
\end{equation}
We show the proof of Eq~\eqref{eqn:funny-bound-error-sep-decomposition-as} 
and Eq~\eqref{eqn:funny-bound-error-sep-decomposition} for 
$\bar{\bDelta}^{(1)}$, $\bar{\bDelta}^{(2)}$, $\bar{\bDelta}^{(3)}$ 
and $\bar{\bDelta}^{(4)}$ one by one in the four consecutive paragraphs below.

%

\paragraph{Proof of Eq~\eqref{eqn:funny-bound-error-sep-decomposition-as}
and Eq~\eqref{eqn:funny-bound-error-sep-decomposition} for $\bar{\bDelta}^{(1)}$.}
The key part of the proof is to show the following moment bounds on $\opnormbig{\bar{\bDelta}^{(1)}}$: 
there exists some $\eps_0 > 0$ such that for for any $k \ge 2$, there exists a
constant $C_k > 0$ independent of $m, n$ such that, for all $m, n$ and $\eps \le \eps_0$, 
\begin{equation}
\label{eqn:funny-error-Delta-one}
\E \opnormbig{\bar{\bDelta}^{(1)}}^k \le C_k(m\vee n)^{(1/2 -\eta)k} \log(m \vee n)^k\eps^{-2k}.
\end{equation}
Deferring the proof of Eq~\eqref{eqn:funny-error-Delta-one} at the sequel of the paragraph, 
we first show how Eq~\eqref{eqn:funny-error-Delta-one} immediately implies that Eq 
\eqref{eqn:funny-bound-error-sep-decomposition-as} and 
Eq~\eqref{eqn:funny-bound-error-sep-decomposition} hold for $j=1$. First, 
pick some $k \ge 2$ such that $\eta k \ge 2$. Markov's inequality and 
Eq~\eqref{eqn:funny-error-Delta-one} imply for some constant 
$C_k > 0$, the inequality below holds for any $\delta > 0$, 
\begin{equation}
\prob\left(\frac{1}{(mn)^{1/4}} \opnormbig{\bar{\bDelta}^{(1)}} \ge \delta\right) 
	\le \delta^{-k} \E \left[ \frac{1}{(mn)^{k/4}} \opnormbig{\bar{\bDelta}^{(1)}}^k \right]
	\le C_k(m\vee n)^{-\eta k} \log(m \vee n)^k\eps^{-2k}\delta^{-k}.
\end{equation}
Since $\eta k > 2$, $m/n\to \gamma \in (0,\infty)$, we know that for any $\delta > 0$, 
\begin{equation}
\sum_{n \in \N} \prob\left(\frac{1}{(mn)^{1/4}} \opnormbig{\bar{\bDelta}^{(1)}} \ge \delta\right) < \infty.
\end{equation}
Borel Cantelli's lemma implies for any $\delta > 0$,
\begin{equation}
\lim_{n \to \infty, m/n\to \gamma}  \frac{1}{(mn)^{1/4}} \opnormbig{\bar{\bDelta}^{(1)}} \le \delta~~{\rm a.s.}.
\end{equation}
Taking $\delta \to 0$, we get that, 
\begin{equation}
\label{eqn:funny-bound-error-sep-decomposition-one-two}
\limsup_{n \to \infty, m/n\to \gamma}  \frac{1}{(mn)^{1/4}} \opnormbig{\bar{\bDelta}^{(1)}} = 0.
\end{equation}
Now, we take $\eps \to 0$ on both sides of Eq~\eqref{eqn:funny-bound-error-sep-decomposition-one-two} 
and get the desired claim of Eq~\eqref{eqn:funny-bound-error-sep-decomposition-as}
for $j= 1$.

Next, for any $\eps > 0$, Eq~\eqref{eqn:funny-error-Delta-one} implies that, 
\begin{equation}
\lim_{n \to \infty, m/n \to \gamma} \frac{1}{(mn)^{1/2}} \E \opnormbig{\bar{\bDelta}^{(1)}}^2 
	= 0.
\end{equation}
By taking $\eps \to 0$ on both sides of the equation, we get the desired claim of 
Eq~\eqref{eqn:funny-bound-error-sep-decomposition} for $j= 1$.

Now, we show the key moment bound at Eq~\eqref{eqn:funny-error-Delta-one}. 
To start with, we note that $\E \bar{\bDelta}^{(1)} = \zero$. 
Now, to upper bound $\E \opnormbig{\bar{\bDelta}^{(1)}}^k$, we use the following lemma, whose 
proof is given in Section~\ref{sec:proof-opnorm-expectation-nonasymmetric}.

\begin{lemma}
\label{lemma:opnorm-expectation-nonsymmetric}
For any $k \ge 2$, there exists some constant $C_k > 0$ depending solely on $k$ such 
that for any matrix $\bX \in \R^{m \times n}$ such that $\E \bX = 0$, and either (1) 
$\bX$ has independent entries or (2) $\bX$ is symmetric (in this case $m= n$) and 
has independent upper triangular entries, 
\begin{equation}
\label{eqn:opnorm-expectation-upper-bound}
\E \opnorm{\bX}^k \le C_k \log (m+n)^k 
	\Bigg[\max_{i \in [m]} \Big(\E \sum_{j=1}^n X_{i, j}^2\Big)^{1/2} + 
	\max_{j \in [n]} \Big(\E \sum_{i=1}^m X_{i, j}^2\Big)^{1/2} 
		+ \E \max_{i, j} |X_{i, j}| \Bigg]^k.
\end{equation}
\end{lemma}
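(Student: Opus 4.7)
The plan is to derive this moment bound from Tropp's matrix Bernstein inequality, applied to the Hermitian dilation of $\bX$, together with a truncation argument to handle the potentially unbounded entries and an $L^k$-integration step.

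I first reduce both cases to a common framework. In case (1), I apply the Hermitian dilation $\widetilde{\bX} = \begin{pmatrix}\zero & \bX \\ \bX^{\sT} & \zero\end{pmatrix}$, a symmetric matrix in $\reals^{(m+n)\times(m+n)}$ with $\|\widetilde{\bX}\|_{\op} = \|\bX\|_{\op}$. In case (2) the matrix $\bX$ is already symmetric, so I set $\widetilde{\bX} = \bX$. In both cases I write $\widetilde{\bX} = \sum_\alpha X_\alpha \boldsymbol{E}_\alpha$ as an independent sum of mean-zero symmetric matrices, where each deterministic $\boldsymbol{E}_\alpha$ has operator norm at most $1$ and encodes the support of the $\alpha$-th independent entry. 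A direct block computation shows $\|\E \widetilde{\bX}^2\|_{\op} \le 2(\sigma_r^2 + \sigma_c^2)$, where $\sigma_r = \max_i(\E\sum_j X_{ij}^2)^{1/2}$ and $\sigma_c = \max_j(\E\sum_i X_{ij}^2)^{1/2}$; I also write $\sigma_* = \E\max_{ij}|X_{ij}|$ for the third quantity in the lemma.

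Next, I truncate at the level $M = c\,k\log(m+n)\,\sigma_*$, with $c$ a sufficiently large absolute constant to be chosen. I split $X_\alpha = \overline{X}_\alpha + \widetilde{X}_\alpha$ into centered truncated and centered tail parts, giving $\widetilde{\bX} = \overline{\widetilde{\bX}} + \widetilde{\widetilde{\bX}}$. The summands of $\overline{\widetilde{\bX}}$ are bounded in operator norm by $2M$ and the matrix variance is at most $2(\sigma_r^2+\sigma_c^2)$, so Tropp's matrix Bernstein inequality yields
$$
\prob\bigl(\|\overline{\widetilde{\bX}}\|_{\op} \ge t\bigr) \le 2(m+n)\exp\Bigl(-\frac{t^2/2}{2(\sigma_r^2+\sigma_c^2) + 2Mt/3}\Bigr).
$$
Integrating $k\, t^{k-1}\,dt$ against this tail bound yields $\bigl(\E\|\overline{\widetilde{\bX}}\|_{\op}^k\bigr)^{1/k} \le C_k\log(m+n)(\sigma_r+\sigma_c+\sigma_*)$, which is the desired bound for the truncated part.

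To handle the truncation residual $\widetilde{\widetilde{\bX}}$, I observe that on the event $E := \{\max_{ij}|X_{ij}|\le M\}$ each random tail term vanishes, leaving only a deterministic centering whose operator norm is easily absorbed into the target. On $E^c$, Markov's inequality gives $\prob(E^c) \le \sigma_*/M = 1/(c\,k\log(m+n))$; by choosing $c$ sufficiently large this makes the bad event small enough that even the crude bound $\|\widetilde{\widetilde{\bX}}\|_{\op}\le \|\widetilde{\widetilde{\bX}}\|_F \le \sqrt{mn}\max_{ij}|X_{ij}|$, combined with H\"older's inequality, contributes only a lower-order amount to the $k$-th moment. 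Adding the two contributions gives the lemma. The hardest step will be this tail estimate, since only a first-moment hypothesis on $\max_{ij}|X_{ij}|$ is available and the standard moment forms of matrix Bernstein (which require $L^k$ control of the maximum summand norm) cannot be applied directly; the resolution is precisely to scale the truncation threshold with $k\log(m+n)$, which drives $\prob(E^c)$ below any fixed inverse polynomial and makes Frobenius-type bounds on the tail negligible in comparison.
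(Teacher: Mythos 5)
There is a genuine gap, and it sits exactly where you flagged the difficulty: the treatment of the truncation residual. Your threshold is $M = c\,k\log(m+n)\,\sigma_*$ with $\sigma_*=\E\max_{i,j}|X_{i,j}|$, and the only tool available to bound $\prob(E^c)=\prob(\max_{i,j}|X_{i,j}|>M)$ is Markov with the first moment, which gives $\prob(E^c)\le \sigma_*/M = 1/(c\,k\log(m+n))$. This is only \emph{logarithmically} small; your claim that the choice of threshold ``drives $\prob(E^c)$ below any fixed inverse polynomial'' is not correct. Consequently the H\"older step cannot close: to control $\E\bigl[\opnorm{\bX}^k\mathbf{1}_{E^c}\bigr]$ via $\bigl(\E\opnorm{\bX}^{kp}\bigr)^{1/p}\prob(E^c)^{1/q}$ you would need both a moment of $\max_{i,j}|X_{i,j}|$ of order strictly larger than $k$ (the lemma only supplies the first moment, and finiteness of row/column second moments does not imply finiteness of $\E\max|X_{i,j}|^{k}$) and a probability decay strong enough to beat the $\sqrt{mn}$ (or $(mn)^{k/2}$) loss from the Frobenius bound, i.e.\ polynomial decay in $mn$, which a first-moment Markov bound at a logarithmic threshold cannot deliver. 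A secondary, smaller issue: even for the good event, the deterministic re-centering $\sum_{\alpha}\E[X_\alpha\mathbf{1}_{|X_\alpha|>M}]\boldsymbol{E}_\alpha$ is not ``easily absorbed'' by a generic bound (the natural estimate gives $(\sigma_r^2+\sigma_c^2)/M$, which need not be $\lesssim \log(m+n)(\sigma_r+\sigma_c+\sigma_*)$), and your choice of $M$ makes the Bernstein linear term contribute $\log(m+n)^2\sigma_*$ rather than the single power of $\log(m+n)$ in the statement. The part of your argument that does work is the bounded piece: dilation, the entrywise independent decomposition, the matrix variance computation, and the integration of the matrix Bernstein tail are all fine, and would prove the lemma if the entries were uniformly bounded by a constant multiple of $\sigma_*$.

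For comparison, the paper does not truncate at all: after the same Hermitian dilation and entrywise decomposition, it symmetrizes with Rademacher signs and invokes a moment-form matrix inequality (Lemma~\ref{lemma:opnorm-expectation-chen}, due to Chen--Gittens--Tropp), which bounds the $k$-th moment of the operator norm by the matrix variance term plus the $L^k$ norm of the maximal summand, all multiplied by $(k+\log(m+n))$ factors. That route avoids any bad-event analysis; the price is that the maximal-entry term naturally appears in $L^k$, not $L^1$ as written in the lemma (a discrepancy that is harmless in the paper's application, where the entries of $\bar{\bDelta}^{(1)}$ are uniformly bounded). If you want to salvage your truncation strategy, you would need either to assume a uniform bound (or higher moments) on the entries, or to keep the residual's contribution inside an $L^k$ quantity of $\max_{i,j}|X_{i,j}|$ rather than trying to reduce it to $\E\max_{i,j}|X_{i,j}|$ alone.
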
 \noindent\noindent
A direct application of Lemma~\ref{lemma:opnorm-expectation-nonsymmetric} shows for 
any $k \ge 2$, there exists a constant $C_k > 0$, such that
\begin{align}
 &\E \opnorm{\bar{\bDelta}^{(1)}}^k 
 \le C_k \log (m+n)^k
 	\Bigg[\max_{i \in [m]} \Big(\E \sum_{j=1}^n \big(\Delta^{(1)}_{i, j}\big)^2\Big)^{1/2} + 
\max_{j \in [n]} \Big(\E \sum_{i=1}^m \big(\Delta^{(1)}_{i, j}\big)^2\Big)^{1/2} + \E 
	\normmax{\bar{\bDelta}^{(1)}} \Bigg]^k.
\label{eqn:opnorm-expectation-Delta-one}
\end{align}
To upper bound the RHS of Eq~\eqref{eqn:opnorm-expectation-Delta-one}, we first show that, 
for some constant $C, \eps_0$ independent of $m, n$, we have, for all $\eps \le \eps_0$, 
\begin{equation}
\label{eqn:infty-target-prime}
\norm{\targetpert^\prime(\,\cdot\,)}_\infty \le C\eps^{-2}.
\end{equation}
In fact, for all $t \in \R$, we have, 
\begin{equation*}
\left|\targetpert^\prime(t)\right| 
	=  \left|\frac{p_W^{\prime\prime}(t)}{p_W(t) + \dezero}
		- \frac{(p_W^\prime(t))^2}{(p_W(t) + \dezero)^2}\right|
	\leq \dezero^{-2} \left(\dezero ||p_W^{\prime\prime}(\cdot)||_\infty
		+ \norm{p_W^\prime(\cdot)}_\infty^2\right) \le 2\dezero^{-2}(M_2 \vee \dezero)^2,
\end{equation*}
where the last inequality follows from Assumption~{\sf A2}. 
This gives the desired bound at Eq~\eqref{eqn:infty-target-prime}. 

Now, by Eq~\eqref{eqn:infty-target-prime}, we know that, for $\eps$ 
small enough, the random variable $f_{W, \eps}^\prime(W_{1, 1})$ satisfies, 
\begin{equation}
\Var^{1/2}\Big(f_{W, \eps}^\prime(W_{1, 1}) \Big) \le \norm{\targetpert^\prime(\,\cdot\,)}_\infty
	\le C\eps^{-2}.
\end{equation}
Hence we have the bound that,  
\begin{equation}
\label{eqn:bound-expection-ell-two-infty}
\max_{i \in [m]} \bigg(\E \sum_{j=1}^n \Big(\Delta^{(1)}_{i, j}\Big)^2\bigg)^{1/2}
	= \norm{\bX}_{\ell_2 \to \ell_\infty} \times \Var^{1/2}\left(f_{W, \eps}^\prime(W_{1, 1}) \right))
	\le C \eps^{-2}\norm{\bX}_{\ell_2 \to \ell_\infty}.
\end{equation}
Similarly, 
\begin{equation}
\label{eqn:bound-expection-ell-infty-two}
\max_{j \in [n]}\E \bigg(\sum_{i=1}^m \Big(\Delta^{(1)}_{i, j}\Big)^2\bigg)^{1/2}
	= \norm{\bX{^\sT}}_{\ell_2 \to \ell_\infty} \times \Var^{1/2}\left((f_{W, \eps}^\prime(W_{1, 1}) \right)
	\le C \eps^{-2}\norm{\bX{^\sT}}_{\ell_2 \to \ell_\infty}.
\end{equation}
Lastly, since by Eq~\eqref{eqn:infty-target-prime}, we have 
$\normbig{\bar{\bDelta}^{(1)}}_{\rm max}  \le C\eps^{-2} \normmax{\bX}$ almost surely, 
this gives that
\begin{equation}
\label{eqn:bound-expectation-max}
\E \normbig{\bar{\bDelta}^{(1)}}_{\rm max} \le C\eps^{-2} \normmax{\bX}, 
\end{equation}
Now, substituting Eq~\eqref{eqn:bound-expection-ell-two-infty}, Eq~\eqref{eqn:bound-expection-ell-infty-two}
and Eq~\eqref{eqn:bound-expectation-max} into Eq~\eqref{eqn:opnorm-expectation-Delta-one}, 
we get for any $k > 0$, there exists some constant $C_k > 0$ such that  
\begin{align*}
\E \opnormbig{\bar{\bDelta}^{(1)}}^k 
	&\le C_k \log(m+n)^k \eps^{-2k} \max\{\norm{\bX}_{\ell_2 \to \ell_\infty}, 
		\norm{\bX}_{\ell_\infty \to \ell_2}\}^k \\
	&\le C_k (m\vee n)^{(1/2-\eta) k} \log(m \vee n)^k \eps^{-2k},
\end{align*}
giving the desired claim at Eq~\eqref{eqn:funny-error-Delta-one}.

\paragraph{Proof of Eq~\eqref{eqn:funny-bound-error-sep-decomposition-as} and
Eq~\eqref{eqn:funny-bound-error-sep-decomposition} for $\bar{\bDelta}^{(2)}$.}
In the proof, we show the following strengthened result: for some constant $C$ independent 
of $m, n$, we have for all $m, n \in \N$ and $\eps > 0$, 
\begin{equation}
\label{eqn:funny-error-Delta-two}
\opnorm{\bar{\bDelta}^{(2)}} \le C\eps^{-3}(m \vee n)^{1/2-\eta}.
\end{equation}
Indeed, Eq~\eqref{eqn:funny-error-Delta-two} clearly implies for any $\eps > 0$, 
\begin{equation}
\label{eqn:funny-error-Delta-two-one}
\lim_{n \to \infty, m/n \to \gamma} \frac{1}{(mn)^{1/4}} 
	\opnormbig{\bar{\bDelta}^{(2)}} = 0.
\end{equation}
and 
\begin{equation}
\label{eqn:funny-error-Delta-two-two}
\lim_{n \to \infty, m/n \to \gamma} 
	\frac{1}{(mn)^{1/2}} \E \opnormbig{\bar{\bDelta}^{(2)}}^2 = 0.
\end{equation}
Now taking limit on both sides of Eq~\eqref{eqn:funny-error-Delta-two-one}
and Eq~\eqref{eqn:funny-error-Delta-two-two} shows that the desired claim 
at Eq 
\eqref{eqn:funny-bound-error-sep-decomposition-as} and Eq
\eqref{eqn:funny-bound-error-sep-decomposition} hold for $j = 2$.

In the rest of the proof, we show Eq~\eqref{eqn:funny-error-Delta-two}.
Indeed, we first show that, for some constant $C, \eps_0$ independent of 
$m, n, \eps$, we have, for $\eps \le \eps_0$, 
\begin{equation}
\label{eqn:infty-target-prime-prime}
\norm{\targetpert^{\prime\prime}\left(\,\cdot\,\right)}_\infty \le C\eps^{-3}.
\end{equation}
In fact, for all $t \in \R$, 
\begin{align}
\left|\targetpert^{\prime\prime}(t)\right| 
&= \left|\frac{p_W^{\prime\prime\prime}(t)}
	{p_W(t)+\dezero} - 
	\frac{3 p_W^\prime(t)p_W^{\prime\prime}(t)}{(p_W(t) + \dezero)^2}
	+ \frac{2 (p_W^\prime(t))^3}{(p_W(t) + \dezero)^3}\right|  \nonumber \\
&\leq \dezero^{-3} \left(||p_W^{\prime\prime\prime}(\cdot)||_\infty \dezero^{2} 
	+ 3 ||p_W^\prime(\cdot)||_\infty |p_W^{\prime\prime}(\cdot)||_\infty \dezero + 
		2 ||p_W^\prime(\cdot)||_\infty^3\right) \nonumber\\ 
&\le  6\dezero^{-3}(M_2 \vee \eps)^3,
\end{align}
where the last inequality follows from Assumption~{\sf A2}. 
This proves the desired bound at Eq~\eqref{eqn:infty-target-prime-prime}. 

Now, fix the constant $C > 0$ such that Eq~\eqref{eqn:infty-target-prime-prime} 
holds. Introduce the auxiliary matrix $\wtilde{\bDelta}^{(2)}$ by 
\begin{equation}
\wtilde{\bDelta}^{(2)}_{i, j} = \half C \eps^{-3} X_{i, j}^2 . 
\end{equation}
By construction of $\bDelta$ and $\wtilde{\bDelta}^{(2)}$, we know that, 
there exists some $\eps_0$ such that for any $\eps \le \eps_0$, 
\begin{equation}
\label{eqn:dominance-Delta-wDelta}
\left|\bDelta^{(2)}_{i, j}\right| \le 
	\half X_{i, j}^2 \norm{\targetpert^{\prime\prime}\left(\,\cdot\,\right)}_\infty
	\le  \wtilde{\bDelta}^{(2)}_{i, j}.
\end{equation}
This immediately implies for any $\eps \le \eps_0$, and any $l \in \N$, 
\begin{equation}
\tr \left\{\left(\bDelta^{(2)} (\bDelta^{(2)}){^\sT}\right)^l\right\}
\le 
\tr\left\{\left(\wtilde{\bDelta}^{(2)} (\wtilde{\bDelta}^{(2)}){^\sT}\right)^l\right\}.
\end{equation}
Since $\opnorm{\bA} = \lim_{l \to \infty} \tr\{(A^{\sT} A)^l\}^{\frac{1}{2l}}$ holds for any 
matrix $\bA$, we get that, 
\begin{equation}
\opnormbig{\bDelta^{(2)}} = 
\lim_{l \to \infty}\left(\tr \left\{\left(\bDelta^{(2)} (\bDelta^{(2)}){^\sT}\right)^l\right\}\right)^{\frac{1}{2l}} 
	\stackrel{(i)}{\le} 
\lim_{l \to \infty}\left(\tr\left\{\left(\wtilde{\bDelta}^{(2)} (\wtilde{\bDelta}^{(2)}){^\sT}\right)^l\right\}\right)^{\frac{1}{2l}} 
	= \opnormbig{\wtilde{\bDelta}^{(2)}}
\end{equation}
The above estimate immediately implies that 
\begin{equation}
\opnormbig{\bar{\bDelta}^{(2)}} \le  \opnormbig{\wtilde{\bDelta}^{(2)}}
	= C\eps^{-3} \opnormbig{\bX \odot \bX}
	\le  CM\eps^{-3}(m \vee n)^{1/2-\eta},
\end{equation}
where the last inequality uses the assumption $X \in \cF_{m, n}(r, M, \eta)$. 
This proves Eq~\eqref{eqn:funny-error-Delta-two} as desired.

\paragraph{Proof of Eq~\eqref{eqn:funny-bound-error-sep-decomposition-as} 
and Eq~\eqref{eqn:funny-bound-error-sep-decomposition} for $\bar{\bDelta}^{(3)}$.}
Now that $\bZ$ is a random matrix whose entries are i.i.d mean $0$ and variance $1$. 
Moreover, by Eq~\eqref{eqn:normmax-Z}, we know for any fix $\eps > 0$, there exists 
some constant $C_{\eps}$ (independent of $m, n$) such that $\normmax{\bZ} \le C_{\eps}$.
Thus, random matrix theory show that, for any fix $\eps > 0$, almost surely we have, 
\begin{equation}
\label{eqn:funny-bound-error-Delta-three-one-as}
\lim_{n\to \infty, m/n\to \gamma} 
	\frac{1}{(mn)^{1/4}} \opnorm{Z} = \gamma^{1/4} + \gamma^{-1/4}. 
\end{equation}
Moreover, for any fix $\eps > 0$, we have
\begin{equation}
\label{eqn:funny-bound-error-Delta-three-one-E}
\lim_{n\to \infty, m/n\to \gamma} \frac{1}{(mn)^{1/2}} \E\opnorm{Z}^2 = (\gamma^{1/4} + \gamma^{-1/4})^2. 
\end{equation}
Since by Eq~\eqref{eqn:limit-dominated-eps}, we have, 
\begin{equation}
\label{eqn:funny-bound-error-Delta-three-two}
\lim_{\eps \to 0} \bar{\nu}_{\eps} = \Info_W^{1/2},
\end{equation}
Eq~\eqref{eqn:funny-bound-error-Delta-three-one-as} and 
Eq~\eqref{eqn:funny-bound-error-Delta-three-one-E} immediately imply that, 
\begin{equation}
\lim_{\eps \to 0}\lim_{n\to \infty, m/n\to \gamma} 
	\frac{1}{(mn)^{1/4}} \opnormbig{\bar{\bDelta}^{(3)}}
= (\gamma^{1/4} + \gamma^{-1/4})
	\cdot \lim_{\eps \to 0} (\bar{\nu}_{\eps}- \Info_W^{1/2}) = 0,
\end{equation}
and moreover, 
\begin{equation}
\lim_{\eps \to 0}\lim_{n\to \infty, m/n\to \gamma} 
	\frac{1}{(mn)^{1/2}} \E\opnormbig{\bar{\bDelta}^{(3)}}^2
= (\gamma^{1/4} + \gamma^{-1/4})^2 \cdot 
	\lim_{\eps \to 0} (\bar{\nu}_{\eps}- \Info_W^{1/2})^2 = 0, 
\end{equation}
This proves the desired claim of Eq~\eqref{eqn:funny-bound-error-sep-decomposition-as}
and Eq~\eqref{eqn:funny-bound-error-sep-decomposition} for $\bar{\bDelta}^{(3)}$.

\paragraph{Proof of Eq~\eqref{eqn:funny-bound-error-sep-decomposition-as} 
and Eq~\eqref{eqn:funny-bound-error-sep-decomposition} for $\bar{\bDelta}^{(4)}$.}
By definition, $
\opnormbig{\bDelta^{(4)}} = \left|\mu_{\eps}^\prime - \Info_W\right| \opnorm{\bX}.
$
Since $\bX \in \cF_{m, n}(r, M, \eta)$, we know that, 
\begin{equation}
\label{eqn:funny-bound-error-Delta-four-one}
\lim_{n\to \infty, m/n\to \gamma} \frac{1}{(mn)^{1/4}} \opnormbig{\bDelta^{(4)}} 
	\le \left|\mu_{\eps}^\prime - \Info_W\right| M. 
\end{equation}
and 
\begin{equation}
\label{eqn:funny-bound-error-Delta-four-two}
\lim_{n\to \infty, m/n\to \gamma} \frac{1}{(mn)^{1/2}} \E \opnormbig{\bDelta^{(4)}}^2 
	\le (\mu_{\eps}^\prime - \Info_W)^2 M^2. 
\end{equation}
Since we have the expression below for $\wbar{\mu}_{\eps}^\prime$, 
\begin{equation}
\wbar{\mu}_{\eps}^\prime = \int_\R f_{W, \eps}^\prime(w) p_W(w) \rmd w =  
- \int_\R f_{W, \eps}(w)p_W^\prime(w) \rmd w =
 \int_\R \frac{(p_W^\prime(w))^2}{p_W(w)+\eps} \rmd w ,
\end{equation}
we know by dominated convergence theorem
\begin{equation}
\lim_{\eps\to 0} \big|\wbar{\mu}_{\eps}^\prime - \Info_W\big| = 0.
\end{equation}
Hence, by taking $\eps \to 0$ on both sides of 
Eq~\eqref{eqn:funny-bound-error-Delta-four-one} and 
Eq~\eqref{eqn:funny-bound-error-Delta-four-two},  
we prove the desired claim of Eq~\eqref{eqn:funny-bound-error-sep-decomposition-as}
and Eq~\eqref{eqn:funny-bound-error-sep-decomposition} for $\bar{\bDelta}^{(4)}$.

\section{Proof of Lemma~\ref{lemma:upper-bound-lemma-two}}
\label{sec:proof-upper-bound-lemma-two}
Denote the quantities $\wbar{W}$, $\Info_{W, \eps}$ and $\difI$ to be
\begin{align}
\label{eqn:def-Info-W-eps}
\wbar{W} =  \frac{1}{mn} \sum_{i \in [m], j \in [n]} W_{i, j},~~
\Info_{W, \eps} = \int_{\R} \frac{(p_W^\prime(w))^2}{p_W(w) + \eps} \de w
~~\text{and}~~\difI = |\Info_{W, \eps} - \Info_W|.
\end{align}
Denote $\res_{n, 1}$ and $\res_{n, 2}$ to be
\begin{align}
\res_{n, 1} = h_n^2 + (mn h_n)^{-1/2}\log (mn)
	~\text{and}~
\res_{n, 2} =  (h_n^\prime)^2 + (mn (h_n^\prime)^3)^{-1/2}\log (mn).
	\label{eqn:def-res-n}
\end{align}
Denote $\res_n(\eps)$ and $\resQ_n(\eps; \kappa)$ to be 
\begin{align}
\res_n(\eps) =\eps^{-1} \res_{n, 1} + \eps^{-2}\res_{n, 2}
~~\text{and}~~
\resQ_n(\eps; \kappa) = \res_n(\eps) + \eps^{-2}(mn)^{-\kappa/2} + \difI + \eps. 
\end{align}
The crux of the proof is to provide high probability bounds onto the quantities
\begin{equation}
\opnorm{\what{f}_{Y, \eps}(\bY) - \targetpert(\bY)}
~~\text{and}~~
\big|\hat{\Info}_{W, \eps} - \Info_W\big|. 
\end{equation}
The bounds are made more precise in the technical lemma below. As the proof 
is lengthy, we defer its proof into Section~\ref{sec:proof-upper-bound-two-high-prob-key}. 
\begin{lemma}
\label{lemma:upper-bound-two-high-prob-key}
Assume that $h_n, h_n^\prime$ are chosen in a way such that, 
\begin{equation}
\label{eqn:choice-of-h-n-h-n-prime-limit}
\lim_n \res_{n, 1} = \lim_n \res_{n, 2} = 0. 
\end{equation}
Fix $\kappa \in (0, 1/2)$. There exist some constants $C, c > 0$ independent of 
$m, n, \eps$ (but can be dependent of $\kappa$ and underlying distribution $\prob$), 
such that whenever $\eps \le c$, there exists $n_0 = n_0(\eps)$ such that, for any $n \ge n_0$, we have,
with probability at least $1-(mn)^{-(1-\kappa)}$, 
\begin{align}
&\opnorm{\what{f}_{Y, \eps}(\bY) - \targetpert(\bY)} 
	\le C (mn)^{1/2}(\res_n(\eps) + \eps^{-2}\big|\wbar{W}\big|).
\label{eqn:upper-bound-proposition-one}
\end{align}
and 
\begin{equation}
\label{eqn:upper-bound-Info-W-high-prob}
\big|\hat{\Info}_{W, \eps} - \Info_W\big| \le C\resQ_n(\eps; \kappa)
\end{equation}
\end{lemma}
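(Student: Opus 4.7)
The plan is to reduce both estimates to a uniform pointwise bound on $\hat f_{Y,\eps}-\targetpert$ over a large window containing all shifted observations $Y_{ij}-\wbar Y$, and then convert it into the requested operator-norm bound through the crude inequality $\|A\|_{\op}\le \sqrt{mn}\,\|A\|_{\max}$ valid for any $A\in\reals^{m\times n}$. First I would fix a truncation window $[-T_n,T_n]$ with $T_n$ a small power of $mn$ (say $T_n=(mn)^{(1-\kappa/2)/2}$) so that, by Chebyshev together with Assumption~{\sf A1} and a union bound, $\max_{i,j}|W_{ij}|\le T_n/2$ on an event of probability $\ge 1-(mn)^{-(1-\kappa)}$. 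Combined with the $\ell_2\!\to\!\ell_\infty$ bound on $\bX\in\cF_{m,n}(M,\eta)$ and with $|\wbar W|\lesssim (mn)^{-(1-\kappa)/2}$ (again by Chebyshev), this guarantees that every $Y_{ij}-\wbar Y$ lies in $[-T_n,T_n]$ on the same event.

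Second I would establish the two uniform KDE-type deviation bounds, valid on this good event:
\begin{align*}
\sup_{|x|\le T_n}|\hat p_Y(x)-p_W(x)|&\;\le\; C\big(\res_{n,1}+|\wbar W|\big),\\
\sup_{|x|\le T_n}|\hat p_Y'(x)-p_W'(x)|&\;\le\; C\big(\res_{n,2}+|\wbar W|\big).
\end{align*}
The bias contributions $h_n^2$ and $(h_n')^2$ come from Taylor-expanding $p_W$ and $p_W'$ around each kernel centre and exploiting the first-order kernel property \eqref{eqn:def-1st-order-kernel} together with the $\|p_W^{(\ell)}\|_\infty\le M_2$ control from Assumption~{\sf A2}. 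The stochastic deviations $(mnh_n)^{-1/2}\log(mn)$ and $(mn(h_n')^3)^{-1/2}\log(mn)$ are produced by Bernstein's inequality applied on a polynomially fine grid in $[-T_n,T_n]$, extended uniformly via the bounded Lipschitz control on $K,K'$. The $|\wbar W|$ summand absorbs the shift error: since $Y_{ij}-\wbar Y=W_{ij}-\wbar W+(X_{ij}-\wbar X)$, replacing $W_{ij}$ by $Y_{ij}-\wbar Y$ translates the argument of $p_W$ by $\wbar W+O(\|\bX\|_{\max})$, which by Assumption~{\sf A2} produces at most an $\|p_W'\|_\infty|\wbar W|$ (resp.\ $\|p_W''\|_\infty|\wbar W|$) correction.

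Third, I would use the algebraic identity
\[
\hat f_{Y,\eps}(x)-\targetpert(x)=\frac{p_W'(x)-\hat p_Y'(x)}{\hat p_Y(x)+\eps}+p_W'(x)\,\frac{\hat p_Y(x)-p_W(x)}{(\hat p_Y(x)+\eps)(p_W(x)+\eps)},
\]
together with the trivial denominator lower bound $\hat p_Y+\eps,\,p_W+\eps\ge\eps$ and $\|p_W'\|_\infty\le M_2$, to deduce
\[
\sup_{|x|\le T_n}|\hat f_{Y,\eps}(x)-\targetpert(x)|\;\lesssim\;\res_n(\eps)+\eps^{-2}|\wbar W|.
\]
Evaluating at each $x=Y_{ij}-\wbar Y$ bounds $\|\hat f_{Y,\eps}(\bY)-\targetpert(\bY)\|_{\max}$ by the same quantity, and then $\|A\|_{\op}\le\sqrt{mn}\,\|A\|_{\max}$ yields \eqref{eqn:upper-bound-proposition-one}. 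For the Fisher information I would decompose
\[
\hInfo_{W,\eps}-\Info_W=\mathrm T_1+\mathrm T_2+\mathrm T_3+\eps,
\]
where $\mathrm T_1=\hInfo_{W,\eps}-\eps-\tfrac{1}{mn}\sum_{i,j}\targetpert(W_{ij})^2$ is controlled by the pointwise bound above combined with $|\hat f+\targetpert|\lesssim\eps^{-1}$ and the Lipschitz control $|\targetpert(Y_{ij}-\wbar Y)-\targetpert(W_{ij})|\lesssim\eps^{-2}(|\wbar W|+\|\bX\|_{\max})$; $\mathrm T_2=\tfrac{1}{mn}\sum\targetpert(W_{ij})^2-\E[\targetpert(W_{11})^2]$ concentrates at rate $(mn)^{-\kappa/2}$ by Bernstein applied to variables that are bounded by $O(\eps^{-2})$ on the good event, producing the $\eps^{-2}(mn)^{-\kappa/2}$ term; and $\mathrm T_3=\E[\targetpert(W_{11})^2]-\Info_W$ is deterministic and equals $\difI+O(\eps)$ by direct integration against the defining identity of $\Info_{W,\eps}$ in \eqref{eqn:def-Info-W-eps}.

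The main technical obstacle is the double role played by each $Y_{ij}$, which is simultaneously a data point entering $\hat p_Y,\hat p_Y'$ and an evaluation point of $\hat f_{Y,\eps}$; this forces the kernel concentration to be \emph{uniform} over the entire truncation window $[-T_n,T_n]$ instead of pointwise, and is what produces the $\log(mn)$ factor in $\res_{n,1}$ and $\res_{n,2}$. A secondary subtlety is that the mean-correction $\wbar Y$ weakly couples all $mn$ summands in $\hat p_Y,\hat p_Y'$ and prevents a naive i.i.d.\ application of Bernstein; I would handle this by conditioning on $\wbar W$, exploiting that $\wbar W$ is independent of the centered deviations $W_{ij}-\wbar W$ only up to an $O((mn)^{-1})$ correlation, and pushing the residual into the additive $\eps^{-2}|\wbar W|$ term visible in \eqref{eqn:upper-bound-proposition-one}.
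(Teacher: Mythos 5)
Your plan follows the paper's own architecture quite closely (polynomially large truncation window, uniform kernel concentration via pointwise Bernstein plus a covering and Lipschitz extension, bias from the first-order kernel property, the pseudo-Lipschitz identity for $x\mapsto -s/(t+\eps)$, the $\opnorm{A}\le\sqrt{mn}\,\normmax{A}$ conversion, and a three-term split of $\hInfo_{W,\eps}-\Info_W$), but there is a genuine gap in how you handle the contamination of the kernel estimate by the signal entries. You describe the replacement of $W_{kl}$ by $Y_{kl}-\wbar{Y}$ as a translation "by $\wbar{W}+O(\normmax{\bX})$" and then claim the resulting correction is only $\norm{p_W'}_\infty|\wbar{W}|$, silently discarding the $\normmax{\bX}$ part. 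Over $\cF_{m,n}(r,M,\eta)$ this cannot be discarded: individual entries of $\bX$ are only constrained through $\opnorm{\bX\odot\bX}\le M(m\vee n)^{1/2-\eta}$, so $\normmax{\bX}$ may be as large as $M^{1/2}(m\vee n)^{1/4-\eta/2}$, which diverges when $\eta<1/2$; a Lipschitz-type bound proportional to it neither vanishes nor fits inside $C(\res_{n,1}+|\wbar{W}|)$. The paper's route is different at exactly this point: write $\hat{p}_Y(x)=\hat{p}_W(x+\wbar{W};\bX)$, where the samples are $W_{kl}+\wtilde{X}_{kl}$ with $\sum_{k,l}\wtilde{X}_{kl}=0$, use the first-order kernel property \emph{together with this exact centering} to cancel the linear term in the Taylor expansion of the bias, and bound the quadratic remainder via $\norm{p_W''}_\infty\le M_2$ by $\tfrac{M_2}{2}\bigl(Mh_n^2+\tfrac{1}{mn}\norm{\bX}_F^2\bigr)$, with $\tfrac{1}{mn}\norm{\bX}_F^2\lesssim (mn)^{-1/2}$ absorbed into $\res_{n,1}$. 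Without this averaged (Frobenius-norm) control your uniform KDE bounds, and hence everything downstream, do not close.

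The same issue recurs in your $\mathrm{T}_1$ term for the Fisher information: you compare $\targetpert(Y_{ij}-\wbar{Y})$ with $\targetpert(W_{ij})$ entrywise via $\eps^{-2}(|\wbar{W}|+\normmax{\bX})$, which again injects a possibly divergent $\normmax{\bX}$ into a bound that must be $O(\resQ_n(\eps;\kappa))$. The paper instead compares the two empirical averages in mean square: after Cauchy--Schwarz only $\tfrac{1}{mn}\norm{\wtilde{\bY}-\bW}_F^2\lesssim \wbar{W}^2+\tfrac{1}{mn}\norm{\bX}_F^2$ enters, multiplied by $\wtilde{\Info}_{W,\eps}^{1/2}$, which is what produces the $\eps^{-2}(mn)^{-\kappa/2}$ term in $\resQ_n$. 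A second, minor, quantitative slip: with only second moments (Assumption {\sf A1}), your window $T_n=(mn)^{(1-\kappa/2)/2}$ is too small, since the union bound gives $\prob(\max_{i,j}|W_{ij}|\ge T_n/2)\lesssim mn\,T_n^{-2}=(mn)^{\kappa/2}$, which is vacuous; you need $T_n$ of order $(mn)^{1-\kappa/2}$ or larger (the paper takes $T_n\asymp mn$), which is harmless because the covering cardinality only grows polynomially and is absorbed by the exponential Bernstein tail.
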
\noindent\noindent
Now, we prove Eq~\eqref{eqn:upper-bound-lemma-two-one}, 
Eq~\eqref{eqn:upper-bound-lemma-two-two} and 
Eq~\eqref{eqn:upper-bound-lemma-two-three} 
in the three paragraphs below. This gives the desired result of 
Lemma~\ref{lemma:upper-bound-lemma-two}. 

\paragraph{Proof of Eq~\eqref{eqn:upper-bound-lemma-two-one}}
Let $\kappa = 1/4$. According to Lemma~\ref{lemma:upper-bound-two-high-prob-key}, 
we know that, 
there exist some constants $c, C > 0$, such that for any $\eps \le c$, the event 
$\Lambda_n(\eps)$ defined by
\begin{equation}
\label{eqn:def-Lambda-n}
\Lambda_n(\eps) = \left\{\opnorm{\what{f}_{Y, \eps}(\bY) - \targetpert(\bY)} 
	\ge C (mn)^{1/2}(\res_n(\eps) + \eps^{-2}\big|\wbar{W}\big|)\right\}
\end{equation}
happens with probability at most $(mn)^{-3/4}$ for large enough $n$. As
$m/n \to \gamma$, this shows that, 
\begin{equation}
\sum_{n=1}^{\infty}\prob \left(\Lambda_n(\eps) \right) < \infty. 
\end{equation} 
Applying Borel Canteli's lemma, the above shows that, for any $\eps \le c$, 
the events $\{\Lambda_n(\eps)\}_{n=1}^\infty$ happen at most finite 
times almost surely. Thus, for $\eps \le c$, we know that almost surely
\begin{align}
\limsup_{n\to \infty, m/n\to \infty}\left(\frac{1}{(mn)^{1/4}}\opnorm{\what{f}_{Y, \eps}(\bY) - \targetpert(\bY)} 
- C (mn)^{1/4}\left(\res_n(\eps) + \eps^{-2}\big|\wbar{W}\big|\right)\right) \le 0.
\label{eqn:as-limit-key-part-one}
\end{align}
Now that, since $\{W_{i, j}\}_{i\in [m], j\in [n]}$ are i.i.d mean $0$ with finite second 
moments, we know by the law of the iterated logarithm that, 
\begin{equation}
\label{eqn:as-limit-bar-W}
\limsup_{n \to \infty, m/n\to \gamma} (mn)^{1/4}\big|\wbar{W}\big| \asto 0. 
\end{equation}
Moreover, our choice of $h_n = n^{-\eta_1}$ and $h_n^\prime = n^{-\eta_2}$ for 
$\eta_1 \in (1/4, 1)$ and $\eta_2 \in (1/4, 1/3)$ gives
\begin{equation}
\limsup_{n \to \infty, m/n\to \gamma} (mn)^{1/4} \max\{ \res_{n, 1}, \res_{n, 2}\} \to 0
\end{equation}
which by definition of $\res_n(\eps)$, implies that 
\begin{equation}
\label{eqn:limit-res-n-eps}
\limsup_{n \to \infty, m/n\to \gamma} (mn)^{1/4}\res_n(\eps) = 0
\end{equation}
Hence, Eq~\eqref{eqn:as-limit-key-part-one}, Eq~\eqref{eqn:as-limit-bar-W} and 
Eq~\eqref{eqn:limit-res-n-eps} together imply that, for $\eps \le c$, 
almost surely,
\begin{equation}
\limsup_{n\to \infty, m/n\to \infty}
		\frac{1}{(mn)^{1/4}}\opnorm{\what{f}_{Y, \eps}(\bY) - \targetpert(\bY)}  = 0.
\end{equation}
By taking $\eps \to 0$, we derive the desired claim at 
Eq~\eqref{eqn:upper-bound-lemma-two-one}.

\paragraph{Proof of Eq~\eqref{eqn:upper-bound-lemma-two-two}}
In the proof, we set $\kappa = \kappa_0$ where 
$\kappa_0 \in (0, 1/24)$. We show that Eq~\eqref{eqn:upper-bound-lemma-two-two}
holds for $\nu_0 = \nu_0$ for any $\nu_0 \in (0, 1/24)$. 
To start with, by Lemma~\ref{lemma:upper-bound-two-high-prob-key}, we know that, 
for some constants $c, C > 0$ independent of $m, n, \eps$, we have for any 
$\eps \le c$, the event $\Lambda_n(\eps; \kappa_0)$ defined by
\begin{equation}
\label{eqn:def-Lambda-n-kappa-0}
\Lambda_n(\eps; \kappa_0) = \left\{\opnorm{\what{f}_{Y, \eps}(\bY) - \targetpert(\bY)} 
	\ge C (mn)^{1/2}(\res_n(\eps) + \eps^{-2}\big|\wbar{W}\big|) \right\}
\end{equation}
happens with probability at least $(mn)^{-(1-\kappa_0)}$. To prove the desired 
Eq~\eqref{eqn:upper-bound-lemma-two-two}, it suffices to show  
\begin{equation}
\label{eqn:upper-bound-two-two-one}
\lim_{\eps \to 0} \limsup_{n \to \infty, m/n\to \gamma} 
	\frac{1}{(mn)^{(1+\nu_0)/4}}\E \left[\opnorm{\what{f}_{Y, \eps}(\bY) - \targetpert(\bY)}^{(1+\nu_0)}
		\indic{\Lambda_n(\eps; \kappa_0)^c}\right] = 0.
\end{equation}
and 
\begin{equation}
\label{eqn:upper-bound-two-two-two}
\lim_{\eps \to 0} \limsup_{n \to \infty, m/n\to \gamma} 
	\frac{1}{(mn)^{(1+\nu_0)/4}} \E \left[\opnorm{\what{f}_{Y, \eps}(\bY) - \targetpert(\bY)}^{(1+\nu_0)}
		\indic{\Lambda_n(\eps; \kappa_0)} \right] = 0.
\end{equation}
First, we show Eq~\eqref{eqn:upper-bound-two-two-one}. Note that, 
Eq~\eqref{eqn:limit-res-n-eps} shows that under our careful choice of 
$h_n = n^{-\eta_1}$ and $h_n^\prime = n^{-\eta_2}$ for 
$\eta_1 \in (1/4, 1)$ and $\eta_2 \in (1/4, 1/3)$, we have 
\begin{equation}
\label{eqn:R-n-eps-exp}
\lim_{\eps \to 0} \lim_{n \to \infty, m/n\to \gamma} 
	\left((mn)^{1/4}\res_n(\eps)\right)^2 = 0. 
\end{equation}
Next, by Assumption~{\sf A2}, $\{W_{i, j}\}_{i\in [m], j\in [n]}$ are i.i.d mean 
$0$ with bounded second moments. Thus, 
\begin{equation}
 \lim_{n \to \infty, m/n\to \gamma} 
	\E ((mn)^{1/4}\big|\wbar{W}\big|)^2 = 0,
\end{equation}
which by taking $\eps \to 0$, immediately implies that, 
\begin{equation}
\label{eqn:W-bar-exp}
\lim_{\eps \to 0} \lim_{n \to \infty, m/n\to \gamma} 
	\E \left[\Big((mn)^{1/4}\eps^{-2}\big|\wbar{W}\big|\Big)^2\right]
		= 0. 
\end{equation}
Now, Eq~\eqref{eqn:R-n-eps-exp}, Eq~\eqref{eqn:W-bar-exp} and 
H\"{o}lder's inequality immediately give that
\begin{equation}
\label{eqn:upper-bound-two-two-one-goal}
\lim_{\eps \to 0} \limsup_{n \to \infty, m/n\to \gamma} 
	\E \left[\left((mn)^{1/4}\left(\res_n(\eps) + \eps^{-2}\big|\wbar{W}\big|\right)
	\right)^2\right] = 0.
\end{equation}
Finally by definition of $\Lambda_n(\eps)$, the above equation
implies that, 
\begin{equation}
\label{eqn:upper-bound-two-two-one-Holder}
\lim_{\eps \to 0} \limsup_{n \to \infty, m/n\to \gamma} 
	\frac{1}{(mn)^{1/2}}\E \left[\opnorm{\what{f}_{Y, \eps}(\bY) - \targetpert(\bY)}^{2}
		\indic{\Lambda_n(\eps; \kappa_0)^c}\right] = 0,
\end{equation}
which by H\"{o}lder's inequality again, shows that Eq~\eqref{eqn:upper-bound-two-two-one}
holds for any $\nu_0 \in (0, 1/24)$.

Next, we prove Eq~\eqref{eqn:upper-bound-two-two-two}. The starting point of 
the proof is the following bound on the operator norm (see 
Lemma~\ref{lemma:operator-to-one-infty} for details), 
\begin{equation}
\label{eqn:upper-bound-two-two-two-start}
\frac{1}{(mn)^{1/2}}\opnorm{\what{f}_{Y, \eps}(\bY) - f_{W, \eps}(\bY)} \le 
\normmax{\what{f}_{Y, \eps}(\bY) - f_{W, \eps}(\bY)} \le
	\normmax{\what{f}_{Y, \eps}(\bY)} + \normBig{f_{W, \eps}(\bY)}_{\rm max}. 
\end{equation}
To upper bound the RHS of Eq~\eqref{eqn:upper-bound-two-two-two-start}, we first 
note that, almost surely, 
\begin{equation}
\label{eqn:upper-bound-two-two-two-start-one}
\normmax{\what{f}_{Y, \eps}(\bY)} \le \norm{\what{f}_{Y, \eps}(\cdot)}_\infty 
	\le \eps^{-1} \norm{p_W^\prime(\cdot)}_\infty \le 
	\eps^{-1} (h_n^\prime)^{-2} \norm{K^\prime(\cdot)}_\infty 
		\le M\eps^{-1} (h_n^\prime)^{-2}.
\end{equation}
Moreover, by Assumption~{\sf A2}, we have, 
\begin{equation}
\label{eqn:upper-bound-two-two-two-start-two}
\normmax{f_{W, \eps}(\bY)} \le \norm{f_{W, \eps}(\cdot)}_\infty
	\le \eps^{-1} \norm{p_W^\prime(\cdot)}_\infty
	\le M_2 \eps^{-1}.
\end{equation}
Substituting Eq~\eqref{eqn:upper-bound-two-two-two-start-one} and 
Eq~\eqref{eqn:upper-bound-two-two-two-start-two} into Eq
\eqref{eqn:upper-bound-two-two-two-start}, we get for some constant 
$C$ independent of $m, n, \eps$, the estimate below holds almost surely, 
\begin{equation}
\frac{1}{(mn)^{1/4}}\opnorm{\what{f}_{Y, \eps}(\bY) - f_{W, \eps}(\bY)} \le 
	C (mn)^{1/4}\eps^{-1}\left((h_n^\prime)^{-2} +  1\right). 
\end{equation}
Hence, this implies 
\begin{align}
&\frac{1}{(mn)^{(1+\nu_0)/4}} \E \left[\opnorm{\what{f}_{Y, \eps}(\bY) - \targetpert(\bY)}^{1+\nu_0} 
	\indic{\Lambda_n(\eps; \kappa_0)} \right]  \\
&\le C^2 (mn)^{(1+\nu_0)/4}\eps^{-(1+\nu_0)}\left((h_n^\prime)^{-2} +1\right)^{(1+\nu_0)}
	\prob\left(\Lambda_n(\eps; \kappa_0)\right)\nonumber \\
&\le C^2\eps^{-(1+\nu_0)} 
	(mn)^{-(3/4-\kappa_0-\nu_0/4)}\left((h_n^\prime)^{-2} +1\right)^{(1+\nu_0)}.
\label{eqn:upper-bound-two-two-two-start-three}
\end{align}
Since $h_n^\prime = n^{-\eta_2}$ for $\eta_2 \in (1/4, 1/3)$, we know that 
for our choice of $\nu_0, \kappa_0 \in (0, 1/24)$, 
\begin{equation}
\lim_{n\to \infty, m/n\to \gamma}
	(mn)^{-(3/4-\kappa_0-\nu_0/4)}
		\left((h_n^\prime)^{-2} +1\right)^{(1+\nu_0)}
		= 0
\end{equation}
holds for any $\eps > 0$. Thus Eq~\eqref{eqn:upper-bound-two-two-two-start-three} shows 
that for the $\kappa_0, \nu_0 > 0$, 
\begin{equation}
\lim_{n\to \infty, m/n\to \gamma}\frac{1}{(mn)^{(1+\nu_0)/4}} 
	\E \left[\opnorm{\what{f}_{Y, \eps}(\bY) - \targetpert(\bY)}^{1+\nu_0} 
		\indic{\Lambda_n(\eps; \kappa_0)} \right] = 0
\end{equation}
holds for any $\eps > 0$. Taking $\eps \to 0$ gives the desired Eq~\eqref{eqn:upper-bound-two-two-two}.

\paragraph{Proof of Eq~\eqref{eqn:upper-bound-lemma-two-three}}
In the proof, we fix $\kappa = 1/4$. To simplify our notation, denote, 
\begin{equation}
\bar{\resQ}_n(\eps) = \resQ_n(\eps; 1/4) =
	 \res_n(\eps) + \eps^{-2}(mn)^{-1/8} + \difI + \eps. 
\end{equation}
According to Lemma~\ref{lemma:upper-bound-two-high-prob-key}, we know that, 
there exist some constants $c, C > 0$, such that for any $\eps \le c$, the event 
$\Lambda_n^{\Info}(\eps)$ defined by
\begin{equation}
\Lambda_n^{\Info}(\eps) = \left\{\big|\hat{\Info}_{W, \eps} - \Info_W\big| \ge C\bar{\resQ}_n(\eps) \right\}
\end{equation}
happens with probability at most $(mn)^{-3/4}$ for large enough $n$. As
$m/n \to \gamma$, this shows that, 
\begin{equation}
\sum_{n=1}^{\infty}\prob \left(\Lambda^{\Info}_n(\eps) \right) < \infty. 
\end{equation} 
Applying Borel Canteli's lemma, the above shows that, for any $\eps \le c$, 
the events $\{\Lambda^{\Info}_n(\eps)\}_{n=1}^\infty$ happen at most finite 
times almost surely. Thus, for $\eps \le c$, almost surely
\begin{equation}
\limsup_{n\to \infty, m/n\to \gamma}
	\big|\hat{\Info}_{W, \eps} - \Info_W\big| \le 
C\limsup_{n\to \infty, m/n\to \gamma}\bar{\resQ}_n(\eps) 
\le C(\delta_{W, \eps} + \eps).
\end{equation}
By taking $\eps \to 0$, we know that 
\begin{equation}
\lim_{\eps \to 0} \limsup_{n\to \infty, m/n\to \gamma}
	\big|\hat{\Info}_{W, \eps} - \Info_W\big|
\le C\lim_{\eps \to 0}(\delta_{W, \eps} + \eps) = 0,
\end{equation}
which gives the desired claim of Eq~\eqref{eqn:upper-bound-lemma-two-three}.

\subsection{Proof of Lemma~\ref{lemma:upper-bound-two-high-prob-key}}
\label{sec:proof-upper-bound-two-high-prob-key}
\subsubsection{Notation}
Throughout the section, we make use of the following notation. We use 
constants $C, c$ to denote constants that are independent of $m, n, \eps$, 
but can be dependent on the underlying distribution $\prob$, the model parameters 
$M$, $M_1$, $M_2$, $r$, $\eta$ and $\kappa$. We 
also use $C, c$ to denote numerical constants in some situations, where 
we shall point this fact explicitly. It is understood that all those constants $C, c$ 
might not be the same at each occurrence. For any matrix $\bA \in \R^{m \times n}$, 
we denote 
\begin{equation}
\label{eqn:notation-matrix}
\wbar{A} = \frac{1}{mn} \sum_{i \in [m], j \in [n]} A_{i, j}, 
	~~\normmax{\bA} = \max_{i \in [m], j\in [n]} |A_{i, j}|~~\text{and}~~
	\wtilde{\bA} = \bA - \wbar{A} \one_{m} \one_n{^\sT}.
\end{equation}

\subsubsection{Proof}
The crux of the proof is the key Lemma~\ref{lemma:high-prob-entrywise-bound}
and its companion Lemma~\ref{lemma:high-prob-entrywise-bound-info}, which we 
present below. The proof of the two lemma are deferred to 
Section~\ref{sec:proof-lemma-high-prob-entrywise-bound}.

\begin{lemma}
\label{lemma:high-prob-entrywise-bound}
For some constant $C$, we have with probability at least $1- (mn)^{-(1-\kappa)}$,
\begin{align}
&\normmax{\hat{p}_Y(\bY) - p_W(\bY)} \le 
	C  \left(\res_{n, 1} +  \big|\wbar{W}\big|\right).
	\label{eqn:high-prob-entrywise-bound-p}
\end{align}
Moreover, for the same constant $C$, we have with probability at least $1- (mn)^{-(1-\kappa)}$,
\begin{align}
&\normmax{\hat{p}_Y^\prime(\bY) - p_W^\prime(\bY)} \le 
	C \left(\res_{n, 2} +   \big|\wbar{W}\big|\right).
	\label{eqn:high-prob-entrywise-bound-p-prime}
\end{align}
\end{lemma}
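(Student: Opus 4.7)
The plan is to decompose the pointwise error of the kernel estimator into three pieces: the kernel bias, the stochastic fluctuation around the mean, and a substitution error arising from using $Y_{ij} - \wbar{Y}$ in place of the true noise sample $W_{ij}$. Introduce the oracle estimator $\tilde p_W(x) = (mnh_n)^{-1}\sum_{i,j} K((W_{ij}-x)/h_n)$ and write
\begin{equation*}
\hat p_Y(x) - p_W(x) = \bigl[\hat p_Y(x) - \tilde p_W(x)\bigr] + \bigl[\tilde p_W(x) - \E\tilde p_W(x)\bigr] + \bigl[\E\tilde p_W(x) - p_W(x)\bigr].
\end{equation*}
The bias term is $O(h_n^2)$ by a second order Taylor expansion inside the integral defining $\E\tilde p_W(x)$, using the first-order kernel property $\int zK(z)\,dz=0$ together with Assumption {\sf A2} (bounded $p_W''$). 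The fluctuation term is controlled by Bernstein's inequality: at a fixed $x$ the summand is bounded by $\|K\|_\infty/h_n$ and has variance $O(1/h_n)$, giving a bound $O((mnh_n)^{-1/2}\log(mn))$ with probability $1-(mn)^{-C}$.

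The substitution error is the novel piece. Set $\Delta_{ij} = (X_{ij}-\wbar X) - \wbar W$ so that $Y_{ij}-\wbar Y = W_{ij} + \Delta_{ij}$. A first order Taylor expansion of $K$ gives
\begin{equation*}
\hat p_Y(x) - \tilde p_W(x) = \frac{1}{mn h_n^2}\sum_{i,j}\Delta_{ij}\,K'\!\left(\tfrac{W_{ij}-x}{h_n}\right) + R(x),
\end{equation*}
where $|R(x)| \le \tfrac{\|K''\|_\infty}{2mn h_n^3}\sum_{i,j}\Delta_{ij}^2$ is bounded via the incoherence estimates in $\cF_{m,n}(M,\eta)$ (in particular the bound on $\|\bX\odot \bX\|_{\op}$). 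Since $\sum_{i,j}(X_{ij}-\wbar X)=0$, the linear term splits as (i) a zero-mean empirical process in $X_{ij}-\wbar X$ controlled by Bernstein-type concentration using the entrywise smallness of $\bX$, and (ii) a term $-\wbar W\cdot (mn h_n^2)^{-1}\sum_{i,j}K'((W_{ij}-x)/h_n)$ whose empirical average concentrates near $-p_W'(x)$ by the same bias/variance decomposition applied to $K'$; this produces the explicit $|\wbar W|$ factor in the stated bound.

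To upgrade the pointwise bounds to the entrywise maximum $\normmax{\hat p_Y(\bY) - p_W(\bY)}$, I would restrict to the event $\{\max_{i,j}|Y_{ij}| \le T_n\}$ with $T_n$ of order $(mn)^{\kappa/4}$, which holds with probability $\ge 1-(mn)^{-(1-\kappa)}$ by Chebyshev using Assumption {\sf A1} together with the uniform bound on $\bX$ in $\cF_{m,n}$. Then discretize $[-T_n,T_n]$ on a grid of spacing $h_n^3/\sqrt{mn}$, apply a union bound over grid points to transfer the pointwise high-probability estimates, and interpolate by Lipschitz continuity of both $\hat p_Y$ and $p_W$ (with bounds involving $\|K'\|_\infty$ and $\|p_W'\|_\infty$). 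The same pipeline, with $K$ replaced by $K'$ and $h_n$ by $h_n'$, and with $\|K''\|_\infty$ used in the interpolation step, yields the derivative bound with $\res_{n,1}$ replaced by $\res_{n,2}$.

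The main obstacle is packaging every error term below the common threshold $\res_{n,1}$ (resp.\ $\res_{n,2}$) while preserving a clean $|\wbar W|$ dependence. In particular the quadratic remainder $R(x)$ and the zero-mean $X$-concentration term must each be verified to be $o(\res_{n,1})$ under the incoherence structure of $\cF_{m,n}(M,\eta)$; this is precisely where the refined entrywise controls on $\bX$ (the $\ell_2\to\ell_\infty$ bound and the Hadamard-square operator norm bound) enter in an essential way, rather than merely the crude bounds on $\|\bX\|_{\op}$.
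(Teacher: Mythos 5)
Your bias and fluctuation steps (second-order Taylor expansion of $p_W$ for the kernel bias; pointwise Bernstein plus a Lipschitz/covering argument over a truncated range, evaluated at the data points) are essentially the paper's steps. The gap is in your treatment of the substitution error $\hat p_Y(x)-\tilde p_W(x)$: you Taylor-expand the kernel $K$ termwise in $\Delta_{ij}/h_n$, and neither the linear term nor the remainder can be brought below the claimed bound. The class $\cF_{m,n}(r,M,\eta)$ only forces $|X_{ij}|\lesssim (m\vee n)^{1/4-\eta/2}$ (via $\opnorm{\bX\odot\bX}$), so $\Delta_{ij}/h_n$ is polynomially large for $h_n=n^{-\eta_1}$, $\eta_1\in(1/4,1)$: each derivative of $K$ costs a factor $h_n^{-1}$ that the incoherence bounds cannot absorb. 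Concretely, your remainder bound $R(x)\le \tfrac{\|K''\|_\infty}{2mnh_n^3}\sum_{ij}\Delta_{ij}^2$ is of order $(mn)^{-1/2}h_n^{-3}+\wbar{W}^2h_n^{-3}$ (using $\|\bX\|_F^2\lesssim r M^2(mn)^{1/2}$); the first piece exceeds the stochastic part $(mnh_n)^{-1/2}\log(mn)$ of $\res_{n,1}$ by a factor $h_n^{-5/2}/\log(mn)\to\infty$ and also dwarfs $h_n^2$ whenever $\eta_1>1/5$, hence for every admissible $\eta_1$, while the second piece is not $O(|\wbar W|)$ once $\eta_1>1/3$. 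Likewise your ``zero-mean empirical process'' (i) has standard deviation of order $(mn)^{-3/4}h_n^{-3/2}$, which is not $O(\res_{n,1})$ when $\eta_1>1/2$. The individual pieces of your expansion are genuinely large; only their sum is small, so this decomposition destroys the estimate rather than proving it.

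The paper's route avoids expanding $K$ altogether. Writing $\hat p_Y(x)=\hat p_W(x+\wbar W;\bX)$ with $\hat p_W(x;\bX)=\tfrac{1}{mnh_n}\sum_{i,j}K\big((W_{ij}+\wtilde X_{ij}-x)/h_n\big)$, it (i) bounds $\sup_{|x|\le 2T_n}\big|\hat p_W(x;\bX)-\E\hat p_W(x;\bX)\big|$ directly by Bernstein plus covering -- the summands are bounded by $\|K\|_\infty/h_n$ regardless of the shifts, so the shifted process is no harder than your oracle $\tilde p_W$ -- and (ii) Taylor-expands the \emph{density} $p_W$, not $K$, inside the integral defining $\E\hat p_W(x;\bX)$: there the shifts hit $\|p_W''\|_\infty\le M_2$ (an $n$-independent constant), the linear term vanishes since $\sum_{i,j}\wtilde X_{ij}=0$, and the total cost is $M_2\big(Mh_n^2+\tfrac{1}{mn}\|\bX\|_F^2+|\wbar W|\big)=O\big(h_n^2+(mn)^{-1/2}+|\wbar W|\big)$. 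That expectation-first step is the missing idea you need. A secondary slip: with only second moments (Assumption {\sf A1}), Chebyshev plus a union bound gives $\normmax{\bW}\le T$ with probability $1-(mn)^{-(1-\kappa)}$ only for $T\gtrsim (mn)^{1-\kappa/2}$, not $(mn)^{\kappa/4}$; this is harmless since the truncation level enters only through a logarithm in the covering bound (the paper takes $T_n\asymp mn$), but your stated truncation event does not hold with the claimed probability.
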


\begin{lemma}
\label{lemma:high-prob-entrywise-bound-info}
For some constant $C$, we have with probability at least $1- (mn)^{-(1-\kappa)}$,
\begin{align}
&\normmax{\hat{p}_Y(\wtilde{\bY}) - p_W(\wtilde{\bY})} \le 
	C  \left(\res_{n, 1} +  \big|\wbar{W}\big|\right),
	\label{eqn:high-prob-entrywise-bound-p-info}
\end{align}
Moreover, for the same constant $C$, we have with probability at least $1- (mn)^{-(1-\kappa)}$,
\begin{align}
&\normmax{\hat{p}_Y^\prime(\wtilde{\bY}) - p_W^\prime(\wtilde{\bY})} \le 
	C \left(\res_{n, 2} +  \big|\wbar{W}\big|\right).
	\label{eqn:high-prob-entrywise-bound-p-prime-info}
\end{align}
\end{lemma}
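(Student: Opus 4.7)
The proof will parallel the (deferred) proof of Lemma~\ref{lemma:high-prob-entrywise-bound}, with the additional complication of a random shift in the evaluation point. I sketch only~\eqref{eqn:high-prob-entrywise-bound-p-info}; the derivative bound~\eqref{eqn:high-prob-entrywise-bound-p-prime-info} follows by replacing $(K, h_n, \res_{n,1})$ with $(K', h_n', \res_{n,2})$ throughout.

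The starting observation is a cancellation: since $\wtilde{Y}_{ij} = Y_{ij} - \wbar{Y}$, the $\wbar{Y}$ appearing in both the centering of $\hat{p}_Y$ and in the evaluation point drops out, giving
\begin{align*}
\hat{p}_Y(\wtilde{Y}_{ij}) = \frac{1}{mn h_n}\sum_{k,l} K\Bigl(\tfrac{Y_{kl} - Y_{ij}}{h_n}\Bigr) = \frac{1}{mn h_n}\sum_{k,l} K\Bigl(\tfrac{(W_{kl} - W_{ij}) + (X_{kl} - X_{ij})}{h_n}\Bigr).
\end{align*}
I introduce the oracle kernel density estimator $r_n(x) \defeq (mn h_n)^{-1}\sum_{k,l} K((W_{kl} - x)/h_n)$, which uses the unobserved noise entries and is a standard KDE for $p_W$, and decompose
\begin{align*}
\hat{p}_Y(\wtilde{Y}_{ij}) - p_W(\wtilde{Y}_{ij}) = \underbrace{\bigl[\hat{p}_Y(\wtilde{Y}_{ij}) - r_n(W_{ij})\bigr]}_{\text{(I) signal--shift}} + \underbrace{\bigl[r_n(W_{ij}) - p_W(W_{ij})\bigr]}_{\text{(II) oracle KDE}} + \underbrace{\bigl[p_W(W_{ij}) - p_W(\wtilde{Y}_{ij})\bigr]}_{\text{(III) smoothness}}.
\end{align*}

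Term (II) is the pointwise error of a classical kernel density estimator for $p_W$ built from $mn$ i.i.d.\ samples: Assumption {\sf A2} and the first-order kernel property give bias $O(h_n^2)$, while Bernstein's inequality with $\norm{K}_\infty \le M$ yields variance $O((mn h_n)^{-1})$; a union bound over the $mn$ points $W_{ij}$ produces a tail of order $C\res_{n,1}$ with probability at least $1-(mn)^{-(1-\kappa)}$. Term (III) is a deterministic Taylor expansion: since $\wtilde{Y}_{ij} - W_{ij} = (X_{ij} - \wbar{X}) - \wbar{W}$ and $\norm{p_W'}_\infty \le M_2$, one has $p_W(W_{ij}) - p_W(\wtilde{Y}_{ij}) = -p_W'(W_{ij})\bigl[(X_{ij} - \wbar{X}) - \wbar{W}\bigr] + O(\normmax{\bX}^2)$.

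The crux is term (I). A direct Lipschitz bound on $K$ only yields $O(\normmax{\bX}/h_n)$, which is insufficient for our choice of bandwidth. Instead, I would condition on $W_{ij}$, perform the change of variables $w \mapsto w + X_{kl} - X_{ij}$ in the integral defining $\E[K((W_{kl} - W_{ij} + X_{kl} - X_{ij})/h_n) \mid W_{ij}]$, and Taylor-expand $p_W$ to second order, yielding
\begin{align*}
\E\bigl[(\text{I}) \bigm| W_{ij}\bigr] = -p_W'(W_{ij})(\wbar{X} - X_{ij}) + R_{ij},
\end{align*}
with a deterministic remainder $R_{ij}$ controlled by $\norm{p_W''}_\infty$ and by the incoherence bound $\normmax{\bX}^2 \le \opnorm{\bX \odot \bX}\le M(m\vee n)^{1/2-\eta}$. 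Crucially, the leading-order conditional mean of (I) \emph{exactly cancels} the leading piece of (III), since the two signs are opposite; what survives is only $p_W'(W_{ij})\wbar{W}$, which gives rise to the explicit $|\wbar{W}|$ term on the right-hand side of~\eqref{eqn:high-prob-entrywise-bound-p-info}. The fluctuation of (I) around its conditional expectation is a sum of $mn$ bounded, conditionally independent summands; Bernstein's inequality combined with a union bound over $(i,j)$ yields the same order of tail as in (II).

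The main obstacle will be making the first-order cancellation between (I) and (III) rigorous while carefully bookkeeping the higher-order Taylor remainders coming from both $K$ and $p_W$: each such remainder must be absorbed into $\res_{n,1}$ via the incoherence properties of $\cF_{m,n}(M,\eta)$ and the bandwidth choice $h_n = n^{-\eta_1}$. The derivative bound~\eqref{eqn:high-prob-entrywise-bound-p-prime-info} follows by the identical program with $K'$ in place of $K$; the worse variance scaling $(mn (h_n')^3)^{-1}$ of a derivative kernel estimator is precisely the source of the asymmetry between $\res_{n,1}$ and $\res_{n,2}$.
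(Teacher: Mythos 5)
Your decomposition through the oracle KDE $r_n(W_{ij})$ and the first-order cancellation between the conditional mean of (I) and the linear part of (III) is correctly identified (and it does isolate the $p_W'(W_{ij})\wbar{W}$ term that produces the $|\wbar{W}|$ on the right-hand side), but the way you dispose of the second-order terms contains a genuine gap. The class $\cF_{m,n}(M,\eta)$ only controls \emph{averaged} quantities: the bound $\normmax{\bX}^2 \le \opnorm{\bX\odot\bX}\le M(m\vee n)^{1/2-\eta}$ that you invoke is polynomially \emph{divergent}, not small, so a remainder of size $\normmax{\bX}^2$ cannot be ``absorbed into $\res_{n,1}$'' — recall $\res_{n,1}\to 0$ while $(m\vee n)^{1/2-\eta}\to\infty$ for $\eta<1/2$. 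Concretely, the second-order Taylor remainder of (III) is of order $M_2(X_{ij}-\wbar{X})^2$, and the remainder $R_{ij}$ in your conditional-mean expansion of (I) is of order $M_2\big[(X_{ij}-\wbar{X})^2 + \frac{1}{mn}\norm{\wtilde{\bX}}_F^2\big]$; for entries with $|X_{ij}|$ of order $(m\vee n)^{1/4-\eta/2}$ (which the class permits) each of these blows up, so the bound as stated fails at exactly those $(i,j)$. These two large contributions do in fact cancel each other to leading order (their difference is $O\big(\frac{1}{mn}\norm{\wtilde{\bX}}_F^2\big)$ plus third-order residuals controllable by $\norm{p_W'''}_\infty$ and $\normmax{\bX}\cdot\frac{1}{mn}\norm{\wtilde{\bX}}_F^2$), but your proposal only tracks the cancellation at first order; without carrying the expansion and sign bookkeeping one order further, the argument does not go through.

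The paper sidesteps this issue entirely by never comparing $p_W(\wtilde{Y}_{ij})$ with $p_W(W_{ij})$: it writes $\hat{p}_Y(x)=\hat{p}_W(x+\wbar{W};\bX)$, proves a \emph{uniform-in-$x$} concentration bound $\sup_{x\in[-2T_n,2T_n]}|\hat{p}_W(x;\bX)-\hat{q}_W(x;\bX)|\lesssim (mnh_n)^{-1/2}\log(mn)$ via Bernstein plus a Lipschitz/covering argument (with $T_n$ a crude high-probability bound on $\normmax{\wtilde{\bY}}$), and then bounds the bias $\sup_x|\hat{q}_W(x;\bX)-p_W(x-\wbar{W})|\le M_2\big(Mh_n^2+\frac{1}{mn}\norm{\bX}_F^2+|\wbar{W}|\big)$, where the Taylor expansion is taken in the shifts $h_nt+\wtilde{X}_{kl}$ \emph{averaged over} $(k,l)$ — so only $\frac{1}{mn}\norm{\bX}_F^2\lesssim (mn)^{-1/2}$ appears, never an individual $X_{ij}^2$, and the evaluation point $x=\wtilde{Y}_{ij}+\wbar{W}$ is handled by the uniformity rather than by expanding around $W_{ij}$. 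To salvage your route you would either have to make the second-order cancellation between (I) and (III) explicit (with third-derivative control of $p_W$), or replace $r_n(W_{ij})$ and $p_W(W_{ij})$ by the conditional mean and the density evaluated at the same point $\wtilde{Y}_{ij}$, which essentially reproduces the paper's argument. Your treatment of the fluctuation terms (Bernstein plus a union bound over the $mn$ evaluation points) and of the derivative estimator (same program with $K'$, giving the $(mn(h_n')^3)^{-1/2}$ scaling in $\res_{n,2}$) is fine.
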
 \noindent\noindent
Given Lemma~\ref{lemma:high-prob-entrywise-bound} and 
Lemma~\ref{lemma:high-prob-entrywise-bound-info}, we 
show the desired Eq~\eqref{eqn:upper-bound-proposition-one} and Eq 
\eqref{eqn:upper-bound-Info-W-high-prob}.

\paragraph{Proof of Eq~\eqref{eqn:upper-bound-proposition-one}}
Our proof starts from the following upper bound on the operator norm 
(see Lemma~\ref{lemma:operator-to-one-infty} for a proof of this bound)
\begin{equation} 
\label{eqn:elementary-opnorm-bound}
\opnorm{\widehat{f}_{Y,\eps}(\bY) - f_{W,\eps}(\bY)} \leq 
	(mn)^{1/2} \normmax{\widehat{f}_{Y,\eps}(\bY) - f_{W,\eps}(\bY)}.
\end{equation}
According to Eq~\eqref{eqn:elementary-opnorm-bound}, it suffices to show that, 
with probability at least $1-(mn)^{-(1-\kappa)}$: 
\begin{align}
 \normmax{\widehat{f}_{Y,\eps}(\bY) - f_{W,\eps}(\bY)}
	\le C(\res_n(\eps) + \dezero^{-2} \big|\wbar{W}\big|).
\label{eqn:goal-uniform-entry-f-bound}
\end{align}
For notational simplicity, denote the function $g: \R \times \R_+ \to \R$ to be: 
\begin{equation}
\label{eqn:def-g}
g(s, t) = -\frac{s}{t + \eps}~\text{for any $s\in \R, t \in \R_+$}. 
\end{equation} 
Using this notation, the goal is equivalent to showing that, with probability at least 
$1-(mn)^{-(1-\kappa)}$: 
\begin{equation}
\normmax{g(\hat{p}_Y^\prime(\bY), \hat{p}_Y(\bY) - g(p_W^\prime(\bY), p_W(\bY))}
 	\le C (\res_n(\eps) + \dezero^{-2} \big|\wbar{W}\big|).
\label{eqn:goal-uniform-entry-g-bound}
\end{equation}
Motivated by Eq~\eqref{eqn:goal-uniform-entry-g-bound}, we present an elementary 
lemma regarding the function $g(\cdot)$, basically showing that the function $g$ is 
(pseudo)-Lipschitz with respect to its input arguments.
\begin{lemma}
\label{lemma:elementary-inequality}
For any $s_1, s_2 \in \R$ and, $t_1, t_2 \ge 0$, 
\begin{equation*}
\left|g(s_1, t_1) - g(s_2, t_2)\right| \leq \dezero^{-1}|s_1 - s_2| 
	+ \dezero^{-2}|t_1 - t_2| (|s_1| \wedge |s_2|).
\end{equation*}
\end{lemma}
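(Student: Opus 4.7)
The plan is to reduce the bound on $|g(s_1,t_1)-g(s_2,t_2)|$ to two one-variable Lipschitz estimates by splitting the increment along the coordinate directions. Writing
\[
g(s_1,t_1)-g(s_2,t_2) \;=\; \bigl[g(s_1,t_1)-g(s_2,t_1)\bigr] \;+\; \bigl[g(s_2,t_1)-g(s_2,t_2)\bigr],
\]
I would handle each bracketed term in turn using the explicit form $g(s,t)=-s/(t+\eps)$ and the fact that $t_1,t_2\ge 0$ so $t_i+\eps\ge \eps$.

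For the first term, $g(s_1,t_1)-g(s_2,t_1)=-(s_1-s_2)/(t_1+\eps)$, giving immediately the bound $\eps^{-1}|s_1-s_2|$. For the second term,
\[
g(s_2,t_1)-g(s_2,t_2) \;=\; -s_2\,\frac{t_2-t_1}{(t_1+\eps)(t_2+\eps)},
\]
whose absolute value is at most $\eps^{-2}|s_2|\,|t_1-t_2|$. This yields one of the two bounds implicit in the $|s_1|\wedge|s_2|$ factor on the right-hand side.

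To obtain the other one, I would apply the identical decomposition but split the other way, $g(s_1,t_1)-g(s_2,t_2)=[g(s_1,t_1)-g(s_1,t_2)]+[g(s_1,t_2)-g(s_2,t_2)]$; the same two estimates (with the roles of $s_1,s_2$ exchanged) give the companion bound with $|s_1|$ in place of $|s_2|$. Taking the smaller of the two produces the claimed inequality with $|s_1|\wedge|s_2|$. There is no real obstacle here beyond bookkeeping; the whole argument is just the triangle inequality combined with the trivial lower bound $t_i+\eps\ge\eps$.
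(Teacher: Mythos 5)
Your proposal is correct and is essentially the paper's own argument: the paper writes the identity $g(s_1,t_1)-g(s_2,t_2)=\frac{s_2-s_1}{t_1+\dezero}+\frac{s_2(t_1-t_2)}{(t_1+\dezero)(t_2+\dezero)}$, which is exactly your coordinate-wise split, then bounds each term using $t_i+\dezero\ge\dezero$ and obtains the companion bound with $|s_1|$ symmetrically before taking the minimum. No gaps.
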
\noindent\noindent
Lemma~\ref{lemma:elementary-inequality} immediately gives us a control on the LHS 
of Eq~\eqref{eqn:goal-uniform-entry-g-bound}. In fact, for $i\in [m], j \in [n]$, 
\begin{align}
\widehat{f}_{Y,\eps}(Y_{i, j}) - f_{W,\eps}(Y_{i, j}) &= 
	\left|g(\hat{p}_W^\prime(Y_{i, j}), \hat{p}_W(Y_{i, j}))
		- g(p_W^\prime(Y_{i, j}), p_W(Y_{i, j}))\right| \nonumber \\
&\le \dezero^{-1} \left|\hat{p}_Y^\prime(Y_{i, j}) - p_W^\prime(Y_{i, j})\right|
	+  \dezero^{-2}\left|\hat{p}_Y(Y_{i, j}) - p_W(Y_{i, j})\right|
		(|\hat{p}_Y^\prime(Y_{i, j})| \wedge  |p_W^\prime(Y_{i, j})|) \nonumber \\
&= \dezero^{-1}\normmax{\hat{p}^\prime_Y(\bY) - p_W^\prime(\bY)}
	+ \dezero^{-2} \norm{p_W^\prime(\cdot)}_\infty \normmax{\hat{p}_Y(\bY) - p_W(\bY)}.
	 \nonumber
\end{align}
Since $\norm{p_W^\prime(\cdot)}_\infty \le M_2$ by Assumption~{\sf A2},  
the desired claim of Eq~\eqref{eqn:upper-bound-proposition-one} follows 
from the above estimate and Lemma~\ref{lemma:high-prob-entrywise-bound}.

\paragraph{Proof of Eq~\eqref{eqn:upper-bound-Info-W-high-prob}}
Recall the definition of $\delta_{W, \eps}$ at Eq~\eqref{eqn:def-Info-W-eps}. 
Our proof starts from the estimate
\begin{equation}
\label{eqn:upper-bound-Info-W-first-step}
\big|\hat{\Info}_{W, \eps} - \Info_W\big| \le 
	\left|\hat{\Info}_{W,\eps} - \Info_{W,\eps} \right| + \delta_{W, \eps}.
\end{equation}
Recall the definition of function $g:\R^2 \to \R$ in Eq~\eqref{eqn:def-g}. 
We can express the estimator $\hat{\Info}_{W,\eps}$ as, 
\begin{equation}
\hat{\Info}_{W,\eps} = \bar{\Info}_{W, \eps} + \eps
\end{equation}
where we define 
\begin{equation}
\bar{\Info}_{W, \eps} \defeq \frac{1}{mn}\sum_{i \in [m], j \in [n]} 
	g^2\left(\hat{p}_Y^\prime(\wtilde{Y}_{i, j}), \hat{p}_Y(\wtilde{Y}_{i, j})\right).
\end{equation}
Now, define the auxiliary random variable $\wtilde{\Info}_{W,\eps}$ to be: 
\begin{equation}
\wtilde{\Info}_{W,\eps} = \frac{1}{mn}\sum_{i \in [m], j \in [n]} 
	g^2\left(p_W^\prime(W_{i, j}), p_W(W_{i, j})\right).
\end{equation}
By triangle inequality and Eq~\eqref{eqn:upper-bound-Info-W-first-step}, 
we can upper bound the difference between $\hat{\Info}_{W,\eps}$
and $\Info_{W} $ by
\begin{equation}
\label{eqn:info-triangle-inequality}
\left|\hat{\Info}_{W,\eps} - \Info_{W} \right| \le \errorone + \errortwo +
	(\delta_{W, \eps} + \eps)~~\text{for}~
	\errorone \defeq \left|\wtilde{\Info}_{W,\eps} - \Info_{W,\eps} \right|~\text{and}~
	\errortwo \defeq \left|\bar{\Info}_{W,\eps} - \wtilde{\Info}_{W,\eps} \right|.
\end{equation}
We upper bound the error $\errorone$ first. Indeed, the next lemma provides a high 
probability onto $\errorone$, whose proof is deferred in Appendix Section 
\ref{sec:proof-lemma-f-abs-bar-bound}.
\begin{lemma}
\label{lemma:f-abs-bar-bound}
The following inequality holds for all $t > 0$, 
\begin{equation}
\label{eqn:dif-Infos}
\P \left( \errorone > t + \delta_{W, \eps}\right) \le 
	2 \exp \left(- \frac{mn t^2 \eps^{4}}{2M_2^4}\right). 
\end{equation}
\end{lemma}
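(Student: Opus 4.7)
The plan is to view $\wtilde{\Info}_{W,\eps}$ as an empirical mean of bounded i.i.d.\ random variables, control its bias relative to $\Info_{W,\eps}$ by $\delta_{W,\eps}$, and then apply Hoeffding's inequality. Writing $T_{ij} := g^2(p_W'(W_{ij}), p_W(W_{ij})) = (p_W'(W_{ij}))^2/(p_W(W_{ij})+\eps)^2$, we have $\wtilde{\Info}_{W,\eps} = (mn)^{-1}\sum_{i,j} T_{ij}$ with i.i.d.\ summands. By Assumption~{\sf A2}, $\|p_W'\|_\infty \le M_2$, and since $p_W(w)+\eps \ge \eps$, each summand obeys the deterministic bound $0 \le T_{ij} \le M_2^2 \eps^{-2}$.

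The key step will be the bias calculation. Using the algebraic decomposition $p_W(w)/(p_W(w)+\eps)^2 = 1/(p_W(w)+\eps) - \eps/(p_W(w)+\eps)^2$, I obtain
\begin{equation*}
\E[T_{11}] = \int_\R \frac{(p_W'(w))^2 \, p_W(w)}{(p_W(w)+\eps)^2} \, \de w
	= \Info_{W,\eps} - \eps \int_\R \frac{(p_W'(w))^2}{(p_W(w)+\eps)^2} \, \de w \, .
\end{equation*}
Since $p_W(w) \le p_W(w)+\eps$ pointwise, the correction term is upper bounded by $\eps \int_\R (p_W'(w))^2/(p_W(w)(p_W(w)+\eps)) \, \de w$, and a partial-fraction computation identifies this last integral exactly with $\Info_W - \Info_{W,\eps} = \delta_{W,\eps}$. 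Hence $|\E[\wtilde{\Info}_{W,\eps}] - \Info_{W,\eps}| \le \delta_{W,\eps}$, and so by the triangle inequality $\errorone \le |\wtilde{\Info}_{W,\eps} - \E[\wtilde{\Info}_{W,\eps}]| + \delta_{W,\eps}$; in particular, the event $\{\errorone > t + \delta_{W,\eps}\}$ is contained in $\{|\wtilde{\Info}_{W,\eps} - \E[\wtilde{\Info}_{W,\eps}]| > t\}$.

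It then remains to apply Hoeffding's inequality to the i.i.d.\ sum $(mn)^{-1}\sum_{i,j} T_{ij}$ of random variables taking values in $[0, M_2^2/\eps^2]$. This yields
\begin{equation*}
\P\Big(|\wtilde{\Info}_{W,\eps} - \E[\wtilde{\Info}_{W,\eps}]| > t\Big) \le 2\exp\!\left(-\frac{2 mn t^2 \eps^4}{M_2^4}\right),
\end{equation*}
which is actually stronger than the stated bound (a factor of $4$ in the exponent is lost). There is no real obstacle here; the only mildly delicate point is recognizing that the expectation of the naive plug-in estimator of $\Info_{W,\eps}$ picks up an extra $\eps$-factor from the sampling measure $p_W \, \de w$, and that this bias is exactly of the order of the regularization gap $\delta_{W,\eps}$.
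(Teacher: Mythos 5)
Your proof is correct and follows essentially the same route as the paper: bound each summand $T_{ij}$ by $M_2^2\eps^{-2}$ via Assumption {\sf A2}, show $|\E\wtilde{\Info}_{W,\eps}-\Info_{W,\eps}|\le\delta_{W,\eps}$ by exactly the same $\eps$-correction and partial-fraction identity, and conclude by the triangle inequality plus Hoeffding. Your observation that the two-sided Hoeffding bound with range $M_2^2\eps^{-2}$ actually gives a sharper constant in the exponent than the stated one is accurate and harmless, since the lemma only claims the weaker bound.
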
\noindent\noindent
Now, if we plug in $t= 2\eps^{-2}M_2^2(mn)^{-1/2}\log(mn)$ into 
Eq~\eqref{eqn:dif-Infos}, we know that, for some constant $C > 0$,
we have with probability at least $1-(mn)^{-(1-\kappa)}$, 
\begin{equation}
\label{eqn:final-error-two-bound-info}
\errorone \leq C\eps^{-2}(mn)^{-1/2}\log(mn) + \difI.
\end{equation}
Next, we upper bound error $\errortwo$. 
We introduce the auxiliary quantities $\resT_{n, 1}$ and $\resT_{n, 2}$ below, 
\begin{align}
\label{eqn:def-T-n-1}
\resT_{n, 1}^2 = \normmax{\hat{p}_Y(\wtilde{\bY}) - p_W(\wtilde{\bY})}^2 +  
		\frac{1}{mn} \norm{X}_F^2 + \wbar{W}^2, \\
\resT_{n, 2}^2 = \normmax{\hat{p}^\prime_Y(\wtilde{\bY}) - p^\prime_W(\wtilde{\bY})}^2 +  
		\frac{1}{mn} \norm{X}_F^2 + \wbar{W}^2.	
\end{align}
The following Lemma~\ref{lemma:upper-bound-error-one} provides a deterministic 
upper bound on the error term~$\errortwo$. The proof is based on tedious calculations,
and is deferred into appendix Section~\ref{sec:proof-upper-bound-error-one}.
\begin{lemma}
\label{lemma:upper-bound-error-one}
There exists some constant $C$ depending on $M, M_1, M_2$ such that, 
\begin{equation}
\label{eqn:upper-bound-error-one-first-step}
\errortwo \le C (\dezero^{-2}\resT_{n, 1} +  \dezero^{-1}\resT_{n, 2}) 
	\left(\wtilde{\Info}_{W,\eps}^{1/2}  + \dezero^{-2}\resT_{n, 1} +  \dezero^{-1}\resT_{n, 2}\right). 
\end{equation}
\end{lemma}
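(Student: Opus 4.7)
The plan is to write $\bar{\Info}_{W,\eps} - \wtilde{\Info}_{W,\eps}$ as an average of differences of squares and then apply Cauchy--Schwartz. Set $a_{ij} = g(\hat{p}_Y^\prime(\wtilde{Y}_{ij}), \hat{p}_Y(\wtilde{Y}_{ij}))$ and $b_{ij} = g(p_W^\prime(W_{ij}), p_W(W_{ij}))$. Using the identity $a^2-b^2=(a-b)(a+b)$ and Cauchy--Schwartz, I get
\begin{equation*}
\errortwo \le \Big(\tfrac{1}{mn}\sum_{i,j}(a_{ij}-b_{ij})^2\Big)^{1/2}\Big(\tfrac{1}{mn}\sum_{i,j}(a_{ij}+b_{ij})^2\Big)^{1/2}.
\end{equation*}
The second factor is controlled by the triangle inequality as $\sqrt{2}\,\wtilde{\Info}_{W,\eps}^{1/2}+\sqrt{2}(\tfrac{1}{mn}\sum(a_{ij}-b_{ij})^2)^{1/2}$, so the whole task reduces to bounding $\tfrac{1}{mn}\sum(a_{ij}-b_{ij})^2$ by $C(\eps^{-2}\resT_{n,2}^2+\eps^{-4}\resT_{n,1}^2)$.

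Next I apply the pseudo-Lipschitz estimate on $g$ from Lemma~\ref{lemma:elementary-inequality}, using $|p_W'(W_{ij})|\le M_2$ (Assumption~{\sf A2}) as the minimizing argument in the second term, to obtain
\begin{equation*}
|a_{ij}-b_{ij}|\le \eps^{-1}\big|\hat{p}_Y^\prime(\wtilde{Y}_{ij})-p_W^\prime(W_{ij})\big|+ M_2\eps^{-2}\big|\hat{p}_Y(\wtilde{Y}_{ij})-p_W(W_{ij})\big|.
\end{equation*}
Squaring and averaging yields two error sums that I split via the triangle inequality into (i) the kernel estimation error at the shifted points $\wtilde{Y}_{ij}$, and (ii) a bias-type term coming from evaluating $p_W$ (resp.\ $p_W'$) at $\wtilde{Y}_{ij}$ rather than $W_{ij}$. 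The first piece is bounded uniformly by $\normmax{\hat{p}_Y(\wtilde{\bY})-p_W(\wtilde{\bY})}$ and $\normmax{\hat{p}_Y'(\wtilde{\bY})-p_W'(\wtilde{\bY})}$ respectively, which are exactly the quantities appearing in $\resT_{n,1}^2$ and $\resT_{n,2}^2$.

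For the bias term I use $\wtilde{Y}_{ij}-W_{ij}=X_{ij}-\wbar{X}-\wbar{W}$ together with the global Lipschitz bounds $\|p_W'\|_\infty,\|p_W''\|_\infty\le M_2$ from Assumption~{\sf A2}, which give $|p_W(\wtilde{Y}_{ij})-p_W(W_{ij})|\le M_2|X_{ij}-\wbar{X}-\wbar{W}|$ and analogously for $p_W'$. Averaging the squares and using $\wbar{X}^2\le \tfrac{1}{mn}\|X\|_F^2$ together with $(u-v-w)^2\le 3u^2+3v^2+3w^2$ produces
\begin{equation*}
\tfrac{1}{mn}\sum_{i,j}(X_{ij}-\wbar{X}-\wbar{W})^2\le C\Big(\tfrac{1}{mn}\|X\|_F^2+\wbar{W}^2\Big),
\end{equation*}
which is precisely the remaining part of $\resT_{n,1}^2$ and $\resT_{n,2}^2$. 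Combining the two pieces gives $\tfrac{1}{mn}\sum(a_{ij}-b_{ij})^2\le C(\eps^{-2}\resT_{n,2}^2+\eps^{-4}\resT_{n,1}^2)$, and substituting into the Cauchy--Schwartz bound yields the claim.

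The routine but slightly delicate obstacle is bookkeeping the constants and making sure both sums (over $p_W$ and over $p_W'$) reduce to the same mean-square deviation $\tfrac{1}{mn}\|X\|_F^2+\wbar{W}^2$ (so that both $\resT_{n,1}$ and $\resT_{n,2}$ genuinely absorb all the non-kernel parts of the error). Everything else is an application of triangle inequality, Cauchy--Schwartz, and the uniform bound $\|p_W'\|_\infty\le M_2$; the only structural input is Lemma~\ref{lemma:elementary-inequality}, and no randomness beyond what is already summarized in the $\resT_{n,\cdot}$ quantities is needed.
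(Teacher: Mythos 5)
Your proposal is correct and follows essentially the same route as the paper: both reduce the problem to bounding $G_1=\bigl(\tfrac{1}{mn}\sum_{i,j}(a_{ij}-b_{ij})^2\bigr)^{1/2}$ via the pseudo-Lipschitz bound of Lemma~\ref{lemma:elementary-inequality} with $\|p_W'\|_\infty\le M_2$, then split each entrywise error into the kernel sup-norm term at $\wtilde{Y}_{ij}$ plus a Lipschitz bias in $|\wtilde{Y}_{ij}-W_{ij}|$, absorbed by $\tfrac{1}{mn}\|\bX\|_F^2+\wbar{W}^2$. The only (immaterial) difference is how the factor $\wtilde{\Info}_{W,\eps}^{1/2}$ is extracted—you use the factorization $a^2-b^2=(a-b)(a+b)$ with Cauchy--Schwartz and Minkowski, while the paper uses the pointwise inequality $|s^2-t^2|\le (s-t)^2+2|s-t|(|s|\wedge|t|)$ followed by Cauchy--Schwartz on the cross term.
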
 \noindent\noindent
Motivated by Lemma~\ref{lemma:upper-bound-error-one}, we study high probability 
upper bounds onto $\wtilde{\Info}_{W,\eps}$, $T_{n, 1}$ and $T_{n, 2}$.
We start by giving high probability bound on $\wtilde{\Info}_{W,\eps}$. Indeed, 
by triangle inequality and Eq~\eqref{eqn:final-error-two-bound-info}, we know with 
probability at least $1-(mn)^{-(1-\kappa)}$,
\begin{equation}
\wtilde{\Info}_{W,\eps} \le C\eps^{-2}(mn)^{-1/2}\log(mn) + \Info_{W, \eps} + \difI.
\end{equation}
Now that by definition $ \Info_{W, \eps}  \le \Info_W$ and thus by triangle inequality 
$\difI \le 2 \Info_W$. This shows that, for some constant $C > 0$, with probability 
at least $1-(mn)^{-(1-\kappa)}$, 
\begin{equation}
\label{eqn:high-prob-bound-f-W}
\wtilde{\Info}_{W,\eps} \le C\eps^{-2}(mn)^{-1/2}\log(mn) + 3 \Info_W.
\end{equation}
Next, we give high probability upper bounds on $\resT_{n, 1}$ and $\resT_{n, 2}$. To do so, we
note the lemma below. The proof is given in appendix Section~\ref{sec:lemma-W-bar-bound}. 
\begin{lemma} 
\label{lemma:W-bar-bound}
The following inequality holds for all $t > 0$, 
\begin{equation*}
\P \left(|\wbar{W}| \geq t (mn)^{-1/2}\right)\leq t^{-2}M_1.
\end{equation*}
\end{lemma}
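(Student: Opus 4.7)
The plan is to prove Lemma~\ref{lemma:W-bar-bound} by a direct application of Chebyshev's inequality to the sample mean $\wbar{W} = (mn)^{-1}\sum_{i,j} W_{ij}$. The key ingredients are already in hand: the entries $W_{ij}$ are i.i.d.\ with density $p_W$, Assumption~{\sf A1} supplies the second-moment bound $\E[W_{ij}^2] \le M_1$, and the noise $\bW$ in the model $\bY = \bX + \bW$ satisfies (without loss of generality) $\E[W_{ij}] = 0$, since any nonzero mean of the noise could be absorbed into the signal $\bX$.

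Given this setup, the steps are routine. First I would compute $\E[\wbar{W}^2]$; by independence and mean zero, this equals $\Var(W_{11})/(mn) \le M_1/(mn)$. Then Chebyshev's inequality yields $\P(|\wbar{W}| \ge s) \le M_1/(mn \cdot s^2)$ for any $s>0$. Substituting $s = t(mn)^{-1/2}$ collapses the $mn$ factors and gives precisely $\P(|\wbar{W}| \ge t(mn)^{-1/2}) \le M_1 t^{-2}$, which is the claim.

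There is no substantive obstacle here: the entire argument is a one-line application of Chebyshev. The lemma's role in the paper is that of a convenient black-box input to the control of the residual quantities $\resT_{n,1}, \resT_{n,2}$ appearing in the upper-bound analysis, so it only needs to be stated in the simple form above rather than sharpened to a Gaussian-tail estimate.
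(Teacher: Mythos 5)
Your proof is correct and matches the paper's argument: the paper likewise bounds $\E[\wbar{W}^2]\le M_1/(mn)$ using independence, mean zero, and Assumption {\sf A1}, and then applies Markov's inequality to $\wbar{W}^2$, which is exactly your Chebyshev step. The only cosmetic difference is your aside that zero mean is ``without loss of generality''; the paper simply takes the entries to be mean zero.
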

Now, according to Lemma~\ref{lemma:W-bar-bound}, $|\wbar{W}| \le M_1^{1/2} (mn)^{-\kappa/2}$ with 
probability at least $1-(mn)^{-(1-\kappa)}$. Since by assumption $\bX \in \cF_{m,n}(r, M, \eta)$, we have 
\begin{equation}
\label{eqn:bound-X-Fnorm}
\norm{\bX}_F \leq \rank^{1/2}(\bX) \norm{\bX}_{\op} \leq r^{1/2} M (mn)^{1/4}.
\end{equation}
Lemma~\ref{lemma:high-prob-entrywise-bound-info} now implies that with probability at least $1-(mn)^{-(1-\kappa)}$, 
\begin{equation}
\label{eqn:high-prob-bound-on-T-n}
\resT_{n, 1} \le C(\res_{n, 1} + (mn)^{-\kappa/2})~~
	\text{and}~~\resT_{n, 2} \le C(\res_{n, 2} +  (mn)^{-\kappa/2}).
\end{equation}
Plugging Eq~\eqref{eqn:high-prob-bound-f-W} and Eq~\eqref{eqn:high-prob-bound-on-T-n} into 
Eq~\eqref{eqn:upper-bound-error-one-first-step}, we get with probability at least $1- (mn)^{-(1-\kappa)}$
\begin{equation}
\label{eqn:final-error-one-bound-info}
\errortwo \le C \left(\res_n(\eps) + \eps^{-2}(mn)^{-\kappa/2}\right)
	\left(\res_n(\eps) + \eps^{-2}(mn)^{-\kappa/2} + \eps^{-1}(mn)^{-1/4}\log(mn)^{1/2} 
		+  \Info_W^{1/2}\right).
\end{equation}
Now for some constant $C, c > 0$, we know when $\eps \le c$, there exists 
$n_0 = n_0(\eps)$ such that for $n \ge n_0$, 
\begin{equation*}
\res_n(\eps) + \eps^{-2}(mn)^{-\kappa/2} + \eps^{-1}(mn)^{-1/4}\log(mn)^{1/2} + \Info_W^{1/2} \le C,
\end{equation*}
and hence by Eq~\eqref{eqn:final-error-one-bound-info}, 
\begin{equation}
\errortwo \le C \left(\res_n(\eps) + \eps^{-2}(mn)^{-\kappa/2}\right)
\end{equation}
The desired result in Eq~\eqref{eqn:upper-bound-Info-W-high-prob} now 
follows by plugging Eq~\eqref{eqn:final-error-one-bound-info} and 
Eq~\eqref{eqn:final-error-two-bound-info} into Eq~\eqref{eqn:info-triangle-inequality}.
%

\subsubsection{Proof of Lemma~\ref{lemma:high-prob-entrywise-bound} and  
Lemma~\ref{lemma:high-prob-entrywise-bound-info}}
\label{sec:proof-lemma-high-prob-entrywise-bound}
We only prove the high probability result for Eq~\eqref{eqn:high-prob-entrywise-bound-p} 
and Eq~\eqref{eqn:high-prob-entrywise-bound-p-info} since we can show the similar results 
for Eq~\eqref{eqn:high-prob-entrywise-bound-p-prime} and Eq
\eqref{eqn:high-prob-entrywise-bound-p-prime-info} in a similar way. 
For any matrix $\bA \in \R^{m \times n}$, denote the following auxiliary function 
\begin{equation}
\hat{p}_W(x; \bA) = \frac{1}{mn h_n} \sum_{i \in [m], j\in [n]} 
	K\left(\frac{W_{i, j} + A_{i, j} - \wbar{A} - x}{h_n}\right)
~~\text{and}~~\hat{q}_W(x; \bA) = \E \hat{p}_W(x; \bA),
\end{equation}
where the expectation in definition of $\hat{q}_W$ is taken with respect to the 
random matrix $\bW$. There is a connection between $\hat{p}_W$ 
and $\hat{p}_Y$: we have $\hat{p}_Y(x) = \hat{p}_W(x+\wbar{W}; \bX)$ for $x\in \R$. 
Now, using those notation, our target is to show that, for some constant $C$, with probability 
at least $1-(mn)^{-(1-\kappa)}$,
\begin{equation}
\label{eqn:proof-target-high-prob-bound}
\normmax{\hat{p}_W(\bY + \wbar{W}\one_m \one_n{^\sT}; \bX) - p_W(\bY)} \le 
	C \left(\res_{n, 1} +  \big|\wbar{W}\big|\right),
\end{equation}
and with probability at least $1-(mn)^{-(1-\kappa)}$,
\begin{equation}
\label{eqn:proof-target-high-prob-bound-Info}
\normmax{\hat{p}_W(\wtilde{\bY} + \wbar{W}\one_m \one_n{^\sT}; \bX) - p_W(\wtilde{\bY})} \le 
	C \left(\res_{n, 1} +  \big|\wbar{W}\big|\right).
\end{equation}
The proof of Eq~\eqref{eqn:proof-target-high-prob-bound} and Eq~\eqref{eqn:proof-target-high-prob-bound-Info} 
is based on standard arguments in empirical process theory. It is convenient to list our proof strategies into the 
following three steps.  
\begin{enumerate}
\item In the first step, we show that, with high probability the magnitude of $\wbar{W}$ is `small' and 
that of $\bY$ is not `too large'. 
More precisely, for $\eta_n = M_1^{1/2}(mn)^{-\kappa/2}$ and 
$T_n = 2M_1^{1/2}(mn)$, with probability at least $1-(mn)^{-(1-\kappa)}$, 
\begin{equation}
\label{eqn:high-prob-bound-argument}
|\wbar{W}| \le \eta_n,~\normmax{\bY} \le T_n~\text{and}~\normbig{\wtilde{\bY}}_{\rm max} \le T_n.
\end{equation}
Note that $T_n \ge \eta_n$ from our definition.

\item In the second step, we show that, with high probability $\hat{p}_W(x; \bX)$ 
is `close' to $\hat{q}_W(x; \bX)$ on the interval $x \in  [-2T_n, 2T_n]$. More precisely, 
we show there exists a numerical constant $c > 0$, such that if we denote
$Q_n = 4(mn)^{1/2}h_n^{-3/2}T_n$, then for any $t > 0$, 
\begin{equation}
\label{eqn:high-prob-bound-error-variance}
\sup_{x \in [-2T_n, 2T_n]} \left|\hat{p}_W(x; \bX) - \hat{q}_W(x; \bX)\right| \le 3(mn h_n)^{-1/2}t
\end{equation}
holds with probability at least $1- MQ_nt^{-1} \exp(-ct^2/(M(t+M_2))$.
\item In the last step, we prove a uniform upper bound on the difference between 
$\hat{q}_W(x + \wbar{W}; \bX)$ to $p_W(x)$. More precisely, we show that, with 
probability one, 
\begin{equation}
\label{eqn:high-prob-bound-error-bias}
\sup_{x \in \R} \left|\hat{q}_W(x; \bX) - p_W(x - \wbar{W})\right| \le 
	M_2 \left(M h_n^2 + \frac{1}{mn} \norm{\bX}_F^2 + |\wbar{W}|\right).
\end{equation}
\end{enumerate}
Now, we prove the desired target in Eq~\eqref{eqn:proof-target-high-prob-bound} using 
Eq~\eqref{eqn:high-prob-bound-argument}, Eq~\eqref{eqn:high-prob-bound-error-variance} 
and Eq~\eqref{eqn:high-prob-bound-error-bias}.
Indeed, for some numerical constant $C$ that is sufficiently large if we plug 
$t = C (M \vee M_2) \log(mn)/3$ into Eq~\eqref{eqn:high-prob-bound-error-variance}, 
we get with probability at least $1-(mn)^{-(1-\kappa)}$,
\begin{equation}
\label{eqn:high-prob-bound-error-variance-special-t}
\sup_{x \in [-2T_n, 2T_n]} \left|\hat{p}_W(x; \bX) - \hat{q}_W(x; \bX)\right| \le C (M \vee M_2)
	(mn h_n)^{-1/2}\log (mn).
\end{equation}
Thus, by triangle inequality, we know that, with probability at least $1-(mn)^{-(1-\kappa)}$, 
\begin{align}
&\normmax{\hat{p}_W(\bY + \wbar{W}\one_m \one_n{^\sT}; \bX) - p_W(\bY)} \nonumber \\
	& \le C (M \vee M_2)(mn h_n)^{-1/2}\log (mn) + M_2 \left(Mh_n^2 + \frac{1}{mn} \norm{\bX}_F^2
		+ \big|\wbar{W}\big|\right).
\end{align}
Eq~\eqref{eqn:proof-target-high-prob-bound} now follows by Eq \eqref{eqn:bound-X-Fnorm}. 
Note that, Eq~\eqref{eqn:proof-target-high-prob-bound-Info} 
can be proven in a similar way. 

\paragraph{Proof of Eq~\eqref{eqn:high-prob-bound-argument}}
The following lemma gives high probability upper bound on $\normmax{W}$. It
is proven in appendix Section~\ref{sec:lemma-W-bar-bound}.
\begin{lemma} 
\label{lemma:W-max-bound}
The following inequality holds for all $t > 0$, 
\begin{equation*}
\prob \left(\normmax{\bW} \ge tM_1^{1/2} (mn)^{1/2} \right) \le t^{-2}.
\end{equation*}
\end{lemma}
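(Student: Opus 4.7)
The plan is straightforward: combine a union bound over the $mn$ entries of $\bW$ with Markov's inequality applied to $W_{ij}^2$, using only the second-moment hypothesis {\sf A1}.

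First, I would invoke Assumption~{\sf A1}, which states $\E[W_{ij}^2] = \int |w|^2 p_W(w)\, \de w \le M_1$ for each $(i,j)$. By Markov's inequality applied to the nonnegative random variable $W_{ij}^2$, for any $s > 0$,
\begin{equation*}
\prob\bigl(|W_{ij}| \ge s\bigr) = \prob\bigl(W_{ij}^2 \ge s^2\bigr) \le \frac{\E[W_{ij}^2]}{s^2} \le \frac{M_1}{s^2}.
\end{equation*}

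Next, I would apply a union bound over all $mn$ entries of $\bW$: since $\{\normmax{\bW} \ge s\} = \bigcup_{i \le m, j \le n} \{|W_{ij}| \ge s\}$, we get
\begin{equation*}
\prob\bigl(\normmax{\bW} \ge s\bigr) \le \sum_{i \le m, j \le n} \prob\bigl(|W_{ij}| \ge s\bigr) \le \frac{mn\, M_1}{s^2}.
\end{equation*}

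Finally, I would substitute $s = tM_1^{1/2}(mn)^{1/2}$, which yields
\begin{equation*}
\prob\bigl(\normmax{\bW} \ge t M_1^{1/2}(mn)^{1/2}\bigr) \le \frac{mn\, M_1}{t^2 M_1 \, mn} = \frac{1}{t^2},
\end{equation*}
as claimed. There is no serious obstacle here—the proof is purely a two-line argument (Markov plus union bound), and the only content of the lemma is that the $(mn)^{1/2}$ scaling is the correct one given a union bound over $mn$ identically distributed entries with finite second moment.
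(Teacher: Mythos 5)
Your proof is correct and matches the paper's argument exactly: the paper likewise combines a union bound over the $mn$ entries with the second-moment Markov bound from Assumption {\sf A1}, then rescales $t$ to obtain the stated form. Nothing to add.
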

\noindent\noindent
Now, according to triangle inequality, we have, 
\begin{equation*}
\normmax{\bY} \le \normmax{\bW} + \normmax{\bX} \le \normmax{\bW}+ \norm{\bX}_F,
\end{equation*} 
and
\begin{equation*}
\normbig{\wtilde{\bY}}_{\rm max} \le \normbig{\wtilde{\bW}}_{\rm max} + \normbig{\wtilde{\bX}}_{\rm max}
	\le \normmax{\bW} + \left|\wbar{W}\right| +  \norm{\bX}_F .
\end{equation*}
where in the last step we use $\normbig{\wtilde{\bX}}_{\rm max} \le \normbig{\wtilde{\bX}}_F \le \norm{\bX}_F$. 
Now, Eq~\eqref{eqn:high-prob-bound-argument} follows from the high probability bound 
in Lemma~\ref{lemma:W-bar-bound}, Lemma~\ref{lemma:W-max-bound} and Eq \eqref{eqn:bound-X-Fnorm}.

\paragraph{Proof of Eq~\eqref{eqn:high-prob-bound-error-variance}}
The proof is based on standard uniform convergence arguments. We use 
$I_n$ to denote the interval $I_n = [-2T_n, 2T_n]$. Our proof proceeds in three steps. 
\begin{enumerate}
\item  First, there exists some numerical constant $c > 0$, such that for any fix $x \in \R$ 
	and $t > 0$
	\begin{equation}
	\label{eqn:pointwise-high-prob-bound-error-variance}
		\prob \left(\left|\hat{p}_W(x; \bX) - \hat{q}_W(x; \bX)\right| \ge (mn h_n)^{-1/2}t\right) \le 
			2\exp\left(-\frac{ct^2 }{M(t+M_2)}\right).
	\end{equation}
\item Next, we show both $\hat{p}_W(\,\cdot\,; \bX)$ and 
	$\hat{q}_W(\,\cdot\,; \bX)$ are Lipschitz functions with Lipschitz constant $L_n = Mh_n^{-2}$. 
\item Lastly, we use the covering type argument to show the uniform convergence 
	result in Eq~\eqref{eqn:high-prob-bound-error-variance}.
\end{enumerate}
The proof of our first step follows by a more general result (which we state as Lemma 
\ref{lemma:kernel-pointwise-high-prob}), whose proof is deferred into 
Section \ref{sec:proof-of-prop-kernel-ptwise}.
\begin{lemma}
\label{lemma:kernel-pointwise-high-prob} 
Let $\{X_i\}_{i=1}^n$ be independent continuous random variables with densities 
$\{p_{X_i}\}_{i=1}^n$. Let $K(\,\cdot\,)$ be square integrable on $\R$. 
Denote $\sigma^2$, $p_\infty$, $M_\infty$ to be the following quantities: 
\begin{equation*}
p_\infty = \max_{i \in [n]} \norm{p_{X_i}(\cdot)}_\infty, ~~M_\infty = %
\norm{K(\cdot)}_\infty ~~\text{and}~~\sigma^2 = \int_\R K^2(z) \rmd z,
\end{equation*}
where $p_{X_i}$ is the density function of $X_i$. For some $h > 0$, 
consider the following function 
\begin{equation*}
Z_n (x) = \frac{1}{nh} \sum_{i=1}^n K_{h, X_i}(x)~~ \text{where}~~K_{h,
X_i}(x) \defeq K\left(\frac{x-X_i}{h}\right).
\end{equation*}
Assume $nh \ge 1$. Then, for some numerical constant $c > 0$, we have
for all $x\in \R$ and $t > 0$, 
\begin{equation}  
\label{eqn:kernel-pointwise-high-prob}
\prob \left( \left|Z_n (x) - \E Z_n(x)\right| \geq (nh)^{-1/2} t \right)
	\leq	2\exp {\left( -\frac{c t^2}{\sigma^2 p_\infty + M_\infty t} \right)}.
\end{equation}
\end{lemma}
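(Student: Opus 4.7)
The plan is to apply Bernstein's inequality to the centered sum $Z_n(x) - \E Z_n(x) = (nh)^{-1}\sum_{i=1}^n (Y_i - \E Y_i)$, where I set $Y_i \defeq K((x-X_i)/h)$. The two ingredients I need are a uniform bound on $|Y_i|$ and a variance bound on each $Y_i$. The first is immediate from the definition: $|Y_i| \le \norm{K}_\infty = M_\infty$ almost surely, so $|Y_i - \E Y_i| \le 2 M_\infty$. For the variance, I will use the change of variables $z = (x-u)/h$ to compute
\begin{equation*}
\E[Y_i^2] = \int_\R K^2\!\left(\tfrac{x-u}{h}\right) p_{X_i}(u)\, du
= h \int_\R K^2(z)\, p_{X_i}(x-hz)\, dz
\le h\,\sigma^2 p_\infty,
\end{equation*}
so $\Var(Y_i) \le h\sigma^2 p_\infty$ and hence $\sum_{i=1}^n \Var(Y_i) \le nh\sigma^2 p_\infty$.

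With those two estimates, Bernstein's inequality applied to $S_n = \sum_{i=1}^n(Y_i - \E Y_i)$ gives
\begin{equation*}
\prob(|S_n| \ge u) \le 2\exp\!\left(-\frac{u^2/2}{nh\sigma^2 p_\infty + (2M_\infty/3)\,u}\right)
\end{equation*}
for every $u > 0$. Setting $u = (nh)^{1/2} t$ so that $(nh)^{-1} u = (nh)^{-1/2} t$, the numerator becomes $nh t^2/2$, and the denominator is $nh\sigma^2 p_\infty + (2M_\infty/3)(nh)^{1/2} t$. Using the hypothesis $nh \ge 1$, which gives $(nh)^{1/2} \le nh$, I can pull out a factor of $nh$ from the denominator and cancel it with the numerator to land on
\begin{equation*}
\prob\!\left(|Z_n(x) - \E Z_n(x)| \ge (nh)^{-1/2} t\right)
\le 2\exp\!\left(-\frac{t^2/2}{\sigma^2 p_\infty + (2M_\infty/3)\, t}\right),
\end{equation*}
which is exactly Eq.~\eqref{eqn:kernel-pointwise-high-prob} after absorbing the constants $1/2$ and $2/3$ into the numerical constant $c$.

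There is no real obstacle here; the argument is a textbook Bernstein bound for bounded independent summands, with the only slightly non-routine step being the change-of-variables bound $\E Y_i^2 \le h\sigma^2 p_\infty$ that makes both the variance and the Bernstein tail scale with the correct power of $h$. I would state this as a self-contained application of the standard Bernstein inequality (say in the form given in Boucheron--Lugosi--Massart), rather than reproving it, and keep the proof to roughly half a page.
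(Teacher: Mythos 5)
Your proof is correct and follows exactly the same route as the paper: a direct application of Bernstein's inequality to the centered kernel values, with the two ingredients $|K_{h,X_i}(x)|\le M_\infty$ and the change-of-variables bound $\E K_{h,X_i}^2(x)\le h\sigma^2 p_\infty$, and finally the hypothesis $nh\ge 1$ used to absorb the $(nh)^{1/2}$ factor in the Bernstein denominator into $nh$. The algebra in your last step is sound: enlarging the denominator of the exponent's ratio only weakens the tail bound, which is the right direction.
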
\noindent\noindent
To be precise, Eq~\eqref{eqn:pointwise-high-prob-bound-error-variance} follows 
by plugging $p_{\infty} = M_2$, $M_{\infty} = \sigma^2 = M$ into Eq 
\eqref{eqn:kernel-pointwise-high-prob}. 

Next, we show that both $\hat{p}_W(\,\cdot\,; \bX)$ and $\hat{q}_W(\,\cdot\,;\bX)$ are $L_n$
Lipschitz. Indeed, we first compute the derivative of $\hat{p}_W(\,\cdot\,; \bX)$, 
\begin{equation}
\frac{\rmd}{\rmd x} \hat{p}_W(x; \bX) = \frac{1}{mn h_n^2} \sum_{i \in [m], j \in [n]}
	K^\prime \left(\frac{W_{i, j} + \wtilde{X}_{i, j} - x}{h_n}\right),
\end{equation}
which we can see from triangle inequality that, 
\begin{equation}
\norm{\frac{\rmd}{\rmd x} \hat{p}_W(\,\cdot\,; \bX)}_\infty \le h_n^{-2} \norm{K^\prime}_\infty 
	\le h_n^{-2} M, 
\end{equation}
where the last inequality follows by our assumption on the kernel $K$. This shows 
that the function $\hat{p}_W(\,\cdot\,; \bX)$ is $L_n$ Lipschitz. The function 
$\hat{q}_W(\,\cdot\,; \bX) = \E \hat{p}_W(\,\cdot\,; \bX)$ is thus also $L_n$ Lipschitz. 

\newcommand{\cover}{\mathcal{D}}
Lastly, we show the desired result in Eq~\eqref{eqn:high-prob-bound-error-variance} 
by covering argument. Fix $t > 0$ and let $\Delta(n, t) \defeq 
L_n^{-1} (mn h_n)^{-1/2}t$. Denote $\cover(n, t)$ to be the minimal $\Delta(n, t)$ cover 
of the interval $I_n = [-2T_n, 2T_n]$. It is clear that, the 
cardinality of $\cover(n, t)$ satisfies, 
\begin{equation}
\left|\cover(n, t)\right| \le \Delta(n, t)^{-1}|I_n| = 4 \Delta(n, t)^{-1}T_n = t^{-1}MQ_n.
\end{equation} 
Now, note that, the high probability bound in Eq~\eqref{eqn:pointwise-high-prob-bound-error-variance}
holds for any fix $x \in \cover(n, t)$. Thus, if we denote the event $\event(n, t)$ to be, 
\begin{equation*}
\event(n, t) = \left\{\max_{x \in \cover(n, t)}\left|\hat{p}_W(x; \bX) - \hat{q}_W(x; \bX)\right| 
	\le (mn h_n)^{-1/2}t\right\}
\end{equation*}
then by union bound, we know that, 
\begin{align}
\prob \left(\event(n, t)^c \right)
	\le 2\left|\cover(n, t)\right| \exp\left(-\frac{ct^2 }{M(t+M_2)}\right)
	\le 2t^{-1} MQ_n \exp\left(-\frac{ct^2 }{M(t+M_2)}\right).
\end{align}
Now, the desired claim follows if we can show that, the event specified by 
Eq~\eqref{eqn:high-prob-bound-error-variance} always holds on event $\event(n, t)^c$.
This is simple, as for all $x \in I_n$, by definition there exists some $x^\prime 
\in \cover(n, t)$ such that $|x-x^\prime| \le \Delta(n, t)$. Now, the Lipschitz property
of both $\hat{p}_W(\,\cdot\,; \bX)$ and $\hat{q}_W(\,\cdot\,; \bX)$ gives that, 
\begin{equation}
\left|\hat{p}_W(x; \bX) - \hat{q}_W(x; \bX)\right|	\le 
	\left|\hat{p}_W(x^\prime; \bX) - \hat{q}_W(x^\prime; \bX)\right| + L_n\Delta(n, t)
	= 3(mn h_n)^{-1}t.
\end{equation}
As $x \in I_n$ is arbitrary, this gives Eq~\eqref{eqn:high-prob-bound-error-variance}.

\paragraph{Proof of Eq~\eqref{eqn:high-prob-bound-error-bias}}
We start by evaluating the function $\hat{q}_W(x; \bX)$. Indeed, for $x \in \R$, 
\begin{align}
\hat{q}_W(x; \bX) &= \frac{1}{mn h_n} \sum_{i \in [m], j \in [n]} \int_{\R} 
	K\left(\frac{w + \wtilde{X}_{i, j} - x}{h_n}\right) p_W(w) \rmd w \nonumber  \\
	&= \frac{1}{mn} \sum_{i \in [m], j \in [n]} \int_{\R} 
	K(t) p_W(h_n t + x - \wtilde{X}_{i, j}) \rmd t. 
\label{eqn:represent-hat-q-W}
\end{align}
Since $K(\,\cdot\,)$ is a first order kernel, and $\sum_{i \in [m], j\in [n]} \wtilde{X}_{i, j} = 0$, we have the identity, 
\begin{equation}
p_W(x) = \frac{1}{mn} \sum_{i \in [m], j\in [n]}
	\int_{\R} K(t) \left(p_W(x) - p_W^\prime(x) (h_n t + \wtilde{X}_{i, j})\right)\rmd t.
\label{eqn:represent-p-W}
\end{equation}
Now, take difference between Eq~\eqref{eqn:represent-hat-q-W} and 
Eq~\eqref{eqn:represent-p-W}. Jensen's inequality implies
\begin{equation}
\left|\hat{q}_W(x; \bX) - p_W(x)\right| \le \frac{1}{mn} \sum_{i \in [m], j\in [n]}
	\int_{\R} K(t) \left|p_W(h_nt + x - \wtilde{X}_{i, j}) - p_W(x) - p_W^\prime(x) (h_n t + \wtilde{X}_{i, j})\right| \rmd t.
\label{eqn:first-upper-bound-hat-q-p-diff}
\end{equation}
Now, by assumption~{\sf A2}, we know that, $\left|p_W^{\prime \prime}(x)\right| \le M_2$
for all $x \in \R$. Thus, intermediate value theorem gives that, for all $t, x \in \R$ and 
all $i\in [m], j \in [n]$,
\begin{equation} 
\left\vert p_W(h_n t + x - \wtilde{X}_{i, j}) - p_W(x)- p_W^\prime(x) (h_n t+\wtilde{X}_{i, j})  \right\vert
\leq  \half M_2 (h_n t+\wtilde{X}_{i, j})^2.
\label{eqn:taylor-pW-second-order}
\end{equation}
Plugging the estimate in Eq~\eqref{eqn:taylor-pW-second-order} into 
Eq~\eqref{eqn:first-upper-bound-hat-q-p-diff}, we get for $x \in \R$, 
\begin{align}
\left|\hat{q}_W(x; \bX) - p_W(x)\right| &\le \frac{M_2}{2mn} \sum_{i \in [m], j\in [n]}
	\int_{\R} K(t)(h_n t+\wtilde{X}_{i, j})^2 \rmd t\nonumber \\
	&= \half M_2 h_n^2 \int_{\R} K(t)t^2 \rmd t + \frac{M_2}{2mn} \sum_{i \in [m], j \in [n]}\wtilde{X}_{i, j}^2. 
\label{eqn:second-upper-bound-hat-q-p-diff}
\end{align}
Now, we note that, by definition of $\wtilde{\bX}$, we know that, 
\begin{equation}
\label{eqn:frob-center-bound}
 \sum_{i \in [m], j \in [n]}\wtilde{X}_{i, j}^2 = \norm{\wtilde{\bX}}_F^2 
 	\le \norm{\bX}_F^2
\end{equation}
By our assumption on the kernel $K(\cdot)$, we see that, $\int K(t) t^2 \rmd t \le M$ for some 
$M > 0$. Thus, Eq~\eqref{eqn:second-upper-bound-hat-q-p-diff} and Eq
\eqref{eqn:frob-center-bound} together give us, 
\begin{equation}
\sup_{x \in \R} \left|\hat{q}_W(x; \bX) - p_W(x)\right| \le \half M_2 \left(M h_n^2 + \frac{1}{mn} \norm{\bX}_F^2\right).
\end{equation}
Note that $p_W$ is $M_2$ Lipschitz as $\norm{p_W^\prime}_\infty \le M_2$ by Assumption~{\sf A2}. 
Thus, by triangle inequality,  
\begin{align}
\sup_{x \in \R} \left|\hat{q}_W(x; \bX) - p_W(x -\wbar{W})\right| &\le 
	\sup_{x \in \R} \left|\hat{q}_W(x; \bX) - p_W(x)\right| 
	+ \sup_{x \in \R} \left|p_W(x) - p_W(x -\wbar{W})\right| \nonumber \\
	&\le	M_2 \left(M h_n^2 + \frac{1}{mn} \norm{\bX}_F^2 + |\wbar{W}|\right).
\label{eqn:ultimate-bias-estimate}
\end{align}
Now, the desired claim in Eq~\eqref{eqn:high-prob-bound-error-bias} follows.


\newcommand{\gauss}{{\normal}}
\newcommand{\Vol}{{\rm Vol}}
\newcommand{\ve}{\varepsilon}
\newcommand{\edge}{\rm{edge}}
\newcommand{\cc}{\rm{cc}}
\newcommand{\nv}{\rm{nv}}
\newcommand{\ball}{\mathbb{B}}
\newcommand{\bI}{\bold{I}}
\newcommand{\bl}{\bold{l}}
\newcommand{\bi}{\bold{i}}
\newcommand{\bj}{\bold{j}}
\newcommand{\type}{\mathcal{T}}

\section{Proof of Theorem~\ref{theorem:general-deform}}
\label{app:RMT}
\subsection{Proof Outline}
Throughout the proof, we denote 
\begin{equation}
\bZ_n = (mn)^{-1/4}\bW_n
\end{equation} 
to be the normalized noise matrix.

First, under the setting where $\bZ_n$ is a Gaussian random matrix, we observe 
that the results in~\cite{BGN12} give the desired 
Eq~\eqref{eqn:singular-value-convergence},
Eq~\eqref{eqn:singular-vector-sign-issue},
Eq~\eqref{eqn:singular-vector-convergence-A}
and Eq~\eqref{eqn:singular-vector-convergence-B}. 
To be clear, an easy application of~\cite[Theorem 2.9, 2.10]{BGN12} gives 
Eq~\eqref{eqn:singular-value-convergence},
Eq~\eqref{eqn:singular-vector-convergence-A},
and Eq~\eqref{eqn:singular-vector-convergence-B}
when $\bZ_n$ is a Gaussian random matrix. 
Moreover, let $D: (1,\infty) \to \R_+$ be
\begin{equation}
D(\sigma) = \frac{G^{(1)}(\sigma) \vee G^{(2)}(\sigma)}{G^{(1)}(\sigma) \wedge G^{(2)}(\sigma)}
\end{equation}
Then when $\bZ_n$ is a Gaussian random matrix, Eq~(18) in the mid of the 
proof of~\cite[Theorem 2.10]{BGN12} (see~\cite[Section 5]{BGN12}) 
implies that for $i \in [k]$,
\begin{equation}
 \langle \hbu_i, \bu_i \rangle - D(\sigma_i) \langle \hbv_i, \bv_i \rangle \asto 0,
\end{equation}
which together with Eq~\eqref{eqn:singular-vector-convergence-A},
and Eq~\eqref{eqn:singular-vector-convergence-B}, proves the desired 
Eq~\eqref{eqn:singular-vector-sign-issue}.

Our proof follows closely to the proof of~\cite[Theorem 2.9, 2.10, Eq~(18)]{BGN12}. 
Denote $F_{\gamma}$ to be the normalized Marchenko-Pastur distribution
such that, for any subset $A \subseteq \R$, 
\begin{equation}
F_{\gamma}(A) = \begin{cases}
	\left(1 - \gamma^{-1}\right)\indic{0 \in A} + \nu(A)~~&\gamma \ge 1 \\
	\nu(A)~~&\gamma \le 1, 
\end{cases}
\end{equation}
where 
\begin{equation}
\rmd \nu(x) = \frac{1}{2\pi \gamma^{1/2} x} \sqrt{(\lambda_+ - x)(x- \lambda_{-})}
	\indic{x \in [\lambda_-, \lambda_+]}
\end{equation}
for $\lambda_\pm = (\gamma^{1/4} \pm \gamma^{-1/4})^2$. 
Denote $m(z), \tilde{m}(z): \C^+ \cup(-\infty, \lambda_-)  \cup (\lambda_+, \infty) 
\to \C \backslash \C_-$ to be the Stieltjes transform of the normalized Marchenko-Pastur 
distribution, i.e, 
\begin{equation}
m(z) = \int \frac{1}{\lambda - z} \rmd F_{\gamma}(\lambda)
~~\text{and}~~
\tilde{m}(z) = \int \frac{1}{\lambda - z} \rmd F_{\gamma^{-1}}(\lambda).
\end{equation}
Now, denote the open set $\C_{\gamma}$ to be: 
\begin{equation}
\C_{\gamma} = \{z \in \C: |z| > \lambda_+ = \gamma^{1/4} + \gamma^{-1/4}\}. 
\end{equation}
A careful checking of the proof of~\cite[Theorem 2.9, 2.10, Eq~(18)]{BGN12} 
shows that  it suffices to prove for any fixed sequences of unit vectors $\{\bu_n\}_{n\in \N}$, 
$\{\bu_n^\prime\}_{n\in \N}$, $\{\bv_n\}_{n\in \N}$ and $\{\bv_n^\prime\}_{n\in \N}$
and compact set $S \subseteq \C_{\gamma}$, the following convergence is uniform 
on the set $S$, 
\begin{enumerate}[(i)]
\item~\label{item:point-one} 
	$\langle \bu_n, (z^2 \id_m - \bZ_n \bZ_n^{\sT})^{-1} \bu_n^\prime \rangle
		+ m(z^2)\langle \bu_n, \bu_n^\prime\rangle \asto 0$.
\item~\label{item:point-two}
	$\langle \bv_n, (z^2 \id_n - \bZ_n^{\sT} \bZ_n)^{-1} \bv_n^\prime \rangle
		+ \tilde{m}(z^2) \langle \bv_n, \bv_n^\prime\rangle \asto 0$.
\item~\label{item:point-three}
 	$\langle \bu_n, (z^2 \id_m - \bZ_n \bZ_n^{\sT})^{-1} \bZ_n \bv_n \rangle \asto 0$. 
\item~\label{item:point-four}
	$\langle \bu_n, (z^2 \id_m - \bZ_n \bZ_n^{\sT})^{-2} \bu_n^\prime \rangle
		- m^\prime(z^2)\langle \bu_n, \bu_n^\prime\rangle \asto 0$.
\item~\label{item:point-five}
	$\langle \bv_n, (z^2 \id_n - \bZ_n^{\sT} \bZ_n)^{-2} \bZ_n^{\sT} \bZ_n \bv_n^\prime \rangle
		- (\tilde{m}^\prime(z^2)z^2 + \tilde{m}(z^2))\langle \bv_n, \bv_n^\prime\rangle \asto 0$.
\item~\label{item:point-six}
	$\langle \bu_n, (z^2 \id_m - \bZ_n \bZ_n^{\sT})^{-2} \bZ_n \bv_n \rangle \asto 0$.
\end{enumerate}
In fact, the proof of~\cite[Theorem 2.9, 2.10]{BGN12} indicates that the 
above coefficients in those almost sure limit (i.e., $m(z^2)$, $\tilde{m}(z^2)$,
$0$, $m^\prime(z^2)$, $m^\prime(z^2)z^2 + m(z^2)$ and 
$0$  respectively) determine the almost sure limits of 
$\left\{\hat{\sigma_i}\right\}_{i\in [r]}$, 
$\left\{\langle \hat{u}_i, u_j \rangle\right\}_{i, j\in [r]}$ and 
$\left\{\langle \hat{v}_i, v_j \rangle\right\}_{i, j\in [r]}$. Note that, 
the above assertions suggest that these limit are the same as the limits that we 
would have if $\bZ$ is Gaussian random matrix, and so consequently the 
almost sure limits of $\left\{\hat{\sigma_i}\right\}_{i\in [r]}$, 
$\left\{\langle \hat{u}_i, u_j \rangle\right\}_{i, j\in [r]}$ and 
$\left\{\langle \hat{v}_i, v_j \rangle\right\}_{i, j\in [r]}$ will be 
the same as if $\bZ$ is Gaussian random matrix, which is given 
by~\cite[Theorem 2.9, 2.10]{BGN12} and stated here in our desired claim
Eq~\eqref{eqn:singular-value-convergence},
Eq~\eqref{eqn:singular-vector-convergence-A} and 
Eq~\eqref{eqn:singular-vector-convergence-B}.

The rest of the proofs is devoted to prove point~\eqref{item:point-one}
to point~\eqref{item:point-six}. The organization is as follows. First, we prove 
point~\eqref{item:point-one} to point~\eqref{item:point-three}. Next, 
we show that point~\eqref{item:point-four} to point~\eqref{item:point-six} 
follows immediately from point~\eqref{item:point-one} to point~\eqref{item:point-three}
with the so-called~\emph{derivative trick} (see e.g.~\cite{DobribanWa18}). 

\paragraph{Proof of point~\eqref{item:point-one} to point~\eqref{item:point-three}}
In fact, point~\eqref{item:point-one} and point~\eqref{item:point-two} follows easily 
from~\cite[Theorem 2.4, 2.5]{BloemendalKnYaYi14}. Thus, we only need to prove 
point~\eqref{item:point-three}. 
Denote $\tilde{\bZ}_n = (z^2 \id_m - \bZ_n \bZ_n^{\sT})^{-1} \bZ_n$.
Previous results for controlling $\langle \bu_n, \tilde{\bZ}_n \bv_n\rangle$ rely 
heavily on the concentration bounds for random $\bu_n$ and $\bv_n$ (See for instance
\cite[Proposition 8.12]{BGN12} for details). In contrast, here $\bu_n$ and $\bv_n$
are fixed vectors and the only randomness here comes from the matrix 
$\tilde{\bZ}_n$. For this reason, we adopt the moment methods to 
control this quantity. In execution of this moment calculations, we develop
some combinatorial arguments. As the proof is a little bit lengthy, we defer it 
to section~\ref{section:proof-item-point-three}.
\paragraph{Proof of point~\eqref{item:point-four} to point~\eqref{item:point-six}}
Here, we employ the derivative trick~\cite[Lemma 2.14]{BaiSi10}, which is based 
on the Montel and Vitali's convergence theorem from complex analysis 
\cite[Theorem 3-4, Section 7]{Remmert13}. 
The idea is the following. Say we want to show point~\eqref{item:point-four}, 
i.e., $f_n(z)= \langle \bu_n, (z^2 \id_m - \bZ_n \bZ_n^{\sT})^{-2} \bu_n^\prime \rangle
		- m^\prime(z^2)\langle \bu_n, \bu_n^\prime\rangle \asto 0$. Then the 
idea is to construct a function $F_n(z)$ such that it satisfies the following 
properties: (1) the derivative of $F_n$ is precisely $f_n$ and (2) the limit of the 
functions $F_n$ is easy to evaluate. The lemma below shows that we can 
usually legitimately change the order between taking limits and derivatives. 

\begin{lemma}[Lemma 2.14~\cite{BaiSi10}]
\label{lemma:Vitali-lemma}
Let $f_1, f_2, \ldots$ be analytic functions on a domain $D$ in the complex plane 
satisfying $|f_n(z)| \le M$ for some constant $M$ and all $z\in D$. Suppose that 
there is an analytic function $f$ on $D$ such that $f_n(z) \to f(z)$ for all $z\in D$.
Then it holds that $f_n^\prime(z) \to f^\prime(z)$ and the convergence is uniform
for any compact set $S \subseteq D$. 
\end{lemma}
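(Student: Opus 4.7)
The plan is to prove this classical Vitali-type convergence theorem in two stages: first upgrade pointwise convergence to uniform convergence on compact sets, then pass derivatives through the limit via Cauchy's integral formula.

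For the first stage, I would invoke Montel's theorem. The hypothesis $|f_n(z)| \le M$ on all of $D$ says that $\{f_n\}$ is a uniformly bounded (hence locally uniformly bounded) family of holomorphic functions on $D$, which by Montel's theorem is a normal family: every subsequence has a further subsequence converging locally uniformly to some holomorphic function on $D$. Now suppose for contradiction that $f_n \not\to f$ uniformly on some compact $S \subseteq D$. Then there exists $\zeta > 0$, a subsequence $\{n_k\}$, and points $z_k \in S$ with $|f_{n_k}(z_k) - f(z_k)| \ge \zeta$. By normality, extract a further subsequence $\{f_{n_{k_j}}\}$ converging locally uniformly on $D$ to some holomorphic $g$. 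Pointwise, however, $f_{n_{k_j}}(z) \to f(z)$ for every $z \in D$, so $g \equiv f$ on $D$. Uniform convergence on $S$ of $f_{n_{k_j}}$ to $f$ then contradicts $|f_{n_{k_j}}(z_{k_j}) - f(z_{k_j})| \ge \zeta$ (after passing to a convergent subsubsequence of $\{z_{k_j}\}$ in the compact $S$ and using continuity of $f$). Hence $f_n \to f$ uniformly on each compact $S \subseteq D$.

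For the second stage, I would use the standard Weierstrass fact that locally uniform limits of holomorphic functions have derivatives given by the limits of derivatives. Concretely, fix any compact $S \subseteq D$ and choose $r > 0$ so that the $r$-neighborhood $S_r = \{z : \mathrm{dist}(z,S) \le r\}$ is still compactly contained in $D$. For $z \in S$, Cauchy's integral formula gives
\begin{equation*}
f_n^\prime(z) - f^\prime(z) = \frac{1}{2\pi i} \oint_{|w - z| = r} \frac{f_n(w) - f(w)}{(w - z)^2}\, dw,
\end{equation*}
so $|f_n^\prime(z) - f^\prime(z)| \le r^{-1} \sup_{w \in S_r} |f_n(w) - f(w)|$. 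Since $f_n \to f$ uniformly on the compact set $S_r$ by the first stage, the right-hand side tends to $0$ uniformly in $z \in S$, yielding $f_n^\prime \to f^\prime$ uniformly on $S$ and in particular pointwise on $D$.

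The two stages are both routine once the right tools are invoked; the only delicate step is the subsequence-extraction argument in stage one, where one must be careful to use the fact that the pointwise limit $f$ is already identified in order to conclude that every normal-family subsequential limit coincides with $f$. No further ingredients beyond Montel's theorem and Cauchy's integral formula are needed.
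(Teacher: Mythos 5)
Your proof is correct. Note, however, that the paper does not actually prove this lemma: it is quoted verbatim as Lemma 2.14 of \cite{BaiSi10} and used as an imported tool, so there is no in-paper argument to compare yours against. Your two-stage argument is the standard self-contained proof and is sound: Montel's theorem applies because the uniform bound $|f_n|\le M$ on $D$ gives local uniform boundedness, and since the pointwise limit $f$ is assumed to exist on \emph{all} of $D$ (a stronger hypothesis than in the classical Vitali theorem, which only needs convergence on a set with a limit point plus the identity theorem), every subsequential locally uniform limit must equal $f$, so the contradiction argument upgrades pointwise to uniform convergence on compacta with no further input. The second stage is the usual Weierstrass-type step: writing $f_n'(z)-f'(z)$ via the Cauchy integral formula over a circle of radius $r$ chosen so that the closed $r$-fattening of $S$ stays in $D$ gives the bound $r^{-1}\sup_{S_r}|f_n-f|\to 0$, hence uniform convergence of the derivatives on $S$. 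The only cosmetic redundancy is your final extraction of a convergent subsubsequence of the points $z_{k_j}$; uniform convergence on $S$ already contradicts $|f_{n_{k_j}}(z_{k_j})-f(z_{k_j})|\ge\zeta$ directly.
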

Now, we show point~\eqref{item:point-four} first. To do so, consider the function 
\begin{equation}
F_{n, 1}(t) = \langle \bu_n, (t \id_m - \bZ_n \bZ_n^{\sT})^{-1} \bu_n^\prime \rangle
	+ m(t) \langle \bu_n, \bu_n^\prime\rangle.
\end{equation}
Denote the set $\C_{\gamma}^2 = \left\{z^2: z\in \C_{\gamma}\right\}$. Fix 
some open set $S$ such that its closure $\bar{S} \subseteq \C_{\gamma}^2$. 
Now that~\cite[Theorem 5.2]{BaiSi10} shows that 
$\opnorm{\bZ_n} \asto \lambda_+ = \gamma^{1/4} + \gamma^{-1/4}$.
Moreover, since $\bar{S} \subseteq \C_{\gamma}^2$, we have 
$\inf_{z\in \bar{S}} |z| > \lambda_+^2$. Thus, for any $\omega \in \Omega$,
there exists sufficiently large $N (= N(\omega))$ and $M (= M(\omega))$ 
such that the function $F_{n, 1}$ is 
well-defined and analytic in $S$, and moreover, for all $n \ge N$, 
$|F_{n, 1}(z)| \le M$ for all $z \in S$. Now that $F_{n, 1}(t) \to 0$ for $t \in S$
by point~\eqref{item:point-one}. Thus, Lemma~\ref{lemma:Vitali-lemma}
implies that $F_{n, 1}^\prime(t) \to 0$ for $t \in S$, or equivalently, 
\begin{equation}
\langle \bu_n, (t \id_m - \bZ_n \bZ_n^{\sT})^{-2} \bu_n^\prime\rangle
	- m^\prime(t) \langle \bu_n, \bu_n^\prime\rangle \to 0,
\end{equation}
and such convergence is uniform over $z \in S^\prime$ for any compact 
subset $S^\prime \subseteq S$. This gives the desired claim of
point~\eqref{item:point-four}.

Next, we overview the proof for point~\eqref{item:point-five}
and point~\eqref{item:point-six}. Their proof strategies are 
the same as that for proving point~\eqref{item:point-four}. For 
point~\eqref{item:point-five}, we start with the identity 
\begin{equation*}
\langle \bv_n, (z^2 \id_n - \bZ_n^{\sT} \bZ_n)^{-2} \bZ_n^{\sT} \bZ_n \bv_n^\prime  \rangle
= z^2 \langle \bv_n,  (z^2 \id_n - \bZ_n^{\sT} \bZ_n)^{-2}  \bv_n^\prime \rangle
	-  \langle \bv_n, (z^2 \id_n - \bZ_n^{\sT} \bZ_n)^{-1}  \bv_n^\prime \rangle.
\end{equation*}
Since point~\eqref{item:point-two} already shows that 
$\langle \bv_n, (z^2 \id_n - \bZ_n^{\sT} \bZ_n)^{-1} \bv_n^\prime \rangle
+ \tilde{m}(z)\langle \bv_n, \bv_n^\prime\rangle \asto 0$. Thus, it suffices 
to show that, 
\begin{equation}
\label{eqn:intermediate-point-five}
\langle \bv_n, (z^2 \id_n - \bZ_n^{\sT} \bZ_n)^{-2} \bv_n^\prime \rangle 
	- \tilde{m}^\prime(z^2) \langle \bv_n, \bv_n^\prime\rangle \asto 0.
\end{equation}
To show Eq~\eqref{eqn:intermediate-point-five}, one needs to 
consider the function 
\begin{equation}
F_{n, 2}(t) = \langle\bv_n, (t \id_m - \bZ_n^{\sT} \bZ_n)^{-1} \bv_n^\prime\rangle
	+ \tilde{m}(t) \langle \bv_n, \bv_n^\prime\rangle,
\end{equation}
and apply the similar derivative trick to $F_{n, 2}$, noticing that 
$F_{n, 2} \asto 0$ by point~\eqref{item:point-two} and the class 
of functions $F_{n, 2}$ is uniformly bounded on any open set 
$S$ such that its closure $\bar{S} \subseteq \C_{\gamma}^2$. 
For point~\eqref{item:point-six}, 
one instead needs to consider the function 
\begin{equation}
F_{n, 3}(t) = \langle\bu_n,  (t \id_m - \bZ_n \bZ_n^{\sT})^{-1} \bZ_n \bv_n\rangle
	+ \tilde{m}(t) \langle \bv_n, \bv_n^\prime\rangle,
\end{equation}
and apply the similar derivative trick to $F_{n, 3}$, noticing that 
$F_{n, 3} \asto 0$ by point~\eqref{item:point-three} and the class 
of functions $F_{n, 3}$ is uniformly bounded on any open set 
$S$ such that its closure $\bar{S} \subseteq \C_{\gamma}^2$.

\subsection{Proof of Point~\eqref{item:point-three}}
\label{section:proof-item-point-three}
Denote the function 
\begin{equation}
f_n(z) = \bu_n^{\sT} \left(z^2\bI_{m \times m} - \bZ_n \bZ_n^{\sT}\right)^{-1} \bZ_n \bv_n.
\end{equation}
By~\cite[Theorem 5.2]{BaiSi10}, we know that 
$\opnorm{\bZ_n} \asto \lambda_+ = \gamma^{1/4} + \gamma^{-1/4}$.
Hence, for any compact set $S \subseteq \C_{\gamma}$ and 
$\omega \in \Omega$, there exists some $N (= N(\omega, S))$
such that the function $f_n$ is analytic in $S$ for $n \ge N$. 
Montel and Vitali's convergence theorem for analytic functions 
\cite[Theorem 3-4, Section 7]{Remmert13} imply that showing 
uniform convergence of $f_n$ to $0$
in $S$ is equivalent to showing for any fix $z \in S$, it holds
\begin{equation}
\label{eqn:converge-one-single-point}
\lim_{n \to \infty}  \bu_n^{\sT} \left(z^2\bI_{m \times m} - \bZ_n \bZ_n^{\sT} \right)^{-1} 
	\bZ_n \bv_n  \asto 0.
\end{equation}
As a result, our goal is to show that Eq~\eqref{eqn:converge-one-single-point}
holds for any fix $z \in \C_{\gamma}$.

For $z \in \C_{\gamma}$, set $r_z = \half(|z| + \lambda_+) < |z|$ and 
$\eps_z = \half(|z|- \lambda_+) > 0$. Take 
$N(\omega)\in \N$ such that $\opnorm{\bZ_n} \le r_z$ for all $n \ge N(\omega)$. 
Note that, for $n \ge N(\omega)$, the function $f_n(z)$ is 
analytic in $\ball(z, \epsilon_z)$ with Laurent expansion, 
\begin{equation}
\label{eqn:ZZT-decomposition}
\bu_n^{\sT} \left(z^2 \bI_{m \times m} - \bZ_n \bZ_n^{\sT} \right)^{-1} \bZ_n \bv_n
= \sum_{k = 0}^\infty z^{-2k-2} \bu_n^{\sT} (\bZ_n \bZ_n^{\sT})^k\bZ_n \bv_n.
\end{equation}
Now we prove that the desired claim of Eq~\eqref{eqn:converge-one-single-point} 
will follow if we can show that for all $k \in \N$, 
\begin{equation}
\label{eqn:goal-fix-k}
\lim_{n\to \infty} \bu_n^{\sT} (\bZ_n \bZ_n^{\sT})^k\bZ_n \bv_n \asto 0. 
\end{equation}
In fact, denote $A_{n, k} = z^{-2k-2} \bu_n^{\sT} (\bZ_n \bZ_n^{\sT})^k\bZ_n \bv_n$.
Then, for any $\omega \in \Omega$, we have 
$|A_{n, k}| \le B_k \defeq |z|^{-2k-2}r_z^{2k}$ for all $n \ge N(\omega)$. 
Note that $\sum_k B_k < \infty$ and for $k \in \N$, $A_{n, k} \to 0$ by assumption. 
Hence, Lebesgue's dominated convergence theorem implies that 
$\lim_{n\to \infty} \sum_k A_{n, k} \to 0$ for any $\omega \in \Omega$, giving 
the desired clam in Eq~\eqref{eqn:converge-one-single-point} thanks to the 
expansion~\eqref{eqn:ZZT-decomposition}.

Now, we show for each $k \in \N$, the desired Eq~\eqref{eqn:goal-fix-k} holds. 
To simplify our notation, define the sequence $\{c_p\}_{p\in \N}$ by 
\begin{equation}
c_p = \E |W_{i, j}|^p < \infty.
\end{equation}
By assumption, we know that $c_p$ is independent of $m, n$. As a consequence of 
Markov's inequality and Borel-Cantelli Lemma, it suffices 
to prove that for each $k \in \N$, there exists some constant $C_k$ independent 
of $n$ (but can be dependent of the sequence $\{c_p\}_{p\in \N}$) such that 
\begin{equation}
\label{eqn:fourth-moment}
\E (\bu_n^{\sT} (\bZ_n \bZ_n^{\sT})^k\bZ_n \bv_n)^4 \leq C_k n^{-2}.
\end{equation}
In the rest of the proof, we show Eq~\eqref{eqn:fourth-moment}. For notational 
simplicity, we use the compact notations $\bu \in \R^m$, $\bv \in \R^n$ and 
$\bZ \in \R^{m\times n}$ to represent the vectors $\bu_n, \bv_n$ and matrix $\bZ_n$, 
making their dependences on $n\in \N$ implicit.

\newcommand{\sing}{^{\rm s}}

The proof of Eq~\eqref{eqn:fourth-moment} is based on a moment 
calculation, and part of it mirrors the moment method proof for 
Marchenko-Pastur law~\cite[Section 3.1]{BaiSi10}. Our first step is to 
expand the LHS of Eq~\eqref{eqn:fourth-moment} for fix $k\in \N$. 
To do so, we introduce some notations. For each 
$\bi = (i_1, \ldots, i_k) \in [m]^k$ and $\bj = (j_1, \ldots, j_k) \in [n]^k$, we 
construct a graph $G_1(\bi, \bj)$ in the following way. Draw two parallel lines, 
referring to the $I$ line and the $J$ line. Plot $i_1, \ldots , i_k$ on the $I$ line 
and $j_1, \ldots ,j_k$ on the $J$ line, and draw $k$ (down) edges from $i_u$ 
to $j_u$, $u = 1, \ldots ,k$, and $k-1$ (up) edges from $j_u$ to 
$i_{u+1}, u = 1,\ldots, k$. Similarly, we denote by $G_2(\bj, \bi)$ the graph with 
$i_1, \ldots , i_k$ on the $I$ line, $j_1, \ldots ,j_k$ on the $J$ line, $k$ (up) edges 
from $j_u$ to $i_{u}, u = 1,\ldots, k$, and $k-1$ (down) edges from $i_u$ to 
$j_{u+1}$, $u = 1, \ldots ,k-1$. For $G_1(\bi, \bj)$ and $G_2(\bj, \bi)$, we define
the scalars $Z_{G_1(\bi, \bj)}, Z_{G_2(\bj, \bi)}$ by
\begin{align*}
Z_{G_1(\bi, \bj)} &= Z_{i_1 j_1} Z_{i_2 j_1} Z_{i_2 j_2} Z_{i_3 j_2} 
	\ldots Z_{i_k j_k}, \notag \\
Z_{G_2(\bj, \bi)} & = Z_{i_1 j_1} Z_{i_1 j_2} Z_{i_2 j_2} Z_{i_2 j_3} 
	\ldots Z_{i_k j_k}.
\end{align*}
For $\{\bi^{(l)}\}_{l \in [4]}$ and $\{\bj^{(l)}\}_{l \in [4]}$, let
$G(\bi^{(1:4)}, \bj^{(1:4)})$ be the graph that is the union of the graphs
$G_1(\bi^{(1)}, \bj^{(1)})$, $G_2( \bj^{(2)}, \bi^{(2)})$, 
$G_1(\bi^{(3)}, \bj^{(3)})$ and $G_2( \bj^{(4)}, \bi^{(4)})$.
For each graph $G(\bi^{(1:4)}, \bj^{(1:4)})$, define
\begin{align*}
Z_{G(\bi^{(1:4)}, \bj^{(1:4)})} 
	\defeq Z_{G_1(\bi^{(1)}, \bj^{(1)})} \cdot
		Z_{G_2( \bj^{(2)}, \bi^{(2)})}  \cdot
		Z_{G_1(\bi^{(3)}, \bj^{(3)})} \cdot
		Z_{G_2( \bj^{(4)}, \bi^{(4)})}.
\end{align*}


\begin{figure}[H]
	\centering
	\begin{subfigure}{} 
		\includegraphics[width=0.4\textwidth]{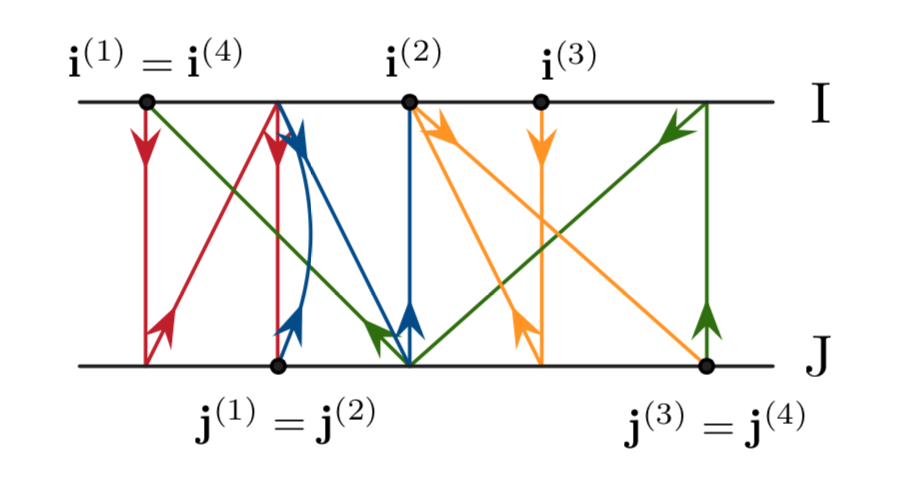}
	\end{subfigure}
	\caption{An example of $G(\bi^{(1:4)}, \bj^{(1:4)})$. The red, blue, 
		orange and blue edges correspond to the edges of the graphs
		$G_1(\bi^{(1)}, \bj^{(1)})$, $G_2( \bj^{(2)}, \bi^{(2)})$, 
		$G_1(\bi^{(3)}, \bj^{(3)})$ and $G_2( \bj^{(4)}, \bi^{(4)})$ respectively.}
\end{figure}


Lastly, for each $\bh \in [m]^4$ and $\bh^\prime \in [n]^4$, we define the set 
of graphs 
\begin{equation}
S(\bh, \bh^\prime) = \left\{G(\bi^{(1:4)}, \bj^{(1:4)}): 
	(\bi^{(l)}, \bj^{(l)}) \in \Gamma(h_l, h_l^\prime)~\text{for all}~l \in [4]\right\}
\end{equation}
where for $h \in [m]$ and $h^\prime \in [n]$, the set $\Gamma(h, h^\prime)$ is 
defined by
\begin{align}
\Gamma(h, h^\prime) = \{(\bi, \bj): \bi \in [m]^k, \bj \in [n]^k, \bi_1 = h, \bj_k = h^\prime\}.
\end{align}
Then we have the following expansion of the LHS of of 
Eq~\eqref{eqn:fourth-moment}: 
\begin{align}
\E \left( \bu^{\sT} (\bZ \bZ^{\sT})^k \bZ \bv \right)^4
&= \E \tr \left(\bu \bu^{\sT} (\bZ \bZ^{\sT})^k \bZ \bv \bv^{\sT} \bZ^{\sT} (\bZ \bZ^{\sT})^k 
\bu \bu^{\sT} (\bZ \bZ^{\sT})^k \bZ \bv \bv^{\sT} \bZ^{\sT} (\bZ \bZ^{\sT})^k \right)
	\nonumber \\
&= \sum_{\substack{\bh \in [m]^4 \\ \bh^\prime \in [n]}} 
	u_{h_1}u_{h_2}u_{h_3}u_{h_4} 
	v_{h_1^\prime} v_{h_2^\prime}v_{h_3^\prime} v_{h_4^\prime}  
	\sum_{G(\bi^{(1:4)}, \bj^{(1:4)}) \in S(\bh, \bh^\prime)} \E Z_{G(\bi^{(1:4)}, \bj^{(1:4)})}.
\label{eqn:square-expansion}
\end{align}
Now, we evaluate the RHS of Eq~\eqref{eqn:square-expansion}. For any graph
$G(\bi^{(1:4)}, \bj^{(1:4)})$ that contains a single edge, i.e., an edge not coincident 
with another edge, then we shall have
\begin{equation}
\label{eqn:zero-mean-observation}
\E Z_{G(\bi^{(1:4)}, \bj^{(1:4)})} = 0
\end{equation}
since the random variables $Z_{i, j}$ are assumed to be independent and mean $0$. 
Now, for any $\bh \in [m]^4$ and $\bh^\prime \in [n]^4$, denote 
\begin{equation*}
S\sing(\bh, \bh^\prime) = S(\bh, \bh^\prime) \cap \{G(\bi^{(1:4)}, \bj^{(1:4)}): 
	G(\bi^{(1:4)}, \bj^{(1:4)})~\text{does not have a single edge}\}
\end{equation*}
by Eq~\eqref{eqn:zero-mean-observation}, we have 
\begin{equation}
\label{eqn:reduce-to-single}
\sum_{G(\bi^{(1:4)}, \bj^{(1:4)}) \in S(\bh, \bh^\prime)} \E Z_{G(\bi^{(1:4)}, \bj^{(1:4)})}
= \sum_{G(\bi^{(1:4)}, \bj^{(1:4)}) \in S\sing(\bh, \bh^\prime)} \E Z_{G(\bi^{(1:4)}, \bj^{(1:4)})}.
\end{equation}
By H\"{o}lder's inequality, we know that, each $Z_{G(\bi^{(1:4)}, \bj^{(1:4)})}$ satisfies
\begin{equation}
\label{eqn:holder-step}
\left|\E Z_{G(\bi^{(1:4)}, \bj^{(1:4)})}\right| \le \E \left[\big(|Z_{i, j}|^{2k+1}\big)^4\right] 
	\le c_{4(2k+1)} (mn)^{-(2k+1)}. 
\end{equation}
Hence, if we set $N\sing(\bh, \bh^\prime) = |S\sing(\bh, \bh^\prime)|$, then Eq
\eqref{eqn:reduce-to-single} and Eq~\eqref{eqn:holder-step} show that 
\begin{equation}
\Bigg|\sum_{G(\bi^{(1:4)}, \bj^{(1:4)}) \in S(\bh, \bh^\prime)} \E Z_{G(\bi^{(1:4)}, \bj^{(1:4)})}\Bigg|
	\le C_k n^{-4k-2}N\sing(\bh, \bh^\prime).
\end{equation}
for some constant $C_k > 0$. Plugging it into Eq~\eqref{eqn:square-expansion}, we 
get that
\begin{equation}
\label{eqn:main-moment-method}
\E \left( \bu^{\sT} (\bZ \bZ^{\sT})^k \bZ \bv \right)^4
	\le C_k n^{-4k-2} \sum_{\substack{\bh \in [m]^4 \\ \bh^\prime \in [n]^4}} 
	N\sing(\bh, \bh^\prime)
	\prod_{l \in [4]} |u_{h_l} v_{h_l^\prime}|.
\end{equation}
Now, for $p \in \{m, n\}$, decompose the set
$[p]^4 = \cup_{l \in [5]} \type_{l, p}$, where we define $\{\type_{l, p}\}_{l\in [5]}$ by
\begin{enumerate}
\item $\type_{1, p} = \{\bl \in [p]^4: |\bl| = 4\}$
\item $\type_{2, p} = \{\bl \in [p]^4: |\bl| = 3\}$
\item $\type_{3, p} = \{\bl \in [p]^4: |\bl| = 2, \text{some entry of $(l_1, l_2, l_3, l_4)$ has multiplicity 3}\}$
\item $\type_{4, p} = \{\bl \in [p]^4: |\bl| = 2, \text{each entry of $(l_1, l_2, l_3, l_4)$ has multiplicity 2}\}$
\item $\type_{5, p} = \{\bl \in [p]^4: |\bl| = 1\}.$
\end{enumerate}
Then by Eq~\eqref{eqn:main-moment-method}, we have that 
\begin{equation}
\label{eqn:main-moment-method-new}
\E \left( \bu^{\sT} (\bZ \bZ^{\sT})^k \bZ \bv \right)^4
	\le C_k n^{-4k-2} \sum_{\substack{l_1 \in [5] \\ l_2 \in [5]}}
	M_m(l_1) M_n(l_2) N\sing(l_1, l_2), 
\end{equation}
where for each $l_1, l_2 \in [5]$, we define
\begin{equation}
M_m(l_1)\defeq \sum_{\bh \in \type_{l_1, m}}\prod_{l \in [4]} |u_{h_l}|,~~
M_n(l_2)\defeq \sum_{\bh \in \type_{l_2, n}}\prod_{l \in [4]} |v_{h_l}|
~~\text{and}~~ 
N\sing(l_1, l_2)\defeq \sup_{\substack{\bh \in \type_{l_1, m} \\ \bh^\prime \in \type_{l_2, n}}} 
	N\sing(\bh, \bh^\prime).
\end{equation}
Now, motivated by Eq~\eqref{eqn:main-moment-method-new}, we provide bounds 
on $M_p(l)$ for $p \in \{m, n\}$ and $l \in [5]$ and on $N(l_1, l_2)$ for $l_1, l_2 \in [5]$. 

\paragraph{Bounds on $M_p(l)$}  We have the following result. 
\begin{lemma}
\label{lemma:ultimate-bound-onto-M}
For some universal constant $C$, the bounds below hold for $p \in \{m, n\}$, 
\begin{equation}
M_p(l) \le C \cdot \begin{cases}
	p^2&~~\text{if $l = 1$} \\
	p&~~\text{if $l = 2$} \\
	p^{1/2}&~~\text{if $l = 3$} \\
	1&~~\text{if $l \in \{4, 5\}$}	
	\end{cases}.
\end{equation}
\end{lemma}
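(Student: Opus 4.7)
The plan is to bound each sum $M_p(l)$ by expressing it in terms of $\ell^q$ norms of $\bu$ (or $\bv$) and invoking the fact that $\bu, \bv$ are unit vectors. The key observation is that for a unit vector $\bw \in \reals^p$, Cauchy--Schwarz and monotonicity of $\ell^q$ norms give $\sum_i |w_i| \le \sqrt{p}$, $\sum_i w_i^2 = 1$, and $\sum_i |w_i|^q \le 1$ for any $q \ge 2$ (since $|w_i| \le 1$). This, combined with the multiplicity structure defining each $\type_{l, p}$, immediately yields each of the five bounds up to a universal combinatorial constant coming from the number of ways to place a given multiplicity pattern into the four coordinates.

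First I would handle the easiest cases $l \in \{4, 5\}$. For $l = 5$, the sum collapses to $\sum_i u_i^4 \le (\max_i u_i^2) \cdot \sum_i u_i^2 \le 1$. For $l = 4$, every term has the form $u_i^2 u_j^2$ with $i \neq j$, and summing gives at most $(\sum_i u_i^2)^2 \le 1$, times a combinatorial factor (the number of ways to split the four positions into two pairs, which is $3$). Next I would address $l = 3$: the indices split as $(3,1)$, so each term looks like $|u_i|^3 |u_j|$; by H\"older $\sum_i |u_i|^3 \le \|\bu\|_\infty \|\bu\|_2^2 \le 1$, and $\sum_j |u_j| \le \sqrt{p}$ by Cauchy--Schwarz, giving $M_p(3) \le 4 p^{1/2}$.

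For $l = 2$, one coordinate has multiplicity $2$ and the other two are distinct singletons, contributing $\sum_i u_i^2 \cdot (\sum_j |u_j|)^2 \le 1 \cdot p$, up to the combinatorial factor of $\binom{4}{2} = 6$. Finally for $l = 1$, all four indices are distinct and I bound the sum trivially by $(\sum_i |u_i|)^4 \le p^2$ using Cauchy--Schwarz once more. In each case the combinatorial factor depends only on the pattern of coincidences, not on $p$, so there is a single universal constant $C$ that works.

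I do not expect a main obstacle here: the lemma is purely combinatorial / H\"older-type, and its proof is essentially a catalogue of cases. The only mild subtlety is writing down the constant $C$ uniformly, which amounts to collecting the finitely many combinatorial multiplicities (at most $4! = 24$) arising from the ways of assigning multiplicity patterns to the four slots $(h_1, h_2, h_3, h_4)$. Since the bound on $\sum |u_i|^q$ for $q \ge 2$ degrades no worse than $O(1)$ while $\sum|u_i|$ grows like $\sqrt{p}$, the final polynomial-in-$p$ rate in each type is dictated by the number of distinct indices minus one half, i.e., the count of ``linear'' rather than ``quadratic'' factors that arise, which matches $p^2, p, p^{1/2}, 1, 1$ for types $1$ through $5$ respectively.
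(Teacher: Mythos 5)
Your proposal is correct and follows essentially the same route as the paper: the paper's proof also reduces each case to the facts $\sum_i |u_i| \le p^{1/2}$ and $\sum_i |u_i|^q \le 1$ for $q \ge 2$ (for unit $\bu$), and then catalogues the five multiplicity patterns with the same combinatorial factors (e.g.\ $\binom{4}{2}$ for $l \in \{2,4\}$), yielding the identical bounds $p^2, 6p, 4p^{1/2}, 6, 1$.
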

\begin{proof}
We only prove the case where $p = m$. The crux of the proof is the following: for 
any $l \in \N$
\begin{equation}
\label{eqn:moment-of-u}
\sum_{i} |u_i|^l \le \begin{cases}
		1&~~\text{if $l \ge 2$}\\
		m^{1/2}&~~\text{if $l = 1$}.
	\end{cases}
\end{equation}
In fact, when $l \ge 2$, since the vector $u$ lies on the unit sphere, we know that 
$\sum_{i} |u_i|^l \le \sum_{i} |u_i|^2 \le 1$. On the other hand, when $l = 1$, then 
we have $\sum_{i} |u_i| \le m^{1/2}(\sum_{i} |u_i|^2)^{1/2} \le m^{1/2}$ by Cauchy-Schwartz 
inequality. Now, using Eq~\eqref{eqn:moment-of-u}, it is easy to enumerate all 
the possibilities of $M_m(l)$:
\begin{enumerate}
\item $M_m(1) \le \big(\sum_{i} |u_i|\big)^4 \le m^2$.
\item $M_m(2) \le \choose{4}{2}\big(\sum_{i} |u_i|\big)^2 \big(\sum_{i} |u_i|^2\big) \le 6m$.
\item $M_m(3) \le \choose{4}{1}\big(\sum_{i} |u_i|\big)\big(\sum_{i} |u_i|^3\big) \le 4m^{1/2}$.
\item $M_m(4) \le \choose{4}{2}\big(\sum_{i} |u_i|^2\big)^2 \le 6$.
\item $M_m(5) \le \sum_{i} |u_i|^4 \le 1$.
\end{enumerate}
This gives the desired claim of the lemma. 
\end{proof}

\newcommand{\nav}{{\rm {nav}}}
\newcommand{\ver}{{\rm {ver}}}
\newcommand{\lab}{{\rm {lab}}}
\newcommand{\eqv}{{\rm {eqv}}}
\paragraph{Bounds on $N\sing(l_1, l_2)$}
We start by bounding $N\sing(\bh, \bh^\prime)$ for $\bh \in [m]^4, \bh^\prime\in [n]^4$. 
Define for $\bl \in \Z^4$
\begin{equation}
|\bl| = |\{l_1, l_2, l_3, l_4\}|.
\end{equation}
To bound $N\sing(\bh, \bh^\prime)$, we find the following characteristics useful: 
\begin{itemize}
\item Denote by $\chi_{\nav}(\bh, \bh^\prime)$ the maximum number of vertices of any 
	graph in the set $S\sing(\bh, \bh^\prime)$.
\item Denote by $\chi_{\edge}(\bh, \bh^\prime)$ the maximum number of edges of any 
	graph in the set $S\sing(\bh, \bh^\prime)$.
\item Denote by $\chi_{\cc}(\bh, \bh^\prime)$ the maximum number of components of 
	any graph in the set $S\sing(\bh, \bh^\prime)$.
\item Define $\chi_{\nv}(\bh, \bh^\prime) = |\bh| + |\bh^\prime|$.
\end{itemize}  
Our next lemma provides an upper bound onto $N\sing(\bh, \bh^\prime)$ based on 
the above characteristics.
\begin{lemma}
\label{lemma:basic-combinatorics}
There exists some constant $C_k > 0$ such that 
\begin{equation}
\label{eqn:N-rule}
N\sing(\bh, \bh^\prime) \leq C_k n^{(\chi_{\nav} - \chi_{\nv})(\bh, \bh^\prime)}
\le C_k n^{(\chi_{\edge} + \chi_{\cc} - \chi_{\nv})(\bh, \bh^\prime)}.
\end{equation}
\end{lemma}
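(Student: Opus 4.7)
The plan is to prove the two inequalities in~\eqref{eqn:N-rule} in turn. The first bound is a labeling count for an abstract graph, and the second is the elementary graph-theoretic inequality $V \le E + C$ applied to a vertex-maximizing element of $S\sing(\bh, \bh^\prime)$.

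For the first inequality, I will partition $S\sing(\bh, \bh^\prime)$ according to the \emph{combinatorial type} of $G(\bi^{(1:4)}, \bj^{(1:4)})$: the pair of equivalence relations on the $I$-slots $\{i_u^{(l)} : l\in[4], u\in[k]\}$ and on the $J$-slots $\{j_u^{(l)}\}$ that records which entries take equal values. Since there are only $4k$ slots on each side, the number of admissible types is bounded by some constant $C_k$ depending solely on $k$. Within a fixed type, the graph is determined up to the choice of distinct labels for the $V$ resulting equivalence classes, where each class takes values in either $[m]$ or $[n]$ depending on its side. Exactly $\chi_{\nv}(\bh, \bh^\prime) = |\bh| + |\bh^\prime|$ of these classes are pinned by the endpoint constraints $i_1^{(l)} = h_l$ and $j_k^{(l)} = h_l^\prime$ built into $\Gamma(h_l, h_l^\prime)$. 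The remaining $V - \chi_{\nv}$ classes are free, and since $m \asymp n$, each contributes at most $O(n)$ labelings, giving $O(n^{V - \chi_{\nv}})$ per type. As every type occurring in $S\sing(\bh, \bh^\prime)$ satisfies $V \le \chi_{\nav}(\bh, \bh^\prime)$ by definition of $\chi_{\nav}$, summing over the at most $C_k$ types yields $N\sing(\bh, \bh^\prime) \le C_k n^{\chi_{\nav} - \chi_{\nv}}$.

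For the second inequality, I will fix a graph $G^\star \in S\sing(\bh, \bh^\prime)$ attaining $V(G^\star) = \chi_{\nav}(\bh, \bh^\prime)$, and apply the bound $V \le E + C$ — valid for any finite graph (or multigraph, with $E$ counting distinct edges), since the underlying simple graph contains a spanning forest with exactly $V - C$ edges. Combined with the pointwise inequalities $E(G^\star) \le \chi_{\edge}(\bh, \bh^\prime)$ and $C(G^\star) \le \chi_{\cc}(\bh, \bh^\prime)$, this yields $\chi_{\nav} = V(G^\star) \le E(G^\star) + C(G^\star) \le \chi_{\edge} + \chi_{\cc}$, which is the second inequality.

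The only point requiring genuine care is the bookkeeping for combinatorial types in the first step: one must check that for each type present in $S\sing(\bh, \bh^\prime)$ the $\chi_{\nv}$ pinned endpoints form a well-defined subset of the equivalence classes, which is immediate from the constraints $i_1^{(l)} = h_l$ and $j_k^{(l)} = h_l^\prime$ in $\Gamma(h_l, h_l^\prime)$. Otherwise there is no substantive obstacle, and the constant $C_k$ absorbs only the (finite, $k$-dependent) number of admissible types.
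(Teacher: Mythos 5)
Your proposal is correct and follows essentially the same route as the paper's proof: your partition of $S\sing(\bh,\bh^\prime)$ into combinatorial types is exactly the paper's grouping into isomorphism classes, with the same two counts (at most $C_k$ types since there are only $O(k)$ slots, and at most $O\big(n^{(\chi_{\nav}-\chi_{\nv})(\bh,\bh^\prime)}\big)$ graphs per type, obtained by fixing the pinned endpoint vertices and letting the remaining vertex labels range freely). The second inequality is derived in both arguments from the same elementary fact that the number of vertices of a graph is at most its number of distinct edges plus its number of connected components.
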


\begin{figure}[H]
	\centering
	\begin{subfigure}{} 
		\includegraphics[width=0.8\textwidth]{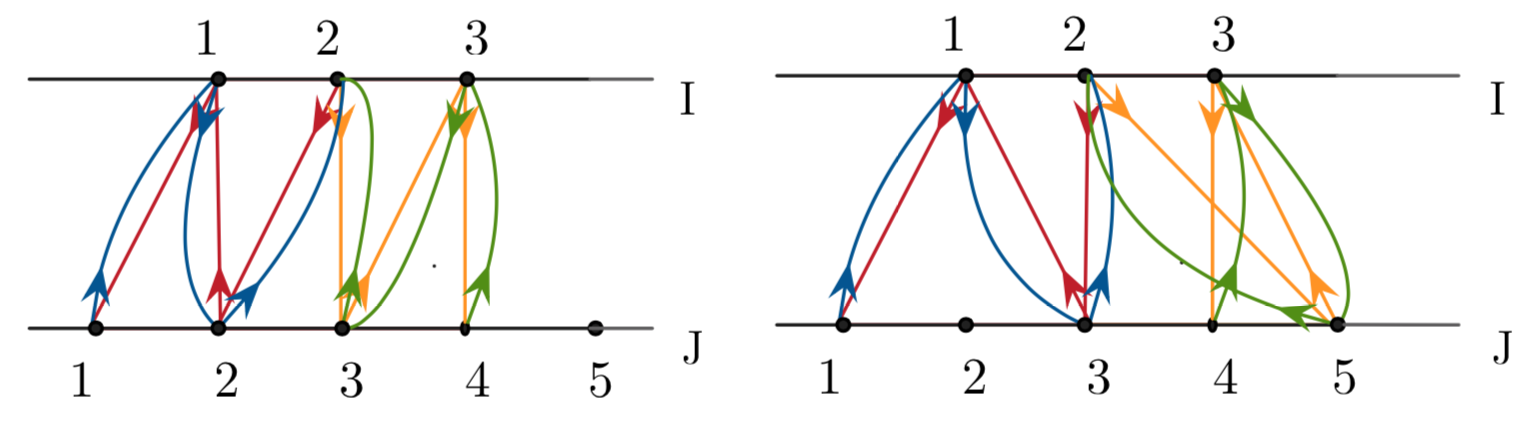}
	\end{subfigure}
	\vspace{1em}
	 
	\caption{An example of two isomorphic graphs when 
			$\bh = (2, 2, 2, 2)$ and $\bh^{\prime} = (1,1,4,4)$. 
			For each of the two graphs, the red, blue, orange and blue edges 
			correspond to edges of $G_1(\bi^{(1)}, \bj^{(1)})$, $G_2( \bj^{(2)}, \bi^{(2)})$, 
			$G_1(\bi^{(3)}, \bj^{(3)})$ and $G_2( \bj^{(4)}, \bi^{(4)})$ respectively. 
			Note that the permutation $(1,2,3,4,5) \to (1,3,5,4,2)$ maps the 
			labeling on indices of $J$ line of the graph on the left to that 
			of the graph on the right.  }
\end{figure}


\begin{proof}
For any graph $G \in S\sing(\bh, \bh^\prime)$, denote $\lab(G)$ to be the indices of the 
vertices of graph $G$. For any two graphs $G, G^\prime \in S\sing(\bh, \bh^\prime)$, we 
say $G$ and $G^\prime$ to be isomorphic if one becomes the other by permuting $\lab(G)$ to 
$\lab(G^\prime)$ without permuting the indices $\bh$ 
and $\bh^\prime$. Denote $C_\eqv(\bh, \bh^\prime)$ to be the number of different 
isomorphism class and $N_\eqv(\bh, \bh^\prime)$ to be the maximum of size of any 
isomorphism class.  Now, we show the following three claims:
\begin{enumerate}
\item\label{item:claim-one} 
	For some constant $C_k$ depending solely on $k$, $C_\eqv(\bh, \bh^\prime) \le C_k$.
\item\label{item:claim-two} 
	$N_\eqv(\bh, \bh^\prime)\le (m+n)^{(\chi_{\nav} - \chi_{nv})(\bh, \bh^\prime)}$
	and $(\chi_{\edge} + \chi_{\cc}) \le 16k$.
\item\label{item:claim-three}
	$\chi_{\nav}(\bh, \bh^\prime) \le (\chi_{\edge} + \chi_{\cc})(\bh, \bh^\prime)$.
\end{enumerate}
Clearly, the desired claim of the lemma follows by the above two claims since we have
by definition 
\begin{equation*}
N\sing(\bh, \bh^\prime) \le C_\eqv(\bh, \bh^\prime) N_\eqv(\bh, \bh^\prime).
\end{equation*}

Now, we prove the above claims. In fact, since $|\lab(G)| \le 8k$, the number of 
$C_\eqv(\bh, \bh^\prime)$ should be only dependent of $k$ and independent of $n$.
This gives claim~\ref{item:claim-one}. To show the first part of claim~\ref{item:claim-two},
we note that the size of the isomorphism class is bounded by 
the number of possible labeling that does not change the labels $\bh$ and 
$\bh^\prime$, which can be easily shown upper bounded by 
$(m+n)^{(\chi_{\nav} - \chi_{nv})(\bh, \bh^\prime)}$.
This proves the first part of claim~\ref{item:claim-two}. The second part of claim 
\ref{item:claim-two} follows by definition of the set $S\sing(\bh, \bh^\prime)$.
Finally, since for any graph $G$, its number of vertices is upper bounded by the sum 
of its number of edges and connected components. This gives the third 
claim \ref{item:claim-three}. 
\end{proof}

Following Lemma~\ref{lemma:basic-combinatorics}, our next lemma gives generic upper 
bounds onto $\chi_{\edge}(\bh, \bh^\prime)$ and $\chi_{\cc}(\bh, \bh^\prime)$. Call 
$\bl\in \Z^4$ odd, if say for some $j \in [4]$,  $|\{i \in [4]: l_i = l_j\}|$ is odd. 
\begin{lemma}
\label{lemma:basic-combinatorics-two}
For $\bh \in [m]^4$ and $\bh^\prime \in [n]^4$, we have
\begin{equation}
\label{eqn:trivial-bound-chi}
\chi_{\edge}(\bh, \bh^\prime) \le 4k+2~~\text{and}~~
\chi_{\cc}(\bh, \bh^\prime) \le \min \{|\bh|, |\bh^\prime|\}.
\end{equation}
In addition, 
\begin{enumerate}
\item if either $\bh$ or $\bh^\prime$ is odd, then
\begin{equation}
\label{eqn:special-chi-one}
	\chi_{\edge}(\bh, \bh^\prime) \le 4k+1.
\end{equation}
\item if either $|\bh| = 4$ or $|\bh^\prime| = 4$, then  
\begin{equation}
\label{eqn:special-chi-two}
	\chi_{\edge}(\bh, \bh^\prime) \le 4k.
\end{equation}
\end{enumerate}
\end{lemma}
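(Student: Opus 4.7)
The plan is to prove all four bounds by a unified edge-counting argument based on the multi-graph structure of $G(\bi^{(1:4)}, \bj^{(1:4)})$, using the key fact that each of the four subgraphs is a connected walk (of length $2k+1$) in the bipartite graph on $[m] \cup [n]$ with $h_l$ (an $i$-vertex) and $h_l'$ (a $j$-vertex) as its two endpoints. Since the total edge multiplicity in $G$ is then $4(2k+1) = 8k+4$, and every distinct edge has multiplicity at least~$2$ by the no-single-edge condition defining $S^{\rm sing}(\bh, \bh')$, dividing immediately yields $\chi_{\edge}(\bh, \bh') \le 4k+2$. For the $\chi_{\cc}$ bound, connectivity of each walk forces it to lie entirely within a single component of $G$, and the $i$-vertex $h_l$ is a member of that component; two distinct components having disjoint vertex sets, the corresponding $h_l$'s must then take distinct labels, so $\chi_{\cc} \le |\bh|$, and symmetrically $\chi_{\cc} \le |\bh'|$.

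The refined bounds hinge on a parity-of-degree analysis. A direct edge-incidence count along any walk shows that a vertex has odd degree in the walk's multi-graph iff it is exactly one of the two endpoints; summing over the four walks, the parity of the degree of an $i$-vertex $v$ in $G$ equals the multiplicity of $v$ in the tuple $\bh$ modulo~$2$, and analogously for $j$-vertices and $\bh'$. Letting $E_o$ denote the set of distinct edges of odd multiplicity and $t = |E_o|$, the multiplicity bound gives
\begin{equation*}
8k+4 \ge 3t + 2(\chi_{\edge} - t) = 2\chi_{\edge} + t,
\end{equation*}
so $\chi_{\edge} \le (8k+4-t)/2$. Observe also that a vertex has odd degree in $G$ iff it has odd degree in the subgraph $H = (V, E_o)$, because the even-multiplicity edges contribute evenly to every vertex degree.

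With these pieces in hand, the special cases are quick. For \eqref{eqn:special-chi-one}, if $\bh$ or $\bh'$ is odd, then at least one vertex of $G$ has odd degree, forcing $t \ge 1$ and hence $\chi_{\edge} \le 4k+1$ after rounding. For \eqref{eqn:special-chi-two}, if $|\bh| = 4$ then all four values $h_l$ are distinct, each of multiplicity one, yielding four odd-degree $i$-vertices in $H$; since $H$ is bipartite, the sum of its $i$-side degrees equals $|E_o| = t$ and must be at least the number of odd-degree $i$-vertices, forcing $t \ge 4$ and $\chi_{\edge} \le 4k$; the case $|\bh'| = 4$ is symmetric. The only genuinely subtle step is the walk-endpoint parity identity and its summation over the four walks, and once that is established everything else is bookkeeping via handshake-style lower bounds on the number of odd-multiplicity edges in a bipartite multi-graph.
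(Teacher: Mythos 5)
Your proof is correct and takes essentially the same counting approach as the paper: both rest on the $8k+4$ total edge budget, the multiplicity-$\ge 2$ constraint, and the observation that a vertex of odd degree (which you derive from walk-endpoint parity, the paper from the multiplicity of $h_l$ in $\bh$) forces an incident edge of odd, hence $\ge 3$, multiplicity. Your handshake reformulation, tracking $t = |E_o|$ via $8k+4 \ge 2\chi_{\edge} + t$ and then lower-bounding $t$ through degree parities in the bipartite odd-edge subgraph, is a slightly more systematic bookkeeping of the same argument the paper gives by directly exhibiting one (or, for the $|\bh|=4$ case, four distinct) high-multiplicity edges.
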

\begin{proof}
The first part of Eq~\eqref{eqn:trivial-bound-chi} follows from the fact that any graph in the 
set $S\sing(\bh, \bh^\prime)$ has in total $8k+4$ edges with each edge appearing at least 
twice. The second part of Eq~\eqref{eqn:trivial-bound-chi} follows by the construction of 
graph in the set $S\sing(\bh, \bh^\prime)$. To prove Eq~\eqref{eqn:special-chi-one}, note 
that, when $\bh$ is odd, say $h_l$ ($l\in [4]$) appears odd times in 
$(h_1, h_2, h_3, h_4)$, it means that some edge connecting $h_l$ must appear at least 
$3$ times. Since each edge at least appears twice and there are in total $8k + 4$ edges, 
we get that the number of total edges in this case is upper bounded by $4k+1$, giving 
Eq~\eqref{eqn:special-chi-one}. To prove Eq~\eqref{eqn:special-chi-two}, we note that, 
when $|\bh| = 4$, then for each $l \in [4]$, there exists some edge connecting $h_l$ 
that appears at least three times. This shows that at least four different edges appear at 
least $3$ times. Since each edge at least appears twice and there are in total $8k + 4$ 
edges, we get that the number of total edges in this case is upper bounded by $4k$, 
giving Eq~\eqref{eqn:special-chi-two}.
\end{proof}

Lastly, we provide an estimate on $\chi_{\nav}(\bh, \bh^\prime)$ when 
$\bh \in \type_{(5, m)}$ and $\bh^\prime \in \type_{(5, n)}$. 
\begin{lemma}
\label{lemma:special-treatment}
For $\bh \in \type_{(5, m)}$, $\bh^\prime \in \type_{(5, n)}$, we have
\begin{equation}
\chi_{\nav}(\bh, \bh^\prime) \le 4k + 2.
\end{equation}
\end{lemma}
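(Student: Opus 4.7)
My plan is to exploit the singleton structure $\bh = (h, h, h, h)$ and $\bh' = (h', h', h', h')$: all four sub-walks in any $G \in S\sing(\bh, \bh')$ share the same two endpoints $h \in [m]$ on the $I$-side and $h' \in [n]$ on the $J$-side, and I will convert this strong constraint into the bound $\chi_{\nav}(\bh, \bh') \le 4k+2$ via a parity argument in the tree case. Note that the trivial estimate from Lemma~\ref{lemma:basic-combinatorics-two} gives only $\chi_{\nav} \le \chi_{\edge} + \chi_{\cc} \le (4k+2) + 1 = 4k+3$ (since $\chi_{\cc} \le \min\{|\bh|,|\bh'|\} = 1$), so I need to save exactly one vertex.

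Fix $G = G(\bi^{(1:4)}, \bj^{(1:4)}) \in S\sing(\bh, \bh')$ and let $\Gamma$ denote its underlying simple bipartite graph, with $V$ vertices and $E^*$ distinct edges. Two preliminary observations: (i) since every distinct edge must appear at least twice (otherwise $\E Z_G$ would vanish by mean-zero of $Z_{i,j}$) and the total number of edge slots is $4(2k+1) = 8k+4$, we have $E^* \le 4k+2$; (ii) every vertex of $\Gamma$ lies on some walk from $h$ to $h'$, so $\Gamma$ is connected, and hence $V \le E^* + 1$, with equality iff $\Gamma$ is a tree.

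If $\Gamma$ has a cycle, then $V \le E^*$ and the conclusion is immediate. In the tree case, let $\pi$ be the unique path from $h$ to $h'$ in $\Gamma$, of length $L \ge 1$ (positive since $h$ and $h'$ lie on opposite sides of the bipartition). For any tree-edge $e \in \pi$, deleting $e$ disconnects $h$ from $h'$, so each of the four walks must cross $e$ an odd number of times, contributing a total multiplicity $\ge 4$; for any tree-edge $f \notin \pi$, deleting $f$ leaves $h$ and $h'$ in the same component (they remain joined by $\pi$), so each walk uses $f$ an even number of times, and since $f$ appears in $\Gamma$ some walk must use it, forcing a total multiplicity $\ge 2$. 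Summing multiplicities over all edges of $\Gamma$,
\begin{equation*}
8k+4 \;=\; \sum_{e \in E(\Gamma)} m_e \;\ge\; 4 L + 2 (E^* - L) \;=\; 2 E^* + 2 L,
\end{equation*}
which gives $E^* \le 4k + 2 - L \le 4k+1$ and hence $V \le E^* + 1 \le 4k+2$, as desired.

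The only step requiring care is the bridge/non-bridge parity argument in the tree case, but once phrased in terms of which tree-edges separate $h$ from $h'$ it is completely standard, so I do not anticipate any serious obstacle.
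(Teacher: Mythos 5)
Your proof is correct, and it reaches the bound by a slightly different mechanism than the paper. The paper splits on whether the number of distinct edges attains its maximum $4k+2$: if it does, every edge has multiplicity exactly $2$, and the paper argues that since the common start vertex $h$ and common end vertex $h'$ of the four walks have degree at least $2$, there are two different $h$--$h'$ paths and hence a cycle in the induced undirected graph, so the vertex count is at most the edge count $4k+2$; if the edge count is at most $4k+1$, connectivity alone gives the bound. You instead split on whether the underlying simple graph is a tree or contains a cycle, and in the tree case you run a parity count: every bridge on the unique $h$--$h'$ path is traversed an odd (hence $\ge 1$) number of times by each of the four walks, every off-path tree edge an even (hence $\ge 2$) number of times by some walk, so $8k+4 \ge 2E^* + 2L$ with $L\ge 1$, forcing $E^* \le 4k+1$ and $V = E^*+1 \le 4k+2$. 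The two case splits are contrapositive views of the same dichotomy ("maximal edge count $\Rightarrow$ cycle" versus "tree $\Rightarrow$ non-maximal edge count"), but your parity argument over separating versus non-separating tree edges is more self-contained and rigorous than the paper's brief "two different paths imply a loop" step, and it even yields the marginally sharper estimate $E^* \le 4k+2-L$; the paper's version, in exchange, avoids any multiplicity bookkeeping in the cycle case by exploiting that all multiplicities there equal $2$. Both arguments use the type-$5$ structure (all four walks sharing endpoints $h$, $h'$) in an essential way and deliver the same conclusion.
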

\begin{proof}
In fact, any graph $G \in S\sing(\bh, \bh^\prime)$ has at most 
$4k+2$ distinct edges. Based on this, we divide our discussion into two cases. 

\begin{enumerate}
\item The graph $G$ has exactly $4k+2$ distinct edges. Hence, each edge 
	has multiplicity $2$. Now consider the undirected graph $\tilde{G}$ 
	induced by $G$: it consists of the vertices and the
	$4k+2$ different edges of $G$. We claim that there exists a loop in 
 	$\tilde{G}$. This implies that the graph $\tilde{G}$ and hence graph 
	$G$ has at most $4k+2$ different vertices. 

\begin{figure}[H]
	\centering
	\begin{subfigure}{} 
		\includegraphics[width=0.8\textwidth]{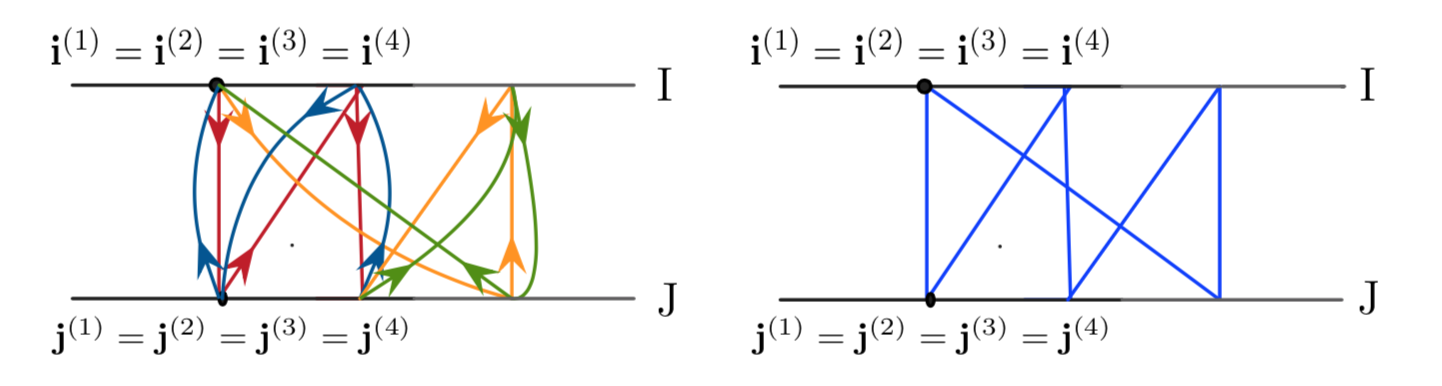}
	\end{subfigure}
	\vspace{1em}
	\caption{An illustration of $G$ and $\tilde{G}$ when the graph $G$ has 
		exactly $4k+2$ distinct edges. The left plot is the directed graph $G$ 
		and the right plot is the induced undirected graph $\tilde{G}$.
		Note that $\tilde{G}$ contains a loop. 
		}
\end{figure}
	
	To show that $\tilde{G}$ contains a loop, let $\bh = (h, h, h, h) \in [m]^4$ and
	$\bh^\prime = (h^\prime, h^\prime, h^\prime, h^\prime) \in [n]^4$ for 
	some $h\in [m]$ and $h^\prime \in [n]$. In fact, note that, by construction, 
	in graph $G$, the out-degree of vertex $h$ is at least $2$ and the in-degree of 
	vertex $h^\prime$ is at least $2$. Thus, there are two different path connecting 
	$h$ and $h^\prime$ in $\wtilde{G}$. This implies the existence of loop in the 
	graph $\tilde{G}$.
\item The graph $G$ has no more than $4k+1$ different edges. In this case, 
	since the graph is connected, the number of different vertices is at most 
	$4k+2$. 
\end{enumerate}
This concludes the proof. 
\end{proof}

Now, we summarize Lemma~\ref{lemma:basic-combinatorics}, Lemma
\ref{lemma:basic-combinatorics-two} and Lemma~\ref{lemma:special-treatment}
to upper bound $N\sing(l_1, l_2)$.
\begin{lemma}
\label{lemma:ultimate-bound-onto-N}
For some constant $C_k$, the bounds below hold for all $l_1, l_2 \in [4]$: 
\begin{equation}
N\sing(l_1, l_2) \le C_k n^{\chi(l_1, l_2)},
\end{equation}
where
\begin{equation}
\chi(l_1, l_2) = 
\begin{cases}
4k -4 &~~\text{if $\min\{l_1, l_2\} = 1$} \\
4k -2 &~~\text{if $\min\{l_1, l_2\} = 2$} \\
4k -1 &~~\text{if $\min\{l_1, l_2\} = 3$} \\
4k  &~~\text{if $\min\{l_1, l_2\} = 4$} \\
4k &~~\text{if $\min\{l_1, l_2\} = 5$}.
\end{cases}
\end{equation}
\end{lemma}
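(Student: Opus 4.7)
The strategy is a straightforward case analysis on the value of $m^\star := \min\{l_1, l_2\}$, invoking Lemma \ref{lemma:basic-combinatorics} together with the refined edge bounds of Lemma \ref{lemma:basic-combinatorics-two}, and reserving Lemma \ref{lemma:special-treatment} for the one case where the generic bound fails. Taking without loss of generality $\bh \in \type_{l_1, m}$ as the tuple achieving $l_1 = m^\star$, we have $\chi_{\nv}(\bh,\bh') = |\bh| + |\bh'|$ by definition, and the multiplicity structure of $\type_{l, \cdot}$ (namely $|\bh| = 4, 3, 2, 2, 1$ for $l = 1,\ldots,5$) ties the size $|\bh|$ directly to the case index.

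The four cases $m^\star \in \{1,2,3,4\}$ will be handled by the second inequality in Lemma \ref{lemma:basic-combinatorics}, i.e.\ by bounding $\chi_{\edge} + \chi_{\cc} - \chi_{\nv}$. Here $\chi_{\cc}(\bh, \bh') - \chi_{\nv}(\bh,\bh') \le \min\{|\bh|,|\bh'|\}-|\bh|-|\bh'| = -\max\{|\bh|, |\bh'|\} = -|\bh|$, so the only remaining task is to bound $\chi_{\edge}$ sharply. When $m^\star = 1$ we have $|\bh| = 4$ and the refinement \eqref{eqn:special-chi-two} yields $\chi_{\edge} \le 4k$; when $m^\star \in \{2,3\}$ the tuple $\bh$ is odd (the multiplicity profile contains an odd part), so \eqref{eqn:special-chi-one} gives $\chi_{\edge} \le 4k+1$; when $m^\star = 4$ neither refinement applies, but $|\bh| = 2$ together with the generic bound $\chi_{\edge} \le 4k+2$ still yields the target $4k$. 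Tabulating these four subtotals produces exactly the claimed exponents $4k-4,\ 4k-2,\ 4k-1,\ 4k$.

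The main obstacle is the terminal case $m^\star = 5$, where $l_1 = l_2 = 5$ and $|\bh| = |\bh'| = 1$. The generic edge-plus-component estimate gives only $\chi_{\edge} + \chi_{\cc} - \chi_{\nv} \le (4k+2) + 1 - 2 = 4k + 1$, one too large. This is exactly the regime Lemma \ref{lemma:special-treatment} is designed for: its conclusion $\chi_{\nav}(\bh, \bh') \le 4k + 2$ lets us invoke the first (sharper) inequality in Lemma \ref{lemma:basic-combinatorics}, yielding $\chi_{\nav} - \chi_{\nv} \le 4k + 2 - 2 = 4k$, which matches the claim. The structural reason the naive bound fails here is that when only one row- and one column-index is available, a graph with the maximal $4k+2$ distinct edges need not have strictly more vertices than edges in each component; Lemma \ref{lemma:special-treatment} recovers the lost unit by arguing that the induced undirected graph must contain a cycle (forced by the degree constraints at the single $I$-vertex and single $J$-vertex), so the vertex count drops by one relative to $\chi_{\edge}+\chi_{\cc}$.
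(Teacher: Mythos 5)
Your proposal is correct and takes essentially the same route as the paper: the generic bound of Lemma~\ref{lemma:basic-combinatorics} combined with the refined edge counts of Lemma~\ref{lemma:basic-combinatorics-two} handles $\min\{l_1,l_2\}\le 4$, and the $(5,5)$ case is rescued exactly as in the paper by Lemma~\ref{lemma:special-treatment} together with the $\chi_{\nav}$ form of Lemma~\ref{lemma:basic-combinatorics}. The only difference is bookkeeping: the paper tabulates all fifteen pairs $(l_1,l_2)$, while you compress this via $\chi_{\cc}-\chi_{\nv}\le -\max\{|\bh|,|\bh^\prime|\}$ and a case split on $\min\{l_1,l_2\}$ alone, which reproduces the same exponents.
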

\begin{proof}
Define the following quantities
\begin{equation}
\chi_{\nav}(l_1, l_2) = \sup_{\bh \in \type_{(l_1, m)}, \bh^\prime \in \type_{(l_2, n)}}
	\chi_{\nav}(\bh, \bh^\prime)
\end{equation}
\begin{equation}
\chi_{\edge}(l_1, l_2) = \sup_{\bh \in \type_{(l_1, m)}, \bh^\prime \in \type_{(l_2, n)}}
	\chi_{\edge}(\bh, \bh^\prime)
\end{equation}
\begin{equation}
\chi_{\cc}(l_1, l_2) = \sup_{\bh \in \type_{(l_1, m)}, \bh^\prime \in \type_{(l_2, n)}}
	\chi_{\cc}(\bh, \bh^\prime)
\end{equation}
\begin{equation}
\chi_{\nv}(l_1, l_2) = \inf_{\bh \in \type_{(l_1, m)}, \bh^\prime \in \type_{(l_2, n)}}
	\chi_{\nv}(\bh, \bh^\prime)
\end{equation}
By Lemma~\ref{lemma:basic-combinatorics}, we know that, for all $l_1, l_2 \in [5]$,
\begin{equation}
\label{eqn:Ns-ell-bound}
N\sing(l_1, l_2) \le C_k n^{\chi_{\edge}(l_1, l_2) + \chi_{\cc}(l_1, l_2) - \chi_{\nv}(l_1, l_2)}.
\end{equation}
By Lemma~\ref{lemma:basic-combinatorics-two}, we can easily upper bound 
$\chi_{\edge}(l_1, l_2), \chi_{\cc}(l_1, l_2), \chi_{\nv}(l_1, l_2)$, and we enumerate 
those upper bound in the table below
(note by symmetry between $l_1$ and $l_2$, we only list the bounds for 
$l_1 \le l_2$)
\begin{equation}
   \begin{matrix}
    \hline
    (l_1, l_2)  & \chi_{\edge}(l_1, l_2) & \chi_{\cc}(l_1, l_2) & \chi_{\nv}(l_1, l_2) \\ \hline
    ({1}, {1}) &  \leq 4k  & \leq 4 & 8  \\
    
    ({1}, {2}) &  \leq 4k  & \leq 3 & 7   \\
    ({2}, {2}) &  \leq 4k+1  & \leq 3 & 6  \\
    
     ({1}, {3}) &  \leq 4k  & \leq 2 & 6  \\         
     ({2}, {3}) &  \leq 4k+1  & \leq 2 & 5  \\
     ({3}, {3}) &  \leq 4k+1  & \leq 2 & 4  \\
     
     ({1}, {4}) &  \leq 4k  & \leq 2 & 6  \\
     ({2}, {4}) &  \leq 4k+1  & \leq 2 & 5   \\
     ({3}, {4}) &  \leq 4k+1  & \leq 2 & 4   \\
     ({4}, {4}) &  \leq 4k+2  & \leq 2 & 4  \\

     ({1}, {5}) & \leq 4k  & 1 & 5 \\
     ({2}, {5}) &  \leq 4k+1  & 1 & 4  \\
     ({3}, {5}) &  \leq 4k+1  & 1 & 3   \\
     ({4}, {5}) &  \leq 4k+2  & 1 & 3   \\  
     ({5}, {5}) & \leq 4k+2 & 1 & 2  \\
     \hline
  \end{matrix}
 \end{equation}
Now, plugging the above estimates into Eq~\eqref{eqn:Ns-ell-bound}, it is easy 
to check that, we have for $l_1, l_2 \in [5]$,
\begin{equation}
\label{eqn:crude-estimate-Ns}
N\sing(l_1, l_2) \le C_k n^{\chi^\prime(l_1, l_2)}.
\end{equation}
where 
\begin{equation}
\chi^\prime(l_1, l_2) = 
\begin{cases}
\chi(l_1, l_2) &~~\text{if $\min\{l_1, l_2\} \le 4$} \\
4k +1 &~~\text{if $\min\{l_1, l_2\} = 5$}.
\end{cases}
\end{equation}
Note that, we can improve the estimate in Eq~\eqref{eqn:crude-estimate-Ns} 
when $l_1 = l_2 = 5$. In fact, by Lemma~\ref{lemma:special-treatment}, we 
know that $\chi_{\nav}(5, 5) \le 4k + 2$. Hence, by Lemma~\ref{lemma:basic-combinatorics}
we get that,
\begin{equation}
\label{eqn:crude-estimate-Ns-two}
N\sing(5, 5) \le C_k n^{\chi_{\nav}(5, 5) - \chi_{\nv}(5, 5)} \le C_k n^{4k}. 
\end{equation}
Now, the desired claim of the lemma follows by Eq~\eqref{eqn:crude-estimate-Ns}
and Eq~\eqref{eqn:crude-estimate-Ns-two}.
\end{proof}

\paragraph{Summary} Now, back to Eq~\eqref{eqn:main-moment-method-new}. 
By Lemma~\ref{lemma:ultimate-bound-onto-M} and Lemma~\ref{lemma:ultimate-bound-onto-N},
we can check easily that, there exists some constant $C_k > 0$ such that for all $l_1, l_2 \in [5]$, 
\begin{equation}
M_m(l_1) M_n(l_2) N\sing(l_1, l_2) \le C_k n^{-2}. 
\end{equation}
Substituting the above bound into Eq~\eqref{eqn:main-moment-method-new} concludes the 
proof of Point~\eqref{item:point-three}.


\section{Proof of Theorem~\ref{thm:subspace-estimation}}
\indent\indent
By Theorem~\ref{proposition:upper-bound-proposition}, 
$\what{\bX}\org(\bY) = \what{f}_{Y, \eps}(\bY)$ has the decomposition below: 
\begin{equation}
\what{\bX}\org(\bY) = \Info_W \bX + \sqrt{\Info_W}\bZ + \bDelta, 
\end{equation}
where $\bDelta$ is a random matrix satisfying 
\begin{equation}
\label{eqn:as-zero-Delta}
\lim_{\eps \to 0} \lim_{n \to \infty, m/n\to \gamma} \frac{1}{(mn)^{1/4}}
	\opnorm{\bDelta} = 0
\end{equation}
and $\bZ$ is some random matrix, whose entries are i.i.d bounded with mean $0$
variance $1$ and moreover, for some constants $\eps_0, C > 0$ independent of 
$m, n$, we have $\normmax{\bZ} \le C\eps^{-1}$ for all $\eps \le \eps_0$. For 
notational simplicity, in the rest of the proof, denote $\wtilde{\bX}\org(\bY) \in 
\R^{m\times n}$ and its SVD decomposition
\begin{equation}
\wtilde{\bX}\org(\bY) = \bX + \frac{1}{\sqrt{\Info_W}}\bZ
~~\text{and}~~
\wtilde{\bX}\org(\bY) = (mn)^{1/4} \wtilde{\bU} \wtilde{\bSigma}\org \wtilde{\bV}^{\sT}.
\end{equation}
Let $\wtilde{\bU}_l = (\wtilde{\bu}_1, \ldots, \wtilde{\bu}_l)^{\sT} \in 
\R^{m \times l}$ be the matrix consisting of the top $l$ left singular 
vectors of $\wtilde{\bX}\org(\bY)$.


We divide our proof into two cases. In the first case, we assume additionally that
the top singular values $\{\sigma_i\}_{i \in [l]}$ are pairwise different, i.e., 
\begin{equation}
\sigma_1 > \sigma_2 > \ldots > \sigma_l.
\end{equation}
Fix $\eps > 0$. By Theorem~\ref{theorem:general-deform}, we can without loss 
of generality (by flipping the sign of $\{\wtilde{\bu}_i\}$ if necessary) assume that, 
for any $i, j\in [l]$, 
\begin{equation}
\lim_{n \to \infty} \wtilde{\bu}_{i}^{\sT} \bu_j \asto 
	\begin{cases}
		G(\sigma_i ; \Info_W)~~&\text{if $i = j$}\\
		0~~&\text{if $i \neq j$}.
	\end{cases}
\end{equation}
As a consequence, we have, 
\begin{equation*}
\opnorm{\wtilde{\bU}_l^{\sT}\bU_{l} - \diag(G(\sigma_i ; \Info_W))_{i \in [l]}}\asto 0,
\end{equation*}
which, by Weyl's inequality, implies that, 
\begin{equation}
\label{eqn:key-without-perturb-eigenspace}
\sigma _{\min }\left( \wtilde{\bU}_l^{\sT}\bU_{l}\right)
	\xrightarrow{a.s.} G(\sigma _{l}; \Info_W).
\end{equation}
To pass the result in Eq~\eqref{eqn:key-without-perturb-eigenspace} to 
$\sigma _{\min }\big(\what{\bU}_{l}^{\sT}\bU_{l}\big)$, our idea is to view 
$\what{\bX}\org(\bY)$ as a perturbed version of $\wtilde{\bX}\org(\bY)$ and then
do some perturbation analysis to show that 
\begin{equation}
\sigma _{\min }\big(\what{\bU}_{l}^{\sT}\bU_{l}\big) \approx 
	\sigma _{\min }\left( \wtilde{\bU}_l^{\sT}\bU_{l}\right).
\end{equation}
More precisely, first, we note that
\begin{equation*}
\sigma _{\min }\left(\what{\bU}_l^{\sT} \bU_{l}\right)^2  = 
\sigma _{\min }\left( \bU_{l}^{\sT} \what{\bU}_l \what{\bU}_l^{\sT} \bU_{l}\right)
~~\text{and}~~
\sigma _{\min }\left(\wtilde{\bU}_l^{\sT} \bU_{l}\right)^2 = 
\sigma _{\min }\left( \bU_{l}^{\sT} \wtilde{\bU}_l \wtilde{\bU}_l^{\sT} \bU_{l}\right).
\end{equation*}
Then using Weyl's inequality and noting that $\bU_l$ is unitary, we get that 
\begin{equation}
\label{eqn:important-perturbation-bound}
\sigma _{\min }\left(\what{\bU}_l^{\sT} \bU_{l}\right)^2 
	\in \left[\sigma _{\min }\left(\wtilde{\bU}_l^{\sT} \bU_{l}\right)^2  \pm 
		\opnorm{\wtilde{\bU}_l\wtilde{\bU}_l^{\sT} - \widehat{\bU}_{l}\widehat{\bU}_{l}^{\sT}}\right],
\end{equation}
Now, by assumption $\sigma_l > \sigma_{l+1}$ and $\sigma_l > \Info_W^{-1/2}$. 
Thus Theorem~\ref{theorem:general-deform} (or Lemma~\ref{lemma:singular-value-of-bA}) 
implies for some constant $\vartheta_0 > 0$ (independent of $\eps, \delta$),  
\begin{equation}
\label{eqn:d-k-signal}
\lim_{n \to \infty, m/n\to \gamma}
	\frac{1}{(mn)^{1/4}}\left(\sigma_{l}( \wtilde{\bX}\org(\bY)) -  \sigma_{l+1}( \wtilde{\bX}\org(\bY))\right)
		\ge \vartheta_0.
\end{equation}
Fix this $\vartheta_0$. 
Next, by Eq~\eqref{eqn:as-zero-Delta}, we know for any $\bar{\Delta} > 0$, 
there exists some $\eps_0$ such that for $\eps \le \eps_0$
\begin{equation}
\label{eqn:d-k-noise}
\lim_{n \to \infty, m/n\to \gamma}
	\frac{1}{(mn)^{1/4}}\opnorm{\bDelta} 	\le \bar{\Delta}.
\end{equation}
According to Eq~\eqref{eqn:d-k-signal} and Eq~\eqref{eqn:d-k-noise}, 
we apply the Davis-Kahan Theorem (see lemma~\ref{lemma:Davis-Kahan})
to see that for any $\bar{\Delta}$ satisfying $\bar{\Delta} < \vartheta_0/2$, 
there exists some $\eps_0$ such that for all $\eps < \eps_0$,
\begin{equation}
\lim_{n\to \infty, m/n \to \gamma}
	\opnorm{\wtilde{\bU}_l\wtilde{\bU}_l^{\sT} - \widehat{\bU}_{l}\widehat{\bU}_{l}^{\sT}}	
		\le \frac{\bar{\Delta}}{\vartheta_0 - 2\bar{\Delta}}.
\end{equation} 
Since $\bar{\Delta} < \vartheta_0/2$ is arbitrary, we take $\bar{\Delta} \to 0$ on both 
sides and get that 
\begin{equation}
\label{eqn:limit-eps-eigenspace}
\lim_{\eps \to 0}\lim_{n\to \infty, m/n \to \gamma}
	\opnorm{\wtilde{\bU}_l\wtilde{\bU}_l^{\sT} - \widehat{\bU}_{l}\widehat{\bU}_{l}^{\sT}}	
		= 0.
\end{equation}
By Eq~\eqref{eqn:key-without-perturb-eigenspace},
Eq~\eqref{eqn:important-perturbation-bound} and 
Eq~\eqref{eqn:limit-eps-eigenspace},
we see that, 
\begin{equation}
\sigma _{\min }\left( \what{\bU}_l^{\sT}\bU_{l}\right)
	\asto G(\sigma _{l}; \Info_W), 
\end{equation}
giving the desired result of the Theorem. 

In the second case, we consider the situation where some elements of 
$\{\sigma_i\}_{i\in [k]}$ coincide. The idea is to reduce the second case 
to the first case, through the \emph{perturbation trick} that we shall describe. 
Indeed, for any $\{\pert_i\}_{i \in [l]}$ such that $\{\sigma_i + \pert_i\}_{i \in [l]}$ 
are distinct, we define
\begin{equation}
\bX(\pert) = \bX + (mn)^{1/4}\sum_{i=1}^r \pert_i \, \bu_i \bv_i^{\sT}.
\end{equation}
Now, for such $\{\pert_i\}_{i \in [l]}$, denote $\what{\bX}(\bY; \pert)$ and its 
SVD decomposition 
\begin{equation}
\what{\bX}\org(\bY; \pert)  = \Info_W \bX(\pert) + \sqrt{\Info_W}\bZ + \bDelta
~~\text{and}~~
\what{\bX}\org(\bY; \pert) = (mn)^{1/4} \what{\bU}(\pert)\what{\bSigma}(\pert)\what{\bV}(\pert)^{\sT}.
\end{equation}
Denote analogously $\widehat{\bU}_l(\pert)$ to be the matrix consisting of the top $l$ singular 
vectors of $\what{\bX}\org(\bY; \pert)$ and$\sigma_i(\pert)$ to be the top $i$th singular 
value of $\bX(\pert)$ for each $i$. Let $\pert\imax = \max_{i \in [l]}|\pert_i|$. Since by assumption
$\sigma_l > \sigma_{l+1}$, Weyl's Theorem implies when $\pert\imax$ is small enough, then 
the set of the top $l$ singular values of  $\bX(\pert)$ is precisely the set $\{\sigma_i + \pert_i\}_{i \in [l]}$. 
Moreover, the top $l$ singular values of $\bX(\pert)$ are pairwise different by our choice of 
$\{\pert_i\}_{i \in [l]}$. Thus, we may use the established result in the first case to conclude 
that, 
\begin{equation}
\label{eqn:perturb-result}
\lim_{\eps\to 0}\lim_{n\to \infty, m/n \to \gamma}
	\sigma _{\min }\left( \what{\bU}_l^{\sT}(\pert) \bU_{l}\right) = 
		G(\sigma_l(\pert); \Info_W).
\end{equation}
Now that, by a similar argument proving Eq~\eqref{eqn:important-perturbation-bound}
we can show that, 
\begin{equation}
\label{eqn:important-perturbation-bound-pert}
\sigma _{\min }\left(\what{\bU}_l^{\sT}(\pert) \bU_{l}\right)^2 
	\in \left[\sigma _{\min }\left(\what{\bU}_l^{\sT} \bU_{l}\right)^2  \pm 
		\opnorm{\what{\bU}_l(\pert)\what{\bU}_l^{\sT}(\pert)- 
			\widehat{\bU}_{l}\widehat{\bU}_{l}^{\sT}}\right],
\end{equation}
Moreover, Eq~\eqref{eqn:as-zero-Delta}, Eq~\eqref{eqn:d-k-signal} and Weyl's 
inequality, we have, for some constant $\eps_0, \vartheta_0 > 0$ 
(independent of $m, n$), we have for all $\eps \le \eps_0$, 
\begin{equation}
\label{eqn:d-k-signal-pert}
\lim_{n \to \infty, m/n\to \gamma}
	\frac{1}{(mn)^{1/4}}\left(\sigma_{l}(\what{\bX}\org(\bY)) -  
		\sigma_{l+1}(\what{\bX}\org(\bY)\right)
		\ge\delta_0^\prime.
\end{equation}
Now, viewing $\what{\bX}\org(\bY; \pert)$ as a perturbed version of
$\what{\bX}\org(\bY)$, Eq~\eqref{eqn:d-k-signal-pert} and Davis-Kahan-Theorem 
imply that for all $\pert$ such that $\pert\imax < \vartheta_0$, we have for 
$\eps \le \eps_0$, 
\begin{equation}
\lim_{n\to \infty, m/n \to \gamma}
	\opnorm{\what{\bU}_l(\pert)\what{\bU}_l^{\sT}(\pert) - \widehat{\bU}_{l}\widehat{\bU}_{l}^{\sT}}	
		\le \frac{\pert\imax }{\vartheta_0 - 2\pert\imax }.
\end{equation} 
By letting $\eps \to 0$ and $\pert\imax \to 0$, we get
\begin{equation}
\label{eqn:limit-eps-eigenspace-pert}
\lim_{\pert \to 0}\lim_{\eps \to 0}\lim_{n\to \infty, m/n \to \gamma}
	\opnorm{\what{\bU}_l(\pert)\what{\bU}_l^{\sT}(\pert) - \widehat{\bU}_{l}\widehat{\bU}_{l}^{\sT}}	
		= 0.
\end{equation}
By Eq~\eqref{eqn:perturb-result}, Eq~\eqref{eqn:important-perturbation-bound-pert} 
and Eq~\eqref{eqn:limit-eps-eigenspace-pert}, we know that, 
\begin{align}
\lim_{\eps\to 0}\lim_{n\to \infty, m/n \to \gamma}
	\sigma _{\min }\left( \what{\bU}_l^{\sT} \bU_{l}\right) 
&= \lim_{\pert \to 0}\lim_{\eps\to 0}\lim_{n\to \infty, m/n \to \gamma}
	\sigma _{\min }\left( \what{\bU}_l^{\sT}(\pert) \bU_{l}\right)  \nonumber \\
&= \lim_{\pert \to 0} G(\sigma_l(\pert); \Info_W)
=  G(\sigma_l; \Info_W) 
\end{align}
where the last identity since the function $\sigma \to G(\sigma; \Info_W)$ is continuous 
on $[\Info_W^{-1/2}, \infty)$. 
This concludes the proof of Theorem~\ref{thm:subspace-estimation}.


\newcommand{\bp}{\boldsymbol{p}}
\newcommand{\bq}{\boldsymbol{q}}
\newcommand{\sym}{^{\rm sym}}
\section{Proof of Theorem~\ref{theorem:nonlinear-PCA-perturbation}}
\label{sec:matrix-perturbation}
Define $\{\bp_1, \bp_2, \ldots, \bp_k\}$ and $\{\bq_1, \bq_2, \ldots, \bq_k\}$ by the columns of 
$\bP_k$, $\bQ_k$ respectively, i.e.,  
\begin{equation}
\bP_k = [\bp_1, \bp_2, \ldots, \bp_k]~\text{and}~\bQ_k = [\bq_1, \bq_2, \ldots, \bq_k].  
\end{equation}
For each $l < k$, we define the matrices $\bP_l \in \R^{m\times l}$ and $\bQ_l \in \R^{n\times l}$ by
\begin{equation}
\bP_l = [\bp_1, \bp_2, \ldots, \bp_l]~\text{and}~\bQ_l = [\bq_1, \bq_2, \ldots, \bq_l].  
\end{equation}
Similarly, we define the vectors $\{\wtilde{\bp}_l\}_{l \in [k]}$, 
$\{\wtilde{\bq}_l\}_{l \in [k]}$ and the matrices 
$\wtilde{\bP}_l \in \R^{m\times l}$, $\wtilde{\bQ}_l \in \R^{n\times l}$.
For each $l \le k$, we define the error matrices $\bDelta_l \in \R^{m \times n}$ and singular 
gap $\delta_l \in \R_+$ by
\begin{equation}
\bDelta_l = \bP_l\bQ_l^{\sT} - \wtilde{\bP}_l\wtilde{\bQ}_l^{\sT},
~\text{and}~
\delta_l = \sigma_l(\bA) - \sigma_{l+1}(\bA)
\end{equation}
By assumption in Eq~\eqref{eqn:singular-gap}, we know that $\delta_k > \vartheta > 2\opnorm{\bE}$. 

As our starting point, we prove the following claim. For each $l \in [k]$ such that 
$\delta_l > 2\opnorm{\bE}$, 
\begin{equation}
\label{eqn:perturb-U-V-prod}
\opnorm{\bDelta_l}
	\le \frac{2}{\delta_l} \opnorm{\bE}.
\end{equation}
To do so, define the matrices $\bA\sym, \wtilde{\bA}\sym, \bE\sym  \in \R^{(m+n) \times (m+n)}$ by 
\begin{equation}
\bA\sym \defeq 
    \begin{bmatrix}
    0 & \bA \\
    \bA^{\sT} & 0 \\
    \end{bmatrix}
,~~
\wtilde{\bA}\sym \defeq 
    \begin{bmatrix}
    0 & \wtilde{\bA} \\
    \wtilde{\bA}^{\sT} & 0 \\
    \end{bmatrix}
~\text{and}~
\bE\sym \defeq 
    \begin{bmatrix}
    0 & \bE \\
    \bE^{\sT} & 0 \\
    \end{bmatrix}
\end{equation}
Now that since $\wtilde{\bA} = \bA+ \bE$, we know that $\wtilde{\bA}\sym = 
\bA\sym + \bE\sym$. By standard result in matrix analysis~\cite[Theorem 4.2]{StewartSun90}, 
we know that the top $k+1$ eigenvalues of $\bA\sym$ are 
precisely the top $k+1$ singular values of $\bA$, 
and moreover, for any $l \in [k]$, the top $l$ eigenvectors of $\bA\sym$ and 
$\wtilde{\bA}\sym$ are columns of $\bP\sym, \wtilde{\bP}\sym \in \R^{(m+n) \times k}$ 
defined below
\begin{equation}
\bP_l\sym \defeq 
	\begin{bmatrix}
		\bP_l \\
		\bQ_l.
	\end{bmatrix}
~~\text{and}~~
\wtilde{\bP}_l\sym \defeq 
	\begin{bmatrix}
		\wtilde{\bP}_l \\
		\wtilde{\bQ}_l.
	\end{bmatrix}
\end{equation}
Now, we fix any $l$ such that $\delta_l > 2\opnorm{\bE}$.
By Davis-Kahan Theorem (see Lemma~\ref{lemma:Davis-Kahan}), we have
\begin{equation}
\opnorm{\wtilde{\bP}_l\sym(\wtilde{\bP}_l\sym)^{\sT} - \bP_l\sym(\bP_l\sym)^{\sT}}
	\le \frac{\opnorm{\bE\sym}}{\delta_l - \opnorm{\bE\sym}}, 
\end{equation}
which is equivalent to 
\begin{equation}
\opnorm{\begin{bmatrix}
\bP_l\bP_l^{\sT} - \wtilde{\bP}_l\wtilde{\bP}_l^{\sT}  & \bP_l \bQ_l^{\sT} - \wtilde{\bP}_l \wtilde{\bQ}_l^{\sT} \\
\bQ_l\bP_l^{\sT} - \wtilde{\bQ}_l\wtilde{\bP}_l^{\sT} & \bQ_l\bQ_l^{\sT} - \wtilde{\bQ}_l \wtilde{\bQ}_l^{\sT} \\
\end{bmatrix}}
\le \frac{\opnorm{\bE}}{\delta_l - \opnorm{\bE}}.
\end{equation}
Now that the bound above implies 
\begin{equation}
\label{eqn:perturb-U-V-prod-final}
\opnorm{\bDelta_l} \le \frac{\opnorm{\bE}}{\delta_l - \opnorm{\bE}}.
\end{equation}
Since $\delta_l > 2\opnorm{\bE}$, Eq~\eqref{eqn:perturb-U-V-prod-final} implies the desired claim at 
Eq~\eqref{eqn:perturb-U-V-prod}.

Now, we are ready to show the desired claim of Theorem~\ref{theorem:nonlinear-PCA-perturbation}.
Define the auxiliary matrix 
\begin{equation}
\bB_k = \wtilde{\bP}_k f(\bS_k) \wtilde{\bQ}_k^{\sT}.
\end{equation}
By triangle inequality, we have, 
\begin{equation}
\label{eqn:def-error-one-two}
\opnorm{f(\bA_k) - f(\wtilde{\bA}_k)}
	\le \opnorm{\bE_1} + \opnorm{\bE_2}~
		~\text{where}~ 
	\bE_1 ={f(\wtilde{\bA}_k) - \bB_k} 
	~\text{and}~
	\bE_2 =  {f(\bA_k) - \bB_k}.
\end{equation}
We bound error matrix $\bE_1$ first. By definition, we have 
\begin{equation}
\label{eqn:error-term-one}
\opnorm{\bE_1} = \opnorm{f(\wtilde{\bA}_k) - \bB_k}
= \opnorm{\wtilde{\bP}_k (f(\bS_k) - f(\wtilde{\bS}_k)) \wtilde{\bQ}_k}
\le \opnorm{f(\bS_k) - f(\wtilde{\bS}_k)}.
\end{equation}
Note that $\opnormbig{\bS_k - \wtilde{\bS}_k} \le \opnorm{\bE}$ by Weyl's inequality. Thus,
the assumption that $f$ is $(L, \alpha)$ H\"{o}lder continuous on $[\tau, \zeta]$ 
for $\zeta > \sigma_1(\bA)$ implies that 
\begin{equation}
\opnorm{f(\bS_k) - f(\wtilde{\bS}_k)} \le L \opnormbig{\bS_k - \wtilde{\bS}_k}^{\alpha}
	\le L \opnorm{\bE}^{\alpha}. 
\end{equation}
Substituting the above estimate into Eq~\eqref{eqn:error-term-one} gives the upper bound  
\begin{equation}
\label{eqn:error-term-one-final}
\opnorm{\bE_1} \le L \opnorm{\bE}^{\alpha}.
\end{equation}
Next, we bound the error matrix $\bE_2$. Indeed, by definition, we have, 
\begin{align}
f(\bA_k) - \bB_k &= \sum_{l \in [k]} f(\sigma_l(\bA)) 
	\left(\bp_l \bq_l^{\sT} -  \wtilde{\bp}_l \wtilde{\bq}_l^{\sT}\right) \nonumber \\
	&= \sum_{l < k} (f(\sigma_l(\bA)) - f(\sigma_{l+1}(\bA))) 
		\left(\bP_l \bQ_l^{\sT} - \wtilde{\bP}_l \wtilde{\bQ}_l^{\sT}\right)
		+ f(\sigma_k(\bA)) \left(\bP_k \bQ_k^{\sT} - \wtilde{\bP}_k \wtilde{\bQ}_k^{\sT}\right). \nonumber
\end{align}
Hence, by triangle inequality, we get the estimate below, 
\begin{align}
\label{eqn:error-term-two}
\opnorm{\bE_2} \le \sum_{l < k} \Big| f(\sigma_l(\bA)) - f(\sigma_{l+1}(\bA))\Big|
		\opnormBig{\bP_l \bQ_l^{\sT} - \wtilde{\bP}_l \wtilde{\bQ}_l^{\sT}}
		+ f(\sigma_k(\bA)) \opnormBig{\bP_k \bQ_k^{\sT} - \wtilde{\bP}_k \wtilde{\bQ}_k^{\sT}}. 
\end{align}
Now that $\delta_k  = \sigma_k(\bA) - \sigma_{k+1}(\bA) > \vartheta >  2\opnorm{\bE}$
by assumption. Eq~\eqref{eqn:perturb-U-V-prod} shows that, 
\begin{equation}
\label{eqn:trivial-claim-matrix-pert}
\opnormBig{\bP_k \bQ_k^{\sT} - \wtilde{\bP}_k \wtilde{\bQ}_k^{\sT}}
	\le \frac{2}{\delta_k} \opnorm{\bE} \le \frac{2}{\vartheta} \opnorm{\bE} 
\end{equation}
Now, we show that for any $l < k$, 
\begin{equation}
\label{eqn:crucial-claim-matrix-pert}
\Big| f(\sigma_l(\bA)) - f(\sigma_{l+1}(\bA))\Big|
		\opnormBig{\bP_l \bQ_l^{\sT} - \wtilde{\bP}_l \wtilde{\bQ}_l^{\sT}}
		\le  4L \opnorm{\bE}^{\alpha}.
\end{equation}
To show Eq~\eqref{eqn:crucial-claim-matrix-pert}, we divide it into two cases.
\begin{enumerate}
\item In the first case, we assume that $\delta_l = \sigma_l(\bA) - \sigma_{l+1}(\bA) \le 2\opnorm{\bE}$. 
	Since $f$ is $(L, \alpha)$ H\"{o}lder continuous (see Eq~\eqref{eqn:f-Holder-continuous}) 
	for $\alpha \in (0, 1]$, we know that, 
	\begin{equation}
	\label{eqn:crucial-claim-matrix-pert-case-one-one}
	\Big| f(\sigma_l(\bA)) - f(\sigma_{l+1}(\bA))\Big|\le L \left(2\opnorm{\bE}\right)^{\alpha}	
		\le 2L \opnorm{\bE}^{\alpha}.
	\end{equation}
	Since $\bP_l$, $\wtilde{\bP}_l$, $\bQ_l$ and $\wtilde{\bQ}_l$ are all orthonormal, 
	we know that, 
	\begin{equation}
	\label{eqn:crucial-claim-matrix-pert-case-one-two}
	\opnormBig{\bP_l \bQ_l^{\sT} - \wtilde{\bP}_l \wtilde{\bQ}_l^{\sT}}
		\le \opnormBig{\bP_l \bQ_l^{\sT}} + \opnormBig{\wtilde{\bP}_l \wtilde{\bQ}_l^{\sT}}
		\le 2. 
	\end{equation}
	Now the desired claim at Eq~\eqref{eqn:crucial-claim-matrix-pert} follows by 
	Eq~\eqref{eqn:crucial-claim-matrix-pert-case-one-one}
	and 
	Eq~\eqref{eqn:crucial-claim-matrix-pert-case-one-two}.
\item In the second case, we assume that $\delta_l = \sigma_l(\bA) - \sigma_{l+1}(\bA) > 2\opnorm{\bE}$. 
	Since $f$ is $(L, \alpha)$ H\"{o}lder continuous (see Eq~\eqref{eqn:f-Holder-continuous}), 
	we know that, 
	\begin{equation}
	\label{eqn:crucial-claim-matrix-pert-case-two-one}
	\Big| f(\sigma_l(\bA)) - f(\sigma_{l+1}(\bA))\Big|\le L \left(\sigma_l(\bA) - \sigma_{l+1}(\bA)\right)^{\alpha}	
		\le L \delta_l^{\alpha}.
	\end{equation}
	Since $\delta_l > 2\opnorm{\bE}$ and $\alpha \in (0, 1]$, Eq~\eqref{eqn:perturb-U-V-prod} shows that, 
	\begin{equation}
	\label{eqn:crucial-claim-matrix-pert-case-two-two}
	\opnormBig{\bP_l \bQ_l^{\sT} - \wtilde{\bP}_l \wtilde{\bQ}_l^{\sT}}
		\le \frac{2}{\delta_l}\opnorm{\bE} \le \frac{2}{\delta_l^{\alpha}}\opnorm{\bE}^{\alpha}.
	\end{equation}
	Now the desired claim at Eq~\eqref{eqn:crucial-claim-matrix-pert} follows by 
	Eq~\eqref{eqn:crucial-claim-matrix-pert-case-two-one}
	and 
	Eq~\eqref{eqn:crucial-claim-matrix-pert-case-two-two}.
\end{enumerate}
Substituting the bound at Eq~\eqref{eqn:crucial-claim-matrix-pert} and 
Eq~\eqref{eqn:trivial-claim-matrix-pert} into Eq~\eqref{eqn:error-term-two}, we get that, 
\begin{equation}
\label{eqn:error-term-two-final}
\opnorm{\bE_2} \le 4(k-1)L \opnorm{\bE}^{\alpha} + \frac{2}{\vartheta}f(\sigma_k(\bA))\opnorm{\bE}. 
\end{equation}
Now the desired claim of Theorem~\ref{theorem:nonlinear-PCA-perturbation} follows by plugging 
Eq~\eqref{eqn:error-term-one-final} and Eq~\eqref{eqn:error-term-two-final}
into Eq~\eqref{eqn:def-error-one-two}.


\section{Proofs of Technical Lemma}
\subsection{Proof of Lemma~\ref{lemma:opnorm-expectation-nonsymmetric}}
\label{sec:proof-opnorm-expectation-nonasymmetric}
We start by proving the case where $\bX$ is symmetric (in this case $m=n$). In this case, 
we define matrices $\{\bX^{(i, j)}\}_{1\le i \le j \le n}$ such that, 
\begin{equation*}
\bX_{k, l}^{(i, j)} = \begin{cases}
X_{i, j}~~&\text{if $(k, l) = (i, j)$ or $(k, l) = (j, i)$} \\
0~~&\text{otherwise}
\end{cases}.
\end{equation*}
Note that, $\{\bX^{(i, j)}\}_{1\le i\le j\le n}$ are all symmetric matrices, mean $\zero$, independent to 
each other and 
\begin{equation*}
\bX = \sum_{1 \le i\le j \le n} \bX^{(i, j)}.
\end{equation*}
Denote $\{\eps_{i, j}\}_{1\le i\le j\le n}$ be independent Radamacher random variables and 
$\wtilde{\bX^{(i, j)}} = \eps_{i, j} \bX^{(i, j)}$. By standard symmetrization argument, we have 
\begin{equation}
\label{eqn:opnorm-symmetrization-step}
\E \opnorm{\bX}^2 = \E \left\{\opnormbigg{\sum_{1 \le i\le j \le n} \bX^{(i, j)}}^2 \right\}
	\le 4\E \left\{\opnormbigg{\sum_{1\le i\le j\le n}\wtilde{\bX^{(i, j)}}}^2 \right\}
\end{equation}
Now, $\left\{\wtilde{\bX^{(i, j)}}\right\}_{1\le i\le j\le n}$ are symmetrically distributed symmetric matrices.
We may use Lemma~\ref{lemma:opnorm-expectation-chen} to get for some numerical constant 
$C > 0$, 
\begin{align}
&\E\left\{\opnormbigg{\sum_{1\le i\le j\le n}\wtilde{\bX^{(i, j)}}}^k \right\} \nonumber \\ 
&\le C^k \left[(\log n + k)^{1/2} \times 
	\opnormbigg{\E \bigg\{\sum_{1\le i\le j\le n} \wtilde{\bX^{(i, j)}}^2\bigg\}} + (\log n + k) \times 
	\E \left\{\max_{1\le i\le j\le n}\opnormbigg{\wtilde{\bX^{(i, j)}}}\right\}\right]^k \nonumber \\
&\le C^k k^k \log^k(n)  \left(\max_{i\in [n]} \E \Big[\sum_{j=1}^n X_{i, j}^2\Big]
	+ \E \Big[\max_{1\le i\le j\le n}\left|X_{i, j}\right|\Big]\right)^k 
\label{eqn:computation-of-chen-inequality}
\end{align}
This proves the result for symmetric matrix $\bX \in \R^{n \times n}$. The more general situation 
where $\bX \in \R^{m \times n}$ is asymmetric can be reduced to the symmetric case. 
In fact, define $\bX\sym \in \R^{(m + n) \times (m+n)}$
\begin{equation}
\bX\sym \defeq \begin{bmatrix}
0  & \bX \\
\bX^{\sT} &0
\end{bmatrix}.
\end{equation}
Then $\bX\sym$ is symmetric and satisfies $\opnorm{\bX\sym} = \opnorm{\bX}$. Now, by applying 
the already established result to the symmetric matrix $\bX\sym \in \R^{(m + n) \times (m+n)}$, we 
get the desired claim of the lemma for the asymmetric matrix $\bX$.

\subsection{Proof of Lemma~\ref{lemma:elementary-inequality}}
\label{sec:proof-lemma-elementary-inequality}
First, we have, 
\begin{equation*}
g(s_1, t_1) - g(s_2, t_2) = \frac{s_2 - s_1}{t_1 +\dezero}
	+ \frac{s_2(t_1 - t_2)}{(t_1 + \dezero) (t_2 + \dezero)}.
\end{equation*}
Since $t_1, t_2 \ge 0$, by triangle inequality, we get, 
\begin{equation*}
\left|g(s_1, t_1) - g(s_2, t_2)\right| \leq \dezero^{-1} 
	\left|s_1 - s_2\right| + \dezero^{-2} |s_2| |t_1 - t_2|. 
\end{equation*}
Similarly,  
\begin{equation*}
\left|g(s_1, t_1) - g(s_2, t_2)\right| \leq \dezero^{-1} 
	\left|s_1 - s_2\right| + \dezero^{-2} |s_1| |t_1 - t_2|. 
\end{equation*}
The last two inequalities together give the desired claim.

\subsection{Proof of Lemma~\ref{lemma:f-abs-bar-bound}}
\label{sec:proof-lemma-f-abs-bar-bound}
By Assumption~{\sf A2}, we know that for all $i \in [m], j\in [n]$,
\begin{equation}
\left|g^2\left(p_W^\prime(W_{i, j}), p_W(W_{i, j})\right)\right| 
	\le \eps^{-2} \norm{p_W^\prime(\cdot)}_\infty^2
	\le \eps^{-2} M_2^2. 
\end{equation}
Thus, since $\wtilde{\Info}_{W,\eps}$ is the average of the 
random variables $g^2\left(p_W^\prime(W_{i, j}), p_W(W_{i, j})\right)$, 
we know by Hoeffding's inequality \cite[Theorem 2.8]{BoucheronLuMa13}
that, for all $t > 0$, 
\begin{equation}
\label{eqn:Hoeffiding-tilde-I-W-eps}
\P \left(| \wtilde{\Info}_{W,\eps} - \E \wtilde{\Info}_{W,\eps} | > t\right) 
	\leq 2 \exp \left(- \frac{mn t^2 \eps^{4}}{2M_2^4}\right).
\end{equation}
Now, we show the below crucial estimate, 
\begin{equation}
\label{eqn:crucial-estimate-diff-bound}
\left|\E \wtilde{\Info}_{W,\eps} - \Info_{W, \eps}\right| \le \delta_{W, \eps}.
\end{equation}
Indeed, Eq~\eqref{eqn:crucial-estimate-diff-bound} follows by the direction computations below
\begin{align}
\left|\E \wtilde{\Info}_{W,\eps} - \Info_{W, \eps}\right|
&= \left|\int_{\R} \frac{(p_W^\prime(w))^2}{(p_W(w)+ \eps)^2} p_W(w)\de w
	- \int_{\R} \frac{(p_W^\prime(w))^2}{p_W(w)+ \eps} \de w\right|
= \eps \left|\int_{\R} \frac{(p_W^\prime(w))^2}{(p_W(w) + \eps)^2} \de w\right| \nonumber \\
& \le \eps \left|\int_{\R} \frac{(p_W^\prime(w))^2}{p_W(w)(p_W(w) + \eps)} \de w\right|
= \left|\int_{\R} \frac{(p_W^\prime(w))^2}{p_W(w)+ \eps} \de w - 
\int_{\R} \frac{(p_W^\prime(w))^2}{p_W(w)} \de w\right|
=  \delta_{W, \eps}. \nonumber
\end{align}
Hence, Eq~\eqref{eqn:Hoeffiding-tilde-I-W-eps} and Eq~\eqref{eqn:crucial-estimate-diff-bound}
together show that, for all $t > 0$, 
\begin{equation}
\P \left(| \wtilde{\Info}_{W,\eps} - I_{W, \eps}| > t + \delta_{W, \eps}\right)
\le  
\P \left(| \wtilde{\Info}_{W,\eps} - \E \wtilde{\Info}_{W,\eps} | > t\right) 
	\leq 2 \exp \left(- \frac{mn t^2 \eps^{4}}{2M_2^4}\right),
\end{equation}
giving the desired claim of the lemma. 

\subsection{Proof of Lemma~\ref{lemma:upper-bound-error-one}}
\label{sec:proof-upper-bound-error-one}
To simplify the notations, we introduce the quantities $\{G_{1, i, j}\}_{i \in [m], j\in [n]}$ to be,
\begin{equation*}
G_{1, i, j} \defeq \left|g(\hat{p}_Y^\prime(\wtilde{Y}_{i, j}), \hat{p}_Y(\wtilde{Y}_{i, j}))
		- g(p_{W}^\prime(W_{i, j}), p_W(W_{i, j}))\right|
\end{equation*}
and the quantities 
$\{G_{2, i, j}\}_{i \in [m], j\in [n]}$ to be,
\begin{equation*}
G_{2, i, j} \defeq \left|g^2(\hat{p}^\prime_Y(\wtilde{Y}_{i, j}), \hat{p}_Y(\wtilde{Y}_{i, j}))
		- g^2(p_{W}^\prime(W_{i, j}), p_W(W_{i, j}))\right|.
\end{equation*}
Moreover, we denote $G_1^2$ to be the mean of $\{G^2_{1, i, j}\}_{i \in [m], j\in [n]}$, i.e, 
\begin{equation}
G_1^2 = \frac{1}{mn} \sum_{i \in [m], j \in [n]} G_{1, i, j}^2. 
\end{equation}
As our starting point, we see by triangle inequality that,
\begin{equation}
\label{eqn:start-bound-on-error-one}
\errortwo = \frac{1}{mn} \left|\sum_{i \in [m], j\in [n]}
	g^2(\hat{p}_Y^\prime(\wtilde{Y}_{i, j}), \hat{p}_Y(\wtilde{Y}_{i, j}))
		- g^2(p_{W}^\prime(W_{i, j}), p_W(W_{i, j}))\right| 
	\le  \frac{1}{mn} \sum_{i \in [m], j\in [n]}G_{2, i, j}.
\end{equation}
We next upper bound the RHS of Eq~\eqref{eqn:start-bound-on-error-one}. 
Note the elementary inequality below,
\begin{equation*}
|s^2 - t^2| \le (s-t)^2 + 2|s-t| (|s| \wedge |t|)~~\text{for $s, t \in \R$}. 
\end{equation*} 
If we apply it to  $s = g(\hat{p}_Y^\prime(\wtilde{Y}_{i, j}), \hat{p}_Y(\wtilde{Y}_{i, j}))$ 
and $t = g(p_{W}^\prime(W_{i, j}), p_W(W_{i, j}))$, we get that, 
\begin{align}
G_{2, i, j} &\le 2 \left|g(p_{W}^\prime(W_{i, j}), p_W(W_{i, j}))\right|G_{1, i, j} + G_{1, i, j}^2,  
\label{eqn:intermediate-bound-on-error-one}
\end{align}
Therefore, if we plug the individual estimate of 
Eq~\eqref{eqn:intermediate-bound-on-error-one} into RHS of 
Eq~\eqref{eqn:start-bound-on-error-one}, we get that,  
\begin{align}
\errortwo \le 
	\frac{2}{mn} \sum_{i, j} \left|g(p_{W}^\prime(W_{i, j}), p_W(W_{i, j}))\right|G_{1, i, j} 
+ 	\frac{1}{mn} \sum_{i, j}G_{1, i, j}^2 \le 2 \wtilde{I}_{W, \eps}^{1/2} \times 
		G_1 + G_1^2,
\end{align}
where the last inequality follows by Cauchy-Schwartz inequality. Now, to 
show the desired claim of the lemma, it suffices to show for some constant 
$C > 0$ the bound below on $G_1$: 
\begin{equation}
\label{eqn:upper-bound-on-G_1}
G_1 \le C \left(\dezero^{-2}\resT_{n, 1} + \dezero^{-1}\resT_{n, 2}\right).
\end{equation}
Indeed, to show Eq~\eqref{eqn:upper-bound-on-G_1}, we first upper 
each term $G_{1, i, j}$. By Lemma~\ref{lemma:elementary-inequality}, 
\begin{align}
G_{1, i, j} &\le \dezero^{-1} \left|\hat{p}_Y^\prime(\wtilde{Y}_{i, j}) - p_W^\prime(W_{i, j})\right|
	+  \dezero^{-2}\norm{p_W^\prime(\cdot)}_\infty
		\left|\hat{p}_Y(\wtilde{Y}_{i, j}) - p_W(W_{i, j})\right|
	\label{eqn:intermediate-step-g-entry-bound-info} 
\end{align}
Note that $\norm{p_W^\prime(\cdot)}_\infty \le M_2$ by Assumption~{\sf A2}. Hence, 
by Jensen's inequality, 
\begin{equation*}
G_{1, i, j}^2 \le C\left[\dezero^{-2} \left(\hat{p}^\prime_Y(\wtilde{Y}_{i, j}) - p^\prime_W(W_{i, j})\right)^2
	+  \dezero^{-4} \left(\hat{p}_Y(\wtilde{Y}_{i, j}) - p_W(W_{i, j})\right)^2\right]
\end{equation*}
for some constant $C > 0$. Summing over all $i \in [m], j\in [n]$, we get that, 
\begin{equation}
\label{eqn:G-1-intermediate-estimate}
G_1^2 \le C \left[\frac{1}{\dezero^{2}mn} \sum_{i \in [m], j\in [n]} 
	\left(\hat{p}_Y^\prime(\wtilde{Y}_{i, j}) - p_W^\prime(W_{i, j})\right)^2
	+  \frac{1}{\dezero^{4}mn} \sum_{i \in [m], j\in [n]} 
		\left(\hat{p}_Y(\wtilde{Y}_{i, j}) - p_W(W_{i, j})\right)^2  \right].
\end{equation}
Now, we bound each of the two individual summation terms in the above parentheses. In fact,
we show that, for some constant $C > 0$,
\begin{equation}
\label{eqn:py-diff-pw}
\frac{1}{mn} \sum_{i \in [m], j \in [n]} \left(\hat{p}_Y(\wtilde{Y}_{i, j}) - p_W(W_{i, j})\right)^2
	\le C \left(\normmax{\hat{p}_Y(\wtilde{\bY}) - p_W(\wtilde{\bY})}^2 + 
		\frac{1}{mn} \norm{\bX}_F^2 + \wbar{W}^2\right), 
\end{equation}
and 
\begin{equation}
\label{eqn:py-diff-pw-prime}
\frac{1}{mn} \sum_{i \in [m], j \in [n]} \left(\hat{p}^\prime_Y(\wtilde{Y}_{i, j}) - p^\prime_W(W_{i, j})\right)^2
	\le C \left(\normmax{\hat{p}_Y^\prime(\wtilde{\bY}) - p_W^\prime(\wtilde{\bY})}^2 + 
		\frac{1}{mn} \norm{\bX}_F^2 + \wbar{W}^2\right).
\end{equation}
The proof of Eq~\eqref{eqn:py-diff-pw} and Eq~\eqref{eqn:py-diff-pw-prime} is similar, 
and we exemplify the proof by showing Eq~\eqref{eqn:py-diff-pw}. Indeed, by triangle 
inequality, we have, for $i \in [m], j\in [n]$, 
\begin{align*}
\left|\hat{p}_Y(\wtilde{Y}_{i, j}) - p_W(W_{i, j})\right| &\le 
	\left|\hat{p}_Y(\wtilde{Y}_{i, j}) - p_W(\wtilde{Y}_{i, j})\right| + \left|p_W(\wtilde{Y}_{i, j}) - p_W(W_{i, j})\right|.
\end{align*}
Now, since $\norm{p_W^\prime(\cdot)}_\infty \le M_2$ by Assumption~{\sf A2}, we have, 
\begin{align*}
\left|\hat{p}_Y(\wtilde{Y}_{i, j}) - p_W(W_{i, j})\right| \le 
	\normmax{\hat{p}_Y(\wtilde{\bY}) - p_W(\wtilde{\bY})} + M_2\left|\wtilde{Y}_{i, j} - W_{i, j}\right|.
\end{align*}
Now, we sum up the above inequality for $i \in [m], j\in [n]$. Jensen's inequality implies, 
\begin{equation*}
\frac{1}{mn} \sum_{i \in [m], j\in [n]} \left(\hat{p}_Y(\wtilde{Y}_{i, j}) - p_W(W_{i, j})\right)^2
	\le 2\left(\normmax{\hat{p}_Y(\wtilde{\bY}) - p_W(\wtilde{\bY})}^2 + \frac{M_2^2}{mn}
		\norm{\wtilde{\bY} - \bW}_F^2\right). 
\end{equation*}
The desired claim of Eq~\eqref{eqn:py-diff-pw} now follows by the above estimate and 
\begin{equation}
\frac{1}{mn}\norm{\wtilde{\bY} - \bW}_F^2 = \frac{1}{mn} \norm{\wtilde{\bX} - 
	\wbar{W} \one_m \one_n^{\sT}}_F^2 \le 2 \left(\wbar{W}^2 + \frac{1}{mn} \norm{\wtilde{\bX}}_F^2\right)
	\le 2 \left(\wbar{W}^2 + \frac{1}{mn} \norm{\bX}_F^2\right).
\end{equation}
Now, the bound of $G_1$ in Eq~\eqref{eqn:upper-bound-on-G_1} follows by plugging 
Eq~\eqref{eqn:py-diff-pw} and Eq~\eqref{eqn:py-diff-pw-prime} into 
Eq~\eqref{eqn:G-1-intermediate-estimate} (and note that the RHS of both 
Eq~\eqref{eqn:py-diff-pw} and Eq~\eqref{eqn:py-diff-pw-prime} are simply $C\resT_{n, 1}^2$
and $C\resT_{n, 2}^2$).

\subsection{Proof of Lemma \ref{lemma:W-bar-bound}}
\label{sec:lemma-W-bar-bound}
By definition, $\wbar{W}$ is the average of $mn$ independent mean $0$
random variables $\{W_{i, j}\}_{i \in [m], j \in [n]}$. 
By Assumption~{\sf A1}, we know that $\E |W_{i, j}|^2 \le M_1$, and thus
\begin{equation*}
\E \wbar{W}^2 \leq (mn)^{-1} M_1.
\end{equation*}
Hence, Markov's inequality implies for all $t > 0$
\begin{equation}
\label{eqn:upper-bound-bar-W}
\P \left(|\wbar{W}| \geq t (mn)^{-1/2}\right)\leq t^{-2}M_1.
\end{equation}

\subsection{Proof of Lemma \ref{lemma:W-max-bound}}
\label{sec:lemma-W-max-bound}
Note that, Assumption~{\sf A1} gets $\E |W_{i, j}|^2 \le M_1$. Thus, for all $t > 0$, 
\begin{equation}
\label{eqn:upper-bound-max-W}
\P \left(\normmax{\bW} \ge t \right) \le \sum_{i \in [m], j\in [n]}
	\P (|W_{i, j}| \ge t) \le t^{-2}\sum_{i \in [m], j\in [n]} \E |W_{i, j}|^2 
		\le t^{-2} mn M_1. 
\end{equation}

\subsection{Proof of Lemma~\ref{lemma:kernel-pointwise-high-prob}}  
\label{sec:proof-of-prop-kernel-ptwise}
The proof is a simple application of Bernstein's inequality 
(see Lemma~\ref{lemma:bernstein-inequality}). Introduce the centered 
random variables below for $i \in [n]$ (for notational simplicity): 
\begin{equation*}
\wtilde{K}_{h, X_i}(x) \defeq K_{h, X_i}(x) - \E K_{h, X_i}(x).
\end{equation*}
By triangle inequality, the random variables $\wtilde{K}_{h, X_i}(x)$ are bounded
random variables as 
\begin{equation*} 
\left\vert \wtilde{K}_{h, X_i}(x) \right\vert \leq \left\vert K_{h, X_i}(x) \right\vert 
	+ \left\vert \E  K_{h, X_i}(x) \right\vert \leq 2 \norm{K(\cdot)}_\infty.
\end{equation*}
In addition, we have the following upper bound on the second moment of 
$|\wtilde{K}_{h, X_i}(x)|$
\begin{align*}  \label{eqn:second-moment-bound-kernel}
\E |\wtilde{K}_{h, X_i}(x)|^2 &\le \E K_{h, X_i}^2(x) 
	= \int_R K^2\left(\frac{x-w}{h}\right) p_{X_i}(w) \rmd w \\
&= h\int_{\R} K^2(z) p_{X_i}(x-hz) \rmd z \le h\sigma^2 \norm{p_{X_i}(\cdot)}_\infty,
\end{align*}
where the first inequality follows by the fact that $\wtilde{K}_{h, X_i}(x)$ centers 
$K_{h, X_i}(x)$ and the second inequality follows by the definition of $\sigma^2$
and $\norm{p_{X_i}(\cdot)}_\infty$. Now, we apply Bernstein's inequality (i.e,  
Lemma~\ref{lemma:bernstein-inequality}) to the independent random variables 
$\wtilde{K}_{h, X_i}(x)$ and get that, for all $t > 0$, 
\begin{equation*}
\prob \left(|Z_n(x) - \E Z_n(x)| \ge t\right) \le 2 \exp\left(-\frac{nh t^2}{\sigma^2 p_\infty + 
	M_{\infty}t}\right)
\end{equation*}
The desired claim of the lemma now thus follows. 

%


\section{Some useful tools}
\label{sec:preliminary-lemma}

\noindent\noindent 
Lemma~\ref{lemma:MZR-type-inequality} follows from Marcinkiewicz-Zygmund inequality 
and Rosenthal's inequality (see~\cite[Theorem 15.11]{BoucheronLuMa13}). 

\begin{lemma}
\label{lemma:MZR-type-inequality} Let $Z_1, Z_2, \ldots, Z_n$ be independent
random variables such that $\E Z_i = 0$ for $i \in [n]$. Then, for any fix $q \geq 2$, 
there exists some constant $C_q > 0$ depending only on $q$ such that,  
\begin{equation*}
\E \left|\sum_{i=1}^n Z_i\right|^q \leq C_q \left[
	\left(\E \sum_{i=1}^n Z_i^2\right)^{q/2} +  \sum_{i=1}^n \E \left|Z_i\right|^q \right]
\end{equation*}
\end{lemma}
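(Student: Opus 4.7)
The claim is Rosenthal's inequality, a classical result. My plan is to prove it by combining symmetrization + Khintchine's inequality with an induction on the exponent $q$.

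\textbf{Step 1 (Symmetrization).} Let $Z_1', \ldots, Z_n'$ be an independent copy of $(Z_1, \ldots, Z_n)$, defined on the same probability space. Since each $\E Z_i' = 0$, by Jensen's inequality applied conditionally,
\[
\E\Big|\sum_{i=1}^n Z_i\Big|^q
= \E\Big|\sum_{i=1}^n Z_i - \E'\sum_{i=1}^n Z_i'\Big|^q
\le \E\Big|\sum_{i=1}^n (Z_i - Z_i')\Big|^q .
\]
Because $(Z_i-Z_i')$ is symmetric, it has the same law as $\epsilon_i(Z_i-Z_i')$ where $(\epsilon_i)$ are i.i.d.\ Rademachers independent of everything else, so the right-hand side equals $\E|\sum \epsilon_i(Z_i-Z_i')|^q$.

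\textbf{Step 2 (Khintchine's inequality).} Conditioning on $(Z_i,Z_i')_{i\le n}$ and applying the Khintchine inequality to the Rademacher sum yields a constant $K_q$ such that
\[
\E_{\epsilon}\Big|\sum_{i=1}^n \epsilon_i (Z_i-Z_i')\Big|^q
\le K_q \Big(\sum_{i=1}^n (Z_i-Z_i')^2\Big)^{q/2}.
\]
Taking expectations and using $(a+b)^{q/2}\le 2^{q/2-1}(a^{q/2}+b^{q/2})$ together with independence of $Z_i,Z_i'$, I obtain
\[
\E\Big|\sum_{i=1}^n Z_i\Big|^q \le A_q\, \E\Big(\sum_{i=1}^n Z_i^2\Big)^{q/2}
\]
for some finite $A_q$. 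Thus the task reduces to controlling $\E(\sum Z_i^2)^{q/2}$.

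\textbf{Step 3 (Decomposing the quadratic variation).} Write $Z_i^2 = \E Z_i^2 + (Z_i^2 - \E Z_i^2)$ and apply $(a+b)^{q/2}\le 2^{q/2-1}(a^{q/2}+b^{q/2})$ once more to get
\[
\E\Big(\sum_{i=1}^n Z_i^2\Big)^{q/2}
\le 2^{q/2-1}\Big(\sum_{i=1}^n \E Z_i^2\Big)^{q/2}
+ 2^{q/2-1}\, \E\Big|\sum_{i=1}^n (Z_i^2-\E Z_i^2)\Big|^{q/2} .
\]
The first term is exactly what we want. For the second term, the random variables $X_i := Z_i^2-\E Z_i^2$ are independent and mean zero, and $\E|X_i|^{q/2}\le 2^{q/2}\E|Z_i|^q$ by Jensen.

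\textbf{Step 4 (Induction on the exponent).} I will prove the inequality by induction on $\lceil\log_2 q\rceil$, the base case being $q\in[2,4]$. When $q\in[2,4]$ so that $q/2\in[1,2]$, the von Bahr--Esseen inequality gives $\E|\sum X_i|^{q/2}\le 2\sum \E|X_i|^{q/2}\le 2^{q/2+1}\sum \E|Z_i|^q$, completing the base case. For the inductive step with $q>4$, apply the induction hypothesis with exponent $q/2\ge 2$ to the sum $\sum X_i$:
\[
\E\Big|\sum X_i\Big|^{q/2}\le C_{q/2}\Big[\Big(\sum \E X_i^2\Big)^{q/4}+\sum \E|X_i|^{q/2}\Big],
\]
and bound $\sum \E X_i^2\le \sum \E Z_i^4$ and $\sum \E|X_i|^{q/2}\lesssim \sum \E|Z_i|^q$. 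The mixed term $(\sum \E Z_i^4)^{q/4}$ is handled by the standard interpolation $\E Z_i^4\le (\E Z_i^2)^{2(q-4)/(q-2)}(\E|Z_i|^q)^{2/(q-2)}$ followed by H\"older, which gives a bound of the form $\tfrac{1}{2}(\sum \E Z_i^2)^{q/2} + C\sum \E|Z_i|^q$; the first piece can be absorbed into the leading constant.

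The main obstacle is the bookkeeping of constants in Step~4: the interpolation trick needs to produce a coefficient strictly less than one on the $(\sum \E Z_i^2)^{q/2}$ term so that it can be absorbed, which requires choosing the H\"older split carefully (alternatively one can avoid this by directly invoking the Burkholder--Davis--Gundy inequality for the Doob martingale $S_k=\sum_{i\le k}Z_i$ and then performing Steps 3--4, which yields the same bound with cleaner constants).
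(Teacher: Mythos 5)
You give a complete proof, whereas the paper does not prove this lemma at all: it simply observes that the statement is Rosenthal's inequality and cites \cite[Theorem 15.11]{BoucheronLuMa13}. Your route --- symmetrization plus conditional Khintchine to obtain the Marcinkiewicz--Zygmund bound $\E|\sum_i Z_i|^q \le A_q\,\E(\sum_i Z_i^2)^{q/2}$, then centering the squares $X_i = Z_i^2-\E Z_i^2$ and inducting on the exponent, with von Bahr--Esseen handling the base case $q\in[2,4]$ --- is a standard and correct way to prove Rosenthal's inequality, and it is self-contained apart from Khintchine and von Bahr--Esseen; what it buys over the paper is an explicit argument in place of a citation.

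Two repairs are needed in Step 4, neither of them structural. First, the interpolation inequality is mis-stated: by homogeneity it must read $\E Z_i^4 \le (\E Z_i^2)^{(q-4)/(q-2)}\,(\E|Z_i|^q)^{2/(q-2)}$; with your exponent $2(q-4)/(q-2)$ on $\E Z_i^2$ the two sides scale differently under $Z_i\mapsto\lambda Z_i$ (test $Z_i=\lambda\epsilon_i$ with $\lambda$ small and $q=6$), so the inequality as written is false. With the corrected exponents, H\"older over $i$ gives $(\sum_i \E Z_i^4)^{q/4} \le A^{s}B^{1-s}$ where $A=(\sum_i\E Z_i^2)^{q/2}$, $B=\sum_i\E|Z_i|^q$ and $s=\tfrac{q-4}{2(q-2)}\in[0,1/2)$. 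Second, the worry in your closing paragraph is unnecessary: the lemma only asserts the existence of some finite $C_q$, and the induction hypothesis is applied to the $X_i$ at exponent $q/2$, not to the quantity being bounded, so nothing has to be ``absorbed'' with a coefficient below one. The crude bound $A^{s}B^{1-s}\le A+B$ already closes the inductive step, with all constants (from symmetrization, Khintchine, convexity, and $C_{q/2}$) accumulating harmlessly into $C_q$; no careful choice of the H\"older split, and no appeal to Burkholder--Davis--Gundy, is required.
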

\noindent\noindent
The next lemma is a restatement of Bernstein's inequality.
\begin{lemma}[Bernstein's inequality]
\label{lemma:bernstein-inequality}
Let $Z_1, Z_2, \ldots, Z_n$ be $n$ independent random variables with mean 
$\E [Z_i] = 0$ for all $i \in [n]$. Suppose that, almost surely $|Z_i| \le M$ for 
some (non-random) $M > 0$. Then, for some numerical constant $c > 0$, 
the following inequality holds for all $t > 0$, 
\begin{equation*} 
\P \left( \frac{1}{n}\left\vert \sum_{i=1}^n Z_i \right\vert \geq t \right)
	\leq	2\exp \left(-\frac{cnt^2}{\frac{1}{n}\sum_{i=1}^n \E [Z_i^2] + Mt}  \right).
\end{equation*}
\end{lemma}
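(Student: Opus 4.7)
\textbf{Proof proposal for Lemma \ref{lemma:bernstein-inequality}.}
The plan is the standard Cramér--Chernoff argument, with care taken to extract the specific Bernstein-type denominator $\frac{1}{n}\sum_i \E[Z_i^2] + Mt$. First I would bound the moment generating function of each centered bounded $Z_i$. Concretely, using $|Z_i|\le M$ almost surely and the power series expansion
\begin{equation*}
\E e^{\lambda Z_i} = 1 + \sum_{k\ge 2}\frac{\lambda^k \E[Z_i^k]}{k!}
\le 1 + \sum_{k\ge 2}\frac{\lambda^k M^{k-2}\E[Z_i^2]}{k!},
\end{equation*}
together with $\sum_{k\ge 2}x^{k-2}/k! \le (2(1-x/3))^{-1}$ for $0\le x< 3$ applied with $x=\lambda M$, I would get the sub-gamma-type bound $\E e^{\lambda Z_i}\le \exp\!\big(\lambda^2 \E[Z_i^2]/(2(1-\lambda M/3))\big)$ valid for $\lambda\in[0,3/M)$. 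This is the only quantitative input.

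Next, by independence of the $Z_i$ and a Markov bound applied to $e^{\lambda\sum_i Z_i}$, I would obtain
\begin{equation*}
\P\!\left(\sum_{i=1}^n Z_i \ge nt\right) \le \exp\!\left(-\lambda n t + \frac{\lambda^2 \sum_{i=1}^n\E[Z_i^2]}{2(1-\lambda M/3)}\right).
\end{equation*}
I would then optimize the right-hand side over $\lambda$. Writing $v=\frac{1}{n}\sum_i \E[Z_i^2]$, the choice $\lambda = t/(v+Mt/3)$ (the minimizer after clearing the $1-\lambda M/3$ factor) lies in $[0,3/M)$ and yields, after elementary algebra, a bound of the form $\exp(-nt^2/(2(v+Mt/3)))$. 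Up to adjusting the numerical constant, this is the claimed inequality for the upper tail; the lower tail follows by applying the same argument to $-Z_i$, and a union bound over the two tails supplies the factor $2$.

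The only genuinely nontrivial step is the MGF inequality in the first paragraph; everything else is bookkeeping. I would expect the main technical care to go into verifying the series bound $\sum_{k\ge 2}x^{k-2}/k! \le 1/(2(1-x/3))$ cleanly and checking that the minimizing $\lambda$ indeed lies in the admissible range $[0,3/M)$ for all $t>0$. Once those two elementary facts are in place, the assembly into the stated probability bound is mechanical, and the constant $c$ in the statement can be taken to be any value $\le 1/2$ after absorbing the $M t/3$ term into $M t$ at the cost of a constant.
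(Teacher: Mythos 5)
Your Cram\'er--Chernoff argument is correct: the MGF bound via the series estimate $\sum_{k\ge 2}x^{k-2}/k!\le \bigl(2(1-x/3)\bigr)^{-1}$ (valid since $k!\ge 2\cdot 3^{k-2}$), the choice $\lambda=t/(v+Mt/3)$ with $v=\frac{1}{n}\sum_i\E[Z_i^2]$, and the two-sided union bound yield $2\exp\bigl(-nt^2/(2(v+Mt/3))\bigr)\le 2\exp\bigl(-nt^2/(2(v+Mt))\bigr)$, so $c=1/2$ suffices, and the only unmentioned edge case, $v=0$, is trivial because then all $Z_i=0$ almost surely. The paper gives no proof of Lemma~\ref{lemma:bernstein-inequality}\,---\,it is quoted as the classical Bernstein inequality\,---\,and your derivation is precisely the standard one.
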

\noindent\noindent
The next Lemma~\ref{lemma:operator-to-one-infty} is a standard result in 
matrix analysis~\cite[Thm 2.11]{StewartSun90}. 
\begin{lemma}
\label{lemma:operator-to-one-infty}
For any matrix $A \in \R^{n \times m}$, we have, 
\begin{equation*}
\opnorm{A} \leq \norm{A}_{\ell_1 \to \ell_1}^{1/2} \norm{A}_{\ell_\infty \to \ell_\infty}^{1/2}
	\le \sqrt{mn} \normmax{A}. 
\end{equation*}
\end{lemma}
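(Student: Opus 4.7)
This is a standard matrix-norm inequality; the plan is to derive it in two short steps. First I would reduce the operator-norm bound to the positive semi-definite case via the identity $\opnorm{A}^2 = \opnorm{A^{\sT} A}$. Then I would note the elementary fact that for any symmetric matrix $B$, $\opnorm{B} \le \norm{B}_{\ell_\infty \to \ell_\infty}$: if $v$ is a top eigenvector with eigenvalue $\lambda$ satisfying $|\lambda| = \opnorm{B}$, then picking an index $i$ where $|v_i| = \norm{v}_\infty$ yields $|\lambda|\,\norm{v}_\infty = |(Bv)_i| \le \norm{B}_{\ell_\infty \to \ell_\infty}\,\norm{v}_\infty$. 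Applying this to $B = A^{\sT} A$ gives
\begin{equation*}
\opnorm{A}^2 = \opnorm{A^{\sT} A} \le \norm{A^{\sT} A}_{\ell_\infty \to \ell_\infty} \le \norm{A^{\sT}}_{\ell_\infty \to \ell_\infty}\,\norm{A}_{\ell_\infty \to \ell_\infty} = \norm{A}_{\ell_1 \to \ell_1}\,\norm{A}_{\ell_\infty \to \ell_\infty},
\end{equation*}
where the submultiplicativity of induced norms is used in the middle step, and the last identity uses the duality $\norm{A^{\sT}}_{\ell_\infty \to \ell_\infty} = \norm{A}_{\ell_1 \to \ell_1}$ (i.e., the max absolute row sum of $A^{\sT}$ equals the max absolute column sum of $A$).

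For the second inequality, I would just use the entrywise bounds: the maximum absolute column sum is bounded by $n\,\normmax{A}$ (columns of $A\in\R^{n\times m}$ have $n$ entries) and the maximum absolute row sum is bounded by $m\,\normmax{A}$, so
\begin{equation*}
\norm{A}_{\ell_1 \to \ell_1}^{1/2}\,\norm{A}_{\ell_\infty \to \ell_\infty}^{1/2} \le (n\,\normmax{A})^{1/2}(m\,\normmax{A})^{1/2} = \sqrt{mn}\,\normmax{A}.
\end{equation*}

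There is no real obstacle here---both steps are standard and one-line. The only point that requires a moment of care is the symmetric-matrix bound $\opnorm{B} \le \norm{B}_{\ell_\infty \to \ell_\infty}$, but as sketched above it is immediate by inspecting a top eigenvector. One could alternatively invoke Riesz--Thorin interpolation on the operator $A:\ell_p \to \ell_p$ between $p=1$ and $p=\infty$, but the $A^{\sT}A$ trick is more self-contained and avoids any machinery.
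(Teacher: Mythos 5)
Your proof is correct. Note that the paper does not actually prove this lemma: it simply cites it as a standard fact from Stewart and Sun (their Theorem 2.11, essentially the interpolation bound $\opnorm{A}\le \norm{A}_{\ell_1\to\ell_1}^{1/2}\norm{A}_{\ell_\infty\to\ell_\infty}^{1/2}$, which one can also get from Riesz--Thorin as you remark). Your route supplies a self-contained elementary derivation of that cited inequality: the identity $\opnorm{A}^2=\opnorm{A^{\sT}A}$, the spectral-radius bound $\opnorm{B}\le\norm{B}_{\ell_\infty\to\ell_\infty}$ for symmetric $B$ (valid here because $A^{\sT}A$ is symmetric, so its operator norm is its largest eigenvalue in absolute value, and your top-eigenvector coordinate argument is sound), submultiplicativity of the induced $\ell_\infty\to\ell_\infty$ norm, and the row/column-sum duality $\norm{A^{\sT}}_{\ell_\infty\to\ell_\infty}=\norm{A}_{\ell_1\to\ell_1}$. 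The final step bounding the maximal column sum by $n\,\normmax{A}$ and the maximal row sum by $m\,\normmax{A}$ for $A\in\R^{n\times m}$ is also correct and gives the second inequality. So what you lose relative to the paper is nothing mathematical; what you gain is a two-line argument in place of an external reference, which is a perfectly reasonable trade.
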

\noindent\noindent
The next Lemma~\ref{lemma:opnorm-expectation-chen} gives a moment bound
on the operator norm of independent sums of random matrices. 
See~\cite[Theorem A.1(2)]{ChenGiTr12} for a proof. 
\begin{lemma}
\label{lemma:opnorm-expectation-chen}
Let $X_i \in \R^{d \times d}$ be independent and symmetrically distributed Hermitian 
matrices. Then, for $k\ge 2$ and $r \ge \max\{k, 2\log d\}$, 
\begin{equation}
\E\left[ \opnorm{\sum_{i=1}^n X_i}^k\right]^{1/k} \le \sqrt{2e r} 
	\opnorm{\left(\sum_{i=1}^n \E [X_i^2]\right)^{1/2}} + 
	2er \left(\E \max_{i \in [n]} \opnorm{X_i}^k\right)^{1/k}.
\end{equation}
\end{lemma}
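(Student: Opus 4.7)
The plan is to follow the now-standard strategy for matrix moment inequalities: convert the operator norm into a trace moment, apply a matrix Bernstein-type tail bound derived from Lieb's concavity inequality (exploiting symmetric distribution to avoid extra symmetrization constants), and integrate to recover the $L^k$ moment estimate. First, since each $X_i$ is Hermitian, so is $S_n = \sum_{i=1}^n X_i$, and the basic inequality $\opnorm{S_n}^{2r} \le \Tr(S_n^{2r})$ lets us pass to trace moments; choosing $r \ge \log d$ ensures $d^{1/(2r)} \le e^{1/2}$, so $(\E\Tr(S_n^{2r}))^{1/(2r)}$ differs from $(\E \opnorm{S_n}^{2r})^{1/(2r)}$ by at most a constant factor. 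Thus it suffices to bound the trace moment for $r \sim \max\{k, 2\log d\}$ and then apply Lyapunov's inequality to pass from $2r$ back to $k$.

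Second, I would control the trace moments via a truncation argument. Fix a threshold $T > 0$ and split $X_i = Y_i + Z_i$ with $Y_i = X_i \mathbf{1}\{\opnorm{X_i} \le T\}$ and $Z_i = X_i \mathbf{1}\{\opnorm{X_i} > T\}$. Because $X_i$ is symmetrically distributed, so are $Y_i$ and $Z_i$, which spares us the usual recentering. For the bounded piece, Tropp's matrix Bernstein inequality---which follows from applying Lieb's concavity to the trace-exponential functional---gives, for all $t \ge 0$,
\begin{equation*}
\mathbb{P}\Bigl(\opnorm{\sum_i Y_i} \ge t\Bigr) \;\le\; 2d\exp\Bigl(-\tfrac{t^2/2}{V + Tt/3}\Bigr),
\qquad V := \opnorm{\sum_i \E Y_i^2} \le \opnorm{\sum_i \E X_i^2}.
\end{equation*}
Integrating this tail against $t^{k-1}\,dt$ yields $\E\opnorm{\sum_i Y_i}^k \lesssim (\sqrt{rV})^k + (rT)^k$ whenever $r \ge \max\{k,2\log d\}$, with the $\log d$ coming exactly from absorbing the prefactor $2d$.

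Third, for the residual, use $\opnorm{\sum_i Z_i} \le \sum_i \opnorm{Z_i} \le \max_i \opnorm{X_i}\cdot \sum_i \mathbf{1}\{\opnorm{X_i}>T\}$, combined with Markov's inequality at level $T$; choosing $T = (\E \max_i \opnorm{X_i}^k)^{1/k}$ makes this term of order $(\E \max_i \opnorm{X_i}^k)^{1/k}$, which is already absorbed into the $(rT)^k$ piece from the previous step. Combining the two contributions and taking $k$-th roots produces the stated inequality.

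The main obstacle is pinning down the explicit constants $\sqrt{2er}$ and $2er$ rather than generic $C_1(r), C_2(r)$. This requires tracking constants carefully through Lieb's inequality into the Bernstein tail, optimizing over $T$ at precisely $(\E\max_i\opnorm{X_i}^k)^{1/k}$, and using $r \ge 2\log d$ to kill the $d^{1/r}$ overhead from the trace-to-operator-norm conversion. The symmetric-distribution hypothesis is essential for clean constants: it removes the factor-of-$2$ from Rademacher symmetrization and allows the truncated variables $Y_i$ themselves (not $Y_i - \E Y_i$) to be plugged directly into the matrix Bernstein bound.
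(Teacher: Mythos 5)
You should first note that the paper does not actually prove this lemma: it is quoted verbatim from \cite[Theorem A.1(2)]{ChenGiTr12}, where it is obtained from noncommutative Khintchine/Rosenthal-type moment inequalities (symmetrization plus an iteration that splits off the maximal term), not from integrating a matrix Bernstein tail bound. So your proposal is a genuinely different route, and the question is whether it closes — and as written it does not, because of the way you treat the unbounded part.

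The gap is in your third step. The claim that choosing $T=(\E\max_i\opnorm{X_i}^k)^{1/k}$ makes $\E\opnorm{\sum_i Z_i}^k$ of order $\E\max_i\opnorm{X_i}^k$ is false in general: the sum of the large pieces is governed by its \emph{variance}, not by the maximum, and Markov's inequality at level $T$ only controls the expected number of exceedances, not the $k$-th moment of their signed sum. A scalar instance ($d=1$, which is a legitimate Hermitian case) already breaks it: let $X_1,\dots,X_n$ be i.i.d.\ with $\prob(X_i=\pm 1)=p/2$ and $\prob(X_i=0)=1-p$, with $p$ fixed and $n$ large, and take $k=2$. Then $\E\max_i|X_i|^2=1-(1-p)^n<1$, so $T<1$ and $Z_i=X_i$ for every $i$; hence $\E\bigl(\sum_i Z_i\bigr)^2=np\to\infty$, whereas your claimed residual bound is $O(r^kT^k)=O(1)$. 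The lemma itself survives only because the variance term $\sqrt{2er}\,\opnorm{(\sum_i\E X_i^2)^{1/2}}=\sqrt{2er}\,(np)^{1/2}$ absorbs this contribution — but your decomposition charges the residual entirely to the max term, which no choice of $T$ combined with a counting/Markov argument can justify. Repairing it requires either retaining a variance-type bound for $\sum_i Z_i$ (which is circular, since that is the inequality being proved for unbounded summands), or importing an additional idea such as the Hoffmann--J{\o}rgensen inequality or a Rosenthal-style iteration — precisely the ingredient the Khintchine-based proof in the cited reference supplies. A secondary point, which you flag yourself: even for the bounded piece, integrating the Bernstein tail against $kt^{k-1}\,\de t$ yields the right shape $\sqrt{rV}+rT$ but not the specific constants $\sqrt{2er}$ and $2er$; that would be tolerable for the application here, but only once the residual step is fixed.
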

\noindent\noindent

\newcommand{\gap}{\gamma}
Recall the following version of Davis-Kahan $\sin \Theta$ theorem~\cite[Theorem 4.4]{StewartSun90}
\begin{lemma}[Davis-Kahan]
\label{lemma:Davis-Kahan}
Let $\hat{\bA} = \bA + \bE \in \R^{p \times q}$. For $s \leq \min(p,q)$, let $\bU$ and $\hat{\bU}$ be the first $s$ columns of the left singular matrices of $A$ and $\hat{A}$ respectively. Denote by $\gap = \sigma_s(\bA) - \sigma_{s+1}(\bA)$. If $\gap \ge 2 \norm{\bE}_{op}$, we have the following inequality
\begin{equation*}
\sqrt{1-\sigma_{\min}^2(\hat{\bU}^{\sT} \bU)} \le \frac{\norm{\bE}_{op}}{\gap -\norm{\bE}_{op}}.
\end{equation*}
\end{lemma}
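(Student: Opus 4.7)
The plan is to establish the bound as a version of the classical Wedin $\sin\Theta$ theorem. The first step is to rewrite the left-hand side in a more tractable form: if $\bU_\perp \in \reals^{p\times(p-s)}$ is an orthonormal completion of $\bU$ in $\reals^p$, then a standard identity (following from the CS decomposition) gives
$$\sqrt{1-\sigma_{\min}^2(\hat{\bU}^{\sT}\bU)} \;=\; \sin\theta_{\max} \;=\; \|\bU_\perp^{\sT}\hat{\bU}\|_{\op},$$
where $\theta_{\max}$ is the largest principal angle between the column spans of $\bU$ and $\hat{\bU}$. So it suffices to bound $\|\bU_\perp^{\sT}\hat{\bU}\|_{\op}$ in terms of $\|\bE\|_{\op}$ and $\gap$.

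Next I would extract two perturbation identities from the SVDs. Let $\bV \in \reals^{q\times s}$ be the top-$s$ right singular vectors of $\bA$ and $\hat{\bV} \in \reals^{q\times s}$ those of $\hat{\bA}$; extend $\bV$ to $[\bV, \bV_\perp]$; let $\hat{\bSigma}_{\rm top}$ be the $s \times s$ diagonal block of top singular values of $\hat{\bA}$, and let $\bSigma_\perp$ denote the $(p-s)\times(q-s)$ block of remaining singular values of $\bA$ (so $\|\bSigma_\perp\|_{\op}=\sigma_{s+1}(\bA)$). The defining relations $\bU_\perp^{\sT}\bA=\bSigma_\perp \bV_\perp^{\sT}$, $\hat{\bA}\hat{\bV}=\hat{\bU}\hat{\bSigma}_{\rm top}$, and their transposed counterparts, combined with $\hat{\bA}=\bA+\bE$, yield
$$\bU_\perp^{\sT}\hat{\bU}\,\hat{\bSigma}_{\rm top}=\bSigma_\perp\bV_\perp^{\sT}\hat{\bV}+\bU_\perp^{\sT}\bE\hat{\bV},\qquad \bV_\perp^{\sT}\hat{\bV}\,\hat{\bSigma}_{\rm top}=\bSigma_\perp^{\sT}\bU_\perp^{\sT}\hat{\bU}+\bV_\perp^{\sT}\bE^{\sT}\hat{\bU}.$$
Writing $a=\|\bU_\perp^{\sT}\hat{\bU}\|_{\op}$ and $b=\|\bV_\perp^{\sT}\hat{\bV}\|_{\op}$, the elementary estimates $\|\bX\hat{\bSigma}_{\rm top}\|_{\op}\ge\hat{\sigma}_s\|\bX\|_{\op}$ and $\|\bSigma_\perp\bY\|_{\op}\le\sigma_{s+1}(\bA)\|\bY\|_{\op}$ turn these identities into a coupled pair of scalar inequalities:
$$a\,\hat{\sigma}_s\le\sigma_{s+1}(\bA)\,b+\|\bE\|_{\op},\qquad b\,\hat{\sigma}_s\le\sigma_{s+1}(\bA)\,a+\|\bE\|_{\op}.$$

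To decouple them I take $\max(a,b)$: assuming without loss of generality $a\ge b$, the first inequality becomes $a(\hat{\sigma}_s-\sigma_{s+1}(\bA))\le\|\bE\|_{\op}$. Weyl's inequality gives $\hat{\sigma}_s\ge\sigma_s(\bA)-\|\bE\|_{\op}$, hence $\hat{\sigma}_s-\sigma_{s+1}(\bA)\ge\gap-\|\bE\|_{\op}$, which is positive under the hypothesis $\gap\ge 2\|\bE\|_{\op}$. The conclusion $a\le \|\bE\|_{\op}/(\gap-\|\bE\|_{\op})$ is exactly the claimed bound. The one subtle point is the ``take the maximum'' step: summing the two coupled inequalities would lose a factor of $2$, whereas exploiting the symmetry between $a$ and $b$ via the WLOG reduction produces the sharp constant. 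Weyl's inequality is the only other nontrivial ingredient, used to pass from $\hat{\sigma}_s$ back to $\sigma_s(\bA)-\|\bE\|_{\op}$.
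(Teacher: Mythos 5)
Your proof is correct. Note that the paper does not actually prove this lemma: it is simply quoted from Stewart and Sun (the rectangular-matrix version usually attributed to Wedin), so there is no in-paper argument to compare against. What you have written is a clean, self-contained derivation of exactly that result: the CS-decomposition identity $\sigma_{\min}^2(\hat{\bU}^{\sT}\bU)=1-\|\bU_\perp^{\sT}\hat{\bU}\|_{\op}^2$ (which follows from $\hat{\bU}^{\sT}(\bU\bU^{\sT}+\bU_\perp\bU_\perp^{\sT})\hat{\bU}=\id_s$), the two coupled Wedin identities obtained from $\bU_\perp^{\sT}\bA=\bSigma_\perp\bV_\perp^{\sT}$ and $\hat\bA\hat\bV=\hat\bU\hat\bSigma_{\rm top}$ (and their transposes), and the decoupling via $\max(a,b)$ followed by Weyl's inequality all check out, and they yield the stated constant $\|\bE\|_{\op}/(\gap-\|\bE\|_{\op})$. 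Two small points of hygiene: the ``WLOG $a\ge b$'' step is best phrased as bounding $c=\max(a,b)$ (whichever inequality has $c$ on its left-hand side closes on itself, and $a\le c$ then gives the claim), and the estimate $\|\bX\hat\bSigma_{\rm top}\|_{\op}\ge\hat\sigma_s\|\bX\|_{\op}$ silently uses $\hat\sigma_s>0$, which is harmless since the case $\hat\sigma_s=0$ is excluded by Weyl's inequality together with $\gap\ge 2\|\bE\|_{\op}$ (up to the degenerate case $\gap=\|\bE\|_{\op}=0$, where the stated bound is itself vacuous). Your observation that summing the two inequalities would cost a factor of $2$, whereas taking the maximum preserves the sharp constant, is exactly the right remark.
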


\end{document}